\xpatchcmd{\@todo}{\setkeys{todonotes}{#1}}{\setkeys{todonotes}{inline,#1}}{}{}
\newcommand*\linenomathpatch[1]{%
  \cspreto{#1}{\linenomath}%
  \cspreto{#1*}{\linenomath}%
  \csappto{end#1}{\endlinenomath}%
  \csappto{end#1*}{\endlinenomath}%
}
\newcommand*\linenomathpatchAMS[1]{%
  \cspreto{#1}{\linenomathAMS}%
  \cspreto{#1*}{\linenomathAMS}%
  \csappto{end#1}{\endlinenomath}%
  \csappto{end#1*}{\endlinenomath}%
}
  \let\linenomathAMS\linenomathWithnumbers
  \patchcmd\linenomathAMS{\advance\postdisplaypenalty\linenopenalty}{}{}{}
  \let\linenomathAMS\linenomathNonumbers
\patchcmd{\mmeasure@}{\measuring@true}{
  \measuring@true
  \ifnum-\linenopenaltypar>\interdisplaylinepenalty
    \advance\interdisplaylinepenalty-\linenopenalty
  \fi
  }{}{}
\newenvironment{enumeratei}{\begin{enumerate}[\upshape i.]}{\end{enumerate}}
\newenvironment{enumeratea}{\begin{enumerate}[\upshape a)]}{\end{enumerate}}
\newtheorem{thm}{Theorem}[section]
\newtheorem{lem}[thm]{Lemma}
\newtheorem{cor}[thm]{Corollary}
\newtheorem{defn}[thm]{Definition}
\newtheorem{rem}[thm]{Remark}
\renewcommand{\leq}{\leqslant} 
\renewcommand{\geq}{\geqslant} 
\renewcommand{\le}{\leqslant} 
\renewcommand{\ge}{\geqslant}
\newcommand{\ind}{\mathds{1}}
\newcommand{\eps}{\varepsilon}
\newcommand{\norm}[1]{\left\Vert#1\right\Vert}
\newcommand{\abs}[1]{\left\vert#1\right\vert}
\newcommand{\ie}{\emph{i.e.,}}
\newcommand{\equald}{\stackrel{\mathrm{d}}{=}}
\def\qed{ \hfill $\blacksquare$}  
\let\ga=\alpha \let\gb=\beta \let\gc=\gamma \let\gd=\delta 
    \let\gk=\kappa \let\gl=\lambda        \let\go=\omega     
\let\gC=\Gamma \let\gD=\Delta  \let\gL=\Lambda
\newcommand{\cA}{\mathcal{A}}\newcommand{\cB}{\mathcal{B}}\newcommand{\cC}{\mathcal{C}}
\newcommand{\cD}{\mathcal{D}}\newcommand{\cF}{\mathcal{F}}
\newcommand{\cH}{\mathcal{H}}
\newcommand{\cU}{\mathcal{U}}
\newcommand{\vone}{\mathbf{1}}
\newcommand{\vd}{\mathbf{d}}
\newcommand{\vy}{\mathbf{y}} 
\providecommand{\1}{\mathds{1}}
\newcommand{\mv}[1]{\boldsymbol{#1}}\newcommand{\mvzero}{\boldsymbol{0}}
\newcommand{\mvk}{\boldsymbol{k}}
\newcommand{\mvx}{\boldsymbol{x}}\newcommand{\mvy}{\boldsymbol{y}}
\newcommand{\fJ}{\mathfrak{J}}
\newcommand{\dC}{\mathds{C}}
\newcommand{\dN}{\mathds{N}}
\newcommand{\dP}{\mathds{P}}
\newcommand{\dR}{\mathds{R}}
\newcommand{\dT}{\mathds{T}}
\newcommand{\dZ}{\mathds{Z}} 
\DeclareMathOperator{\E}{\mathds{E}}
\DeclareMathOperator{\pr}{\mathds{P}}
\DeclareMathOperator{\argmin}{argmin}
\newcommand{\dlap}{\ensuremath{\Delta_h}}
\newcommand{\ddt}{\ensuremath{\frac{d}{dt}}}
\renewcommand{\i}{{\mathrm{i}\mkern2mu}}
\newcommand{\peq}{\preccurlyeq}
\begin{document}
\title[Phase transition in DNLS]{Phase transition for discrete nonlinear Schr\"odinger equation in three and higher dimensions}
\author[Dey]{Partha S.~Dey}
\author[Kirkpatrick]{Kay Kirkpatrick}
\author[Krishnan]{Kesav Krishnan}
\address{Department of Mathematics, University of Illinois Urbana-Champaign, 1409 W Green Street, Urbana, Illinois 61801}
\email{$\{$psdey, kkirkpat, kesavsk2$\}$@illinois.edu}
\date{\today}
\keywords{Nonlinear Schr\"odinger Equation, Invariant Measure, Solitons, Free energy, Dispersive Equations}

\begin{abstract}
    We analyze the thermodynamics of the focusing discrete nonlinear Schr\"odinger equation in dimensions $d\ge 3$ with general nonlinearity $p>1$  and under a model with two parameters, representing inverse temperature and strength of the nonlinearity, respectively. We prove the existence of limiting free energy and analyze the phase diagram for general $d,p$. We also prove the existence of a continuous phase transition curve that divides the parametric plane into two regions involving the appearance or non-appearance of solitons. Appropriate upper and lower bounds for the curve are constructed that match the result in~\cite{CK12} in a one-sided asymptotic limit. We also look at the typical behavior of a function chosen from the Gibbs measure for certain parts of the phase diagram. 
\end{abstract}

\maketitle
\setcounter{tocdepth}1\tableofcontents

\section{Introduction}\label{sec:intro}

Nonlinear Schr\"odinger (NLS) equations have a fundamental physical importance. They arise in descriptions of a multitude of classical and quantum phenomena, examples include  nonlinear optics~\cite{CLS03}, Bose-Einstein condensation~\cite{BK04} and even the complex dynamics of  DNA~\cite{P04}. A close cousin of the NLS, the Gross Pitaevski equation was recently used to describe a theory of dark matter~\cite{KMW}. The focusing NLS is of significant mathematical interest due to the competition between the dispersive character of the linear part of the equation and the nonlinearity. A striking consequence is the formation of solitons, localized solutions preserved in time. Additionaly, the NLS has algebraic structure; it admits a Hamiltonian description and several conserved quantities. In fact, in dimension one and for non-linearity $p=3$, the NLS is completely integrable, \ie\ it can be described in terms of a Lax pair~\cite{LP68}. All this being said, the behavior of the focusing NLS is particularly challenging to understand in higher dimensions and it is this situation that we aim to address. We first discuss the  continuum focusing nonlinear Schr\"odinger Equation (NLS). 

Let $\psi(t,x)$ be a complex-valued function of time $t$ and spatial variable $x\in\dR^{d}$. We say that $\psi: [0,\infty) \times \dR^{d}\to \dC$ satisfies the continuum focusing NLS with power non-linearity $p>1$ if
\begin{align}\label{def:FNLS}
    \i \partial_{t}\psi =-\gD \psi -|\psi|^{p-1}\psi \text{ for all } t,x.
\end{align}
The continuum Hamiltonian functional is given by
\begin{align}\label{def:HAM0}
    H_c(\psi):=\int_{\dR^{d}}|\nabla \psi|^2 dx-\frac2{p+1}\int_{\dR^{d}}|\psi|^{p+1}dx.
\end{align}
Formally~\eqref{def:FNLS} may be rewritten via the variation of the Hamiltonian as
\begin{align*}
    \ddt \psi= \i \frac{\gd}{\gd \psi^{*}}H_c(\psi).
\end{align*}

Given the Hamiltonian structure, it is reasonable to address questions regarding well-posedness and asymptotic behavior via the construction of invariant measures for the flow. This approach has a rich history, and we will survey the results about the existence of solutions and invariant measures in Section~\ref{sec:lit}. There is a significant obstacle to applying the standard method of construction to the continuum equation in three dimensions and higher, which we will also address in Section~\ref{sec:lit}. Essentially, the natural candidate is not normalizable due to spatial regularity issues (see~\cite{LRS88}). One way around this obstacle is to consider a spatial discretization and study the discrete NLS instead (see~\cites{CK12, C14}). For the spatial dimension, we fix an integer $d\ge 3$.
Let $\mathbb{T}_{n}^{d}$ be the $d$-dimensional discrete torus with vertex set indexed by
\begin{align*}
    V=V_n=[n]^{d}:=\{0,2,\ldots,n-1\}^{d}
\end{align*}
of size $N=n^{d}$ and edge set $E=E_n$. We will denote the $d-$dimensional integer lattice as $\dZ^{d}$. We take $h$ to be the spacing between two neighboring vertices in either case. The discrete nearest neighbor Laplacian acting on $\ell^{2}(G)$ with spacing $h$ and $G=\dT^{d}_{n}$ or $\dZ^{d}$ (the case under consideration will always be specified as required) is defined as\begin{align}\label{def:finitelement}
    \dlap \psi_{{\mvx}}:= \frac1{h^2}\sum_{\mvy\sim {\mvx}} (\psi_{{\mvx}}-\psi_{\mvy}),
\end{align}
where $\mvy\sim {\mvx}$ denotes the sum over all nearest neighbors $\mvy$ of ${\mvx}$ and $\psi=(\psi_{{\mvx}})_{{\mvx}\in V} \in \ell^{2}(G)$. We may regard the parameter $1/h$ to be the coupling strength of the lattice. The focusing Discrete Nonlinear Schr\"odinger (DNLS) equation on $G$ with nonlinearity $p$ is defined as coupled system of ODEs with $(\psi_{{\mvx}}(t))_{{\mvx}\in V}, t>0$ satisfying\begin{align}\label{eq:DNLS}
    \i\ddt \psi_{{\mvx}}(t) = -\dlap\psi_{{\mvx}}(t) - |\psi_{{\mvx}}(t)|^{p-1}\psi_{{\mvx}}(t),\qquad {\mvx}\in V, t>0.
\end{align}

Equation~\eqref{eq:DNLS} admits a global solution for $\ell^2$ initial data for both cases of $G$. Like the continuum equation, it may be cast into a Hamiltonian form. The discrete Hamiltonian associated with the focusing DNLS is
\begin{align}\label{def:HAM1}
    \begin{split}
        \cH_{h}(\psi)
        & = h^{d-2}\sum_{({\mvx},{\mvx'})\in E} \abs{\psi_{{\mvx}}-\psi_{{\mvx'}}}^2 - \frac{2}{p+1}\cdot h^d\sum_{x\in G} |\psi_{{\mvx}}|^{p+1}\\
        & = {h^{d-2}}\norm{\nabla\psi}_2^2 - \frac{2}{p+1}\cdot h^d\norm{\psi}_{p+1}^{p+1}.
    \end{split}
\end{align}
Up to scaling by a constant,~\eqref{def:HAM1} is the discrete analog of~\eqref{def:HAM0}. When defined on the discrete torus, the phase space of the equation is isomorphic to $\dC^N$, which crucially is finite-dimensional and has a natural volume form. The volume form is preserved under the flow of the equation via Liouville's theorem. As a consequence of Noether's theorem, the invariance of the Hamiltonian with respect to multiplication of $(\psi_{{\mvx}})_{{\mvx}\in V}$ by a constant phase implies that the $\ell^{2}$ norm is a conserved by the dynamics. We refer to the $\ell^{2}$ norm as the mass.  We immediately obtain an invariant Gibbs probability measure for the dynamics, defined on $\dC^V$ with probability density of $(\psi_{{\mvx}})_{{\mvx}\in V}$ proportional to
\begin{align}\label{def:orig}
    e^{-\gb \cH_{h}(\psi)}\cdot \ind_{\{h^{d}\norm{\psi}^2_2\le B\}}\,\vd\psi
\end{align}
with $\gb,B>0$. For simplicity, we express the volume element as
\begin{align*}
    \vd\psi := \prod_{{\mvx}\in V} d\Re{\psi_{{\mvx}}}\,d\Im{\psi}_{{\mvx}}.
\end{align*}
The relationship between $B$, $\beta$, $h$ and $N$ governs the scaling behavior of this measure. In this article, we will work with the following version of the problem.
\begin{defn}[The model]
    Let $d\ge 3, p>1$ be fixed.
    Given a positive real number $\nu>0$, we consider the Hamiltonian
    \begin{align}\label{def:Hn}
        H_{\nu,N}(\psi):=\norm{\nabla\psi}_2^2 - \left( \frac{\nu}{N}\right)^{(p-1)/2}\cdot\frac2{p+1} \norm{\psi}_{p+1}^{p+1}.
    \end{align}
    for $\psi:\dT^{d}_{n}\to \dC$. With this Hamiltonian, we obtain a Gibbs measure of the form
    \begin{align}\label{equiv}
        \mu_N^{\theta,\nu}(\psi) = \frac1{Z_N(\theta,\nu)} e^{-\theta \cH_{\nu,N}(\psi)}\cdot \ind_{\{\norm{\psi}_2^2\le N\}} \vd\psi
    \end{align}
    where $\theta>0$ is the inverse temperature and
    \begin{align}\label{def:parfn}
        Z_N(\theta,\nu) := \int_{\dC^V} e^{-\theta \cH_{\nu,N}(\psi)}\cdot \ind_{\{\norm{\psi}_2^2\le N\}} \vd\psi
    \end{align}
    is the partition function.
\end{defn}

In terms of the original measure~\eqref{def:orig}, this corresponds to choosing our parameters to obey
\begin{align}\label{eq:param}
    \begin{split}
        \theta             & = \gb B\cdot \frac1{Nh^2} \text{ and }
        \nu = B\cdot h^{-d+4/(p-1)};                            \\
        \text{or, equivalently } \beta & = \theta/\nu \cdot N h^{-(d-2)+4/(p-1)}= \theta/\nu \cdot N h^{-(d-2)(p-p_e)/(p-1)},\\
        \text{ and }
        B &= \nu\cdot h^{d-4/(p-1)} = \nu\cdot h^{d(p-p_m)/(p-1)},
    \end{split}
\end{align}
where $p_{m}:=1+4/d< p_{e}:=1+4/(d-2)$ are the mass critical and energy critical threshold, respectively. When $h$ is of constant order, we get $B=\Theta(1)$ whereas $\gb=\Theta(N)$. However, when $h\to 0$, we have $B\to 0$ or $\infty$ according as $p>p_{m}$ or $p<p_{m}$; and $\gb \ll N$ or $\gg N$ according as $p>p_{e}$ or $p<p_{e}$.
We will elaborate further on the scaling in Section~\ref{sec:heuristic}. It suffices to say here that the dependence in $N$ is chosen such that asymptotically the linear and nonlinear parts of the energy contribute on the same scale.

\section{Main Results}
In order to state our results, we need to introduce two functions, $I$ and $W$ defined on $(0,\infty)$. They correspond to nonlinear and linear contributions to the free energy respectively, as will be explained.  

\begin{defn}[Soliton with given mass]
Recall the discrete Hamiltonian introduced in~\eqref{def:HAM1}, with $h=1$ and defined on $\ell^{2}(\dZ^{d})$. We define the minimum energy at mass $a$ as
    \begin{align}\label{def:Ifn}
        I(a):=\inf_{\norm{\psi}_2^2=a} \cH(\psi).
    \end{align}
\end{defn}

This is the energy of a soliton of mass $a$, when it exists. We will elaborate more on this in Section~\ref{sec:soliton}. A standard result in the theory of DLNS states that the minimizer is attained whenever $I(a)<0$. Moreover, there exists $R_p=R_{p,d}\ge 0$ such that $I(a)=0$ iff $a\le R_{p}$ (see~\cite{WEI99}). It can also be shown that $I(a)\geq 0$ implies that $I(a)=0$. 

As for the function $W$, let $\gD$ denote the graph Laplacian on the discrete torus. Let $\phi^{\mv0}$ denote the constant $\mv1$ vector, which spans $\ker(\gD)$, and $\Delta^{\perp}$ be the restriction of $\Delta$ on $\dC^{N}/\mathrm{span}\{\phi^{\mv0}\}$. We define
\begin{align}\label{def:Kfn}
    K(y)=\lim_{N\to \infty} \frac{1}{N}\log \det (y-\gD^{\perp}),\ y\in [0,\infty).
\end{align}
In Section~\ref{sec:GFF}, we will show the convergence of the limit and give a more useful expression for $K(y)$. 
\begin{defn}[Free field energy with given mass]
    Let $K$ be as in~\eqref{def:Kfn}. We define $W:(0,\infty)\to(0,\infty)$ as  
\begin{align}\label{def:Wfn}
    W(b):=\inf_{y\,:\,K'(y)\le b} (K(y)-yb).
\end{align}
\end{defn}
In Lemma~\ref{lem:propW} we will show that $W$ is a decreasing convex function. Moreover, it is the limiting free energy of the Gaussian Free Field conditioned to have mass $b$ and will be explicitly demonstrated in Section~\ref{sec:GFF}. There is a $d-$dependent constant $C_{d}$ (see~\eqref{def:Cd}) such that $W(b)=K(0)$ for all $b\geq C_{d}$.

Throughout the rest of the paper, we will fix the spatial dimension $d\ge 3$ and unless otherwise specified, the non-linearity will be fixed $p>1$.

\subsection{Free energy limit}\label{ssec:free}

Let $W(b)$ denote the limiting mean free energy of the Gaussian Free Field conditioned to have mass $b$, as given in equation~\eqref{def:Wfn} and $I(a)$ denote the minimum energy for the Hamiltonian~\eqref{def:HAM1} on $\dZ^d$ at mass $a$, as given in equation~\eqref{def:Ifn}. Our first main result is the following.

\begin{thm}[Convergence of free energy]\label{thm:free}
    Let $Z_N(\theta,\nu)$ be as in~\eqref{def:parfn}. We have,
    \begin{align*}
        \left|\frac1N\log Z_N(\theta,\nu)- F(\theta,\nu)\right| \leq O(N^{-2(d-2)/{3d}}),
    \end{align*}
    where
    \begin{align}\label{def:F}
        F(\theta,\nu):=\log\frac{\pi}{\theta}- \min_{0<a<1} \left( W(\theta(1-a)) + \frac\theta{\nu} I(a\nu)\right).
    \end{align}
\end{thm}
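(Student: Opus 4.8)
The shape of $F$ dictates a two-scale picture: under $\mu_N^{\theta,\nu}$ a typical $\psi$ should decompose, in the limit, into a \emph{condensate} carried by $O(1)$ sites plus a delocalized \emph{bulk} that behaves like a complex Gaussian free field. The model is scaled precisely so that this is exact: the amplitude change $\psi=\sqrt{N/\nu}\,\chi$ gives $H_{\nu,N}(\sqrt{N/\nu}\,\chi)=(N/\nu)\,\cH(\chi)$ and $\norm{\sqrt{N/\nu}\,\chi}_2^2=(N/\nu)\norm{\chi}_2^2$, so a condensate of torus-mass $aN$ is a $\dZ^d$-configuration of mass $a\nu$ costing energy $\ge(N/\nu)I(a\nu)$, while the bulk, of mass $\le(1-a)N$, carries essentially only the quadratic energy $\theta\norm{\nabla\psi}_2^2$. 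The plan is to prove matching upper and lower bounds on $\tfrac1N\log Z_N$ and then optimize over the mass split $a$; the target rate $O(N^{-2(d-2)/(3d)})$ will fall out of tuning a coarse-graining scale against the competing error terms.

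\emph{The Gaussian input.} The first ingredient is the exact bulk free energy: for every $c>0$, and for $V_n$ with any $O(1)$ sites deleted,
\[
\frac1N\log\int e^{-\theta\norm{\nabla\psi}_2^2}\ind_{\{\norm{\psi}_2^2\le cN\}}\,\vd\psi\;\To\;\log\tfrac{\pi}{\theta}-W(\theta c).
\]
For the upper bound one tilts away the hard constraint, $\ind_{\{\norm{\psi}_2^2\le cN\}}\le e^{y(cN-\norm{\psi}_2^2)}$ for $y\ge0$, diagonalizes in the eigenbasis of $\gD$ (the kernel mode is harmless, meeting only the $e^{-y|\cdot|^2}$ factor), uses $\tfrac1N\sum_{k\ne0}\log(\theta\mu_k+y)\to\log\theta+K(y/\theta)$, and optimizes over $y$; the optimum is $-W(\theta c)$ by the variational description of $W$ from Section~\ref{sec:GFF}. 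The matching lower bound restricts to a thin shell $\norm{\psi}_2^2\in[(c-\eta)N,cN]$ and uses that $\norm{\psi}_2^2$ concentrates at its mean under the tilted Gaussian. Crucially, deleting $O(1)$ sites does not change the limiting spectral measure of $\gD$, hence affects neither $K$ nor $W$; this is exactly what will allow the condensate to be excised below.

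\emph{Upper bound for $Z_N$.} Since the nonlinearity is focusing, a global tilt diverges, so the plan is to use the mass cap to tame it pointwise. Fix small $\delta>0$ and large $\ell$, tile $\dT^d_n$ by cubes of side $\ell$, and let $S=S(\psi)$ be the union of those carrying mass $\ge\delta N$, so $|S|=O_{\delta,\ell}(1)$. Off $S$ every site has $|\psi_x|^2<\delta N$, whence $(\nu/N)^{(p-1)/2}\tfrac{2}{p+1}\norm{\psi_{V\setminus S}}_{p+1}^{p+1}\le\tfrac{2}{p+1}(\nu\delta)^{(p-1)/2}N=:\eps(\delta)N$ with $\eps(\delta)\to0$. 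Dropping the $S$-to-$(V\setminus S)$ edges decouples $\norm{\nabla\psi}_2^2$; summing over the $\le N^{|S|}=e^{o(N)}$ choices of $S$ and over a fine grid of values of the $S$-mass fraction $a$, the off-$S$ factor is at most the Gaussian quantity above at mass $(1-a)N$, while the $S$-factor, after the amplitude rescaling and using that the $\dZ^d$ soliton is localized (so a finite box loses only $o_\ell(1)$ in energy and mass), is at most $e^{o(N)}\exp(-\theta(N/\nu)(I(a\nu)-o_\ell(1)))$. Combining, then letting $\delta\to0$ and $\ell=\ell(N)\to\infty$ so as to balance the errors, yields $\tfrac1N\log Z_N\le F(\theta,\nu)+O(N^{-2(d-2)/(3d)})$.

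\emph{Lower bound and main obstacle.} For the lower bound, fix $a$ nearly achieving the minimum in $F$, take a near-optimal, localized $\dZ^d$ soliton $\chi$ of mass $a\nu$ with $\cH(\chi)\le I(a\nu)+o(1)$, and place $\psi^{\mathrm{sol}}=\sqrt{N/\nu}\,\chi$ near a fixed vertex of $\dT^d_n$; restricting the integral in $Z_N$ to $\psi=\psi^{\mathrm{sol}}+\psi^{\mathrm g}$ with $\psi^{\mathrm g}$ supported off the soliton box and $\norm{\psi^{\mathrm g}}_2^2\le(1-a)N$, the $\norm{\cdot}_2^2$ cross term vanishes and the cross terms in $\norm{\nabla\cdot}_2^2$ and $\norm{\cdot}_{p+1}^{p+1}$ are lower order, so integrating out $\psi^{\mathrm g}$ reproduces the bulk weight and gives $\tfrac1N\log Z_N\ge\log\tfrac{\pi}{\theta}-W(\theta(1-a))-\tfrac\theta\nu I(a\nu)-o(1)$; optimizing over $a$ closes the gap. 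I expect the crux to be the upper bound: showing that the single-condensate ansatz is optimal, i.e., that the heavy part cannot beat $(N/\nu)I(a\nu)$ by fragmenting into several clusters or adopting a non-soliton profile. This is where one must exploit that $I$ is the infimal energy at given mass — attained by a localized soliton exactly when it is negative, and $\equiv 0$ for mass $\le R_p$ — together with the stability of the spectrum of $\gD$ under removing $O(1)$ sites; it is also where the quantitative rate must be extracted, by optimizing $\ell$ against the soliton-truncation tail, the volume $\ell^d/N$ lost from the bulk, and the Riemann-sum error in $\tfrac1N\log\det(y-\gD^\perp)\to K(y)$.
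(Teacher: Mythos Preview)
Your overall architecture matches the paper's: partition into condensate plus bulk, bound the condensate via $I$ and the bulk via $W$, then match upper and lower bounds; your Gaussian analysis (tilt for the upper bound, concentration for the lower) is exactly what Section~\ref{sec:GFF} carries out. But the upper-bound decoupling has a real gap. When you ``drop the $S$-to-$(V\setminus S)$ edges'', the gradient energy on $S$ becomes the \emph{interior} quadratic form, and for compactly supported $\chi$ on $\dZ^d$ one has $\cH(\chi)=\cH^{\mathrm{int}}(\chi)+\sum_{x\in S,\,x'\notin S,\,x\sim x'}|\chi_x|^2$, so the only inequality you get is $\cH^{\mathrm{int}}(\chi)\ge I(\norm{\chi}_2^2)-2d\norm{\chi_{\partial S_{\mathrm{int}}}}_2^2$. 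Your cube criterion does not prevent a heavy site from sitting on $\partial S_{\mathrm{int}}$, so after rescaling this boundary error is $O(N)$, not $o(N)$, and the bound is vacuous. Invoking soliton localization here is the wrong direction: it gives $\inf_{\chi\ \mathrm{supp}\subset S}\cH(\chi)\le I+o_\ell(1)$, not $\ge$. The paper closes this gap with a pigeonhole over annuli (Lemma~\ref{lem:separation}): enlarge the na\"ive heavy set by successive two-step boundaries and choose a layer of mass $\le N/k$, which secures simultaneously $|\psi_x|^2<\eps N$ off $U$ and $\sum_{x\in\partial U}|\psi_x|^2<3\eps N$; with this in hand the Dirichlet decomposition~\eqref{eq:split} gives $\cH_N(\psi)\ge\cH_N(\psi_U)+\cH_N(\psi_{U^c})-3\eps N$ and Lemma~\ref{lem:SUB} applies directly. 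Your tiling can be rescued the same way, but not as written.

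Two related mislocations of difficulty. Soliton exponential decay (Section~\ref{sec:expdec}) is used only in the \emph{lower} bound, to fit a near-minimizer of $\cH$ inside a box of side $\lfloor\log N\rfloor$ with controlled loss (Lemmas~\ref{lem:minrate1}--\ref{lem:minrate2}); it plays no role in the upper bound. And the fragmentation worry you flag as the crux is empty: $I(a\nu)$ is by definition the infimum of $\cH$ over \emph{all} $\ell^2(\dZ^d)$ configurations of mass $a\nu$, disconnected profiles included, so once $|U|\ll n$ and $U$ embeds in $\dZ^d$ the inequality $\cH_N(\psi_U)\ge(N/\nu)I(a\nu)$ is immediate from the definition (this is the one-line content of Lemma~\ref{lem:SUB}). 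The genuine work in the upper bound is the boundary control just described and the patching of the bulk back to the full torus (Lemma~\ref{lem:DUB}), not ruling out multi-solitons.
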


Essentially, Theorem~\ref{thm:free} states that asymptotically the mass of a typical function may be divided into two parts:
\begin{enumerate}
    \item Structured part having mass $ \approx a n$, localized to a region of size $\Theta(1)$, function values of $\psi_{x}$ are of order $\sqrt{N}$ and contributes $\exp(- N\theta \nu^{-1} I( \nu a)+o(N))$ to the free energy. 

    \item Random part having mass  $\approx bN$ with $b\le 1-a$, maximum value of $|\psi_{x}|^2$ is $o(N)$, Gaussian fluctuation dominates the typical behavior and contribution to free energy is  given by the integral
       \begin{align*}
           \int_{\sum_{x}|\psi_{x}|^2\ \approx\ bN} \exp\left(-\theta \norm{\nabla \psi}_2^2\right) \vd\psi 
           &= (1/\theta)^{N}\cdot \int_{\sum_{x}|\psi_{x}|^2\ \approx\ b\theta N} \exp\left(- \norm{\nabla \psi}_2^2\right) \vd\psi\\
           &= (\pi/\theta)^N\cdot \exp(-NW(b\theta)+o(N)).
       \end{align*}
\end{enumerate}

Optimizing over $a,b, a+b\le 1$ should give us the scaling behavior for $Z_N(\theta,\nu)$. We prove that this is indeed the case.

\subsection{Phase transition curve}\label{ssec:phase}
With the behavior of the free energy established, the question moves towards the behavior of the phases, which we characterize by the mass fraction $a$ allocated to the soliton portion. We will prove in Lemma~\ref{lem:propW} that $W$ is continuous and differentiable. In particular, if the minimizer is attained at $a=a_\star\in (0,1)$, then by differentiability of $W,I$, we get the relation
$W'(\theta(1-a_\star))+I'(a_\star\nu)=0.$ However, we have no explicit formula for $I'$ and only an implicit formula for $W'$, making it difficult to utilize this relation. We will still be able to characterize the phase diagram in Section~\ref{ssec:phase}. 
We define
\begin{align}\label{def:minset}
    \mathscr{M}(\theta,\nu):= \argmin_{0\le a\le 1} \left( W(\theta(1-a)) + \frac\theta{\nu} I(a\nu)\right)
\end{align}
as the set of minimizers for the variation formula. This is a compact set by continuity of $W$ and $I$.  We further define 
\begin{align}
    a_{\star}(\theta,\nu):=\min \mathscr{M}(\theta, \nu)
\end{align}
as the smallest $a$ attaining the global minima for~\eqref{def:F}. We define
\begin{align}
    \mathscr{S}:=\{(\theta,\nu)\in (0,\infty)^2: a_\star(\theta,\nu)>0\}.\label{def:sregion}
\end{align}
as the open region in the $(\theta,\nu)$ phase-space having non-zero solitonic contribution and 
\[
\mathscr{D}:=\text{int}((0,\infty)^{2}\setminus \mathscr{S}),
\]
as the open region in the $(\theta,\nu)$ phase-space having zero solitonic contribution. Note that
\begin{align*}
\mathscr{S} &= \{(\theta,\nu)\in (0,\infty)^2: \min_{0\le a\le 1} \left( (W(\theta(1-a))-W(\theta))/\theta + I(a\nu)/\nu\right)<0\}\\
&= \bigcup_{\eps>0,\ 0< a < 1}\left\{ (\theta,\nu)\in (0,\infty)^2: (W(\theta(1-a))-W(\theta))/\theta + I(a\nu)/\nu <-\eps\right\}
\end{align*}
is an open set by continuity of the map $(\theta,\nu)\mapsto (W(\theta(1-a))-W(\theta))/\theta + I(a\nu)/\nu$ for fixed $a$.
The following result characterizes the phase transition. Define the function
\begin{align}\label{def:zeta}
\xi_{p}(t):=\inf_{0<a<1}\frac{-\ln(1-a)}{a^{(p+1)/2}+t} \text{ for } t\ge 0.
\end{align}
See Figure~\ref{fig:xi0} for a plot of $\xi_{p}(0)$.
\begin{figure}[htbp] 
   \centering
   \includegraphics[width=4in]{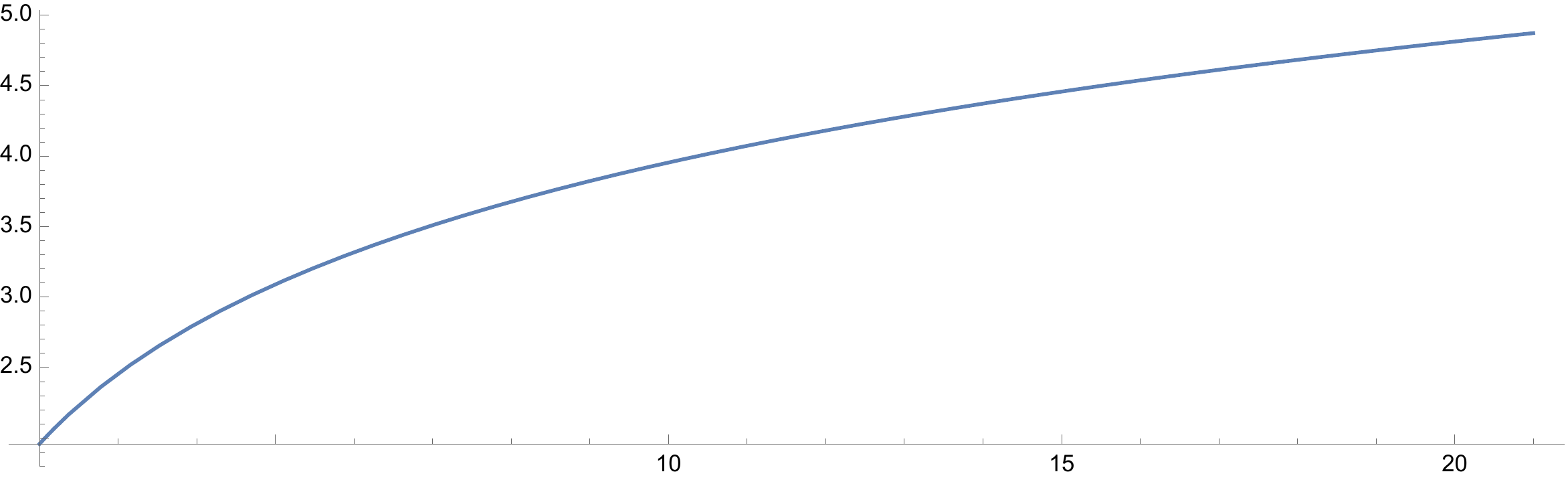} 
   \caption{Plot of $p$ vs.~$\xi_{p}(0)$.}
   \label{fig:xi0}
\end{figure}

\begin{thm}[Existence of Phase transition curve]\label{thm:ptcurve}
We have the following results.
\begin{enumeratea}
    \item For $\nu\le R_{p}$, we have $a_{\star}(\theta,\nu)=0$. 
    \item For $\nu>R_{p}$, there exists a strictly decreasing continuous function $\theta_{c}:(R_{p},\infty)\to (0,\infty)$ such that 
    \[
    a_{\star}(\theta,\nu)\quad
    \begin{cases} 
    =0 &\text{ for }\theta\le \theta_{c}(\nu),\\
    >0 &\text{ for }\theta>\theta_{c}(\nu).
    \end{cases}
    \]
\item The function $\theta_{c}$ is bounded by
\[
\theta_{c}(\nu) \le \min\left\{ \frac{p+1}{2}\nu^{-(p-1)/2}\cdot \xi_{p}(0), \frac{C_{d}\nu}{\nu-R_{p}} \right\}
\]
and satisfies,
\begin{align*}
\lim_{\nu\uparrow\infty}\theta_{c}(\nu)\cdot \frac{2}{p+1}\nu^{(p-1)/2} &= \xi_{p}(0)\text{ and }
\lim_{\nu\downarrow R_{p}} \theta_{c}(\nu)\cdot (\nu-R_{p})= C_{d}R_{p}.
\end{align*}

\item Moreover, we have that for all $\nu>R_{p}$, 
\[
\liminf_{\theta \downarrow \theta_{c}} a_{\star}(\theta,\nu)>0. 
\]
    \end{enumeratea}
\end{thm}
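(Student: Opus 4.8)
The plan is to establish that the mass fraction allocated to the soliton cannot decay continuously to zero as $\theta\downarrow\theta_c(\nu)$, so that there is a genuine jump discontinuity at the critical curve. Fix $\nu>R_p$. The key structural input is the behavior of $I$ near $a=R_p$ and of $W$ near the saturation value. Recall that $I(a)=0$ for $a\le R_p$ and $I(a)<0$ for $a>R_p$; moreover, since the infimum defining $I$ is a constrained minimization of a Hamiltonian whose nonlinear term scales like $a^{(p+1)/2}$ against a quadratic term, one has $I(a)\le -c\,(a-R_p)^{\kappa}$ for some exponent $\kappa$ and $c>0$ on a right-neighborhood of $R_p$ (for $p>1$ one in fact expects $\kappa=1$, i.e.\ $I$ has strictly negative right-derivative at $R_p$; I would extract whatever quantitative decay is available from the variational characterization in Section~\ref{sec:soliton}). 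On the $W$ side, Lemma~\ref{lem:propW} gives that $W$ is convex, decreasing, and $C^1$, with $W$ constant equal to $K(0)$ for arguments $\ge C_d$; near the left endpoint of that plateau $W$ has bounded derivative.

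First I would argue by contradiction: suppose $\liminf_{\theta\downarrow\theta_c}a_\star(\theta,\nu)=0$. Then there is a sequence $\theta_k\downarrow\theta_c:=\theta_c(\nu)$ with $a_\star(\theta_k,\nu)=:a_k\to 0$. The defining inequality of $\mathscr{S}$ (and its complement) gives, for $\theta\le\theta_c$, that $g_\theta(a):=(W(\theta(1-a))-W(\theta))/\theta + I(a\nu)/\nu\ge 0$ for all $a\in[0,1]$, while for $\theta>\theta_c$ the global minimum is strictly negative and attained at $a_\star>0$. Passing to the limit $\theta\downarrow\theta_c$ using continuity of $(\theta,a)\mapsto g_\theta(a)$, the function $g_{\theta_c}$ is $\ge 0$ everywhere and attains value $0$; the hypothesis $a_k\to 0$ forces $g_{\theta_c}$ to attain its minimum $0$ at $a=0$ as a limit of near-minimizers, hence $g_{\theta_c}'(0^+)\ge 0$. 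But $g_{\theta_c}'(0^+) = -W'(\theta_c)\cdot\frac{\theta_c}{\theta_c} \cdot(-1)\cdot\frac{1}{\theta_c}\cdot\theta_c + (I\circ(\nu\,\cdot))'(0^+)/\nu$; the second term vanishes because $I\equiv 0$ on $[0,R_p]$ and $\nu\cdot 0 < R_p$ when... — wait, here is the crux: since $\nu>R_p$, for small $a$ we have $a\nu<R_p$ only if $a<R_p/\nu$, and on that range $I(a\nu)=0$, so $g_{\theta_c}(a) = (W(\theta_c(1-a))-W(\theta_c))/\theta_c\ge 0$ automatically with derivative $-W'(\theta_c)$ at $0$. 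This does not immediately contradict anything, so the naive derivative test fails and one must instead exploit the \emph{non-local} drop of $I$ past $R_p/\nu$.

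Therefore the real argument is a two-scale comparison. For $\theta$ slightly above $\theta_c$, test $g_\theta$ at the fixed value $a_0:=R_p/\nu + \delta$ for a small fixed $\delta>0$ (independent of $\theta$): then $I(a_0\nu)/\nu = I(R_p+\nu\delta)/\nu \le -c'(\delta)<0$ is a fixed strictly negative number, while $W(\theta(1-a_0))-W(\theta)$ depends continuously on $\theta$ and at $\theta=\theta_c$ equals $g_{\theta_c}(a_0)\cdot\theta_c - I(a_0\nu)\theta_c/\nu\le \theta_c\cdot 0 -\theta_c I(a_0\nu)/\nu$ — rearranging, $g_{\theta_c}(a_0)\ge 0$ gives $(W(\theta_c(1-a_0))-W(\theta_c))/\theta_c\ge -I(a_0\nu)/\nu=c'(\delta)$. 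Now I choose $\delta$ so that this bound is \emph{nearly tight}, i.e.\ $a_0$ is close to a minimizer of $g_{\theta_c}$; since $g_{\theta_c}\ge 0$ with a zero, and the minimizing set is compact, I can pick $\delta$ with $g_{\theta_c}(a_0)<\eta$ for any preassigned $\eta>0$, provided $R_p/\nu+\delta$ is near the (positive) minimizer — and here I use that $a_\star(\theta_c,\nu)$ need not be $0$: in fact by upper semicontinuity of $a\mapsto$ minimizers and the strict-inequality structure, one shows $\lim_{\theta\downarrow\theta_c}a_\star(\theta,\nu)$ equals the \emph{largest} minimizer of $g_{\theta_c}$, which is bounded below by $R_p/\nu$ because $g_{\theta_c}$ is strictly decreasing on $[0,R_p/\nu]$ (as $-W'>0$ there and $I=0$). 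This last monotonicity observation is the heart of the matter: on $[0,R_p/\nu]$ the soliton term is inert, so $g_{\theta_c}$ strictly decreases, hence no minimizer lies in $[0,R_p/\nu)$, hence every minimizer — in particular the limit of $a_\star(\theta_k,\nu)$ — is at least $R_p/\nu>0$, contradicting $a_k\to 0$.

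To make this rigorous I would (i) prove the semicontinuity statement $\limsup_{\theta\downarrow\theta_c}a_\star(\theta,\nu)\le \max\mathscr{M}(\theta_c,\nu)$ and the matching $\liminf\ge$ via a standard $\Gamma$-convergence / uniform-convergence argument using continuity of $W,I$ on compacts (Lemma~\ref{lem:propW} and the properties of $I$ from~\cite{WEI99}); (ii) verify $\min\mathscr{M}(\theta_c,\nu)\ge R_p/\nu$ using $W'(\,\cdot\,)<0$ strictly on the relevant range — this requires knowing $\theta_c(1-a)<C_d$ there, i.e.\ that we are on the strictly decreasing part of $W$, not the plateau, which follows from the upper bound $\theta_c(\nu)\le C_d\nu/(\nu-R_p)$ in part (c) after checking $\theta_c(1-R_p/\nu)=\theta_c(\nu-R_p)/\nu \le C_d$; and (iii) conclude. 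The main obstacle is step (ii): one must rule out the degenerate possibility that $\theta_c(1-a)$ sits exactly on the plateau $[C_d,\infty)$ where $W'=0$ and the monotonicity argument collapses; I expect this is handled precisely by the bound in part (c), but reconciling the two regimes of that bound (the $\xi_p(0)$ regime for large $\nu$ versus the $C_d\nu/(\nu-R_p)$ regime near $R_p$) to guarantee strict monotonicity of $W$ at the argument $\theta_c(1-R_p/\nu)$ uniformly will require a careful case split, and that is where the bulk of the work lies.
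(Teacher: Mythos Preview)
Your approach has a genuine gap. The central step (ii) asks you to prove $\min\mathscr{M}(\theta_c,\nu)\ge R_p/\nu$, but this is false: by part (b) of the theorem, $a_\star(\theta_c,\nu)=0$, so $0\in\mathscr{M}(\theta_c,\nu)$ and $\min\mathscr{M}(\theta_c,\nu)=0$. The semicontinuity machinery you build therefore cannot exclude $a_k\to 0$, since $0$ is a legitimate limit point in $\mathscr{M}(\theta_c,\nu)$. Relatedly, there is a sign error in your monotonicity claim: on $[0,R_p/\nu]$ where $I(a\nu)=0$, one has $\frac{d}{da}g_\theta(a)=-W'(\theta(1-a))\ge 0$, so $g_\theta$ is \emph{non-decreasing} there, not strictly decreasing. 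This means $0$ is the minimizer of $g_{\theta_c}$ on that subinterval, consistent with $0\in\mathscr{M}(\theta_c,\nu)$.

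The paper's argument bypasses all of this by working directly at $\theta>\theta_c$, not at the limit point. For each $\theta>\theta_c$ one has $0\notin\mathscr{M}(\theta,\nu)$, hence $G_{\theta,\nu}(a_\star)<G_{\theta,\nu}(0)=W(\theta)$. Since $W$ is non-increasing, $W(\theta(1-a_\star))\ge W(\theta)$, which forces $I(a_\star\nu)<0$ and therefore $a_\star\nu>R_p$, i.e.\ $a_\star(\theta,\nu)>R_p/\nu$. This bound is uniform in $\theta>\theta_c$, giving $\liminf_{\theta\downarrow\theta_c}a_\star(\theta,\nu)\ge R_p/\nu>0$ immediately. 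No $\Gamma$-convergence, no limiting minimizer analysis, and no concern about the plateau of $W$ is needed: the argument uses only $W$ non-increasing, not strict monotonicity.
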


\begin{rem}
We expect $\mathscr{M}$ to be singleton when $a_{\star}>0$. However, to prove that, we need detailed behavior of the $I$ function, especially close to $R_{p}$. We currently lack this. 
\end{rem}

See Figure~\ref{fig:phd} for a pictorial description of the phase diagram.
\begin{figure}[htbp] 
\centering
      \includegraphics[width=3in,page=1]{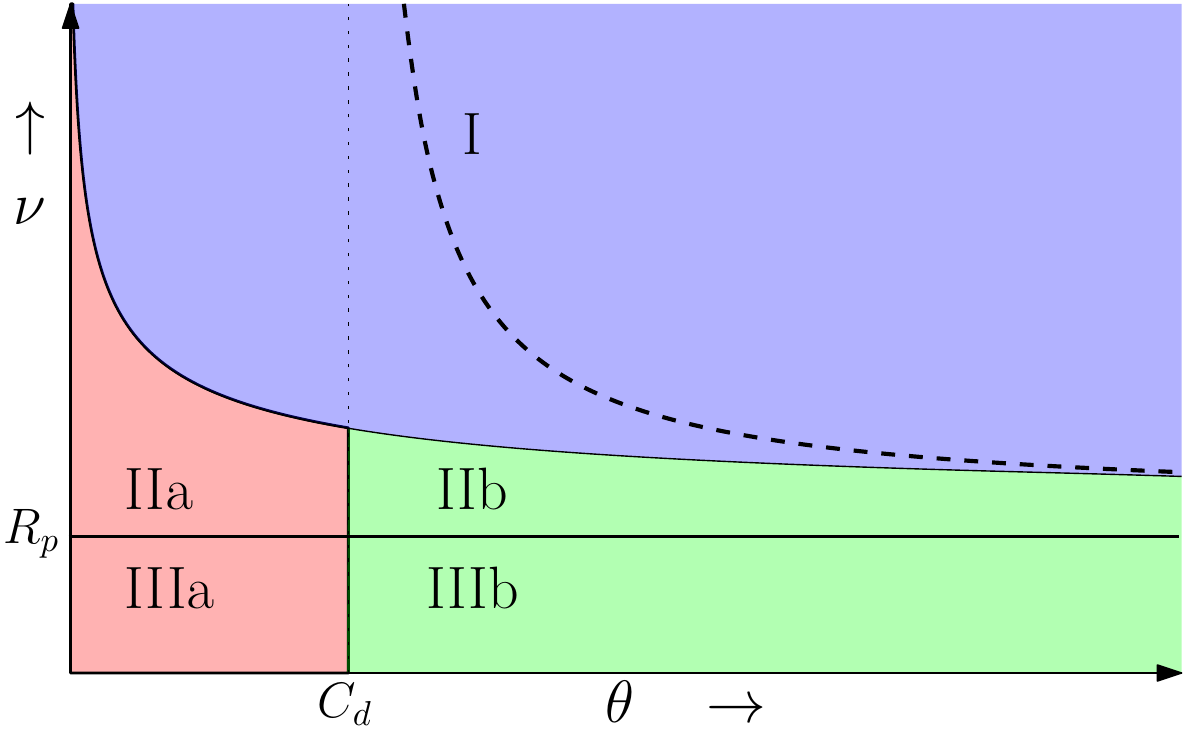}\includegraphics[width=3in,page=2]{phasediag.pdf}\\[2ex]
      \includegraphics[width=1.1in, height=1in, trim={0 0 0in 3in}, clip, page=5]{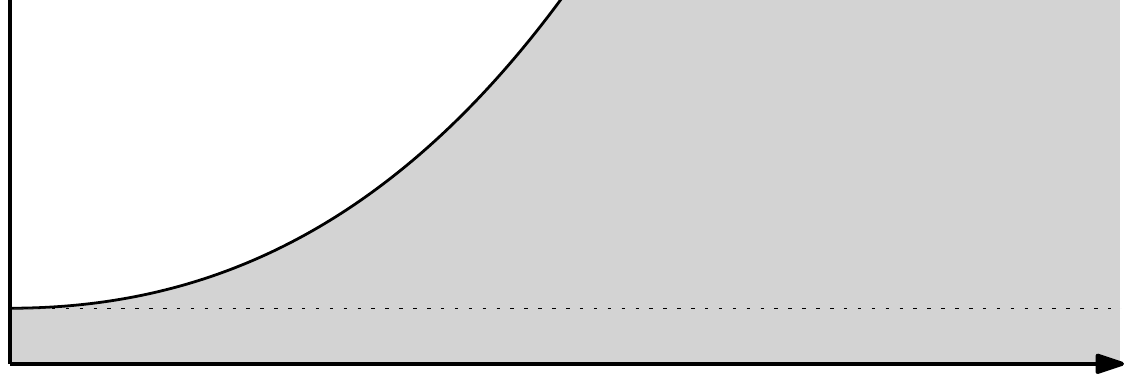}\hfill
      \includegraphics[width=1.1in, height=1in, trim={0 0 0in 3in}, clip, page=3]{region.pdf}\hfill 
      \includegraphics[width=1.1in, height=1in, trim={0 0 0in 3in}, clip, page=4]{region.pdf}\hfill
      \includegraphics[width=1.1in, height=1in, trim={0 0 0in 3in}, clip, page=1]{region.pdf}\hfill
      \includegraphics[width=1.1in, height=1in, trim={0 0 0in 3in}, clip, page=2]{region.pdf}\\
   \quad I\qquad\qquad\qquad\quad IIa\qquad\qquad\qquad\quad IIb\qquad\qquad\qquad\quad IIIa\qquad \qquad\qquad IIIb\quad
    \caption{Top: Phase Diagrams for $\{R>0, I'(R+)=0\}$ and $\{ R=0$ or $R>0, I'(R+)<0\}$, respectively. Bottom: Representative plots of $a\mapsto W(\theta(1-a))/\theta+I(a\nu)/\nu$ function for different regions of the phase diagram.}
   \label{fig:phd}
\end{figure}

\begin{rem}\label{rem:inf}
    With the scaling $\theta\nu^{-(p-1)/2}=C$ and $\nu \to \infty$, we recover the regime considered in~\cite{CK12}, the critical value being $C=\xi_{p}(0)$. This is due to the fact that as $\nu$ becomes large, the nonlinear part of the Hamiltonian dominates and the lattice sites decouple.
\end{rem}

\subsection{Typical Dispersive Function}
We now take $(\theta,\nu) \in \mathscr{D}$. In practice, for fixed $\theta$ we will have to take $\nu$ appropriately small (but \textbf{not} vanishing). We introduce the prototypical object to which we will compare a typical function in the dispersive phase. 
\begin{defn}
Let $\{\zeta_{\mvk}\}_{\mvk \in [n]^{d}}$ be i.i.d.~ standard complex Gaussian random variables. Let $\{\phi^{\mvk}\}_{\mvk \in [n]^{d}}$ and $\{\gl_{\mvk}\}_{\mvk \in [n]^{d}}$  respectively be the eigenfunctions~\eqref{eq:efns} and eigenvalues~\eqref{eq:LaplaceValues} of $-\Delta$, as defined on $\ell^{2}(\dT^{d}_{n})$. We define the massive Gaussian free field (MGFF) with mass parameter $y>0$ as 
\begin{align}\label{def:MGFF}
\Psi^{y}:=\sum_{\mvk \in [n]^{d}} \frac{\zeta_{\mvk}}{\sqrt{y+\gl_{\mvk}}}\phi^{\mvk}.
\end{align}
\end{defn}
The properties of the massive free field are well understood. We will recap some of them in Section~\ref{sec:GFF}. 

\begin{thm}\label{thm:typical}
Let $\theta < C_{d}$ be fixed, $p<5+4\sqrt{2}$ and $\nu$ be sufficiently small. Let $y_{N}$ be such that $\E \norm{\Psi^{y_{N}}}_{2}^{2} = N\theta$. Let $\cA_{N}\subset \dC^{N}$ be such that $\dP(\Psi^{y_{N}} \in \cA_{N})\leq N^{-\ga}$ where $\ga >1/2$. Then we have an $\epsilon>0$ such that 
\[
\mu_{N}(\cA_{N})\leq N^{-\epsilon}.
\]

\end{thm}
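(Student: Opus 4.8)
\textbf{Proof proposal for Theorem~\ref{thm:typical}.}

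The plan is to compare $\mu_N$ restricted to the ``dispersive part of configuration space'' with the law of the massive Gaussian free field $\Psi^{y_N}$, using the free-energy asymptotics of Theorem~\ref{thm:free} to control the normalizing constants. First I would split $\dC^N$ into the event $G_N = \{\norm{\psi}_\infty^2 \le \eta N\}$ where no site carries macroscopic mass, and its complement. On the complement, a configuration necessarily has $a_\star$-type solitonic mass, and since $(\theta,\nu)\in\mathscr{D}$ with $\nu$ small, Theorem~\ref{thm:ptcurve}(a)--(b) together with the variational formula~\eqref{def:F} show that the free-energy cost of allocating positive mass to a soliton is strictly positive; hence $\mu_N(G_N^c) \le e^{-cN}$ for some $c>0$. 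So it suffices to bound $\mu_N(\cA_N \cap G_N)$.

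On $G_N$ the nonlinear term $\nu^{(p-1)/2}N^{-(p-1)/2}\norm{\psi}_{p+1}^{p+1}$ is a lower-order perturbation of the Gaussian weight $e^{-\theta\norm{\nabla\psi}_2^2}$: on this event $\norm{\psi}_{p+1}^{p+1} \le (\eta N)^{(p-1)/2}\norm{\psi}_2^2 \le (\eta N)^{(p-1)/2} N$, so the nonlinear contribution to the exponent is at most $C\eta^{(p-1)/2} N$, which for $\nu$ small and $\eta$ small is $o(N)$ uniformly — actually one wants a bound that is genuinely negligible after comparison, so more care is needed here (this is where $p < 5 + 4\sqrt 2$ enters: it guarantees that fluctuations of $\norm{\psi}_{p+1}^{p+1}$ under the Gaussian field are controlled at the relevant scale). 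Concretely, I would write, for the massive field with mass $y_N$ chosen so that $\E\norm{\Psi^{y_N}}_2^2 = N\theta$,
\begin{align*}
\mu_N(\cA_N\cap G_N)
&= \frac{1}{Z_N(\theta,\nu)} \int_{\cA_N\cap G_N} e^{-\theta\norm{\nabla\psi}_2^2}\, e^{\theta(\nu/N)^{(p-1)/2}\frac{2}{p+1}\norm{\psi}_{p+1}^{p+1}}\,\ind_{\{\norm{\psi}_2^2\le N\}}\,\vd\psi.
\end{align*}
Completing the square, $e^{-\theta\norm{\nabla\psi}_2^2} = e^{-\theta\norm{\psi}_2^2}\,e^{\theta\norm{\psi}_2^2 - \theta\norm{\nabla\psi}_2^2}$; comparing with the density of $\Psi^{y_N}$, which is proportional to $\exp(-\sum_\mvk (y_N+\gl_\mvk)|\widehat\psi_\mvk|^2)$, one gets a Radon--Nikodym factor $\exp(\sum_\mvk(y_N+\gl_\mvk - \theta^{-1}\cdot\text{(something)})|\widehat\psi_\mvk|^2)$ times the mass indicator. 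The key point is that the extra exponential factors (the nonlinear term on $G_N$, the discrepancy between the hard mass constraint $\norm{\psi}_2^2\le N$ and the soft Gaussian mass $\approx N\theta$, and the change of the quadratic form) are all $e^{o(N)}$ on the bulk of the Gaussian measure by standard concentration for $\norm{\Psi^{y_N}}_2^2$ and $\norm{\Psi^{y_N}}_{p+1}^{p+1}$. Hence
\[
\mu_N(\cA_N\cap G_N) \le e^{o(N)}\cdot \dP(\Psi^{y_N}\in\cA_N) + e^{-cN} \le e^{o(N)} N^{-\ga} + e^{-cN}.
\]
Since $\ga > 1/2$ is fixed but the prefactor is only $e^{o(N)}$ and not $N^{O(1)}$, this naive bound is not quite enough; the resolution is to run the comparison at the level of $\frac1N\log$ and instead use a \emph{stability} version — namely that $\dP(\Psi^{y_N}\in\cA_N) \le N^{-\ga}$ with $\ga>1/2$ forces, via a union bound over a polynomial net together with the sub-exponential tails, a genuine exponentially small $\mu_N$-probability, OR (more likely what the authors intend) to exploit that $\frac1N\log Z_N \to F(\theta,\nu)$ with the \emph{matching} lower bound coming precisely from the Gaussian bulk, so that the comparison constant is $1 + o(1)$ on a multiplicative scale and any polynomially small Gaussian probability transfers to a polynomially small (with a slightly worse exponent $\epsilon < \ga - 1/2$) Gibbs probability. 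I would therefore track the error term $O(N^{-2(d-2)/3d})$ from Theorem~\ref{thm:free} carefully and arrange $\epsilon$ in terms of $\ga - 1/2$ and $d$.

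The main obstacle is precisely this last quantitative matching: showing that the restricted Gibbs measure on $G_N$ is comparable to the MGFF law \emph{up to sub-polynomial factors}, which requires (i) sharp two-sided control of $Z_N(\theta,\nu)$ — available from Theorem~\ref{thm:free} but only up to $e^{O(N^{1 - 2(d-2)/3d})}$, so one must check this is $N^{o(1)}$ relative to the relevant scale or absorb it, and (ii) controlling the nonlinear exponential $\exp(\theta(\nu/N)^{(p-1)/2}\tfrac{2}{p+1}\norm{\Psi^{y_N}}_{p+1}^{p+1})$ in $L^q$ for some $q>1$ under the Gaussian measure, which is exactly a Gaussian chaos / hypercontractivity estimate and is where the hypothesis $p < 5 + 4\sqrt{2}$ is consumed: one needs the $(p+1)/2$-th moment growth of $|\Psi^y_\mvx|^2$ against the smallness of $(\nu/N)^{(p-1)/2}$ to keep the exponential integrable with room to spare. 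I would handle (ii) by the bound $\norm{\Psi^{y_N}}_{p+1}^{p+1}\le \norm{\Psi^{y_N}}_\infty^{p-1}\norm{\Psi^{y_N}}_2^2$ combined with moderate-deviation estimates for $\norm{\Psi^{y_N}}_\infty$ (of order $\sqrt{\log N}$ for the massive field in $d\ge 3$) and concentration of $\norm{\Psi^{y_N}}_2^2$ around $N\theta$, which makes the nonlinear factor $\exp(O(N^{1-(p-1)/2}(\log N)^{(p-1)/2})) = e^{o(N)}$ comfortably, the constraint $p<5+4\sqrt2$ presumably entering in making a \emph{higher} moment of this factor still $e^{o(N)}$ so that Hölder against $\dP(\Psi^{y_N}\in\cA_N)^{1/q}$ closes the argument with a quantitative $\epsilon>0$.
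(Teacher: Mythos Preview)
Your overall architecture (compare $\mu_N$ to the law of $\Psi^{y_N}$, control the nonlinear tilt separately, and close with H\"older) is in the right spirit, but two key choices are off and would not yield a polynomial bound.

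\textbf{Where the $\sqrt{N}$ (and hence $\ga>1/2$) really comes from.} You try to control the ratio of normalizing constants via Theorem~\ref{thm:free}. That theorem only gives $\tfrac1N\log Z_N$ up to $O(N^{-2(d-2)/3d})$, i.e.\ $Z_N$ up to a stretched--exponential multiplicative factor $\exp(O(N^{1-2(d-2)/3d}))$; for $d=3$ this is $\exp(O(N^{7/9}))$, which swamps any polynomially small Gaussian probability. The paper avoids Theorem~\ref{thm:free} entirely here. It first uses the trivial bound $\tilde Z_N\ge Z_{\text{ref}}(\theta)$ (the nonlinearity is nonnegative), and then compares the \emph{reference} partition function (Gaussian weight with the hard constraint $\norm{\psi}_2^2\le N\theta$) to the MGFF partition function $Z^{y_N}$ by a local central limit theorem / Berry--Esseen argument for $\norm{\Psi^{y_N}}_2^2$, which is a sum of independent exponentials. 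This produces a precise factor of order $\sqrt N$ in the Radon--Nikodym comparison (Theorem~\ref{thm:AC0}), and \emph{that} is the sole origin of the threshold $\ga>1/2$. Your splitting into $G_N$ and $G_N^c$ with a large-deviation bound on $G_N^c$ is unnecessary and, more importantly, does not recover this sharp polynomial factor.

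\textbf{Where $p<5+4\sqrt2$ really enters.} You suggest bounding the tilt via $\norm{\Psi}_{p+1}^{p+1}\le\norm{\Psi}_\infty^{p-1}\norm{\Psi}_2^2$ plus $\norm{\Psi}_\infty=O(\sqrt{\log N})$, and speculate that $p<5+4\sqrt2$ is a hypercontractivity/chaos condition. It is neither. The paper replaces the $\ell^{p+1}$ functional (under the mass constraint) by $\sum_{\mvx} h(|\psi_{\mvx}|/\sqrt N)$ with $h(x)=2x^{p+1}/(1+x^{p-1})$, which agrees with $x^{p+1}$ for small $x$ but is bounded by $2x^2$ for large $x$; this makes the tilt uniformly integrable under the MGFF without any good-event truncation. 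The condition $p<5+4\sqrt2$ is exactly the range where $h''(x)+x^{-1}h'(x)>0$, which is what one needs to run a Gaussian comparison (interpolation) from the MGFF to an i.i.d.\ Gaussian vector and reduce the exponential-moment bound to a single-site computation (Lemmas~\ref{lem:hconvex}--\ref{lem:decoupling}, Theorem~\ref{thm:AC1}). Two applications of H\"older then separate the indicator of $\cA_N$, the nonlinear tilt (bounded by a constant via the above), and the quadratic mismatch (bounded by $CN^{-1/2}$ via the local CLT), and one reads off the exponent.
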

In the dispersive phase, any event sufficiently rare for the massive GFF with appropriate parameter $\theta$ is also rare for the measure $\mu_{N}$. 
\begin{rem}
We anticipate that the upper bound on $p$ should be removable; it is a consequence of using a particular auxiliary function to aid our proofs. 
\end{rem}
\begin{cor}
For $\Psi$ sampled from $\mu_{N}$, we have with probability approaching $1$
\[
\norm{\Psi}_{\infty} \leq \sqrt{3C_{d} \log N}.
\]
\end{cor}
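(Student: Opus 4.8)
The plan is to deduce the corollary directly from Theorem~\ref{thm:typical} by taking $\cA_N$ to be the large–sup–norm event and verifying that this event is polynomially unlikely under the reference massive Gaussian free field $\Psi^{y_N}$.

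First I would record the marginal law of a single coordinate of $\Psi^{y_N}$. By translation invariance of $\dT^d_n$ and the normalization $|\phi^{\mvk}_{\mvx}|^2 = 1/N$ of the Fourier eigenfunctions, each coordinate $\Psi^{y_N}_{\mvx}$ is a centered complex Gaussian with
\[
\E|\Psi^{y_N}_{\mvx}|^2 = \frac1N\sum_{\mvk\in[n]^d}\frac{1}{y_N+\gl_{\mvk}} = \frac1N\,\E\norm{\Psi^{y_N}}_2^2 = \theta,
\]
the last equality being exactly the defining property of $y_N$ in Theorem~\ref{thm:typical}. Consequently $|\Psi^{y_N}_{\mvx}|^2/\theta$ is a unit–mean exponential random variable, so $\dP(|\Psi^{y_N}_{\mvx}|>s) = e^{-s^2/\theta}$ for every $\mvx$ and every $s>0$.

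Next, set $r_N := \sqrt{3C_d\log N}$ and $\cA_N := \{\psi\in\dC^N : \norm{\psi}_\infty > r_N\}$. A union bound over the $N$ sites, together with the marginal tail just computed, gives
\[
\dP(\Psi^{y_N}\in\cA_N) \le N\cdot e^{-r_N^2/\theta} = N^{\,1-3C_d/\theta}.
\]
Since $\theta<C_d$ by hypothesis, we have $3C_d/\theta>3$, hence $\dP(\Psi^{y_N}\in\cA_N)\le N^{-\ga}$ with $\ga := 3C_d/\theta-1 > 2 > 1/2$. The remaining hypotheses of Theorem~\ref{thm:typical}, namely $\theta<C_d$, $p<5+4\sqrt2$, and $\nu$ sufficiently small, are precisely those under which the corollary is stated, so the theorem applies and furnishes $\epsilon>0$ with $\mu_N(\cA_N)\le N^{-\epsilon}$. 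Equivalently, $\mu_N\big(\norm{\Psi}_\infty\le\sqrt{3C_d\log N}\big)\ge 1-N^{-\epsilon}\to 1$, which is the assertion.

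The argument is entirely routine; there is no genuine obstacle beyond bookkeeping. The only point warranting care is the identity $\E|\Psi^{y_N}_{\mvx}|^2=\theta$: this is where the choice of $y_N$ enters, and it is what pins the per–site variance at $\theta$, so that the strict inequality $\theta<C_d$ turns the union bound into a probability with exponent safely above $1/2$. Everything else is the $\chi^2_2$ tail and a citation of Theorem~\ref{thm:typical}.
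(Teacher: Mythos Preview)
Your proposal is correct and follows exactly the route the paper intends: the corollary is stated as ``immediate'' from Theorem~\ref{thm:typical}, and the rarity of the event $\{\norm{\psi}_\infty>\sqrt{3C_d\log N}\}$ under the MGFF is obtained by the same union-bound-plus-exponential-tail computation that appears in Lemma~\ref{lem:GFFmax}. Your version is in fact slightly cleaner than the paper's, since you use directly that the defining condition on $y_N$ pins the per-site variance to $\theta$, rather than going through $K'_N$ and a limit.
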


The penalty factor of $N^{1/2}$ does not arise from the exponential tilting due to the nonlinearity, but rather the severe restriction placed on the allowed value of the mass, as will be discussed in Section~\ref{sec:typical}.

\section{Background, Literature Review and Heuristics}\label{sec:lit}
The purpose of this section is threefold; provide a very brief survey of some of the important characteristics of the continuum NLS, discuss the obstacles that arise when studying them and finally describe how the discrete NLS captures some of these phenomena without the obstacles. Thus the analysis of the discrete case sheds some light on the continuum. 
\subsection{Criticality} In the continuum focusing NLS, the value of the nonlinearity parameter $p$ can lead to drastically different long-term behavior and also impose different requirements on the regularity and size of the initial data to obtain a well-posed solution. Fundamentally, the issue is that $|\psi|^{p-1}\psi$ need not be in $L^{2}$, and thus can lead to the development of point singularities. The Gagliardo-Nirenberg-Sobolev (GNS) inequality allows us to use the $H^{1}$ norm to bound the $L^{p}$ norm, there is a constant depending on $p$ and $d$ such that
\begin{align}
\norm{\psi}_{p}\leq C_{p,d} \norm{\nabla\psi}_{2}^{(p-1)d/2}\cdot\norm{\psi}_{2}^{2+(2-d)(p-1)/2}.
\end{align}

The regime $p< 1+4/d$ is referred to as mass subcritical, $H^1$ initial data is adequate for global well-posedness. The regime $p=1+4/d$ is referred to as mass critical. In this case, $H^{1}$ data leads to a globally well-posed solution so long as $L^{2}$ norm is smaller than a threshold depending on $p$ and $d$. When $p>1+4/d$, referred to as the mass supercritical regime, we require an upper threshold for both $L^{2}$ and $H^{1}$ norms. 
\subsection{Soliton Solutions} The competition between the dispersion the focusing effect of the nonlinearity yield spatially stationary solutions called solitons. Solton solutions may be realized in one of two ways; either using the separation of variables or through a variational characterization by minimizing the Hamiltonian subject to a mass constraint. If $\varphi({\mvx})$ denotes a soliton solution, it satisfies the following nonlinear elliptic problem for $\go<0$
\[
\go \varphi + \gD \varphi + |\varphi|^{p-1}\varphi=0.
\]

Solitons are strongly localized; it can be proved that they decay exponentially about a center. Further, the variational characterization implies that they are radially symmetric about a center and smooth for energy subcritical nonlinearity. Under their construction, optimizing the difference of $L^{p+1}$ and $H^{1}$ norms, they happen to be the functions for which the GNS inequality is realized, and the constant is sharp~\cite{WEI82}.

\subsection{Use of Invariant Measures} The invariant measure approach to studying the continuum NLS is well-known and celebrated. The hope is that an invariant measure sheds light on the `typical' behavior of the NLS. For instance, this can include questions of well-posedness, as well as questions of types of solutions. Consider the famous \emph{soliton resolution conjecture}, which states that for generic initial data, we see a potion of mass coalescing into a soliton and a portion dispersing away. Invariant measures are a natural means for talking about what constitutes generic initial data.  

In terms of construction, the idea is to use the intuition provided by finite-dimensional Hamiltonian systems to obtain candidate invariant measures on appropriate function spaces. Of course, there is  no version of a Lebesgue measure on a function space to which Liouville's theorem can be applied. However, as is well known in probability, we may rigorously make sense of Gaussian measures which have `density' proportional to $\exp( -\norm{\psi}_{H^{1}})$ on appropriate Wiener spaces. 

For instance, in dimension one with Dirichlet boundary conditions, this is the Brownian bridge on the torus $\dT^{d}$ with harmonic function fixed; this corresponds to the Gaussian Free Field. The regularity of these as distributions is well understood, and the hope is that by exponentially tilting these Gaussian measures by $\norm{\psi}_{p+1}^{p+1}$ and employing a mass cut-off, we may obtain an invariant measure for the dynamics. This requires verifying that the tilt is integrable with respect to the reference Gaussian measure and that the NLS flow is defined on the support of the measure. This technique was used to construct a candidate invariant measure for the 1-d periodic focusing NLS by Lebowitz, Rose, and Speer~\cite{LRS88}. They worked in the subcritical mass regime, where GNS inequality can be applied to control the nonlinearity. Later, McKean and Vaninsky~\cites{MV94,MV97a, MV97b} proved that this measure is indeed invariant for the flow. Bourgain~\cite{B94} used a version of this invariant measure to prove global well-posedness for the periodic equation. Brydges and Slade~\cite{BS96} followed a similar approach for the two-dimensional periodic equation, with a slightly different ultraviolet cutoff. They established a normalizable measure for mass below a critical threshold. However, their measure is not invariant for the flow of the NLS. Indeed, this approach breaks down in $d\geq 3$, where this approach fails to yield even a normalizable measure, and the associated Gaussian field is too rough. It is at this juncture that discretization comes into play. 

\subsection{The Discrete NLS}
We introduced the DNLS in~\eqref{eq:DNLS}, and observed that it retains the Hamiltonian structure, akin to its continuum counterpart. The discrete setting is harder to work with in many ways as several of the symmetries of the continuum NLS are lost, such as Galilean invariance and rotational invariance. On the other hand, we do not have the same regularity issues; the DNLS is globally well-posed for $\ell^{2}$ initial data. Like the continuum equation, the focusing DNLS admits soliton solutions. Like the continuum, they can be realized either through the separation of variables or as minimizers of the Hamiltonian. We discuss them in detail in Section~\ref{sec:soliton}. 

In~\cite{WEI99}, Weinstein studied the discrete focusing NLS on $\dZ^{d}$ and showed that soliton solutions of arbitrary mass could be realized for mass-subcritical nonlinearity. On the other hand, in the mass-supercritical case, there is a constant depending only on the lattice coupling strength and nonlinearity parameter, denoted by $R_{p}$, such that soliton solutions can only be realized when the mass is more than $R_{p}$; a phenomenon strikingly familiar to the blow-up in the continuum. The precise statement is provided later; see Lemma~\ref{lem:Rp}. This analogy is strengthened by the observation that there is a correspondence between solutions of a large mass and solutions on the lattice with low coupling strength; soliton solutions are increasingly concentrated onto a single lattice site as the mass increases. 

In~\cite{CK12}, Chatterjee and Kirkpatrick examined the behavior of the discrete focusing cubic (with $p=3$) NLS defined on the torus of dimension $d\geq 3$, via the analysis of a Gibbs measure of the form~\eqref{def:orig}. Their regime of scaling is chosen to correspond to taking a limit to the continuum; the blow-up phenomenon is realized as a phase transition. They show that a single parameter $\theta=\theta(\beta, B)$ governs the phase behavior. When $\theta \geq \theta_{c}$, a single site acquires a positive fraction of the mass. This is explained by the scaling regime considered; the $H^1$ norm part of the Hamiltonian becomes irrelevant, and the measure may be regarded as an exponential tilt of the uniform measure on the ball via the $\ell^{4}$ norm. The immediate conclusion is that the favored states are those where all the mass is localized to a single site. 

Discrete invariant measures were used to rigorously establish a version of the soliton resolution conjecture by Chatterjee in~\cite{C14}. Chatterjee worked with a microcanonical ensemble, \ie\ the uniform measure defined on an $\eps-$thickening of a $2N-2$ dimensional surface defined by taking constant values of mass and energy and showed that a function uniformly drawn from this measure, modulo translation and phase rotation, converges in a suitable sense to a continuum soliton of the same mass. 

\subsection{Scaling commentary}\label{sec:heuristic}

There is a natural scale invariance associated with the continuum NLS. If $\psi(t,x)$ is a solution of~\eqref{def:FNLS}, then for any $\gl>0$, $\gl^{\frac2{p-1}} \psi(\gl^2 t,\gl x)$ is also a solution. Since the lattice cannot be scaled, the discrete equation does not admit any symmetry with respect to scaling. However, we do have the following equivalence.
\begin{lem}\label{lem:scaling}
    Let $\psi_{x}(t)$ denote a solution of the discrete NLS on either the lattice $\dZ^{d}$ or the discrete torus $\dT^{d}$ with lattice spacing $h$. Then $\gl^{\frac2{p-1} } \psi_{x}(\gl^2 t)$ is a solution to the discrete NLS corresponding to the same graph with lattice spacing $h/\gl$.
\end{lem}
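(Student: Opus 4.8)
The plan is to verify the claimed equivalence by a direct substitution, the only input being how the Laplacian $\dlap$ in~\eqref{def:finitelement} transforms under rescaling of the spacing. Fix a global solution $\psi=(\psi_{\mvx}(t))_{\mvx}$ of the DNLS~\eqref{eq:DNLS} with spacing $h$ on the graph $G$ (either $\dZ^{d}$ or $\dT^{d}_{n}$), and set $\wt\psi_{\mvx}(t):=\gl^{2/(p-1)}\psi_{\mvx}(\gl^{2}t)$. The key observation is that the two graphs under consideration — the one with spacing $h$ and the one with spacing $h/\gl$ — share the same vertex set and edge set, and only the weight $1/h^{2}$ in~\eqref{def:finitelement} changes. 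Hence, as operators on $\ell^{2}(G)$,
\begin{align*}
\Delta_{h/\gl}=\gl^{2}\,\dlap.
\end{align*}

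Next I would differentiate in time using the chain rule and then substitute the equation satisfied by $\psi$ at the time $s=\gl^{2}t$. This gives
\begin{align*}
\i\ddt\wt\psi_{\mvx}(t)
=\gl^{2/(p-1)}\gl^{2}\,\bigl(\i\dot\psi_{\mvx}(\gl^{2}t)\bigr)
=\gl^{2/(p-1)}\gl^{2}\Bigl(-\dlap\psi_{\mvx}(\gl^{2}t)-\abs{\psi_{\mvx}(\gl^{2}t)}^{p-1}\psi_{\mvx}(\gl^{2}t)\Bigr).
\end{align*}
For the linear term, I would pull the scalar $\gl^{2/(p-1)}$ inside the linear operator $\dlap$ and use $\gl^{2}\dlap=\Delta_{h/\gl}$ to obtain $-\Delta_{h/\gl}\wt\psi_{\mvx}(t)$. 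For the nonlinear term, the point is precisely the choice of exponent $2/(p-1)$: since $\gl>0$ we have $\abs{\gl^{2/(p-1)}z}^{p-1}=\gl^{2}\abs{z}^{p-1}$, so
\begin{align*}
\abs{\wt\psi_{\mvx}(t)}^{p-1}\wt\psi_{\mvx}(t)
=\gl^{2}\cdot\gl^{2/(p-1)}\abs{\psi_{\mvx}(\gl^{2}t)}^{p-1}\psi_{\mvx}(\gl^{2}t),
\end{align*}
which matches the second term above exactly. Therefore $\i\ddt\wt\psi_{\mvx}(t)=-\Delta_{h/\gl}\wt\psi_{\mvx}(t)-\abs{\wt\psi_{\mvx}(t)}^{p-1}\wt\psi_{\mvx}(t)$, i.e.\ $\wt\psi$ solves the DNLS on the same graph with spacing $h/\gl$; its global existence is inherited from that of $\psi$, as noted after~\eqref{eq:DNLS}.

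There is no genuine obstacle here: the proof is a one-line computation once the scaling $\Delta_{h/\gl}=\gl^{2}\dlap$ is recorded. The only thing that requires (minor) care is bookkeeping the \emph{two} factors of $\gl^{2}$ that appear — one from differentiating the time-dilated argument $\psi_{\mvx}(\gl^{2}t)$, and one from $(\gl^{2/(p-1)})^{p-1}$ in the nonlinearity — and checking that together they are consistent with the single factor $\gl^{2}$ coming from the change of Laplacian. This consistency is exactly what dictates the exponent $2/(p-1)$, in parallel with the scaling $\gl^{2/(p-1)}\psi(\gl^{2}t,\gl x)$ of the continuum equation discussed just above.
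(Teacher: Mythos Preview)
Your proof is correct. You verify the claim by direct substitution into the DNLS~\eqref{eq:DNLS}, recording the scaling identity $\Delta_{h/\gl}=\gl^{2}\dlap$ and checking that the two factors of $\gl^{2}$ (from the time dilation and from the nonlinearity) are consistent with it. The paper's proof takes a slightly different, more structural route: it observes that the discrete Hamiltonians are related by $\cH_{h}(\psi)=\gl^{d-2-4/(p-1)}\cH_{h/\gl}(\wt\psi)$ and then appeals to the Hamiltonian formulation of the equation. Your approach is more self-contained and makes the exponent $2/(p-1)$ transparent at the level of the equation; the paper's approach has the advantage of yielding in one stroke the scaling of the Hamiltonian itself, which is what is actually used downstream (see the relations~\eqref{eq:param} immediately following the lemma). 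Either argument is a one-line computation.
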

\begin{proof}
    Proof follows immediately by noting that for $h'=h/\gl, \psi'_{x}(t):=\gl^{\frac2{p-1} } \psi_{x}(\gl^2 t)$ we have $H_{h}(\psi)=\gl^{d-2-\frac{4}{p-1}}H_{h{'}}(\psi{'})$, and using the discrete analogue in~\eqref{eq:DNLS}.
\end{proof}

This yields a family of equivalent ODEs on the lattice, where solutions of one can be scaled into solutions of the other. For the discrete equation, this is the reason why solutions with low values of lattice coupling can be placed in correspondence with solutions of significant mass, as seen in~\cite{WEI99}. In particular, in the model~\eqref{equiv} if we replace $\psi$ by $\gl \psi$ we have equivalence between~\eqref{def:orig} and~\eqref{equiv} as long as the parameters satisfy
\begin{align*}
    \gb h^{d-2}=\theta \gl^2,\quad \gb h^{d}=\theta\gl^{2}\cdot (\nu\gl^{2}/N)^{(p-1)/2}\text{ and } Bh^{-d}=N\gl^{-2}.
\end{align*}
Solving we get the relations~\eqref{eq:param} with $\gl= \sqrt{Nh^d/B}$. 

To use physics terminology, this regime of scaling corresponds to taking the infrared limit, without removing the ultraviolet cutoff, essentially allowing $\dT^{d}_{n}$ to grow to $\dZ^{d}$. As a consequence of this scaling, we may consider the behavior of concentrated and dispersed parts of typical functions separately. Take a function $\psi$ in $\ell^{2}(\dT^{d}_{n})$ with mass bounded above by $N$. We may break it into a region where the values are of order $\sqrt{N}$ and a region where they are of strictly lower order. Let the region of concentration be $U$. The energy of the restriction $\psi_{U}$ is then expressible in terms of the $\dZ^{d}$ Hamiltonian as
\[
\cH_{N}(\psi_{U})\approx \frac{N}{\nu} \cH \left(\sqrt{\frac{\nu}{N}}\psi_{U}\right)
\]
where $\sqrt{\nu/N} \cdot \psi_{U}$ scales to yield a valid function in $\ell^{2}(\dZ^{d})$. 

As for the dispersive part, whenever the order of typical values of $\psi_{x}$ is lower than $N$, the nonlinearity does not contribute, and we see Gaussian Free Field behavior for $\psi_{U^{c}}$. Moreover, the contribution of this portion to the free energy is non-trivial. The analysis of this regime is what makes this article novel; usually the scaling is chosen such that a function sampled from the measure converges to a soliton, our work strongly suggests (yet falls short of explicitly characterizing) the behavior of the \textit{fluctuations} about this soliton, in the vein of studying fluctuations for various random surface models. Indeed, this is the case for our analysis of the typical function in the dispersive phase. The fact that there is no soliton essentially corresponds to the fact that no centering is required; we only see the fluctuations.

\subsection{Notations}\label{ssec:not}
The following are fixed for the entire article
\begin{enumeratei}
    \item We will denote the integer lattice by $\dZ^{d}$ and the discrete torus of side length $n$ by $\dT^{d}_{n}$.
    \item Vertices in $\dT^{d}_{n}$ or $\dZ^{d}$ will be denoted by ${\mvx}$.
    \item The dual variable to ${\mvx}$ in the sense of the Fourier transform defined on $\dT_{n}^{d}$ will be denoted by $\mvk$. 
    \item $N$ will always denote $n^{d}$.
    \item $\dC^{N}$, as is standard will denote the complex vector space of dimension $N$, with the standard inner product.
    \item $\theta$ and $\nu$ will be positive real numbers denoting the inverse temperature and the coupling constant, respectively.
    \item $\mathscr{S}$ and $\mathscr{D}$ are subsets of $[0,\infty)^{2}$ denoting the solitonic and dispersive regions of the parametric plane $(\theta,\nu)$. 
    \item $\mathscr{M}(\theta,\nu)$ is the collection of optimizers of the variational formula defining the free energy. 
\end{enumeratei}

As best as possible, we work with the following conventions. We also highlight frequently recurring examples. 
\begin{enumeratei}
    \item Subsets of $\dZ^{d}$ or $\dT^{d}$ will be denoted by capitalized Roman letters such as $U$ or $V$. 
    \item Subsets of the function spaces $\ell^{2}(\dT^{d})$ will be denoted by calligraphic letters such as $\cA$ or $\cB$. 
    \item Functions in $\ell^{2}(\dZ^{d})$ or $\ell^{2}(\dT^{d})$ will be denoted by small Greek letters such as $\psi$ or $\phi$, with the following recurring examples. 
    \begin{enumerate}
        \item Restrictions of a function $\psi$ to a subset $U$ will be denoted as $\psi_{U}$.
        \item Discrete solitons of mass $a$ will be denoted as $\varphi^{a}$.
        \item Eigenfunctions of the $\dT^{d}_{n}$ Laplacian will be denoted as $\phi^{\mvk}$ with $\mvk \in [n]^{d}$.
        \item Functions in the orthogonal complement of the subspace spanned by $\{\phi^{\mv0}\}$ will be denoted by $\psi^{\perp}$. 
    \end{enumerate}
    \item Random fields will be denoted by capital Greek letters such as $\Psi$ and $\Phi$, with the following recurring examples,
    \begin{enumerate}
        \item $\Psi^{U,y}$ will denote the massive Dirichlet Gaussian Free Field taking values in $\ell^{2}(U)$.
        \item $\Psi^{\mv0,y}$ will denote the massive zero average Gaussian Free Field taking values in $\ell^{2}(\dT^{d}_{n})$.
        \item The superscript $y$ will be dropped when we have $y=0$, when appropriate. 
    \end{enumerate}
    \item The Laplacians under consideration will be variations of $\gD$. 
    \begin{enumerate}
        \item Restriction to the complement of the kernel will be denoted as $\gD^{\perp}$.
        \item Dirichlet Laplacians on $U$ will be denoted as $\gD^{0}_{U}$. 
    \end{enumerate}
    \item Constants will usually be denoted by variations on the letter $C$. The most frequently recurring standard constant is the following. 
    \begin{enumerate}
        \item $C_{d}$ will denote the limiting mass per site of $\Psi^{\mv0}$.

    \end{enumerate}
\end{enumeratei}

Important exceptions to the conventions listed above are unavoidable for various reasons, such as consistency with prior literature. We list them below.

\begin{enumeratei}
    \item The Hamiltonians will always be denoted with variations of $\cH$. There are three cases
    \begin{enumerate}
        \item $\cH_{c}$ is the continuum Hamiltonian and will not be refrerred to beyond the background material. 
        \item $\cH$ is the discrete Hamiltonian for the DNLS defined on $\dZ^{d}$, with $h=1$. See~\eqref{def:HAM1}. 
        \item $\cH_{N}$ is our scale-dependent model Hamiltonian with which we define the Gibbs measure of interest. 
    \end{enumerate}
    \item The following capital Roman Letters have specific meanings, and are \textbf{not} subsets of lattice sites. 
    \begin{enumerate}
        \item The function $I(a)$ for $a\geq 0$ will always denote the minimum $\cH$ for fixed mass $a$, $\cH$ as defined in~\eqref{def:HAM1}. See~\eqref{def:Ifn}.
        \item The function $K(y)$ will always denote the limiting scaled log determinant of $y-\gD^{\perp}$. See~\eqref{def:Kfn}.
        \item The function $W(b)$ for $b>0$ will always denote the Legendre transform of $K$. See~\eqref{def:Wfn}.
        \item The function $L$ will always denote the inverse of $K'$. See~\eqref{def:L}
        \item $R_{p}$ will always denote the mass threshold for soliton formation. See Lemma~\ref{lem:Rp}
    \end{enumerate}
    \item Parameters $\theta>0$ and $\nu>0$  are the inverse temperature and coupling constant for the nonlinearity in~\eqref{def:parfn}, and are \textbf{not} functions in $\ell^{2}$. 
\end{enumeratei}

Along the way, we will define certain auxiliary functions and random variables to make calculations more convenient to express. These will be defined in a context-appropriate fashion. 

\subsection{Organization of the Paper}\label{ssec:org}
The article is organized as follows. In Section~\ref{sec:GFF}, we provide a description of the Discrete Gaussian Free Field and explain its importance for our analysis. We then  prove  the convergence of its limiting free energy, and of that conditioned to have a specified mass. We also establish some useful $\ell^{\infty}$ bounds. In Section~\ref{sec:soliton}, we discuss soliton solutions of the DNLS. In particular, we construct exponentially decaying minimizers for~\eqref{def:HAM1}. We also establish properties of  the function $I$. In Section~\ref{sec:feconv}, we combine insights from Sections~\ref{sec:GFF} and~\ref{sec:soliton} to prove the convergence of the limiting free energy, that is Theorem~\ref{thm:free}. In Section~\ref{sec:phase}, we analyze the phases. In particular, we demonstrate that there are two regimes of optimal mass allocation, one where we have a non-trivial soliton, and one where we do not. That is, we prove Theorem~\ref{thm:ptcurve}. In Section~\ref{sec:typical}, we provide commentary on the behavior of a typical function in the dispersive phase. We provide an explicit comparison between the reference measure corresponding to the linear part of the Hamiltonian and the massive Gaussian Free Field. We then show that the tilt corresponding to the nonlinearity is integrable with respect to the massive free field. A combination of these two results verifies Theorem~\ref{thm:typical}. As an immediate corollary, we have that that a typical function in the dispersive phase is bounded above in probability by $\sqrt{3C_{d} \log N}$. We conclude the article with some interesting questions for the future. 

\section{Gaussian Free Field}\label{sec:GFF}

The dispersive contribution to the free energy is given by the integral
\begin{align}
    M_{N}(b,\eps):=\int_{\norm{\psi}^2_2\in N(b-\eps, b+\eps)}\exp \bigl(-\norm{\nabla \psi}_2^2 \bigr)\,\vd \psi
\end{align}
where $b\ge 0,\eps>0$. Understanding the asymptotics as $N \to \infty$ is thus of fundamental importance to this article. This section is first and foremost dedicated to establishing the following theorem.
\begin{thm}\label{thm:DISP}
    Let $b>0$, and $\eps>0$ be a positive number such that $N\eps^d\gg 1$. We have
    \begin{align*}
        \left|\frac1N \log M_{N}(b,\eps)-\pi +W(b)\right|
        \leq  2\eps L(b)+ 2/(N^2\eps^2\cdot m_N(2))+\frac1N \log ((b+\eps)N).
    \end{align*}
    where $m_{N}(\cdot)$ is as per Lemma \ref{lem:epower}. 
\end{thm}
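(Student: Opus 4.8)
The quantity $M_N(b,\eps)$ is a Gaussian integral over an annular mass shell, so the plan is to compute it exactly up to the mass constraint and then control the effect of the constraint by a Laplace/Legendre argument. First I would diagonalize $\|\nabla\psi\|_2^2 = \langle\psi,-\gD\psi\rangle$ in the eigenbasis $\{\phi^{\mvk}\}$ of the torus Laplacian, writing $\psi = \sum_{\mvk} c_{\mvk}\phi^{\mvk}$ with $c_{\mvk}\in\dC$, so that $\|\nabla\psi\|_2^2 = \sum_{\mvk}\gl_{\mvk}|c_{\mvk}|^2$ and $\|\psi\|_2^2 = \sum_{\mvk}|c_{\mvk}|^2$. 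Since $\gl_{\mv0}=0$, the zero mode is unconstrained by the energy but contributes to the mass; this is exactly why the definition of $K$ uses $\gD^\perp$. The idea is then to introduce an auxiliary mass parameter $y\ge 0$ (a Lagrange multiplier / tilting parameter): for any $y$ in the admissible range one writes
\begin{align*}
M_N(b,\eps) = \int_{\|\psi\|_2^2\in N(b-\eps,b+\eps)} e^{y\|\psi\|_2^2}\,e^{-\langle\psi,(y-\gD)\psi\rangle}\,\vd\psi,
\end{align*}
bound $e^{y\|\psi\|_2^2}$ above and below on the shell by $e^{yN(b\pm\eps)}$, and extend/restrict the remaining integral $\int e^{-\langle\psi,(y-\gD)\psi\rangle}\ind_{\{\cdots\}}\,\vd\psi$ to a full Gaussian integral, which evaluates to $\pi^N/\det(y-\gD^\perp)$ times the one-dimensional zero-mode factor. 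Taking $\tfrac1N\log$ and $N\to\infty$ this produces $\pi - K(y) + y(b\pm\eps)$, and optimizing over $y$ with $K'(y)\le b$ gives $\pi - W(b)$ by the definition~\eqref{def:Wfn}; the optimal $y$ is $y=L(b)$, which is where the $2\eps L(b)$ term comes from.

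The two error terms have distinct origins. The $2\eps L(b)$ term is the first-order cost of the $\pm\eps$ slack in the mass constraint, evaluated at the optimal multiplier $y=L(b)$ (using that $K$ is smooth with $K'$ invertible, via Lemma~\ref{lem:propW} and the definition~\eqref{def:L}). The term $\tfrac1N\log((b+\eps)N)$ is the volume/entropy factor: after pulling out the Gaussian normalization, one is left with estimating the probability that a massive GFF with mass parameter $y$ has $\|\cdot\|_2^2$ landing in the shell, and a crude upper bound on that probability costs a factor polynomial in $N$ (the $(b+\eps)N$ accounting for the length of the mass interval in absolute units). The middle term $2/(N^2\eps^2 m_N(2))$ is the genuinely quantitative piece: it is a lower bound on $\dP(\|\Psi^{y}\|_2^2 \in N(b-\eps,b+\eps))$ obtained from a second-moment / Chebyshev estimate, where $m_N(2)$ (from Lemma~\ref{lem:epower}) controls $\var(\|\Psi^y\|_2^2) = \sum_{\mvk}(y+\gl_{\mvk})^{-2}$; one needs $\E\|\Psi^y\|_2^2 \approx Nb$ (which fixes $y = L(b)$ again) and then $\dP(|\|\Psi^y\|_2^2 - Nb| \le N\eps) \ge 1 - \var/(N\eps)^2$, so the contribution of the shell to the full Gaussian integral is at least $1 - 2/(N^2\eps^2 m_N(2))$ of the total, whence the logarithmic correction after $\tfrac1N\log(1-x)\ge -2x$ for small $x$ (legitimate precisely because $N\eps^d\gg1$ forces this quantity small).

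I would carry out the steps in this order: (i) diagonalize and isolate the zero mode; (ii) the upper bound — tilt by the optimal $y=L(b)$, bound $e^{y\|\psi\|_2^2}\le e^{yN(b+\eps)}$ on the shell, extend to the full Gaussian, evaluate the determinant, invoke convergence of $K$ from~\eqref{def:Kfn}, and read off $\pi - K(y) + y(b+\eps) \le \pi - W(b) + \eps L(b) + O(\tfrac1N\log N)$; (iii) the lower bound — tilt by the same $y$, bound $e^{y\|\psi\|_2^2}\ge e^{yN(b-\eps)}$ on the shell, and multiply the full Gaussian by the shell-probability lower bound from the Chebyshev step, giving $\pi - K(y) + y(b-\eps) + \tfrac1N\log(1 - 2/(N^2\eps^2 m_N(2)))$; (iv) combine, using $|K(y) - y b| = W(b)$ at $y=L(b)$ and $-yb + y(b\pm\eps) = \pm y\eps$ with $y=L(b)\le L$(something bounded), to get the stated two-sided bound. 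The main obstacle is step (iii): getting a clean, non-asymptotic lower bound on $\dP(\|\Psi^y\|_2^2\in N(b-\eps,b+\eps))$ that is uniform enough in $N$ and $\eps$ to produce exactly the $2/(N^2\eps^2 m_N(2))$ term — this requires the variance estimate of Lemma~\ref{lem:epower} and care that the mean is centered in the shell (so that the one-sided tail is not the binding constraint), and it is also the place where the hypothesis $N\eps^d\gg1$ is actually used.
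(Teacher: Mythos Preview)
Your proposal is correct and follows essentially the same route as the paper: exponential tilting by the Legendre-optimal mass parameter $y=L(b)$, an upper bound by enlarging to the full Gaussian, and a lower bound via Chebyshev on the shell probability (this is exactly the content of the paper's Lemma~\ref{lem:massGFFlow} and Corollary~\ref{cor:PerpInt}).

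Two small points where your write-up diverges from the paper's execution. First, the paper isolates the zero mode \emph{before} tilting, writing $\psi=c_{\mv0}\phi^{\mv0}+\psi^\perp$ and sandwiching the annulus between product sets $\cA_1\subset\cA\subset\cA_2$; the tilting and Chebyshev argument are then applied only to $\psi^\perp$ with $b'=\min\{b,C_d\}$. This matters when $b\ge C_d$, where $L(b)=0$ and your ``tilt the full integral by $y$'' description degenerates (the zero mode makes the untilted Gaussian diverge). Your step (i) ``isolate the zero mode'' covers this, but the order of operations should be: split off $c_{\mv0}$ first, then tilt $\psi^\perp$. Second, the $\tfrac1N\log((b+\eps)N)$ term is not a probability bound or a shell-length factor; it is precisely the area $\pi N(b+\eps)$ of the disk $\{|c_{\mv0}|^2\le N(b+\eps)\}$ that the zero mode ranges over in the outer set $\cA_2$.
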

What prevents the immediate representation of the integral as an expectation with respect to a Gaussian random variable is the fact that the quadratic form in the exponential is degenerate; it corresponds to $-\gD$ which has a non-trivial kernel. However, we can still relate this integral to the large deviations of the mass of an appropriate Gaussian field called the zero--average Gaussian Free Field (GFF). We will define the GFF and evaluate the asymptotic via a combination of two probabilistic techniques, exponential tilting, and concentration. We will then conclude the section with some maximum estimates will be of importance later. 

\subsection{Definitions}\label{sec:def}
The Laplacian $\gD$ is a translation-invariant operator on $\dC^{V}$ with respect to the standard basis and is thus diagonalized by the Fourier basis. It is well-known that the eigenvalues of the Laplacian on the discrete torus with vertex set $[n]^{d}$ are given by
\begin{align}\label{eq:LaplaceValues}
    \gl_{\mvk}=4\sum_{i=1}^{d}\sin^2(\pi k_{i}/n)=f(\mvk/n) \text{ for } \mvk\in [n]^{d}.
\end{align}
The corresponding eigenfunctions are
\begin{align}\label{eq:efns}
    \phi^{\mvk}_{\mvx}=\frac{1}{\sqrt{N}}\exp\left (2\pi\i\cdot {(\mvk \cdot {\mvx})}/n\right),\qquad {\mvx}, \mvk\in [n]^{d}.
\end{align}

\begin{defn}[Massive zero--average GFF]\label{def:0gff}
    Let $y\geq 0$. We define the massive zero--average free field $\Psi^{\mv0,y}\in \dC^{V}$ as the random vector
    \[
        \Psi^{\mv0,y} :=\sum_{\mvk\neq \mv0}\frac{\zeta_{\mvk}}{\sqrt{\gl_{\mvk}+y}}\phi^{\mvk},
    \]
    where $\gl_{\mvk}$ are the eigenvalues of $-\gD$ as in~\eqref{eq:LaplaceValues}, $\phi^{\mvk}$ are the corresponding eigenfunctions as in~\eqref{eq:efns} and $\zeta_{\mvk}$ are i.i.d.~standard complex Gaussian random variables.
\end{defn}
We may explicitly write down the density of the zero--average free field, which is expressible as a Gibbs measure in its own right. We will be working with the subspace $\dC^{N}/\{\phi^{\mv0}\}$, that is the subspace of all $\psi^{\perp}$ that are orthogonal to $\phi^{\mv0}$.
Note, the restriction is negative definite and is therefore invertible. We have 
\[
\dP(\Psi^{\mv0,y}\in \cA):=\frac{1}{Z^{\mv0,y}}\int_{\cA}\exp \left(-\norm{\nabla \psi^{\perp}}_{2}^{2} -y\norm{\psi^{\perp}}_{2}^{2}\right)\vd \psi^{\perp}. 
\]
where $\vd \psi^{\perp}$ denotes the volume element on $\dC/\{\phi^{\mv0}\}$.  We will denote the restriction of the Laplacian on this space as $\gD^{\perp}$. The partition function is given by
\[
Z^{\mv0,y} := \pi^{N-1}/\det(y-\gD^{\perp}).
\]
We refer the interested reader to~\cite{S07} and~\cite{A19} for more details on GFF and zero--average GFF on the discrete torus, respectively.
\begin{rem}
The operator $y-\gD$ for $y>0$ is positive definite, therefore we may define a Gaussian process with covariance $(y-\gD)^{-1}$ without the restriction to $\dC^{N}/\mathrm{span}\{\phi^{\mv0}\}$. This is exactly the massive Gaussian Free Field, seen in~\eqref{def:MGFF}. 
\end{rem}

\subsection{Analysis of the Limiting Free Energy}\label{sec:freeenergy}
From here onwards, the graph under consideration will be the entirety of the discrete torus $\dT^{d}$, and we will describe the properties of the measure associated with the field $\Psi^{\mv0,y}$. The associated mean free energy is given by
\begin{align*}
    \frac1N\log Z^{\mvzero,y}=\frac{N-1}{N}\log \pi - \frac1N\sum_{\mvk \neq \mv0}\log(\gl_{\mvk}+y).
\end{align*}
In order to more compactly express our results, for ${\mvx}\in [0,1]^{d}$, we define
\begin{align}\label{def:f}
    f({\mvx}):= 4\sum_{i=1}^{d}\sin^2(\pi x_{i}),\quad {\mvx}=(x_1,x_2,\ldots,x_d)\in[0,1]^d
\end{align}
and 
\begin{align*}
    g_y({\mvx}):=\log\left(y+f({\mvx})\right).
\end{align*}
Let ${\mvx}$ denote a uniform random variable on $[0,1]^{d}$. We define ${\mvx}_n:=\lfloor n{\mvx}\rfloor/n\sim \text{Uniform}(\{0,1/n,\ldots,1-1/n\}^d)$ and
\begin{align*}
    K_N(y):=\frac1N\sum_{\mvk \neq \mv0}\log (y+\gl_{\mvk}) = \E g_y({\mvx}_n)\ind_{{\mvx}_n\neq\mvzero}.
\end{align*}
Observe that the expected mass of $\Psi^{\mv0,y}$ is given by
\[
    \E \norm{\Psi^{\mv0,y}}_{2}^{2}=-\frac{d}{dy}\log Z^{\mv0,y}=N\cdot K_{N}'(y).
\]
Recall the function $K$ introduced in~\eqref{def:Kfn}. By definition, $K=\lim_{n\to \infty}K_{N}$. Clearly we should have
\begin{align}\label{eq:Kfnalt}
    K(y)=\int_{[0,1]^{d}}g_y({\mvx})\,d{\mvx}.
\end{align}
We will prove this convergence now, and as an abuse of notation, we use~\eqref{eq:Kfnalt} as the definition of $K$. We refer to the following lemma from~\cite{DK21}, which is important for establishing rates of convergence and follows quite easily.
\begin{lem}[\cite{DK21}*{Lemma~$2.1$}]\label{lem:epower}
    Let $\{\gl_{\mvk}\}_{\mvk \neq 0}$ be the eigenvalues of $-\gD^{\perp}$. Then we have for any $p>0$,
    \[
        m_{N}(p):=\frac1N \sum_{\mvk \neq \mv0} \gl_{\mvk}^{-p}\simeq
        \begin{cases}
            1           & \text{ if } d>2p    \\
            \log N      & \text{ if } d=2p    \\
            N^{-1+2p/d} & \text{ otherwise}.
        \end{cases}
    \]
\end{lem}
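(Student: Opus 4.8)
Lemma~\ref{lem:epower} is a statement about the asymptotic size of the negative power sums of the eigenvalues $\gl_{\mvk}=f(\mvk/n)$ of $-\gD^{\perp}$ on $\dT^d_n$. The plan is to reduce everything to estimating the integral $\int_{[0,1]^d} f(\mvx)^{-p}\,d\mvx$ together with a careful analysis of the discrepancy between the Riemann sum $\frac1N\sum_{\mvk\ne\mv0} f(\mvk/n)^{-p}$ and that integral near the singularity at the origin (and, by periodicity/symmetry, at the other lattice corners). First I would record the two-sided bound $c\abs{\mvx}^2 \le f(\mvx) \le C\abs{\mvx}^2$ for $\mvx$ in the fundamental domain $[-1/2,1/2]^d$ (using $\sin^2(\pi t)\asymp t^2$ there), so that $f(\mvk/n)^{-p} \asymp (n/\abs{\mvk})^{2p}$ for $\mvk$ ranging over representatives in $\{-n/2,\dots,n/2\}^d\setminus\{\mv0\}$. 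Consequently
\begin{align*}
    m_N(p) = \frac1N\sum_{\mvk\ne\mv0} \gl_{\mvk}^{-p} \asymp \frac{1}{n^d}\sum_{\mvk\ne\mv0}\left(\frac{n}{\abs{\mvk}}\right)^{2p} = n^{2p-d}\sum_{1\le\abs{\mvk}\le C n}\abs{\mvk}^{-2p}.
\end{align*}

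**The three regimes.** Now the sum $\sum_{1\le\abs{\mvk}\le Cn}\abs{\mvk}^{-2p}$ over $\mvk\in\dZ^d$ is controlled by comparison with $\int_{1\le\abs{x}\le Cn}\abs{x}^{-2p}\,dx$, whose radial part is $\int_1^{Cn} r^{d-1-2p}\,dr$. This integral is $\Theta(1)$ if $d-2p<0$, i.e.\ $d<2p$; is $\Theta(\log n)=\Theta(\log N / d)\asymp \log N$ if $d=2p$; and is $\Theta((Cn)^{d-2p})=\Theta(n^{d-2p})$ if $d>2p$. Multiplying back by $n^{2p-d}$: in the first case $m_N(p)\asymp n^{2p-d}$... wait, that is $N^{-1+2p/d}$ since $n^{2p-d}=N^{(2p-d)/d}=N^{-1+2p/d}$ — but the lemma says this is the ``otherwise'' case $d<2p$ giving $m_N(p)\simeq N^{-1+2p/d}$; and indeed $d<2p \iff -1+2p/d>0$ so this grows, consistent. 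In the borderline case $m_N(p)\asymp n^{2p-d}\log n = \log N$ since $2p-d=0$. In the case $d>2p$ we get $m_N(p)\asymp n^{2p-d}\cdot n^{d-2p}=\Theta(1)$. So the three cases match: $d>2p$ gives $1$, $d=2p$ gives $\log N$, $d<2p$ gives $N^{-1+2p/d}$.

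**Making the comparison rigorous.** The one technical point that needs care is the lower-order interaction between the lattice sum and the integral — specifically that the $\Theta(1)$ bulk contribution from $\abs{\mvk}\asymp n$ does not swamp the singular contribution in the case $d<2p$, and conversely that the singular contribution near $\mv0$ does not dominate when $d>2p$. I would handle this by splitting the sum dyadically in $\abs{\mvk}$: for each scale $2^j\le\abs{\mvk}<2^{j+1}$ with $0\le 2^j\le Cn$, the number of lattice points is $\asymp 2^{jd}$ and each term is $\asymp 2^{-2pj}$, contributing $\asymp 2^{(d-2p)j}$; summing the geometric-type series in $j$ from $0$ to $\log_2(Cn)$ is dominated by the top end when $d>2p$ (giving $\Theta(1)$ after the $n^{2p-d}$ prefactor — wait I need $d>2p$ so $d-2p>0$ so top end $j=\log_2 n$ dominates giving $n^{d-2p}$, times prefactor $n^{2p-d}$ equals $1$), by the bottom end when $d<2p$, and is balanced (hence the $\log$) when $d=2p$. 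Since this is Lemma~2.1 of~\cite{DK21} quoted verbatim, I would in practice just cite it; the sketch above is the proof one would reconstruct. The main obstacle, if one insists on a self-contained argument, is purely bookkeeping: tracking the implied constants uniformly in $n$ through the dyadic decomposition and the $f(\mvx)\asymp\abs{\mvx}^2$ comparison, which is routine but must be done with the symmetry of $f$ about all corners of the cube in mind so that one genuinely sees only the singularity at $\mv0$ contributing a boundary term.
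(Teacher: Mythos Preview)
Your sketch is correct and is exactly the standard argument one would give. Note, however, that the paper does not supply its own proof of this lemma: it is stated with a citation to~\cite{DK21}*{Lemma~2.1} and the remark that it ``follows quite easily,'' so there is no in-paper proof to compare against. Your dyadic decomposition and the two-sided bound $f(\mvx)\asymp\abs{\mvx}^2$ on the centered fundamental domain are precisely the ingredients needed, and your case analysis matches the claimed asymptotics; the only point worth emphasizing (which you do flag at the end) is that by periodicity one should work with representatives $\mvk\in\{-\lfloor n/2\rfloor,\dots,\lceil n/2\rceil-1\}^d$ so that the singularities of $f^{-p}$ at the other corners of $[0,1]^d$ are all folded back to the origin, after which the comparison $\sin^2(\pi t)\asymp t^2$ is uniform.
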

This lemma is first of use in order to calculate the rate of convergence of the scaled expected mass, i.e. $\E \norm{\Psi^{0,y}}_{2}^{2}$.
\begin{lem}\label{lem:massconvergerate}
    For $y\geq 0$, we have a constant $c$, depending only on $d\ge 3$, such that
    \begin{align*}
        |K_N(y)-K(y)|\leq c\cdot N^{-1/d} \text{ and }
        |K'_N(y)-K'(y)|\leq c\cdot N^{-1/d}\cdot (1+\vone_{d=3}\cdot \log N).
    \end{align*}
\end{lem}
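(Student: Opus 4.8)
The plan is to estimate the difference between the empirical averages $K_N(y)=\E\,g_y(\mvx_n)\ind_{\mvx_n\neq\mvzero}$ and the integral $K(y)=\int_{[0,1]^d} g_y(\mvx)\,d\mvx$ by a standard Riemann-sum comparison, keeping careful track of the singularity of $g_y$ near $\mvx=\mvzero$ when $y=0$ (where $f(\mvx)\asymp|\mvx|^2$, so $g_0(\mvx)\asymp\log|\mvx|$ is integrable but unbounded). First I would write $K_N(y)-K(y)$ as a sum over the $N$ dyadic-type cubes $Q_{\mvk}$ of side $1/n$ centered appropriately at $\mvk/n$, of the quantities $\frac1N g_y(\mvk/n)-\int_{Q_{\mvk}} g_y(\mvx)\,d\mvx$, plus the single omitted term at $\mvk=\mvzero$ whose contribution $\frac1N|\log y|$ (or the corresponding integral piece $\int_{Q_{\mvzero}}|g_0|$, which is $O(N^{-1}\log N)$) is harmless. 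On cubes bounded away from the origin, $g_y$ is smooth with gradient bounded by a constant (uniformly in $y\ge 0$, since $|\nabla g_y(\mvx)|=|\nabla f(\mvx)|/(y+f(\mvx))\le |\nabla f(\mvx)|/f(\mvx)\lesssim 1/|\mvx|$ away from the diagonal lattice points), so each such cube contributes $O(N^{-1}\cdot n^{-1}\cdot\|\nabla g_y\|_{L^\infty(Q_{\mvk})})$; summing the $1/|\mvx|$ weights over the $\asymp n^d$ cubes gives a total of $O(n^{-1})=O(N^{-1/d})$ in dimension $d\ge 3$. Near the origin, say within the ball of radius $n^{-1/2}$, I would instead bound the contribution crudely: both $\frac1N\sum g_y(\mvk/n)$ over those $O(n^{d/2})$ points and the corresponding integral are $O(n^{-d/2}\log n)\ll N^{-1/d}$, using that $\log$ is slowly varying. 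This yields the first bound $|K_N(y)-K(y)|\le c\,N^{-1/d}$.

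For the derivative, note $K_N'(y)=\frac1N\sum_{\mvk\neq\mvzero}(y+\gl_{\mvk})^{-1}$ and $K'(y)=\int_{[0,1]^d}(y+f(\mvx))^{-1}d\mvx$, so the same Riemann-sum scheme applies to the function $h_y(\mvx):=(y+f(\mvx))^{-1}$, which is nonnegative and now genuinely singular: $h_0(\mvx)\asymp|\mvx|^{-2}$ near the origin, which is integrable iff $d>2$, i.e.\ exactly in our range $d\ge 3$, but barely so when $d=3$. Away from the origin $|\nabla h_y|\lesssim |\mvx|^{-3}$, so the smooth-cube contribution is $O(n^{-1}\cdot\frac1N\sum_{\mvk}|\mvk/n|^{-3})$; the sum $\frac1N\sum|\mvk/n|^{-3}=m_N(3/2)$-type quantity is $O(1)$ when $d>3$ and $O(\log N)$ when $d=3$ (this is where the $\ind_{d=3}\log N$ factor enters, and it is exactly consistent with Lemma~\ref{lem:epower} with $p=3/2$). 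The near-origin ball of radius $r_N$ contributes, for the sum, at most $\frac1N\sum_{0<|\mvk|\le n r_N}|\mvk/n|^{-2}\lesssim n^{d-2}r_N^{d-2}/N = r_N^{d-2}$ and similarly for the integral; choosing $r_N\asymp N^{-1/d}$ balances this against the other error terms. Combining gives $|K_N'(y)-K'(y)|\le c\,N^{-1/d}(1+\ind_{d=3}\log N)$.

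The main obstacle, and the only place requiring genuine care rather than bookkeeping, is the uniformity in $y\ge 0$ together with the behavior at the lattice singularities of $f$: $f(\mvx)$ vanishes not only at $\mvx=\mvzero$ but the periodized gradient bound must account for $f$ being small near all corners of $[0,1]^d$ of the form with coordinates in $\{0\}$ — actually on the torus $f(\mvx)=4\sum\sin^2(\pi x_i)$ vanishes only at $\mvx=\mvzero$, so there is a single singularity, which simplifies matters. The delicate point is that the bounds $|\nabla g_y|\lesssim 1/|\mvx|$ and $|\nabla h_y|\lesssim 1/|\mvx|^3$ must hold \emph{with constants independent of $y$}, which follows from $y+f(\mvx)\ge f(\mvx)\ge c|\mvx|^2$ (using $\sin^2(\pi t)\ge c t^2$ on $[0,1/2]$ and periodicity); I would state this elementary lower bound on $f$ as a preliminary observation. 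A secondary subtlety is that the Riemann sum uses the \emph{left-endpoint} grid $\{0,1/n,\ldots,1-1/n\}^d$ rather than midpoints, so one does not get the extra cancellation of a midpoint rule — but since we only claim first-order accuracy $O(N^{-1/d})$, the mean value theorem on each cube suffices and this is not an issue. Everything else is routine summation of the weights $|\mvk|^{-1}$ and $|\mvk|^{-3}$ over $[n]^d$, which is precisely the content of Lemma~\ref{lem:epower}.
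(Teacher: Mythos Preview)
Your proposal is correct and follows essentially the same approach as the paper's proof: a Riemann-sum comparison via the mean value theorem, with the gradient bounds $|\nabla g_y|\lesssim f^{-1/2}$ and $|\nabla(\partial_y g_y)|\lesssim f^{-3/2}$ (uniformly in $y\ge 0$, coming from $\|\nabla f\|^2\le 16f$), then summing via Lemma~\ref{lem:epower} with $p=1/2$ and $p=3/2$ respectively. The paper dispenses with your near-origin/far-origin split by bounding the gradient at the MVT intermediate point directly by $c\,f(\mvx_n)^{-1/2}$ (resp.\ $f(\mvx_n)^{-3/2}$), so the entire error becomes $cN^{-1/d}m_N(1/2)$ (resp.\ $cN^{-1/d}m_N(3/2)$) in one stroke---your split is correct but unnecessary.
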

\begin{proof}
    Let
    \[
        g_y({\mvx}):= \log(y+f({\mvx})) \text{ and } g'_y({\mvx}):=\partial_y g_y({\mvx})=(y+f({\mvx}))^{-1},\quad {\mvx}\in[0,1]^d.
    \]
    It is clear that
    \begin{align*}
        K_N(y)&=\E g_y({\mvx}_n)\ind_{{\mvx}_n\neq\mv0},\text{ } K(y)=\E g_y({\mvx}),\\
        K'_N(y) &=\E g'_y({\mvx}_n)\ind_{{\mvx}_n\neq\mv0} \text{ and } K'(y)=\E g'_y({\mvx}).
    \end{align*}
    Thus, we have, for any $y\ge 0$, we have
    \begin{align*}
    \abs{K(y)-K_N(y)}
         & \le \E |g_y({\mvx})|\ind_{{\mvx}_n=\mv0} +\E\abs{g_y({\mvx})- g_y({\mvx}_n)}\ind_{{\mvx}_n\neq\mv0}\\\text{ and }
         \abs{K'(y)-K'_N(y)}
         & \le \E g'_y({\mvx})\ind_{{\mvx}_n=\mv0} +\E\abs{g'_y({\mvx})- g'_y({\mvx}_n)}\ind_{{\mvx}_n\neq\mv0}.
    \end{align*}
    It is easy to chek that, 
    \[
    \E |g_y({\mvx})|\ind_{{\mvx}_n=\mv0} \le \E |\log(y+f({\mvx})|\ind_{{\mvx}_n=\mv0}\le (1+|\log y|)\cdot N^{-1} 
    \] 
    and 
    \[
    \E g'_y({\mvx})\ind_{{\mvx}_n=\mv0} \le \E f({\mvx})^{-1}\ind_{{\mvx}_n=\mv0}\le cN^{2/d-1}\le cN^{-1/d}.
    \]
    Note that with $y>0$ fixed, $g_y(\cdot)$ is smooth, with bounded derivatives of all orders. Moreover, when $\norm{{\mvx}_n}>0$, we can bound 
    \begin{align*}
    \abs{g_y({\mvx})- g_y({\mvx}_n)} &\le \norm{{\mvx}-{\mvx}_n}\cdot \norm{\nabla g_y({\mvx}_n^*)}\le cN^{-1/d}\cdot f({\mvx}_n)^{-1/2}\\\text{ and } 
    \abs{g'_y({\mvx})- g'_y({\mvx}_n)} &\le \norm{{\mvx}-{\mvx}_n}\cdot \norm{\nabla g'_y({\mvx}_n^*)}\le cN^{-1/d}\cdot f({\mvx}_n)^{-3/2}.
    \end{align*}
    Here we used the fact that 
    \[
    \norm{\nabla f({\mvx})}^{2} = \sum_{i=1}^{d}(8\sin(\pi x_{i})\cos(\pi x_{i}))^{2} \le 16 f({\mvx}).
    \]
    Summing, we get that
    \[
    \abs{K(y)-K_n(y)}
    \le cN^{-1/d} m_N(1/2) \text{ and }
    \abs{K'(y)-K'_N(y)}
    \le cN^{-1/d} m_N(3/2),
    \]
    where $m_n$ is as given in Lemma~\ref{lem:epower}. This completes the proof.
\end{proof}
We now take the opportunity to introduce an important dimension dependent constant. We define
\begin{align}\label{def:Cd}
C_{d}:=K'(0),
\end{align}
which is clearly finite for $d\geq 3$. 
\begin{rem}
The constant $C_{d}$ has an important interpretation in probability, it is the expected number of returns to ${\mvx}$ for a simple symmetric random walk started at ${\mvx}$ in $\dZ^{d}$. Clearly, $C_{d}<\infty$ when $d\geq 3$ and is infinite otherwise due to the recurrence of the random walk. 
\end{rem}

It is easy to check that $2dC_d\ge 2d/\E(f({\mvx}))=1$ for all $d\ge 3$ and converges to $1$ as $d\to\infty$. In Table~\ref{table:Cd} we provide the numerical values of $C_d$ for $d=3,4,\cdots,10$.
\begin{table}[hbtp]
    \centering
    \begin{tabular}{ccccccccc}
        \toprule
        $d$  & 3   & 4   & 5   & 6   & 6   & 8   & 9   & 10 \\
        \midrule
        $C_d$ & 0.252 & 0.155 & 0.116 & 0.093 & 0.078 & 0.067 & 0.059 & 0.053\\
        \bottomrule                              \\[-1ex]
    \end{tabular}
    \caption{Numerical values for $C_d$}
    \label{table:Cd}
\end{table}

Since $K'$ is decreasing and convex, we may define an inverse function
\begin{align}\label{def:L}
    L:(0,C_{d}]\to [0,\infty)
\end{align}
which is also decreasing, convex and by hypothesis satisfies $K'(L(b))=b$. We extend $L$ by defining $L(b)=0$ for $b>C_{d}$. Note that, $bL(b)\le 1$ for all $b>0$ and
\begin{align}
    W(b)=\inf_{y\,:\,K'(y)\le b} (K(y)-yb) =
    \begin{cases}
        K(L(b)) - bL(b) & \text{ if } b<C_{d}  \\
        K(0)      & \text{ if } b\ge C_d.
    \end{cases}
\end{align}
The function $W$ is singular at $0$, however the divergence can be well understood.

\begin{lem}\label{lem:propW}
    The function $W$ given in~\eqref{def:Wfn} is a decreasing convex function of $b$ with $W'(b)=-L(b)$ and $\lim_{b\to 0+}(W(b)+\log eb)=0$. Moreover, $\widehat{W}$ defined by
    \begin{align}\label{eq:what}
        \widehat{W}(b) :=W(b)+\log eb =\int_{0}^{b}(s^{-1}-L(s))\,ds,
    \end{align}
    is an increasing concave function for $b\in (0,C_d]$.
\end{lem}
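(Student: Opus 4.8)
The plan is to obtain everything from the single fact that $W(b) = K(L(b)) - bL(b)$ for $b < C_d$ together with the defining relation $K'(L(b)) = b$ and the known properties of $K$: namely that $K' > 0$ is strictly decreasing (hence $K$ is strictly increasing and strictly concave on $[0,\infty)$) and $K'(0) = C_d$, while $L$ is the decreasing convex inverse of $K'$ extended by $L(b) = 0$ for $b \ge C_d$. So first I would compute $W'$. On $(0, C_d)$, differentiate $W(b) = K(L(b)) - bL(b)$ to get $W'(b) = K'(L(b))L'(b) - L(b) - bL'(b) = (K'(L(b)) - b)L'(b) - L(b) = -L(b)$, using $K'(L(b)) = b$. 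For $b > C_d$ one has $W \equiv K(0)$ so $W' = 0 = -L(b)$ trivially, and continuity of $L$ at $C_d$ (with $L(C_d) = 0$) patches the two pieces, so $W$ is $C^1$ on $(0,\infty)$ with $W'(b) = -L(b)$. Since $L \ge 0$, $W$ is decreasing; since $L$ is decreasing, $-L$ is increasing, so $W$ is convex. This handles the first sentence.

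Next I would address the boundary behaviour at $0$. As $b \downarrow 0$, $K'(L(b)) = b \to 0$ forces $L(b) \to \infty$ (since $K'$ is finite and positive everywhere, $K'(y) \to 0$ only as $y \to \infty$). I want $\lim_{b\to 0+}(W(b) + \log eb) = 0$, equivalently $W(b) = -\log b - 1 + o(1)$. The natural route is the integral representation: define $\widehat W(b) := W(b) + \log(eb)$; then $\widehat W'(b) = W'(b) + 1/b = b^{-1} - L(b)$ for $b \in (0, C_d]$, so $\widehat W(b) - \widehat W(b_0) = \int_{b_0}^{b}(s^{-1} - L(s))\,ds$ for $0 < b_0 < b \le C_d$. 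To pin down the constant and get $\widehat W(0+) = 0$, I need $\int_0^{b}(s^{-1} - L(s))\,ds$ to converge at $0$, i.e. $s^{-1} - L(s)$ integrable near $0$; this is where the precise growth rate of $L$ near $0$ enters. The change of variables $s = K'(y)$, $ds = K''(y)\,dy$ turns $\int_0^{b}(s^{-1} - L(s))\,ds$ into $\int_{L(b)}^{\infty}\big(\tfrac{1}{K'(y)} - y\big)K''(y)\,dy$ (note $K'' < 0$, and the orientation flips), and one checks convergence using the large-$y$ asymptotics $K'(y) \sim 1/y$ and $K''(y) \sim -1/y^2$ coming from $K(y) = \int_{[0,1]^d} \log(y + f(\mathbf x))\,d\mathbf x = \log y + O(1/y)$, whose $y$-derivatives give $K'(y) = 1/y + O(1/y^2)$, $K''(y) = -1/y^2 + O(1/y^3)$. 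Then a direct estimate shows the integrand is $O(1/y^2)$ at infinity and the integral's value as $b \to 0$ is exactly $\lim_{y\to\infty}(K(y) - \log y - $ something$)$; I would compute this limit to be $0$ by writing $W(b) = K(L(b)) - bL(b) = K(L(b)) - L(b)K'(L(b))$ and using $K(y) - yK'(y) = K(y) - 1 + O(1/y) \to$ (after subtracting $\log y = \log(1/K'(y)) + o(1) = -\log b + o(1)$) the value $-1$, giving $W(b) + \log b \to -1$, i.e. $\widehat W(0+) = 0$. This is the cleanest way and makes the integral formula \eqref{eq:what} fall out simultaneously.

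Finally, for monotonicity and concavity of $\widehat W$ on $(0, C_d]$: from $\widehat W'(b) = b^{-1} - L(b)$, positivity follows from $bL(b) \le 1$ (already recorded in the excerpt, coming from $L(b)K'(L(b)) = bL(b)$ and the elementary bound $yK'(y) \le K(y) - K(0) + \text{const}$... more simply from concavity $K(0) \ge K(y) - yK'(y)$, wait — I'd use the stated inequality $bL(b)\le 1$ directly). For concavity, $\widehat W'' = -b^{-2} - L'(b)$; since $L$ is convex and decreasing, $L'$ is negative and increasing, so I need $-L'(b) \le b^{-2}$, i.e. $L'(b) \ge -b^{-2}$. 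Differentiating $K'(L(b)) = b$ gives $K''(L(b))L'(b) = 1$, so $L'(b) = 1/K''(L(b))$, and the claim $L'(b) \ge -1/b^2$ becomes $1/K''(y) \ge -1/K'(y)^2$ at $y = L(b)$, i.e. (since $K'' < 0$) $K'(y)^2 \ge -K''(y)$, i.e. $K'(y)^2 + K''(y) \ge 0$. This is the substantive inequality and I expect it to be the main obstacle. It should follow from a Cauchy–Schwarz / Jensen argument: writing $\rho_y$ for the probability measure on $[0,1]^d$ with density proportional to $(y + f(\mathbf x))^{-1}$, one has $K'(y) = \int (y+f)^{-1}\,d\mathbf x$ and $-K''(y) = \int (y+f)^{-2}\,d\mathbf x$, and $K'(y)^2 + K''(y) = \big(\int (y+f)^{-1}\big)^2 - \int (y+f)^{-2} = (\E_{\text{Leb}} u)^2 - \E_{\text{Leb}} u^2 = -\var(u) \le 0$ where $u = (y+f)^{-1}$ — which is the wrong sign. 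So I will need to be more careful: the correct statement is $\widehat W$ concave $\iff \widehat W'' \le 0 \iff -L'(b) \le b^{-2}$... let me recheck signs: $L' < 0$ so $-L' > 0$; $\widehat W'' = -b^{-2} - L'(b)$ and I want this $\le 0$, i.e. $-L'(b) \le b^{-2}$, i.e. $|L'(b)| \le b^{-2}$, i.e. $1/|K''(L(b))| \le 1/K'(L(b))^2$, i.e. $K'(y)^2 \le |K''(y)| = -K''(y)$, i.e. $K'(y)^2 + K''(y) \le 0$ — and that \emph{is} $-\var_{\text{Leb}}((y+f)^{-1}) \le 0$, which holds. So concavity of $\widehat W$ is exactly the variance inequality $\big(\int (y+f)^{-1}\,d\mathbf x\big)^2 \le \int (y+f)^{-2}\,d\mathbf x$, a one-line Cauchy–Schwarz. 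The monotonicity $\widehat W' \ge 0$ I'd get from $bL(b) \le 1$ as stated, or equivalently from $\widehat W$ concave plus $\widehat W'(C_d) = C_d^{-1} - L(C_d) = C_d^{-1} > 0$ forcing $\widehat W' > 0$ throughout $(0, C_d]$. I expect the only real care needed is the $b \to 0$ limit computation and making the integrability/change-of-variables argument rigorous using the $\log y + O(1/y)$ expansion of $K$.
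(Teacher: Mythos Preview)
Your proposal is correct and the core architecture matches the paper's: $W'=-L$ by direct differentiation of $K(L(b))-bL(b)$, convexity from $L$ decreasing, and concavity of $\widehat W$ via the Cauchy--Schwarz inequality $(\E(y+f)^{-1})^2\le \E(y+f)^{-2}$, which the paper writes in the equivalent form $\widehat W''(b)=b^{-2}L'(b)\bigl(\E(L(b)+f)^{-2}-b^2\bigr)\le 0$.

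The one genuine methodological difference is in handling the $b\to 0$ limit and the monotonicity of $\widehat W$. You argue via the large-$y$ asymptotics $K(y)=\log y+O(1/y)$, $K'(y)=1/y+O(1/y^2)$ to compute $W(b)+\log b\to -1$ directly. The paper instead proves the two-sided bound $1/C_d\le 1/b-L(b)\le 2d$ by two applications of Jensen: the upper bound from $b=K'(L(b))\ge (L(b)+\E f(\mvx))^{-1}=(L(b)+2d)^{-1}$, and the lower bound from $bL(b)=1-\E f(y+f)^{-1}\le 1-(yC_d+1)^{-1}$ with $y=L(b)$. The upper bound gives integrability of $\widehat W'$ at $0$ (hence the integral formula up to a constant), and the lower bound gives $\widehat W'>0$ with a quantitative margin; your alternative for monotonicity (concavity plus $\widehat W'(C_d)=C_d^{-1}>0$) is equally valid. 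Note, though, that the paper's proof does not actually pin down the constant $\widehat W(0+)=0$ explicitly --- your asymptotic computation is the cleaner way to nail that, so on this point your argument is more complete than what is written in the paper.
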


\begin{proof}[Proof of Lemma~\ref{lem:propW}]
    The function $W$ is decreasing and convex follows from the fact that $W'(b)=-L(b)\le 0$ and $W''(b)=-L'(b)=-1/K'(L(b))=1/\E(L(b)+f({\mvx}))^{-2}>0$.

    Note that
    \[
    b=K'(L(b))=\E(L(b)+f({\mvx}))^{-1}\ge (L(b) +\E f({\mvx}))^{-1}=(L(b)+2d)^{-1}
    \]
    implies that $1/b-L(b)\le 2d$. Moreover, with $y=L(b)>0$, we have
    \begin{align*}
        yb=yK'(y)
         & = 1-\E f({\mvx})(y+f({\mvx}))^{-1}\\
         & \le 1- (y\E f({\mvx})^{-1} +1)^{-1}=1-(C_{d}y+1)^{-1}
        = y(y+1/C_{d})^{-1}.
    \end{align*}
    Simplifying we get
    \[
        1/b-L(b)\ge 1/C_{d}.
    \]

    The last conclusion in Lemma~\ref{lem:propW} follows from the fact that $\widehat{W}'(b) = 1/b - L(b)>0$ and
    \begin{align*}
        \widehat{W}''(b) & = -b^{-2}-L'(b)                                                                              \\
                         & = b^{-2}L'(b) \cdot ( -1/L'(b) - b^2) = b^{-2}L'(b) \cdot (\E(L(b)+f({\mvx}))^{-2} - b^2)\le 0
    \end{align*}
    as $\E(L(b)+f({\mvx}))^{-2} > (\E(L(b)+f({\mvx}))^{-1})^2 = b^2$ and $L'(b)\le 0$.
\end{proof}

\begin{figure}[htbp]
    \centering
    \includegraphics[height=1.5in,width=4in]{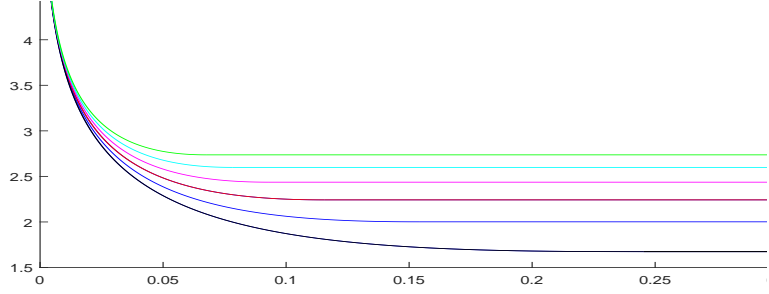}
    \caption{Plot of $W$ functions for $d=3,4,\ldots,8$; Lower function corresponds to higher $d$.}
    \label{fig:W}
\end{figure}

See Figure~\ref{fig:W} for plot of the $W$ function in dimensions $d=3,4,\ldots,10$. 
\subsection{Concentration of Mass}\label{sec:cmass}
We now return to our integral of interest,
\[
    M_{N}(b,\eps)=\int_{\norm{\psi}_{2}^{2}\in N(b-\eps,b+\eps)}\exp(-\norm{\nabla \psi}_{2}^{2})\vd \psi,
\]
and the proof of Theorem~\ref{thm:DISP}. We relate the integral to the concentration of mass of $\Psi^{\mv0,L(b)}$, and analyze two distinct cases depending on whether $b\le C_{d}$ or $b>C_{d}$. Let $\{X_{\mvk}\}$ be i.i.d.~exponential with rate 1 random variables. A simple change of variables argument tells us that the mass of $\Psi^{\mv0, y}$ may be represented as
\begin{align}\label{eq:expsum}
    \gC_{N,y}:=\norm{\Psi^{\mv0,y}}^2_2\equald \sum_{\mvk \neq \mv0} \frac1{\gl_{\mvk}+y}\cdot X_{\mvk}.
\end{align}

\begin{lem}\label{lem:massGFFlow}
    Let $0<b<C_{d}$ with $C_{d}$ and $L$ from~\eqref{def:Cd} and~\eqref{def:L}, respectively and $n\eps^{d}\gg 1$. Then we have, for some constant $c>0$
    \begin{align*}
        \pr\left( \gC_{N,L(b)}\in N(b-\eps,b+\eps)\right)\geq 1- cm_N(2)/N\eps^{2}.
    \end{align*}
\end{lem}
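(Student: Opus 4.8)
The plan is to prove the concentration estimate via Chebyshev's inequality applied to the random variable $\gC_{N,L(b)} = \norm{\Psi^{\mv0,L(b)}}_2^2$, using the exponential-sum representation~\eqref{eq:expsum}. Write $y = L(b)$ for brevity. First I would compute the mean: by the remark preceding Lemma~\ref{lem:massconvergerate}, $\E \gC_{N,y} = N K_N'(y)$, and by definition of $L$ we have $K'(y) = b$, so $\E \gC_{N,y} = N b + N(K_N'(y) - K'(y))$. By Lemma~\ref{lem:massconvergerate} the error term is $O(N^{1-1/d}\log N)$ in the worst case $d=3$, which is $o(N\eps)$ under the hypothesis $n\eps^d \gg 1$ (so $\eps \gg n^{-1} = N^{-1/d}$, hence $N\eps \gg N^{1-1/d} \gg N^{1-1/d}\log N$ needs a slightly more careful reading — I would double check that $n\eps^d\gg1$ indeed gives $\eps$ large enough to absorb the log; if not, one simply notes the mean is within $N\eps/2$ of $Nb$ for $N$ large). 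Thus for $N$ large, $\abs{\E\gC_{N,y} - Nb} \le N\eps/2$.

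Next I would bound the variance. Since the $X_{\mvk}$ are i.i.d.\ exponential(1), $\var(X_{\mvk}) = 1$, so from~\eqref{eq:expsum},
\begin{align*}
\var(\gC_{N,y}) = \sum_{\mvk\neq\mv0}\frac{1}{(\gl_{\mvk}+y)^2} \le \sum_{\mvk\neq\mv0}\frac{1}{\gl_{\mvk}^2} = N\, m_N(2),
\end{align*}
using $y \ge 0$ and the definition of $m_N(2)$ from Lemma~\ref{lem:epower}. Then by Chebyshev, on the event that $\abs{\E\gC_{N,y}-Nb}\le N\eps/2$,
\begin{align*}
\pr\bigl(\abs{\gC_{N,y} - Nb} > N\eps\bigr) \le \pr\bigl(\abs{\gC_{N,y} - \E\gC_{N,y}} > N\eps/2\bigr) \le \frac{4\var(\gC_{N,y})}{N^2\eps^2} \le \frac{4\, m_N(2)}{N\eps^2},
\end{align*}
which gives the claimed bound with $c = 4$ (after absorbing the constant from the mean-shift step, which only affects the range of $N$ for which the bound is stated, or can be handled by a slightly larger universal $c$).

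The only genuine subtlety — and the step I expect to require the most care — is reconciling the mean of $\gC_{N,y}$ with $Nb$: the natural identity gives $\E\gC_{N,y} = NK_N'(y)$, whereas $L$ is defined through the \emph{limiting} function $K'$, so one must quantify $K_N' - K'$ and check it is negligible compared to $N\eps$ under the standing assumption $n\eps^d \gg 1$. Lemma~\ref{lem:massconvergerate} supplies exactly the needed rate $O(N^{-1/d}(1+\vone_{d=3}\log N))$ per site; I would verify that $n^{-1}(1+\log n)$ is $o(\eps)$ whenever $n\eps^d\to\infty$, which holds since $\eps^d \gg n^{-1}$ forces $\eps \gg n^{-1/d} \ge n^{-1}(1+\log n)$ for $d \ge 3$ and $n$ large. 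Everything else is a one-line variance computation and a direct application of Chebyshev's inequality, so no further obstacles are anticipated. (One should also note $b < C_d$ guarantees $y = L(b) \in [0,\infty)$ is well-defined and finite, so the representation~\eqref{eq:expsum} is legitimate.)
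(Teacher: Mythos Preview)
Your proposal is correct and follows essentially the same route as the paper: compute the mean $\E\gC_{N,y}=NK_N'(y)$, use Lemma~\ref{lem:massconvergerate} to show it is within $N\eps/2$ of $Nb$, and then apply Chebyshev with the variance bound $\sum_{\mvk\neq\mv0}(\gl_{\mvk}+y)^{-2}\le Nm_N(2)$. The paper's proof is slightly terser on the mean-shift step but otherwise identical.
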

\begin{proof}
    The proof hinges on the fact that the expectation of $N^{-1}\gC_{N,y}$  is $K'_{N}(y)/N$, which converges $K'(y)$ as $N\to\infty$. Moreover, $K'$ has the well defined inverse $L(\cdot):(0,C_{d}]\to [0,\infty)$. What this means in practice is that we may choose $y$ such that the limiting mean is $b$, so long as $b\in(0,C_{d}]$.
    Using Lemma~\ref{lem:massconvergerate}, we know that
    \begin{align*}
        |K'_{N}(L(b))-b|\leq cN^{-1/d}m_{N}(3/2).
    \end{align*}
    It is therefore adequate to verify concentration of $\gC_{N,y}/N$ about $K_N'(y)$. Applying Chebyshev's inequality to~\eqref{eq:expsum},
    \begin{align*}
        \pr\left(\left|\gC_{N,y} - NK_{N}'(y) \right| \geq  N\eps \right)
        \leq (N\eps)^{-2}\cdot \sum_{\mvk \neq \mv0} (\gl_{\mvk}+y)^{-2}.
    \end{align*}
    Since $y\geq 0$,
    \begin{align*}
        \pr\left(\left|\gC_{N,y} - NK_{N}'(y) \right| \geq  N\eps \right)
        \leq (N\eps)^{-2}\cdot \sum_{\mvk \neq \mv0} \gl_{\mvk}^{-2}
        = m_N(2)/N\eps^{2}.
    \end{align*}
    The last line follows by Lemma~\ref{lem:epower}. This verifies the requisite concentration.
\end{proof}

\begin{cor}\label{cor:PerpInt}
    Let $0<b< C_{d}$, $L(\cdot)$ as in~\eqref{def:L}, and $\phi^{\mv0}$ as in~\eqref{eq:efns}. Let
    \[
        \cA=\left\{\psi^{\perp} \in \dC^{V}/\{\phi^{\mv0}\}: N(b-\eps)\leq \norm{\psi^{\perp}}_{2}^{2}\leq N(b+\eps)\right\}.
    \]
    Then we have
    \[
        \left|\frac{1}{N}\log \int_{\cA}\exp\left(-\norm{\nabla \psi^{\perp}}_{2}^{2}\right)\vd \psi^{\perp} -\bigl(\pi -W(b)\bigr)\right|\leq 2\eps L(b) + \left|\log \left(1- \frac{cm_N(2)}{N\eps^{2}}\right)\right|.
    \]

\end{cor}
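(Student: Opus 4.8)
The plan is to reduce the degenerate Gaussian integral over the annulus $\cA$ to a mass-concentration statement for the massive zero--average free field, via an exponential change of measure whose tilt parameter is tuned so that the mass of the tilted field concentrates precisely on $\cA$. Concretely, for any $y\ge 0$ I would write $1=e^{-y\norm{\psi^{\perp}}_2^2}e^{y\norm{\psi^{\perp}}_2^2}$ inside the integrand and use the density of $\Psi^{\mv0,y}$ from Section~\ref{sec:def} to get
\[
\int_{\cA}\exp\bigl(-\norm{\nabla\psi^{\perp}}_2^2\bigr)\vd\psi^{\perp}=Z^{\mv0,y}\cdot\E\Bigl[e^{y\norm{\Psi^{\mv0,y}}_2^2}\ind_{\{\Psi^{\mv0,y}\in\cA\}}\Bigr],\qquad Z^{\mv0,y}=\frac{\pi^{N-1}}{\det(y-\gD^{\perp})}.
\]
Since $\norm{\psi^{\perp}}_2^2\in N(b-\eps,b+\eps)$ on $\cA$ and $y\ge0$, this gives the sandwich
\[
e^{yN(b-\eps)}Z^{\mv0,y}\pr(\Psi^{\mv0,y}\in\cA)\ \le\ \int_{\cA}e^{-\norm{\nabla\psi^{\perp}}_2^2}\vd\psi^{\perp}\ \le\ e^{yN(b+\eps)}Z^{\mv0,y}\pr(\Psi^{\mv0,y}\in\cA).
\]

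Next I would choose $y=L(b)$, which is finite and nonnegative precisely because $0<b<C_d$. Then $\{\Psi^{\mv0,L(b)}\in\cA\}=\{\gC_{N,L(b)}\in N(b-\eps,b+\eps)\}$, so Lemma~\ref{lem:massGFFlow} (whose hypothesis $n\eps^d\gg1$ is in force) yields $\pr(\Psi^{\mv0,L(b)}\in\cA)\ge1-cm_N(2)/N\eps^2$, while trivially the probability is $\le1$. Taking $\tfrac1N\log$ of the sandwich, the prefactor contributes $bL(b)$ up to $\pm\eps L(b)$, and the probability contributes at most $\tfrac1N|\log(1-cm_N(2)/N\eps^2)|\le|\log(1-cm_N(2)/N\eps^2)|$.

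For the main constant I would use $\tfrac1N\log Z^{\mv0,L(b)}=\tfrac{N-1}{N}\log\pi-K_N(L(b))$ together with $|K_N(L(b))-K(L(b))|\le cN^{-1/d}$ from Lemma~\ref{lem:massconvergerate}, so that $\tfrac1N\log Z^{\mv0,L(b)}=\log\pi-K(L(b))+O(N^{-1/d})$. Since $W(b)=K(L(b))-bL(b)$ for $b<C_d$, the leading term is $bL(b)+\log\pi-K(L(b))=\log\pi-W(b)$ (the displayed $\pi-W(b)$ being shorthand for $\log\pi-W(b)$, consistently with Theorems~\ref{thm:DISP} and~\ref{thm:free}). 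The residual terms $|\log\pi|/N+cN^{-1/d}$ I would absorb into a second $\eps L(b)$: since $n\eps^d\gg1$ forces $\eps\gg n^{-1/d}=N^{-1/d^2}\gg N^{-1/d}$ and $L(b)>0$ is a fixed constant for fixed $b\in(0,C_d)$, these residuals are $\le\eps L(b)$ for $N$ large, producing the asserted bound $2\eps L(b)+|\log(1-cm_N(2)/N\eps^2)|$.

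The argument is mostly bookkeeping layered on Lemma~\ref{lem:massGFFlow}, which carries the only real content --- that with the correctly chosen mass parameter $y=L(b)$ the tilted field's mass lands on the prescribed window. The one step I expect to require a little care is the last one: verifying that the finite-$N$ spectral corrections ($\tfrac{N-1}{N}\log\pi$ versus $\log\pi$, and $K_N$ versus $K$) are genuinely dominated by $\eps L(b)$; this is where the implicit lower bound on $\eps$ from $n\eps^d\gg1$ and the strict inequality $b<C_d$ (keeping $L(b)$ bounded away from $0$) get used.
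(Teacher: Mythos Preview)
Your proposal is correct and follows essentially the same approach as the paper: tilt by $e^{-y\norm{\psi^\perp}_2^2}$ with $y=L(b)$, sandwich the integral using the mass constraint on $\cA$, and invoke Lemma~\ref{lem:massGFFlow} for the lower bound on the probability. The only cosmetic difference is that for the upper bound the paper drops the restriction to $\cA$ altogether (so no probability factor appears), whereas you keep it and use $\pr(\cdot)\le 1$; and you are more explicit than the paper about the $K_N\to K$ and $(N-1)/N\to 1$ corrections, which the paper leaves implicit.
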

\begin{proof}
    This corollary follows almost directly from verifying
    \begin{align}\label{eq:GFFPartfnbd}
        e^{N(b-\eps)L(b)}\left(1- \frac{cm_N(2)}{N\eps^{2}}\right)Z^{\mv0,L(b)} \leq \int_{\cA}\exp \left(-\norm{\nabla \psi^{\perp}}_{2}^{2}\right)\vd\psi^{\perp} \leq Z^{\mv0,L(b)}e^{N L(b)(b+\eps)}.
    \end{align}
    For efficiency of notation, for $y>0$ we will denote the quadratic form $\norm{\nabla \psi^{\perp}}_{2}^{2}+y\norm{\psi^{\perp}}_{2}^{2}$ by $Q^{y}(\psi^{\perp})$. The following bounds may be immediately verified.
    \[
        e^{NL(b)(b-\eps)}\int_{\cA}\exp({-Q^{L(b)}(\psi^{\perp})})\vd \psi \leq \int_{\cA} \exp\left({-\norm{\nabla\psi^{\perp}}_{2}^{2}}\right)\vd \psi
    \]
    and
    \[
        \int_{\cA} \exp\left({-\norm{\nabla \psi^{\perp}}_{2}^{2}}\right)\vd \psi \leq e^{N L(b)(b+\eps)}\int_{\cA} \exp({-Q^{L(b)}(\psi^{\perp})})\vd \psi.
    \]
    We begin with the upper bound to~\eqref{eq:GFFPartfnbd} as it is easier to establish, it follows simply by enlarging the region of integration to all of $\dC^{V}$:
    \[
        e^{N L(b)(b+\eps)}\int_{\norm{\psi^{\perp}}_{2}^{2}\in N(b-\eps,b+\eps)}\exp(-Q^{L(b)}(\psi^{\perp}))\vd \psi\leq \pi^{N-1}\exp\bigl(NL(b)(b+\eps)-NK_{N}(L(b))\bigr).
    \]
    Immediately, we may conclude that
    \[
        \frac1N \log \int_{\norm{\psi^{\perp}}_2^2\in N(b-\eps, b+\eps)}\exp\left(-\norm{\nabla \psi}_2^2\right)\vd \psi \leq \pi+ L(b)(b+\eps)-K_{N}(L(b)).
    \]
    Next, we show that the lower bound converges to the same limit as the upper bound. It is at this point that we introduce the concentration estimates. We have
    \begin{align*}
        \frac{1}{Z^{\mv0,L(b)}}\int_{\norm{\psi}_2^2\in N(b-\eps,b+\eps)}\exp(-Q^{L(b)}(\psi))\vd \psi= \pr \biggl(\norm{\Psi^{\mv0,L(b)}}^{2}\in N(b-\eps,b+\eps) \biggr).     \end{align*}
    Lemma~\ref{lem:massGFFlow} may now be directly applied.
\end{proof}

\begin{rem}
    It is a trivial extension of the above corollary that
    \[
        \frac{1}{N}\log \int_{\norm{\psi^{\perp}}_{2}^{2}\leq N(b+\eps)} \exp\left(\norm{\nabla \psi^{\perp}}_{2}^{2}\right)\vd\psi^{\perp}
    \]
    converges to $\pi -W(b)$ with the same bound for the rate of convergence. This fact is required for the proof of Theorem~\ref{thm:DISP}, but the proof is identical to that above and is therefore omitted.
\end{rem}
We have now assembled all the ingredients required to prove Theorem~\ref{thm:DISP}.
\begin{proof}[Proof of Theorem~\ref{thm:DISP}]
    Let $\psi$ be such that $N(b-\eps)\leq\norm{\psi}_{2}^{2}\leq N(b+\eps)$. We orthogonally decompose $\psi$ with respect to to $\phi^{\mv0}$ obtaining $\psi=c_{\mv0}\phi^{\mv0}+ \psi^{\perp}$. Thus,
    \[
        N(b-\eps)\leq |c_{0}|^{2}+\norm{\psi^{\perp}}_{2}^{2}\leq N(b+\eps).
    \]
    Let $b'=\min\{b,C_{d}\}$. We define
    \begin{align}\label{eq:circLB}
        \cA_{1}:=\left\{c_{\mv0}\phi_{\mv0}: b-b'-\frac{\eps}{2}\leq \frac{1}{N}|c_{0}|^{2}\leq b-b'+\frac{\eps}{2}\right\}\times \left\{\psi^{\perp}: b'-\frac{\eps}{2}\leq\frac{1}{N}\norm{\psi^{\perp}}_{2}^{2}\leq b'+\frac{\eps}{2}\right\}.
    \end{align}
    and
    \begin{align}\label{eq:circUB}
        \cA_{2}:=\left\{c_{0}\phi_{\mv0}: |c_{0}|^{2}\leq N(b+\eps)\right\}\times \left\{\psi^{\perp}: \norm{\psi^{\perp}}_{2}^{2}\leq N(b+\eps)\right\}.
    \end{align}
    It is clear that
    \[
        \cA_{1}\subset \cA \subset \cA_{2}.
    \]
    The proof is reduced to showing that the integrals of $\exp(-\norm{\nabla \psi}_{2}^{2})$ over $\cA_{1}$ and $\cA_{2}$ are logarithmically equivalent. The upper bound is easier, so we establish it first. We have
    \[
        \int_{\cA_{2}} \exp(-\norm{\nabla \psi}_{2}^{2})\vd \psi=\pi N(b+\eps)\cdot \int_{\norm{\psi^{\perp}}_{2}^{2}\leq N(b+\eps)} \exp\left(-\norm{\nabla \psi^{\perp}}_{2}^{2}\right)\vd \psi^{\perp}
    \]
    Applying Corollary~\ref{cor:PerpInt},
    \[
        \frac{1}{N}\log \int_{\cA_{2}}\exp(-\norm{\nabla \psi}_{2}^{2})\vd \psi \leq \pi -W(b)+\frac{1}{N}\log(b+\eps)N
    \]
    Now as for the lower bound,
    \[
        \int_{\cA_{1}} \exp(-\norm{\nabla \psi}_{2}^{2})\vd \psi \geq \pi N\eps\cdot \int_{\norm{\psi^{\perp}}_{2}^{2}\in N(b'-\eps/2,b'+\eps/2)}\exp\left(-\norm{\nabla\psi^{\perp}}_{2}^{2}\right)\vd \psi^{\perp}.
    \]
    The concluding step is to again use Corollary~\ref{cor:PerpInt}
\end{proof}

\subsection{Bounds on the Maximum}\label{sec:useful}
In order to address how the soliton is distinguishable from the background noise, we need upper bounds on the values that the free field can take at a point. Additionally, maximum bounds are crucial for the stitching procedure required in Section~\ref{sec:lbd}. The most crucial result in this section is the following.
\begin{thm}\label{thm:GFFtruncation}
    Let $C_{d}$ be as defined in~\eqref{def:Cd} and $b\in (0,\infty)$. We have
    \begin{align}\label{eq:intratio}
        \left(\int_{\cA'}\exp(-\norm{\nabla\psi}_{2}^{2})\vd \psi \right) 
        & /\left(\int_{\cA} \exp(-\norm{\nabla \psi}_{2}^{2})\vd \psi \right)\notag\\ 
        & \geq \frac{b-b'+\eps}{b+\eps}\cdot e^{-2N\eps L(b)}\cdot \left(1- 2cm_N(2)/N\eps^{2}\right)
    \end{align}
    where
    \begin{align*}
    \cA &=\{\psi : N(b-\eps)\leq \norm{\psi}_{2}^{2}\leq N(b+\eps) \}\\
    \text{and } 
    \cA' &=\cA \cap \{\psi : \norm{\psi}_{\infty}\leq 2\cdot \sqrt{3C_{d}\log N}\}.
    \end{align*}
\end{thm}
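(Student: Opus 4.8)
The goal is to show that restricting to $\norm{\psi}_\infty \le 2\sqrt{3C_d\log N}$ costs at most a factor $e^{-2N\eps L(b)}(1-2cm_N(2)/N\eps^2)\cdot\frac{b-b'+\eps}{b+\eps}$ in the Gaussian integral over the annular mass shell $\cA$. The natural strategy is to pass from the Lebesgue integral to a probability statement about the zero-average GFF $\Psi^{\mv0,L(b)}$, exactly as in Corollary~\ref{cor:PerpInt}, and then show that the sup-norm truncation removes only a vanishing fraction of the relevant probability mass. Concretely, I would first orthogonally decompose $\psi = c_{\mv0}\phi^{\mv0}+\psi^\perp$ and note that $\norm{\psi}_\infty \le \abs{c_{\mv0}}/\sqrt N + \norm{\psi^\perp}_\infty$; since on the inner region the constant mode contributes at most $\sqrt{b-b'+\eps/2}\cdot\sqrt N$ worth of amplitude spread over $N$ sites, i.e.\ $\abs{c_{\mv0}}/\sqrt N \le \sqrt{b-b'+\eps}$, it is harmless (a bounded quantity, swallowed into the constant $3C_d$ for $N$ large), so the truncation really only bites on $\psi^\perp$. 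Thus it suffices to compare $\int_{\cA'}$ and $\int_{\cA}$ after reducing both to integrals over $\psi^\perp$ in a mass shell of width $\eps$, which by the remark after Corollary~\ref{cor:PerpInt} and the computation in the proof of Theorem~\ref{thm:DISP} both behave like $\pi - W(b)$ up to the stated error.

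**Key steps in order.** (i) Reduce to $\psi^\perp$: write $\int_{\cA}\exp(-\norm{\nabla\psi}_2^2)\vd\psi$ and $\int_{\cA'}\exp(-\norm{\nabla\psi}_2^2)\vd\psi$ using the slab decomposition from the proof of Theorem~\ref{thm:DISP}, bounding $\cA' \supset \cA_1 \cap \{\norm{\psi^\perp}_\infty \le \sqrt{3C_d\log N}\}$ (using $\sqrt{b-b'+\eps}$ is eventually $\le \sqrt{3C_d\log N}$) and $\cA \subset \cA_2$. (ii) Introduce the tilt: multiply and divide by $e^{-L(b)\norm{\psi^\perp}_2^2}$ on the mass shell, picking up factors $e^{NL(b)(b'\pm\eps/2)}$ and converting to a ratio of $\Psi^{\mv0,L(b)}$-probabilities, namely
\[
\frac{\pr\bigl(\gC_{N,L(b)}\in N(b'-\tfrac\eps2,b'+\tfrac\eps2),\ \norm{\Psi^{\mv0,L(b)}}_\infty \le \sqrt{3C_d\log N}\bigr)}{\pr\bigl(\gC_{N,L(b)}\le N(b+\eps)\bigr)}.
\]
(iii) Control the numerator: by Lemma~\ref{lem:massGFFlow} the mass event has probability $\ge 1 - cm_N(2)/N\eps^2$, and by a union bound over the $N$ coordinates plus the standard complex-Gaussian tail $\pr(\abs{\Psi^{\mv0,L(b)}_{\mvx}} > t) \le e^{-t^2/\sigma^2}$ with per-site variance $\sigma^2 = K'_N(L(b)) \le b + o(1) \le C_d + o(1)$, the bad sup-norm event has probability $\le N\cdot N^{-3C_d/\sigma^2} \to 0$ faster than any relevant error term (here the choice of the constant $3$ and the factor $2$ in the truncation give the needed slack). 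Intersecting, the numerator is $\ge 1 - 2cm_N(2)/N\eps^2$ for $N$ large. (iv) The denominator is $\le 1$, and the prefactor from the volume of the constant-mode slab gives the $\frac{b-b'+\eps}{b+\eps}$ ratio. Collecting the exponential tilt factors $e^{NL(b)(b'-\eps/2)}$ in the numerator against $e^{NL(b)(b+\eps)}$ in the denominator (after enlarging as in Theorem~\ref{thm:DISP}) produces $e^{-2N\eps L(b)}$ once one uses $b' \le b$, completing the bound.

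**Main obstacle.** The delicate point is the sup-norm tail bound for $\Psi^{\mv0,L(b)}$: one must verify that the per-site marginal variance of the \emph{zero-average} field is $K'_N(L(b))$, which by Lemma~\ref{lem:massconvergerate} converges to $b \le C_d$ (or exactly $C_d$ when $b \ge C_d$, via $L(b)=0$), so that the Gaussian tail constant is governed by $C_d$ and the threshold $2\sqrt{3C_d\log N}$ leaves polynomial-in-$N$ slack in the union bound — comfortably beating the $m_N(2)/N\eps^2$ error. Two technical wrinkles: the marginals of $\Psi^{\mv0,L(b)}$ are not quite i.i.d.\ (the diagonal of the covariance is constant by translation invariance, which is all we need, so this is fine), and one must make sure the constant-mode contribution $\abs{c_{\mv0}}/\sqrt N$ to $\norm{\psi}_\infty$ is genuinely $O(\sqrt{b})=O(1)$ on $\cA_1$, hence absorbed — this uses $b - b' = \max\{b - C_d, 0\}$ is bounded on any compact range of $b$, and for the application $b$ stays bounded. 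I expect that once these variance identifications are in place the rest is bookkeeping parallel to Corollary~\ref{cor:PerpInt}.
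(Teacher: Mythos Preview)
Your proposal is correct and follows essentially the same approach as the paper: the paper also sandwiches $\cA'\supset\cA_1\cap\{\norm{\psi^\perp}_\infty\le\sqrt{3C_d\log N}\}$ and $\cA\subset\cA_2$ using the slab decomposition from Theorem~\ref{thm:DISP}, introduces the tilt by $L(b)$, and bounds the resulting $\Psi^{\mv0,L(b)}$-probability via the mass concentration of Lemma~\ref{lem:massGFFlow} together with the union-bound sup-norm estimate (which the paper records separately as Lemma~\ref{lem:GFFmax}). Your identification of the constant-mode contribution $|c_{\mv0}|/\sqrt N=O(1)$ as the reason the factor $2$ appears in the truncation threshold is exactly the content of the paper's remark that $\cA_1'\subset\cA'$ for $N$ large.
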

This goes one step beyond mass concentration, as it asserts that on the exponential scale, the dominant contribution to the integral $M_{N}(b,\eps)$ comes from functions for which the mass is relatively evenly spread over the entire torus, we cannot have too many sharp peaks. This phenomenon is closely related to (and proved by) an $\ell^{\infty}$ bound on the associated free field.

\begin{lem} \label{lem:GFFmax}
    Let $0<b\leq C_{d}$. We have for $n$ sufficiently large
    \[
        \pr\left (\norm{\Psi^{\mv0,L(b)}}_{\infty}\geq \sqrt{3C_{d} \log N}\right )\leq N^{-1}.
    \]
    \begin{proof}
        The translation invariance tells us that the random variables $\left\{\Psi^{\mv0,L(b)}_{{\mvx}}\right\}_{{\mvx} \in \dT^{d}}$ are identically distributed. The union bound is applicable and yields
        \begin{align*}
            \pr\left (\norm{\Psi^{\mv0,L(b)}}_{\infty}\geq \sqrt{3C_{d} \log N} \right )
            &\leq \sum _{{\mvx} \in \dT^{d}} \pr\left(\left|\Psi_{{\mvx}}^{\mv0,L(b)}\right|^{2}\geq 3C_{d} \log N\right)\\
            &
            =\exp\left(\left(1-\frac{3C_{d}}{K'_{n}(L(b))}\right)\cdot \log N \right).
        \end{align*}
        The proof follows using the fact that $K'_{N}(L(b)) \to K'(L(b))\leq C_{d}$, and thus for $N$ sufficiently large, $3C_{d}/K_{N}(L(b)) \geq 2$.
    \end{proof}
\end{lem}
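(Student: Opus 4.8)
\subsection*{Proof proposal for Lemma~\ref{lem:GFFmax}}

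The plan is a straightforward first-moment (union-bound) argument over the $N=n^d$ torus sites, exploiting that $\Psi^{\mv0,L(b)}$ has identically distributed coordinates whose common variance is exactly $K'_N(L(b))$, a quantity that converges to $b\le C_d$.

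First I would record the relevant distributional facts about a single coordinate. Writing $\Psi^{\mv0,y}_{\mvx}=N^{-1/2}\sum_{\mvk\ne\mv0}(\gl_{\mvk}+y)^{-1/2}\zeta_{\mvk}e^{2\pi\i\,\mvk\cdot\mvx/n}$, the translation invariance of $\gD^{\perp}$ on $\dT^d_n$ (or direct inspection of this formula) shows that the $\{\Psi^{\mv0,L(b)}_{\mvx}\}_{\mvx\in\dT^d_n}$ are identically distributed; each is a centered complex Gaussian and, being a linear combination of the i.i.d.\ standard complex Gaussians $\zeta_{\mvk}$, is circularly symmetric, with
\[
\E\bigl|\Psi^{\mv0,y}_{\mvx}\bigr|^2=\frac1N\sum_{\mvk\ne\mv0}\frac1{\gl_{\mvk}+y}=K'_N(y).
\]
For such a coordinate, $|\Psi^{\mv0,L(b)}_{\mvx}|^2$ is exponentially distributed with mean $K'_N(L(b))$, which gives the exact tail identity
\[
\pr\Bigl(\bigl|\Psi^{\mv0,L(b)}_{\mvx}\bigr|^2\ge 3C_d\log N\Bigr)=\exp\!\Bigl(-\frac{3C_d\log N}{K'_N(L(b))}\Bigr)=N^{-3C_d/K'_N(L(b))}.
\]

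Next I would apply the union bound over the $N$ sites, obtaining
\[
\pr\Bigl(\norm{\Psi^{\mv0,L(b)}}_\infty\ge\sqrt{3C_d\log N}\Bigr)\le N\cdot N^{-3C_d/K'_N(L(b))}=\exp\!\Bigl(\Bigl(1-\frac{3C_d}{K'_N(L(b))}\Bigr)\log N\Bigr).
\]
To finish, invoke Lemma~\ref{lem:massconvergerate} at $y=L(b)\ge 0$: since $K'_N(L(b))\to K'(L(b))=b\le C_d$ and $C_d>0$, for all $n$ large enough we have $K'_N(L(b))\le\tfrac32 C_d$, so $3C_d/K'_N(L(b))\ge 2$, whence the exponent above is $\le-\log N$ and the probability is $\le N^{-1}$.

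There is no serious obstacle here; the only point requiring mild care is the passage to the limit — one must ensure $K'_N(L(b))$ stays below $\tfrac32 C_d$ for large $n$, which is exactly the content of Lemma~\ref{lem:massconvergerate} (and at the endpoint $b=C_d$ one uses $L(C_d)=0$ together with $K'_N(0)\to K'(0)=C_d$). If one prefers to avoid quoting the exact exponential law of $|\Psi^{\mv0,L(b)}_{\mvx}|^2$, a crude Gaussian tail bound applied separately to $\Re\Psi^{\mv0,L(b)}_{\mvx}$ and $\Im\Psi^{\mv0,L(b)}_{\mvx}$, each $N(0,\tfrac12 K'_N(L(b)))$, works equally well up to harmless constants absorbed by taking $n$ large.
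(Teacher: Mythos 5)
Your proposal is correct and follows essentially the same route as the paper: translation invariance to get identically distributed coordinates, the exponential law of $|\Psi^{\mv0,L(b)}_{\mvx}|^2$ with mean $K'_N(L(b))$, a union bound over the $N$ sites, and the convergence $K'_N(L(b))\to K'(L(b))\le C_d$ (Lemma~\ref{lem:massconvergerate}) to force the exponent $3C_d/K'_N(L(b))\ge 2$ for large $N$. The only difference is that you spell out the single-site tail identity and the endpoint case $b=C_d$ explicitly, which the paper leaves implicit.
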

\begin{proof}[Proof of Theorem~\ref{thm:GFFtruncation}]
    Recall the definitions of $\cA_{1}$ and $\cA_{2}$ be as in the proof of Theorem~\ref{thm:DISP}. We define
    \[
        \cA_{1}':=\cA_{1}\cap\left \{\psi : \norm{\psi^{\perp}}_{\infty}\leq \sqrt{3C_{d}\log N}  \right \}.
    \]
    It is clear that for $N$ sufficiently large, $\cA_{1}'\subset \cA'$. We will denote
    \[
        \cA_{\perp}:=\left\{\psi^{\perp} \in \dC^{V}/\{\phi^{\mv0}\}: N(b'-\eps)\norm{\psi^{\perp}}_{2}^{2}\leq N(b'+\eps)\right\}
    \]
    and
    \[
        \cA_{\perp}':=\cA_{\perp}\cap \left\{ \norm{\psi^{\perp}}_{\infty}\leq \sqrt{3C_{d} \log N }\right\}.
    \]
    We use the fact that $\cA_{2}$ contains $\cA$ to conclude that the ratio in~\eqref{eq:intratio} may be bounded below by
    \begin{align}\label{eq:intratio2}
        \left( \int_{\cA_{1}'} \exp\left(-\norm{\nabla \psi}\right)\vd \psi \right) /\left (\int_{\cA_{2}} \exp \left(-\norm{\nabla \psi}\vd \psi \right)\right).
    \end{align}
    On separation, we then know that
    \[
        \int_{\cA'_{1}} \exp\left(\norm{-\nabla\psi}_{2}^{2}\right)\vd \psi = N\pi(b-b'+\epsilon_{n})\cdot\int_{\cA_{\perp}'} \exp \left(-\norm{\nabla \psi^{\perp}}_{2}^{2}\right)\vd \psi^{\perp}.
    \]
    On introducing the exponential tilt, we further have
    \[
        \int_{\cA_{1}'}\exp \left( -\norm{\nabla \psi}_{2}^{2}\right)\vd \psi\geq N \pi (b-b'+\eps)\cdot e^{-L(b)(b-\eps)}\cdot \int _{\cA_{\perp}'} \exp\left( -Q^{L(b)}(\psi^{\perp})\right)\vd \psi^{\perp}.
    \]
    Analogously,
    \[
        \int_{\cA_{2}} \exp\left( -\norm{\nabla \psi}_{2}^{2}\right)\vd \psi = N \pi (b+\eps)\cdot e^{L(b)(b+\eps)}\cdot Z^{\mv0, L(b)}.
    \]
    Thus~\eqref{eq:intratio2} is bounded below further as
    \[
        \frac{b-b'+\eps}{b+\eps}\cdot e^{-2\eps L(b)}\cdot \frac{1}{Z^{\mv0,L(b)}}\int_{\cA_{\perp}'} \exp \left(-\norm{\nabla \psi}_{2}^{2} \right)\vd \psi.
    \]
    Now, observe that
    \begin{align*}
        \frac{1}{Z^{\mv0,y}} &\int_{\cA_{\perp}'} \exp \left( -\norm{\nabla \psi^{\perp}}\right)\vd \psi^{\perp}  \\ 
        &=\pr \left( b-\eps\leq \frac{1}{N}\norm{\Psi^{\mv0,L(b)}}_{2}^{2} \leq b+\eps ,\text{ } \norm{\Psi^{\mv0,L(b)}}_{\infty}\leq \sqrt{3C_{d}\log N}\right) \\                                                                                     &\geq 1- \frac{cm_N(2)}{N\eps^{2}} -\frac{1}{N}
    \end{align*}
    which follows via a combination of Lemmas~\ref{lem:massGFFlow} and~\ref{lem:GFFmax}, as well as the union bound.
\end{proof}

\section{Soliton Solutions and Minimal Energy}\label{sec:soliton}
In this section, we provide a survey on some results describing soliton solutions of the DNLS defined on the lattice $\dZ^{d}$. Recall, this means that $\psi_{{\mvx}}(t) \in \ell^{2}(\dZ^{d})$ solves
\begin{align*}
    \i \ddt\psi_{{\mvx}}=-(\gD\psi)_{{\mvx}}-|\psi_{{\mvx}}|^{p-1}\psi_{{\mvx}}.
\end{align*}
In particular, we will be interested in the time-periodic solutions, which we refer to as discrete breathers or solitons. 

\subsection{Definition and Existence}\label{sec:exist}
There are two ways of characterizing soliton solutions. The first is finding solutions via the ansatz $\psi_{{\mvx}}(t)=e^{\i \go t}\varphi_{{\mvx}}$ where $\varphi_{{\mvx}}$ is time-invariant. The second is variational, solitons can be realized as the minimizers of the Hamiltonian subject to the constraint $\norm{\varphi}_2^2=a$. The parameter $\go$ is realized as a Lagrange Multiplier and thus depends on $a$. The soliton equation is given by\begin{align}\label{eq:breather}
    \go(a) \cdot \varphi_{{\mvx}}=-(\gD \phi)_{{\mvx}}-|\varphi_{{\mvx}}|^{p-1}\varphi_{{\mvx}}.
\end{align}
The variational characterization also tells us that the values at all sites must have the same complex phase, and thus the discrete solitons may be assumed to be real-valued and nonnegative. Unlike the continuum case, there is no scale invariance for the DNLS defined on a given lattice. Thus, we cannot construct soliton solutions of arbitrary mass. The fundamental requirement for~\eqref{eq:breather} to admit an $\ell^2$ solution is that $\go(a) <0$. As discussed in the introduction, this occurs for all choices of mass $a>0$ when $p<1+4/d$, or when $a>R_{p,d}$ for $p\geq 1+4/d$. 
\begin{lem}[Weinstein, See~\cite{WEI99}]\label{lem:Rp}
    The following holds.
    \begin{enumeratei}
        \item Ground state or the minimizer in~\eqref{def:Ifn} exists when $I(a)\in (-\infty,0)$.
        \item Let $1<p<1+4/d$, then $I(a)<0$ for all $a>0$. Thus $R_{p}=0$.
        \item Let $p \ge 1+4/d$, then there exists a ground state excitation threshold $R_{p}>0$ so that $I(a)<0$ if $a>R_{p}$; and $I(a)=0$ if $a<R_{p}$. Moreover,
        \begin{align*}
            \frac2{p+1} R_{p}^{(p-1)/2} = \inf_{f} \left\{ \frac{ \norm{f}_2^{p-1}\cdot \norm{\nabla f}_2^2 }{ \norm{f}_{p+1}^{p+1} } \right\}.
        \end{align*}
    \end{enumeratei}
\end{lem}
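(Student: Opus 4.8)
The plan is to reduce everything to the scale-invariant Gagliardo--Nirenberg quotient
\[
J_{p}(\psi):=\frac{\norm{\psi}_2^{p-1}\,\norm{\nabla\psi}_2^2}{\norm{\psi}_{p+1}^{p+1}},\qquad 0\ne\psi\in\ell^{2}(\dZ^{d}),
\]
and to its infimum $m_{p}:=\inf_{\psi\ne 0}J_{p}(\psi)$. First I would record that when $\norm{\psi}_2^2=a$, solving for $\norm{\psi}_{p+1}^{p+1}$ gives
\[
\cH(\psi)=\norm{\nabla\psi}_2^2\Bigl(1-\tfrac{2}{p+1}\,a^{(p-1)/2}\,J_{p}(\psi)^{-1}\Bigr),
\]
so that $\cH(\psi)<0$ exactly when $J_{p}(\psi)<\tfrac{2}{p+1}a^{(p-1)/2}$. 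Because $J_{p}$ is invariant under $\psi\mapsto c\psi$, any $\psi\ne 0$ may be rescaled to mass $a$, and it follows that $I(a)<0$ iff $m_{p}<\tfrac{2}{p+1}a^{(p-1)/2}$, while $I(a)\ge 0$ otherwise (in that case every mass-$a$ function has nonnegative energy). Defining $R_{p}$ by $\tfrac{2}{p+1}R_{p}^{(p-1)/2}:=m_{p}$ then yields at once $\{I(a)<0\}=\{a>R_{p}\}$ together with the variational formula for $R_{p}$ in (iii). For the complementary bound $I(a)\le 0$ (hence $I(a)=0$ when $a\le R_{p}$, and more generally $I\ge 0\Rightarrow I=0$) I would test with a flat box: $\psi$ equal to $\sqrt{a}\,R^{-d/2}$ on a cube of side $R$ and $0$ outside has mass $a$, $\norm{\nabla\psi}_2^2=O(a/R)$ and $\norm{\psi}_{p+1}^{p+1}=a^{(p+1)/2}R^{-d(p-1)/2}\to 0$ as $R\to\infty$, so $\cH(\psi)\to 0$.

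It then remains to locate $m_{p}$, and here the mass-critical exponent $p_{m}=1+4/d$ enters. In the subcritical range $1<p<p_{m}$ I would show $m_{p}=0$, hence $R_{p}=0$ and $I(a)<0$ for every $a>0$, by feeding in slowly spreading profiles $\psi^{\lambda}_{\mvx}=\lambda^{d/2}g(\lambda\mvx)$ built from a fixed Schwartz function $g$: a Riemann-sum comparison gives $\norm{\psi^{\lambda}}_2^2\to\norm{g}_{L^{2}}^{2}$, $\norm{\psi^{\lambda}}_{p+1}^{p+1}\sim\lambda^{d(p-1)/2}\norm{g}_{L^{p+1}}^{p+1}$ and $\norm{\nabla\psi^{\lambda}}_2^2\sim\lambda^{2}\norm{\nabla g}_{L^{2}}^{2}$, so $J_{p}(\psi^{\lambda})\sim\lambda^{\,2-d(p-1)/2}\to 0$ precisely because $2-d(p-1)/2>0$ in this range. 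For $p\ge p_{m}$ I would instead prove $m_{p}>0$, hence $R_{p}>0$. The engine here is a discrete Sobolev inequality $\norm{\psi}_{\ell^{2d/(d-2)}}\le C_{d}\norm{\nabla\psi}_2$ on $\dZ^{d}$, which one obtains from the $O(|x|^{1-d})$ decay of the kernel of $(-\Delta)^{-1/2}$ together with the discrete Hardy--Littlewood--Sobolev inequality (or one may simply invoke \cite{WEI99}). Interpolating it against $\norm{\psi}_2$ gives the critical discrete GNS $\norm{\psi}_{p_{m}+1}^{p_{m}+1}\le C\norm{\psi}_2^{4/d}\norm{\nabla\psi}_2^{2}$; then for general $p\ge p_{m}$ one splits $\norm{\psi}_{p+1}^{p+1}\le\norm{\psi}_{\infty}^{p-p_{m}}\norm{\psi}_{p_{m}+1}^{p_{m}+1}$, bounds $\norm{\psi}_{\infty}\le\norm{\psi}_{\ell^{2d/(d-2)}}\le C_{d}\norm{\nabla\psi}_2$, and absorbs the extra powers of $\norm{\nabla\psi}_2$ using the trivial lattice bound $\norm{\nabla\psi}_2^2\le 4d\norm{\psi}_2^2$. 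Either way $J_{p}(\psi)\ge c>0$ uniformly, so $m_{p}>0$.

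Finally, part (i) --- attainment of the infimum in \eqref{def:Ifn} once $-\infty<I(a)<0$ --- is the genuinely delicate point, and I would attack it by concentration--compactness on $\ell^{2}(\dZ^{d})$. Given a minimising sequence $\psi^{(n)}$ of mass $a$, the lattice bound $\norm{\nabla\psi}_2^2\le 4d\norm{\psi}_2^2$ makes $(\psi^{(n)})$ bounded in $\ell^{2}$, while $\cH(\psi^{(n)})\to I(a)<0$ forces $\norm{\psi^{(n)}}_{p+1}^{p+1}\ge\tfrac{p+1}{2}\bigl(\norm{\nabla\psi^{(n)}}_2^2-\cH(\psi^{(n)})\bigr)$ to stay bounded below; combined with $\norm{\psi^{(n)}}_{p+1}^{p+1}\le\norm{\psi^{(n)}}_{\infty}^{p-1}\norm{\psi^{(n)}}_2^2$ this pins down a lattice site carrying a definite amount of mass, and after translating it to the origin and extracting a pointwise (hence weak $\ell^{2}$) limit one obtains a nonzero candidate $\psi^{\star}$. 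The crux --- and the step I expect to be the main obstacle --- is excluding mass that escapes to infinity; I would do this via strict subadditivity $I(a)<I(a')+I(a-a')$ for $0<a'<a$, which I expect to deduce from the elementary scaling bound $\cH(c\psi)\le c^{2}\cH(\psi)$ for $c\ge 1$ (strict when $c>1$ and $\cH(\psi)<0$, since $p>1$): this makes $a\mapsto I(a)/a$ strictly decreasing on $\{I<0\}$, and summing $I(a_{i})>(a_{i}/a)I(a)$ gives strict subadditivity. With that in hand the standard dichotomy argument forbids splitting, so $\norm{\psi^{\star}}_2^2=a$; lower semicontinuity of $\norm{\nabla\cdot}_2^2$ and continuity of $\norm{\cdot}_{p+1}^{p+1}$ under the resulting strong convergence give $\cH(\psi^{\star})\le I(a)$, forcing equality. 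Everything outside this compactness step is elementary once the discrete Sobolev inequality is in hand.
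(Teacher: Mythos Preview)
The paper does not give its own proof of this lemma: it is stated with the attribution ``Weinstein, See~\cite{WEI99}'' and is simply quoted as a known result, with the remark that the formula for $R_{p}$ identifies it as the reciprocal of the sharp constant in the discrete Gagliardo--Nirenberg--Sobolev inequality. So there is nothing to compare against on the paper's side.

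Your proposal is a faithful and essentially correct reconstruction of Weinstein's argument. The reduction to the scale-invariant quotient $J_{p}$, the identification of $R_{p}$ via $m_{p}$, the flat-box test for $I(a)\le 0$, the spreading profiles giving $m_{p}=0$ for $p<p_{m}$, and the discrete Sobolev/interpolation route to $m_{p}>0$ for $p\ge p_{m}$ are all standard and sound (the paper itself uses the flat-box test in its Lemma~\ref{lem:minrate2}). One small point worth tightening in part~(i): the strict subadditivity step needs you to pass the pointwise strict inequality $\cH(c\psi)<c^{2}\cH(\psi)$ through an infimum, and that requires noting that along a minimising sequence for $I(a')$ with $I(a')<0$ the quantity $\norm{\psi_{n}}_{p+1}^{p+1}\ge -\tfrac{p+1}{2}\cH(\psi_{n})$ stays bounded below by a positive constant; this is routine but should be said explicitly. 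Otherwise your outline would stand as a complete proof.
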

The last statement interprets $R_{p}$ in terms of a functional inequality for the lattice. It is reciprocal to the best possible constant for the discrete Gagliardo-Nirenberg-Sobolev inequality to hold~\cite{WEI99}.

\subsection{Dirichlet Solitons and Exponential Decay}\label{sec:expdec}
In the continuum, when soliton solutions exist, they are known to be smooth and exponentially decaying. In the discrete setting, exponential decay still holds whenever the solitons exist, that is whenever $r>R_{p}$. A discrete counterpart of the continuum proof can be used to prove this result on $\dZ^{d}$. For our purposes, it suffices to establish a uniform rate of exponential decay for the Dirichlet problem defined on a growing sequence of boxes $\gL$ centered at the origin, and show the existence of an exponentially decaying $\ell^{2}(\dZ^{d})$ minimizer via a tightness argument. The analogous Dirichlet problem may be defined as
\begin{align}\label{eq:DirHam}
    \varphi^{\gL,a}:=\argmin\{\cH(\varphi): \varphi \in \dC^{\gL},\norm{\varphi}_{2}^{2}=a\}.
\end{align}
It is clear that the Dirichlet solitons must satisfy
\begin{align}\label{eq:DirEq}
    \go_{\gL}(a)\cdot \varphi^{\gL,a}_{{\mvx}}=\left(-\gD^{0}_{\gL}\varphi^{\gL,a}\right)_{{\mvx}}-\left|\varphi^{\gL,a}_{{\mvx}}\right|^{p-1}\varphi^{\gL,a}_{{\mvx}}
\end{align}
for a Lagrange multiplier $\go_{\gL}(a)$. Note, minimizers for each $\gL$ may be simultaneously defined on $\dZ^{d}$. Moreover, the resulting sequence itself is minimizing.
\begin{lem}\label{lem:DirMin}
    Let $\varphi^{\gL,a}_{{\mvx}}$ denote the Dirichlet minimizers of~\eqref{eq:DirHam} with mass $a$ on $\gL \subset \dZ^{d}$. Let $\{\gL_{k}\}_{k\geq 1}$ be a sequence of subsets each containing $\mv0$ such that $\gL_{k} \uparrow \dZ^{d}$ as $k \to \infty$. Then the sequence $\{\varphi^{\gL_{k},a}\}_{k\geq 1}$ is minimizing for~\eqref{def:HAM1}.
\end{lem}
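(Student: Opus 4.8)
The statement to prove is that if $\varphi^{\gL_k,a}$ is the Dirichlet minimizer of $\cH$ on $\gL_k$ at mass $a$, and $\gL_k \uparrow \dZ^d$, then $\cH(\varphi^{\gL_k,a}) \to I(a)$. Let me think about how to set this up.

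First, the easy inequality. Since each $\varphi^{\gL_k,a}$, extended by zero, is an element of $\ell^2(\dZ^d)$ with mass exactly $a$, we immediately have $\cH(\varphi^{\gL_k,a}) \geq I(a)$ for every $k$, so $\liminf_k \cH(\varphi^{\gL_k,a}) \geq I(a)$.

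Now the hard direction: $\limsup_k \cH(\varphi^{\gL_k,a}) \leq I(a)$. The plan is to approximate a near-optimal function on $\dZ^d$ by a compactly supported one and push it into $\gL_k$. Fix $\eta > 0$ and choose $\psi \in \ell^2(\dZ^d)$ with $\norm{\psi}_2^2 = a$ and $\cH(\psi) \leq I(a) + \eta$. Since $\psi \in \ell^2$, its tail mass and tail gradient energy vanish: there is a box $\gL$ (centered at $\mv0$, WLOG contained in $\gL_k$ for $k$ large) such that the truncation $\psi \mathbf{1}_{\gL}$ satisfies $\norm{\psi \mathbf{1}_\gL}_2^2 \geq a - \delta$ and $\cH(\psi\mathbf{1}_\gL) \leq \cH(\psi) + o_\delta(1)$; the latter because $\norm{\nabla(\psi\mathbf{1}_\gL)}_2^2 \to \norm{\nabla\psi}_2^2$ as the box grows (the only new edge contributions are at the boundary, where $\psi$ is small) and $\norm{\psi\mathbf{1}_\gL}_{p+1}^{p+1} \to \norm{\psi}_{p+1}^{p+1}$. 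Here one should be a little careful: for the $\limsup$ bound on $\cH$ we want the gradient term not to increase too much and the $\ell^{p+1}$ term not to decrease too much; truncation can only decrease $\norm{\cdot}_{p+1}$ and the boundary gradient terms are controlled by $\sum_{\mvx \in \partial\gL}|\psi_\mvx|^2$ which is a tail sum. Then rescale $\psi\mathbf{1}_\gL$ to have mass exactly $a$: set $\widetilde\psi = \sqrt{a/\norm{\psi\mathbf{1}_\gL}_2^2}\,\psi\mathbf{1}_\gL$, a valid competitor for the Dirichlet problem on $\gL_k$. Since the rescaling factor is $1 + O(\delta)$ and $\cH$ is a combination of a quadratic and a degree-$(p+1)$ homogeneous term, $\cH(\widetilde\psi) = \cH(\psi\mathbf{1}_\gL) + O(\delta)$ uniformly. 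Therefore, by minimality of $\varphi^{\gL_k,a}$, for all $k$ with $\gL \subseteq \gL_k$,
\[
\cH(\varphi^{\gL_k,a}) \leq \cH(\widetilde\psi) \leq I(a) + \eta + O(\delta).
\]
Letting $k \to \infty$, then $\delta \to 0$, then $\eta \to 0$ gives $\limsup_k \cH(\varphi^{\gL_k,a}) \leq I(a)$, completing the proof.

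The main obstacle I anticipate is the bookkeeping in the truncation step — specifically verifying that $\cH(\psi\mathbf{1}_\gL)$ does not overshoot $\cH(\psi)$ by more than a vanishing amount, which requires separating the boundary gradient contributions and bounding them by the $\ell^2$-tail of $\psi$, and then checking the rescaling perturbs $\cH$ by a controlled amount (using, e.g., that $\norm{\nabla\psi}_2^2 \leq \cH(\psi) + \frac{2}{p+1}\norm{\psi}_{p+1}^{p+1}$ and a crude $\ell^{p+1} \leq \ell^2$ type bound to keep everything finite). None of this is deep, but it is the only place where something could go wrong. Note also that this lemma does not assert that the limit is \emph{attained} by an $\ell^2(\dZ^d)$ minimizer — that is the subsequent tightness/compactness argument alluded to in Section \ref{sec:expdec}; here we only need the convergence of the values, which is exactly what a minimizing sequence gives.
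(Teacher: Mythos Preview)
Your proposal is correct and follows essentially the same approach as the paper: take a near-minimizer on $\dZ^d$, truncate to a large box, rescale to restore the mass, and compare against the Dirichlet minimizer. The paper phrases this via a minimizing sequence and a diagonal extraction rather than fixing a single $\eta$-near-minimizer at a time, but the content is identical; if anything, your version is more explicit about the truncation estimates (boundary gradient terms, $\ell^{p+1}$ continuity, rescaling error) that the paper's proof leaves implicit.
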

\begin{proof}
    Let ${\varphi^{a,j}}$ be a minimizing sequence for $\cH$, each with mass $a$. Note, by the translation invariance of $\cH$, we may recenter the $\varphi^{a,j}$ such that  $\mv0$ is always the site with the largest absolute value. We then define
    \[
        \varphi^{a,j,\gL_{k}}:=\sqrt{\frac{a}{\norm{P_{\gL_{k}}\varphi^{a,j}}_{2}^{2}}}\cdot P_{\gL_{k}}\varphi^{a,j}.
    \]
    Note that as $\gL_{k} \uparrow \dZ^{d}$, $P_{\gL_{k}} \varphi \to \varphi$ in norm for any $\varphi \in \ell^{2}(\dZ^{d})$. We may then choose a diagonal subsequence that is minimizing for $H$ and denote it as $\{\varphi^{a,j_{k},\gL_{l}}\}_{k\geq 1}$. By hypothesis,
    \[
        \cH(\varphi^{\gL_{k},a})\leq \cH(\varphi^{n_{k},a,\gL_{k}}).
    \]
    Thus, the sequence of Dirichlet minimizers is also minimizing for the Hamiltonian.
\end{proof}
On the question of exponential decay, we provide a probabilistic proof directly adapted from Chatterjee~\cite{C14}, who proved the same for soliton solutions in the mass subcritical regime on the discrete torus. 

We bring this in now in order to control the value of the Lagrange Multiplier, which in turn is required for a uniform rate of exponential decay.
\begin{lem}\label{lem:gobound}
    Let $a>R_{p}$, for $|\gL|$ sufficiently large we have $\eps_{1}<\eps_{2}<0$ depending on $a$ such that
    \[
        \eps_{1}< \go_{\gL}(a) <\eps_{2}.
    \]
\end{lem}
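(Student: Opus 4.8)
\textbf{Proof plan for Lemma~\ref{lem:gobound}.}

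The plan is to get two-sided control of $\go_{\gL}(a)$ by pairing the defining relation~\eqref{eq:DirEq} with the mass constraint and the known sign of $I(a)$. First I would multiply~\eqref{eq:DirEq} by $\overline{\varphi^{\gL,a}_{\mvx}}$ and sum over $\mvx\in\gL$ to get the identity
\begin{align*}
\go_{\gL}(a)\cdot a = \norm{\nabla\varphi^{\gL,a}}_2^2 - \norm{\varphi^{\gL,a}}_{p+1}^{p+1} = \cH(\varphi^{\gL,a}) - \tfrac{p-1}{p+1}\norm{\varphi^{\gL,a}}_{p+1}^{p+1},
\end{align*}
using the definition~\eqref{def:HAM1} of $\cH$ with $h=1$. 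Since $a>R_p$, Lemma~\ref{lem:Rp} gives $I(a)<0$, and the Dirichlet minimizers are minimizing for $\cH$ by Lemma~\ref{lem:DirMin}, so for $|\gL|$ large we have $\cH(\varphi^{\gL,a})\le I(a)/2<0$. Because $\norm{\varphi^{\gL,a}}_{p+1}^{p+1}\ge 0$, the displayed identity immediately yields the upper bound $\go_{\gL}(a)\cdot a \le \cH(\varphi^{\gL,a}) \le I(a)/2$, i.e.\ $\go_{\gL}(a)<\eps_2:=I(a)/(2a)<0$.

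For the lower bound I would again use the displayed identity, now writing $\go_{\gL}(a)\cdot a = \cH(\varphi^{\gL,a}) - \tfrac{p-1}{p+1}\norm{\varphi^{\gL,a}}_{p+1}^{p+1}$ and bounding each term below. For the Hamiltonian term, $\cH(\varphi^{\gL,a})$ is bounded below by $I(a)$ (the infimum over all of $\dZ^d$), which is finite. For the $\ell^{p+1}$ term, I would use $\norm{\varphi^{\gL,a}}_{p+1}^{p+1}\le \norm{\varphi^{\gL,a}}_\infty^{p-1}\norm{\varphi^{\gL,a}}_2^2 \le a\cdot\norm{\varphi^{\gL,a}}_\infty^{p-1}$, so it suffices to bound $\norm{\varphi^{\gL,a}}_\infty$; but $\norm{\varphi^{\gL,a}}_\infty^2\le \norm{\varphi^{\gL,a}}_2^2 = a$, giving $\norm{\varphi^{\gL,a}}_{p+1}^{p+1}\le a^{(p+1)/2}$. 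Combining, $\go_{\gL}(a)\cdot a \ge I(a) - \tfrac{p-1}{p+1}a^{(p+1)/2}$, so $\go_{\gL}(a)>\eps_1 := a^{-1}\bigl(I(a) - \tfrac{p-1}{p+1}a^{(p+1)/2}\bigr)$, a finite constant depending only on $a$ (and $p,d$). Since $\eps_1\le \eps_2<0$ (and one can take $\eps_1$ strictly smaller, or note strictness follows from $\go_{\gL}(a)$ being a genuine negative eigenvalue), this completes the argument.

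The main obstacle, such as it is, is justifying that $\cH(\varphi^{\gL,a})$ is genuinely negative for large $\gL$ — this is exactly where $a>R_p$ enters, via Lemma~\ref{lem:Rp} ($I(a)<0$) and Lemma~\ref{lem:DirMin} (the Dirichlet minimizers form a minimizing sequence, so $\cH(\varphi^{\gL,a})\to I(a)$); the negativity of $\go_{\gL}(a)$ is precisely the obstruction to exponential decay that this lemma is meant to remove, so one should be slightly careful that the bound $\cH(\varphi^{\gL,a})\le I(a)/2$ holds for all sufficiently large $|\gL|$ uniformly. Everything else is the elementary Pohozaev-type identity obtained by testing the Euler--Lagrange equation against the minimizer and crude $\ell^2 \hookrightarrow \ell^{p+1}$ interpolation.
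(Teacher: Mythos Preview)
Your proof is correct and follows essentially the same approach as the paper: test the Euler--Lagrange equation~\eqref{eq:DirEq} against the minimizer to obtain $\go_{\gL}(a)\cdot a = \norm{\nabla\varphi^{\gL,a}}_2^2 - \norm{\varphi^{\gL,a}}_{p+1}^{p+1}$, then use $\cH(\varphi^{\gL,a})\to I(a)<0$ for the upper bound and the crude interpolation $\norm{\varphi}_{p+1}^{p+1}\le a^{(p+1)/2}$ for the lower bound. The only cosmetic difference is that the paper obtains the lower bound more directly by dropping $\norm{\nabla\varphi^{\gL,a}}_2^2\ge 0$ rather than routing through $\cH(\varphi^{\gL,a})\ge I(a)$, but the resulting constants are equally valid.
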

\begin{proof}
    Multiplying both sides of~\eqref{eq:DirEq} by $\varphi^{\gL,a}_{{\mvx}}$ (recall that $\varphi^{\gL,a}$ is assumed to be nonnegative), we obtain that
    \begin{align*}
        \go_{\gL}(a)\cdot r
         & =\norm{\nabla \varphi^{\gL,a}}_{2}^{2}-\norm{\varphi^{\gL,a}}_{p+1}^{p+1}                  \\
         & \leq \norm{\nabla \varphi^{\gL,a}}_{2}^{2}-\frac{2}{p+1}\norm{\varphi^{\gL,a}}_{p+1}^{p+1}
        =\cH(\varphi^{\gL,a}).
    \end{align*}
    Now recall that as $\gL\uparrow \dZ^{d}$, $\cH(\varphi^{\gL,a})$ converges to $I(a)<0$, and we choose $\eps_{2}>I(a)$ but still strictly negative. As for the lower bound, we have that
    $
        \go_{\gL}(a)\geq -a^{(p+1)/2}=:\eps_{1}.
    $
\end{proof}

\begin{lem}\label{lem:decay}
    Let $a>R_{p}$. Let $\varphi^{\gL}$ denote a Dirichlet minimizer corresponding to $\gL$. We have $U_0\subset \gL$ and finite constants $\go_{0}>0$ and $C_{0}>0$ depending on $a$ and independent of $\gL$ such that
    \begin{align*}
        \varphi^{\gL}_{{\mvx}}\leq C_{0}e^{-\go_{0}\cdot d({\mvx}, U_0)}.
    \end{align*}
\end{lem}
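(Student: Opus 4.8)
\textbf{Proof plan for Lemma~\ref{lem:decay}.}

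The plan is to adapt the Combes--Thomas / probabilistic ``self-repulsion'' argument used by Chatterjee~\cite{C14}, taking advantage of the uniform bound on the Lagrange multiplier $\go_{\gL}(a)$ from Lemma~\ref{lem:gobound}. First I would fix $a>R_p$ and, using Lemma~\ref{lem:gobound}, fix constants $\eps_1<\eps_2<0$ with $\eps_1<\go_{\gL}(a)<\eps_2$ for all sufficiently large $\gL$; write $\go:=\go_\gL(a)$ for brevity. Rearranging~\eqref{eq:DirEq} gives the pointwise identity
\begin{align*}
(-\gD^0_\gL\varphi^\gL)_{\mvx} + |\go|\,\varphi^\gL_{\mvx} = |\varphi^\gL_{\mvx}|^{p-1}\varphi^\gL_{\mvx},
\end{align*}
so that $\varphi^\gL = \bigl(-\gD^0_\gL + |\go|\bigr)^{-1}\bigl(|\varphi^\gL|^{p-1}\varphi^\gL\bigr)$, and the operator $\bigl(-\gD^0_\gL+|\go|\bigr)^{-1}$ has a probabilistic (killed random walk / Green's function) representation that decays exponentially at a rate depending only on $|\go|\ge |\eps_2|>0$, uniformly in $\gL$. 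The second ingredient is that the ``source'' $|\varphi^\gL_{\mvx}|^{p-1}\varphi^\gL_{\mvx}$ is itself small away from a bounded set: since $\norm{\varphi^\gL}_2^2=a$, the set $U_0:=\{\mvx:\varphi^\gL_{\mvx}\ge\delta\}$ has cardinality at most $a/\delta^2$, and on $U_0^c$ the nonlinear factor $|\varphi^\gL_{\mvx}|^{p-1}\le\delta^{p-1}$ can be absorbed as a small perturbation of the mass term $|\go|$, provided $\delta$ is chosen small enough that $\delta^{p-1}<|\eps_2|/2$ — this uses the uniform lower bound on $|\go|$ in an essential way.

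The key steps, in order, would be: (1) record the rewritten equation and the Green's function representation $\varphi^\gL_{\mvx}=\sum_{\mvy}G_{|\go|}(\mvx,\mvy)\,|\varphi^\gL_{\mvy}|^{p-1}\varphi^\gL_{\mvy}$, where $G_{|\go|}$ is the resolvent kernel of $-\gD^0_\gL+|\go|$; (2) use $\norm{\varphi^\gL}_2^2=a$ and Chebyshev to produce the finite set $U_0$ with $|U_0|\le a\delta^{-2}$, with $\delta$ a constant chosen below; (3) on $U_0^c$ write $|\varphi^\gL_{\mvx}|^{p-1}\varphi^\gL_{\mvx}=V_{\mvx}\varphi^\gL_{\mvx}$ with $0\le V_{\mvx}\le\delta^{p-1}<|\eps_2|/2$, and move this term to the left to get $(-\gD^0_\gL + |\go| - V)\varphi^\gL = (\text{source supported on }U_0)$, where $|\go|-V\ge |\eps_2|/2>0$ on all of $\gL$; (4) apply the Combes--Thomas estimate to the operator $-\gD^0_\gL+(|\go|-V)$ — equivalently, conjugate by $e^{\go_0 d(\cdot,U_0)}$ and check that for $\go_0$ small enough (depending only on $|\eps_2|$, $d$) the conjugated operator is still positive with quantitative gap — to conclude $\varphi^\gL_{\mvx}\le C_0 e^{-\go_0 d(\mvx,U_0)}$ with $C_0$ controlled by $\max_{\mvx}\varphi^\gL_{\mvx}\le a^{1/2}$ and the gap; (5) observe that $\go_0$ and $C_0$ depend only on $a$ (through $\eps_2$ and the crude $\ell^\infty\le\ell^2$ bound), not on $\gL$, which is exactly the uniformity claimed. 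Alternatively to steps (3)--(4) one can run Chatterjee's random-walk argument directly: iterate the Green's function identity and bound $\varphi^\gL_{\mvx}$ by a sum over killed walk paths from $\mvx$ to $U_0$, each path of length $\ell$ contributing a factor $(2d/(2d+|\go|))^\ell$, giving geometric decay in $d(\mvx,U_0)$.

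The main obstacle is step (4)/(3): making the perturbation-of-the-mass-term argument rigorous and uniform in $\gL$ requires that the ``bad set'' $U_0$ be genuinely finite with size bounded independently of $\gL$ (which Chebyshev gives) \emph{and} that the exponential weight $e^{\go_0 d(\cdot,U_0)}$ interact well with the discrete Laplacian — the standard Combes--Thomas computation shows $e^{\go_0 d(\cdot,U_0)}(-\gD^0_\gL)e^{-\go_0 d(\cdot,U_0)} = -\gD^0_\gL + \mathrm{O}(\go_0)$ in operator norm, so one needs $\mathrm{O}(\go_0)$ strictly smaller than the gap $|\eps_2|/2$, which pins down the admissible $\go_0$. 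A subtlety worth flagging is that $d(\mvx,U_0)$ is a random-walk/graph distance on $\gL$ rather than on $\dZ^d$, but since $U_0$ is eventually contained in a fixed box and $\gL\uparrow\dZ^d$, the two distances agree for $\mvx$ far from the boundary, and near the boundary $\varphi^\gL$ is already tiny; one should state the conclusion with $d(\cdot,U_0)$ the $\dZ^d$-distance and note this reconciliation. Everything else — the Green's function bound, the cardinality bound on $U_0$, the $\ell^\infty\le\ell^2$ estimate — is routine given Lemma~\ref{lem:gobound}.
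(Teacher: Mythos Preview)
Your proposal is correct and shares all the essential ingredients with the paper's proof: the resolvent/Green's function representation coming from~\eqref{eq:DirEq}, the Chebyshev bound producing a finite set $U_\gd=\{\mvx:\varphi^{\gL}_{\mvx}\ge\gd\}$ with $|U_\gd|\le a/\gd^2$, and the uniform control on $|\go_\gL(a)|\ge|\eps_2|>0$ from Lemma~\ref{lem:gobound}. The only difference lies in how the exponential decay is finally extracted. Your primary route is Combes--Thomas: absorb the small potential $V=|\varphi^\gL|^{p-1}\le\gd^{p-1}$ on $U_0^c$ into the mass term, then conjugate $-\gD^0_\gL+|\go|-V$ by $e^{\go_0 d(\cdot,U_0)}$ and use the spectral gap. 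The paper instead follows Chatterjee's probabilistic bootstrap directly: it writes the Green's function as a killed random walk with killing probability $z=\go^2/(1+\go^2)$, introduces an explicit comparison function
\[
\eta_{\mvx}:=\frac{a^{(p+2)/2}}{z\gd^2}\,\max_{\mvx'}(1-z)^{\,d(\mvx',U_\gd)+d(\mvx,\mvx')/2},
\]
and lets $C$ be the smallest constant with $\varphi^{\gL}\le C\eta$; substituting back into the Green's function identity and choosing $\gd$ so that $\gd^{p-1}z^{-1}\sum_{\mvx'}(1-z)^{d(\mvx,\mvx')/2}\le 1/2$ yields $C\le 1+C/2$, hence $C\le 2$. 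Your Combes--Thomas route is conceptually cleaner and more standard in spectral theory; the paper's route is more hands-on and delivers the constants $C_0,\go_0$ explicitly in terms of $z$ without any operator-norm computation. The ``Chatterjee random-walk argument'' you mention as an alternative is precisely what the paper does.
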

\begin{proof}
    As we have already seen, $a>R_{p}$ implies that  for $I(a)<\eps<0$, $\go_{\gL}(a)<\eps$ for all $|\gL|$ sufficiently large. In turn,~\eqref{eq:DirEq} may be rewritten using Green's function description of the inverse of the Dirichlet Laplacian as
    \begin{align}\label{eq:DirEq2}
        \varphi^{\gL,a}_{{\mvx}}=\sum_{{\mvx'}}G^{\gL}_{-\go_{\gL}}({\mvx},{\mvx'})\left|\varphi^{\gL,a}_{{\mvx'}} \right|^{p-1}\varphi^{\gL,a}_{{\mvx'}}.
    \end{align}
    Now, let $\gd >0$, and define
    \[
        U_{\gd}:=\{{\mvx} \in \gL : \varphi^{\gL,a}_{{\mvx}}\geq \gd\}.
    \]
    By the usual bound,
    \[
        |U_{\gd}|\leq \frac{a}{\gd^{2}}.
    \]
    We define $z=\go_{\gL}(a)^{2}/(1+\go_{\gL}(a)^{2})$, and let $(X^{z}_{t})_{t\in \dN}$ be the simple symmetric random walk on $\gL$, started at ${\mvx}$ which is killed with probability $z$ at each step, and is killed at the boundary. Recall that the Green's function is given by
    \[
        G^{\gL}_{|\go_{\gL}|}({\mvx},{\mvx'})=\E \sum_{t=1}^{T_{\gL}^{|\go_{\gL}|}} \1_{X_{t}^{z}={\mvx'}}.
    \]
    Using this expression and the fact that the event of death is independent of the step taken, we may rewrite~\eqref{eq:DirEq2} as
    \[
        \varphi^{\gL,a}_{{\mvx}}= \sum_{t=0}^{\infty}(1-z)^{t}\E\left|\varphi^{\gL,a}_{X_{t}^{z}}\right|^{p-1}\varphi^{\gL,a}_{X_{t}^{z}}.
    \]
    Let $p_{\gL}({\mvx},{\mvx'},t)$ denote the probability kernel of $X_{t}$, the random walk on $\gL$ annihilated on the boundary. Observe that for all ${\mvx'}$ such that $d({\mvx'},{\mvx})>t$, the probability that the random walk has reached ${\mvx'}$ is $0$. Thus,
    \begin{align}\label{eq:DirEq3}
        \varphi^{\gL,a}_{{\mvx}}=\sum_{{\mvx'} \in \gL}\sum_{t\geq d({\mvx},{\mvx'})}(1-z)^{t}p_{\gL}({\mvx},{\mvx'},t) \left( \varphi^{\gl ,a }_{{\mvx'}}\right)^{p}.
    \end{align}
    Clearly $p({\mvx},\vy,t)\leq 1$ and on evaluation of the geometric sum, we have
    \[
        \varphi^{\gL,a}_{{\mvx}}\leq \frac{1}{z}\sum_{{\mvx'} \in \gL}{(1-z)^{d({\mvx},{\mvx'})}}\left( \varphi^{\gl,a}_{{\mvx'}}\right)^{p}.
    \]
    Now, if ${\mvx'} \notin U_{\gd}$, then we have that $(\varphi^{\gL,a}_{{\mvx}})^{p}\leq \gd^{p-1} \varphi_{{\mvx'}}^{\gL,a}$, and if ${\mvx'} \in U_{\gd}$, then $d({\mvx}, U_{\gd})\leq d({\mvx},{\mvx'})$. Thus, partitioning the sum in~\eqref{eq:DirEq3},
    \begin{align}
        \varphi^{\gl,a}_{{\mvx}} 
        &\leq \frac{(1-z)^{d(U_{\gd},{\mvx})}}{z}\sum_{{\mvx'} \in U_{\gd}}\left(\varphi^{\gL,a}_{{\mvx'}} \right)^{p}+\frac{\gd^{p-1}}{z}\sum_{{\mvx'} \notin U_{\gd}}\varphi^{\gL,a}_{{\mvx'}}\notag\\
    &\leq \frac{(1-z)^{d(U_{\gd},{\mvx})}}{z}\cdot a^{p/2}\cdot \frac{a}{\gd^{2}} +\frac{\gd^{p-1}}{z}\sum_{{\mvx'} \in \gL}(1-z)^{d({\mvx},{\mvx'})}\varphi^{\gL,a}_{{\mvx'}}.\label{eq:DirInEq1}
    \end{align}
    Let
    \[
        \eta_{{\mvx}}:= \frac{a^{(p+2)/2}}{z \gd^{2}} \cdot \max_{{\mvx'}' \in \gL}(1-z)^{d({\mvx'}',U_{\gd})+d({\mvx},{\mvx'}')/2}.
    \]
    We observe as a consequence of the triangle inequality,
    \[
        \eta_{{\mvx}}\leq \frac{a^{(p+2)/2}}{z\gd^{2}}\max_{{\mvx'}' \in \gL} (1-z)^{d({\mvx'}',U_{\gd})+d({\mvx'},{\mvx'}')/2-d({\mvx},{\mvx'})/2}
        \leq (1-z)^{-d({\mvx},{\mvx'})}\eta_{{\mvx'}}.
    \]
    Let $C$ denote the smallest constant such that
    \begin{align}\label{def:C}
        \varphi^{\gL,a}_{{\mvx}}\leq C \eta_{{\mvx}}.
    \end{align}
    Since $\eta_{{\mvx}}>0$ and $\gL$ is finite, it is clear that $C$ is finite as well. We use this bound for~\eqref{eq:DirInEq1} obtaining
    \begin{align*}
        \varphi^{\gL,a}_{{\mvx}} 
        &\leq \frac{(1-z)^{d(U_{\gd},{\mvx})}}{z}\cdot \frac{a^{(p+2)/2}}{\gd^{2}}+\frac{C\gd^{p-1}}{z}\sum_{{\mvx'} \in \gL}(1-z)^{d({\mvx},{\mvx'})} \eta_{{\mvx'}}\\
     &\leq \frac{(1-z)^{d(U_{\gD},{\mvx})}}{z}\cdot \frac{a^{(p+2)/2}}{\gd^{2}}+\frac{C\gd^{p-1}}{z}\eta_{{\mvx}}\sum_{{\mvx'} \in \gL}(1-z)^{\frac{1}{2}d({\mvx},{\mvx'})}\\
    &\leq \left(1 + C \cdot \gd^{p-1} \sum_{{\mvx'} \in \gL}(1-z)^{d({\mvx},{\mvx'})/2}  \right)\varphi_{{\mvx}}.
        \end{align*}
    We can choose $\gd$ sufficiently small, such that \[
        \frac{\gd^{p-1}}{z}\sum_{{\mvx'} \in \gL}(1-z)^{d({\mvx},{\mvx'} )}\leq \frac{1}{2}.
    \]
    In turn, this tells us that
    \[
        C\leq 1+{C}/{2}
    \]
    since $C$ is the best constant for~\eqref{def:C}. This is the same as $C\leq 2$. With this choice of $\gd$, we have that
    \[
        \varphi^{\gL,a}_{{\mvx}}\leq 2\frac{a^{(p+2)/2}}{z \gd^{2}}\max_{{\mvx'} \in \gL} (1-z)^{d(U_{\gd},{\mvx'})+d({\mvx},{\mvx'})/2}.
    \]
    By the triangle inequality,
    \[
        d(U_{\gd},{\mvx'})+\frac{1}{2}d({\mvx},{\mvx'})\geq \frac{1}{2}d(U_{\gd},{\mvx'})+\frac{1}{2}d({\mvx},{\mvx'})\geq  \frac{d(U_{\gd},{\mvx})-d({\mvx},{\mvx'}')}{2}+\frac{1}{2}d({\mvx},{\mvx'}).
    \]
    Thus,
    \[
        \varphi^{\gL,a}_{{\mvx}}\leq 2\frac{a^{(p+2)/2}}{z \gd^{2}}(1-z)^{d({\mvx},U_{\gd})/2}.
    \]
    To conclude, take
    $
        U_{0}=U_{\gd},\text{ }C_{0}=2\frac{a^{(p+2)/2}}{z \gd^{2}}\text{ and }\go_{0}=-\frac{1}{2}\log(1-z).
    $
\end{proof}
We take this moment to emphasize that we may bound all the constants arising in Lemma~\ref{lem:decay} in terms of the $\eps_{1}$ and $\eps_{2}$ introduced in Lemma~\ref{lem:gobound}. To begin with, as a direct consequence of Lemma~\ref{lem:gobound},
\[
    \frac{\eps^{2}_{2}}{1+\eps^{2}_{2}} \leq z \leq \frac{\eps^{2}_{1}}{1+\eps^{2}_{1}}  .
\]
Next, a valid choice of $\delta$ is
\[
    \gd=\frac{z|\log(1-z)|^{d}}{2(d-1)!}
\]
which in turn yields
\[
    C_{0}=\frac{4\cdot a^{(p+1)/2}\cdot (d-1)!}{z^{2}|\log(1-z)|^{d}}\leq \frac{4\cdot a^{(p+1)/2}\cdot (d-1)!\cdot (1+\eps_{2}^{2})}{\eps_{2}^{2}\cdot |\log(1+\eps^{2}_{2})-2\log \eps_{2}|^{d}}
\]
The uniform rate of exponential decay of the Dirichlet minimizers implies the existence of an exponentially decaying minimizer to~\eqref{def:HAM1}, via a standard compactness argument. For the sake of clarity of exposition, we detail the argument here.
\begin{lem}\label{lem:expdecL}
    Let $a>R_{p}$. There exists an exponentially decaying minimizer $\varphi^{a}$ to~\eqref{def:HAM1}.
\end{lem}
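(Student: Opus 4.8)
The plan is to run a concentration--compactness argument on the minimizing sequence supplied by Lemma~\ref{lem:DirMin}, using the uniform exponential decay of Lemma~\ref{lem:decay} to pass to an $\ell^2(\dZ^d)$ limit.

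First I would set up and normalize the sequence. By Lemma~\ref{lem:DirMin}, the Dirichlet minimizers $\varphi^{\gL_k,a}$ (extended by zero to $\dZ^d$) satisfy $\cH(\varphi^{\gL_k,a}) \to I(a)$ with $\norm{\varphi^{\gL_k,a}}_2^2 = a$; since $\norm{\varphi^{\gL_k,a}}_{p+1}^{p+1} \le \norm{\varphi^{\gL_k,a}}_\infty^{p-1}\norm{\varphi^{\gL_k,a}}_2^2 \le a^{(p+1)/2}$, the Dirichlet energies $\norm{\nabla\varphi^{\gL_k,a}}_2^2$ are bounded. Using translation invariance of $\cH$ I recenter each $\varphi^{\gL_k,a}$ so that its maximum is attained at $\mv0$; writing $m_k := \varphi^{\gL_k,a}_{\mv0}$, the bound $\cH(\varphi^{\gL_k,a}) \ge -\tfrac{2}{p+1}m_k^{p-1}a$ together with $\cH(\varphi^{\gL_k,a}) \to I(a) < 0$ forces $\liminf_k m_k \ge c_0 > 0$ for an explicit $c_0 = c_0(a,p)$. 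Shrinking the parameter $\gd$ of Lemma~\ref{lem:decay} if necessary so that $\gd < c_0$ (that lemma only asks for $\gd$ small, and its constants $C_0,\go_0$ are uniform in $k$ because of the two-sided bound on $\go_\gL(a)$ from Lemma~\ref{lem:gobound}), we obtain $\mv0 \in U_0^{(k)}$ and $|U_0^{(k)}| \le a/\gd^2$ for all large $k$.

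Next I would establish tightness of $\{\varphi^{\gL_k,a}\}$ in $\ell^2(\dZ^d)$, that is $\sup_k \sum_{|\mvx|>R}(\varphi^{\gL_k,a}_\mvx)^2 \to 0$ as $R\to\infty$. The uniform exponential decay rules out the ``vanishing'' scenario (mass spreading thinly over the lattice) and, after the recentering about the maximum, confines the bulk of the mass to a bounded neighbourhood of a single cluster; the ``dichotomy'' scenario, in which two well-separated bumps each retain a positive fraction of the mass, is excluded by the strict subadditivity $I(a) < I(a_1) + I(a - a_1)$ for $0 < a_1 < a$ whenever $I(a) < 0$, which is standard for the DNLS ground-state problem (cf.~\cite{WEI99}); splitting off a separated bump would otherwise produce $\liminf_k \cH(\varphi^{\gL_k,a}) \ge I(a_1) + I(a-a_1) > I(a)$, a contradiction. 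This is the step I expect to be the main obstacle, precisely because Lemma~\ref{lem:decay} alone only controls decay away from the (a priori possibly spread-out) super-level set $U_0^{(k)}$, so one must invoke the binding inequality to know that recentering about the maximum actually captures all of the mass and that $U_0^{(k)}$ has uniformly bounded diameter.

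Finally, tightness together with the uniform $\ell^2$ (hence $\ell^\infty$) bound lets me extract a subsequence with $\varphi^{\gL_{k_j},a} \to \varphi^a$ in $\ell^2(\dZ^d)$. Then $\norm{\varphi^a}_2^2 = a$; the finite-difference operator is bounded on $\ell^2$, so $\norm{\nabla\varphi^{\gL_{k_j},a}}_2^2 \to \norm{\nabla\varphi^a}_2^2$; and since $p+1 > 2$ and $|\varphi^{\gL_{k_j},a}_\mvx|^{p+1} \le a^{(p-1)/2}|\varphi^{\gL_{k_j},a}_\mvx|^2$, dominated convergence on $\ell^1$ gives $\norm{\varphi^{\gL_{k_j},a}}_{p+1}^{p+1} \to \norm{\varphi^a}_{p+1}^{p+1}$. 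Hence $\cH(\varphi^a) = \lim_j \cH(\varphi^{\gL_{k_j},a}) = I(a)$, so $\varphi^a$ is a minimizer for~\eqref{def:HAM1}; and passing to the limit in $\varphi^{\gL_{k_j},a}_\mvx \le C_0 e^{-\go_0 d(\mvx, U_0^{(k_j)})}$, using the recentering and the uniform bound on the diameter of the effective support from the previous step, yields $\varphi^a_\mvx \le C_0' e^{-\go_0 |\mvx|}$, i.e., $\varphi^a$ decays exponentially.
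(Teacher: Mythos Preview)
Your proposal is correct and follows the same overall strategy as the paper: take the Dirichlet minimizers $\varphi^{\gL_k,a}$ as a minimizing sequence (Lemma~\ref{lem:DirMin}), recenter so the maximum sits at $\mv0$, use the uniform exponential decay of Lemma~\ref{lem:decay} to obtain compactness in $\ell^2(\dZ^d)$, and pass to a limit that inherits both the minimizing property and the decay.

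The genuine difference is in how the tightness step is handled. The paper's proof asserts that after recentering one has $U_0\subseteq\gL_0$ for a \emph{fixed} box $\gL_0$, from which precompactness is immediate; but Lemma~\ref{lem:decay} only bounds the \emph{cardinality} $|U_0|\le a/\gd^2$, not the diameter, so the passage ``$\mv0\in U_0$, and thus there is a box $\gL_0$ such that $\tilde{\varphi}^{\gL,a}\le C_0 e^{-\go_0 d(\gL_0,\mvx)}$'' is not fully justified as written. You correctly flag this as the main obstacle and close it via the Lions trichotomy: vanishing is excluded by your lower bound $m_k\ge c_0>0$ on the maximum, and dichotomy by the strict binding inequality $I(a)<I(a_1)+I(a-a_1)$ from~\cite{WEI99}. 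Once tightness around $\mv0$ is established, bounded diameter of $U_0^{(k)}$ follows (any site outside the tightness radius for $\eps<\gd^2$ cannot lie in $U_\gd^{(k)}$), and the rest of your argument---continuity of $\norm{\nabla\cdot}_2^2$ and $\norm{\cdot}_{p+1}^{p+1}$ under strong $\ell^2$ convergence with a uniform $\ell^\infty$ bound---goes through. In short, your route is the paper's route made rigorous at the one point where the paper is terse.
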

\begin{proof}
    Given a box $\gL=[-n,n]^{d}\cap \dZ^{d}$ centered at the origin, we define $2\cdot \gL$ as $[-2n,2n]^{d}\cap \dZ^{d}$. We embed the minimizer $\varphi^{\gL,a}$ into $\ell^{2}(2\cdot\gL)$ via the standard inclusion map and note that the energy is still the same. We are then free to translate $\varphi^{\gL,a}$ within this larger box and preserve the energy. We define $\tilde{\varphi}^{\gL,a}$ to be the translation of $\varphi^{\gL,a}$ such that the site with the largest mass is situated at the origin. By Lemma~\ref{lem:decay} there exist, independent of $\gL$, a set $U_{0}$ of bounded size and positive constants $C_{0}$ and $\go_{0}$ such that $\tilde{\varphi}^{\gL,a}\leq C_{0} e^{-\go_{0} d({\mvx}, U_{0})}$. By construction, it is clear that $\mv0\in U_{0}$, and thus there is a box $\gL_{0}$ such that $\tilde{\varphi}^{\gL,a} \leq C_{0}e^{-\go_{0}\cdot d(\gL_{0},{\mvx})}$. Thus, $\{\tilde{\varphi}^{\gL,a}\}$ is a pre-compact sequence in $\ell^{2}(\dZ^{d})$. We take $\varphi^{a}$ to be any accumulation point of the sequence. Clearly, $\varphi^{a}_{{\mvx}}\geq 0$, $\norm{\varphi^{a}}_{2}^{2}=a$ and $\varphi^{a}_{{\mvx}}\leq C_{0} e^{-\go_{0}d({\mvx}, \gL_{0})}$.
\end{proof}
\begin{rem}
    We remark that the above process of re-centering should not be necessary as the sequence of Dirichlet minimizers should have a mass that is concentrated towards the center of the box $\gL$. This is due to the decay of the Dirichlet heat kernel at the edges of the box. We, in fact, conjecture that the minimizer for the Dirichlet problem is unique, which would in turn imply the uniqueness of the minimizer to~\eqref{def:HAM1} up to translation and phase rotation.
\end{rem}
The exponentially decaying minimizer and the corresponding truncations are very useful from the perspective of constructing a minimizing sequence with a good rate of convergence. We establish this now.
\begin{lem}\label{lem:minrate1}
    Let $a>R_{p}+\eps$, $\gL=[-n,n]^{d}\cap \dZ^{d}$, and let $\varphi^{\gL,a}$ denote the Dirichlet minimizer. We have
    \[
        \left|I(a)-\cH\left(\varphi^{\gL,a}\right) \right|\leq C_{0}'\cdot a^{p^{2}-1}\cdot e^{-\go_{0}\cdot n} .
    \]

\end{lem}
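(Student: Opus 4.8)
The plan is to prove the two inequalities separately. The bound $\cH(\varphi^{\gL,a})\ge I(a)$ is immediate: extending $\varphi^{\gL,a}$ by zero off $\gL$ produces a function on $\dZ^{d}$ of the same mass $a$ and the same $\cH$-energy (the Dirichlet form on $\gL$ being exactly the restriction of $\norm{\nabla\cdot}_{2}^{2}$ to functions supported in $\gL$), which is admissible in the infimum~\eqref{def:Ifn}. Hence the content of the lemma is the reverse bound $\cH(\varphi^{\gL,a})-I(a)\le C_{0}'a^{p^{2}-1}e^{-\go_{0}n}$, and the natural way to obtain it is to feed a truncation of the \emph{global} minimizer into the Dirichlet problem.

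Concretely, let $\varphi^{a}$ be the exponentially decaying minimizer on $\dZ^{d}$ produced by Lemma~\ref{lem:expdecL}, recentered so that $\varphi^{a}_{{\mvx}}\le C_{0}e^{-\go_{0}d({\mvx},\gL_{0})}$ for a finite box $\gL_{0}$ independent of $\gL$; then $\cH(\varphi^{a})=I(a)$, and for $n$ large we have $\gL_{0}\subset\gL=[-n,n]^{d}\cap\dZ^{d}$ with $d(\gL_{0},\gL^{c})\ge n/2$. Writing $P_{\gL}\varphi^{a}$ for the restriction to $\gL$ (zero outside), $m:=\norm{P_{\gL}\varphi^{a}}_{2}^{2}$ and $\lambda:=\sqrt{a/m}\ge 1$, the rescaled truncation $\widetilde{\varphi}:=\lambda\,P_{\gL}\varphi^{a}$ is supported in $\gL$ and has mass exactly $a$, so by minimality $\cH(\varphi^{\gL,a})\le\cH(\widetilde{\varphi})$ and it suffices to bound
\[
\cH(\widetilde{\varphi})-I(a)=\bigl(\lambda^{2}\norm{\nabla P_{\gL}\varphi^{a}}_{2}^{2}-\norm{\nabla\varphi^{a}}_{2}^{2}\bigr)-\tfrac{2}{p+1}\bigl(\lambda^{p+1}\norm{P_{\gL}\varphi^{a}}_{p+1}^{p+1}-\norm{\varphi^{a}}_{p+1}^{p+1}\bigr).
\]
I would split each bracket into a \emph{truncation error} (the case $\lambda=1$) and a \emph{rescaling error} (the factors $\lambda^{2}-1$ and $\lambda^{p+1}-1$). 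The truncation errors involve only edges and sites within distance one of $\gL^{c}$, on which $\varphi^{a}\le C_{0}e^{-\go_{0}n/2}$; hence they, and also $a-m$, are controlled by $\sum_{d({\mvx},\gL_{0})\ge n/2}C_{0}^{2}e^{-2\go_{0}d({\mvx},\gL_{0})}$, which after a routine absorption of the lattice‑sphere factor $O(\rho^{d-1})$ is $\le C\,C_{0}^{2}e^{-\go_{0}n}$. In particular $0\le\lambda^{2}-1\le 2(a-m)/a\le C\,C_{0}^{2}a^{-1}e^{-\go_{0}n}$, and likewise for $\lambda^{p+1}-1$. The rescaling errors are then these small quantities multiplied by $\norm{\nabla P_{\gL}\varphi^{a}}_{2}^{2}$ and $\norm{P_{\gL}\varphi^{a}}_{p+1}^{p+1}$, for which I would use the soliton identity $\norm{\nabla\varphi^{a}}_{2}^{2}=\norm{\varphi^{a}}_{p+1}^{p+1}+\go(a)a\le\norm{\varphi^{a}}_{p+1}^{p+1}$ (since $\go(a)<0$) together with the crude inequality $\norm{\varphi^{a}}_{p+1}^{p+1}\le\norm{\varphi^{a}}_{2}^{p-1}\norm{\varphi^{a}}_{2}^{2}=a^{(p+1)/2}$.

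Summing the four contributions gives $\cH(\widetilde{\varphi})-I(a)\le C(\eps,d,p)\,a^{q}\,e^{-\go_{0}n}$, where the constant and the exponent $q$ come from tracking through Lemma~\ref{lem:decay} the $a$‑dependence of $C_{0}$, $z$ and $\go_{0}$; by Lemma~\ref{lem:gobound} and the paragraph following Lemma~\ref{lem:decay} these are controlled in terms of $\eps_{1}=-a^{(p+1)/2}$ and an admissible $\eps_{2}\in(I(a),0)$, and the crudest admissible estimates collapse to $q=p^{2}-1$ (any larger explicit polynomial would serve equally well downstream). The one place a hypothesis is genuinely needed is that $z$ — equivalently $\go_{0}$ — stay bounded away from $0$; this is exactly why one assumes $a>R_{p}+\eps$, which keeps $I(a)$, hence a valid choice of $\eps_{2}$, bounded away from $0$ on the relevant range of $a$. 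Accordingly, I expect the only real work to be this bookkeeping of powers of $a$: the analytic input (zero‑extension, the uniform exponential decay of Lemma~\ref{lem:decay}, and the soliton identity) is already in place.
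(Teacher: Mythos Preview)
Your proposal is correct and follows essentially the same route as the paper: take the exponentially decaying $\dZ^{d}$ minimizer $\varphi^{a}$ from Lemma~\ref{lem:expdecL}, truncate it to $\gL$, rescale to mass $a$, and use this as a competitor for the Dirichlet problem to show $\cH(\varphi^{\gL,a})\le \cH(\widetilde{\varphi})$, then estimate $\cH(\widetilde{\varphi})-I(a)$ via the exponential decay. Your organization into separate ``truncation'' and ``rescaling'' errors, together with the soliton identity $\norm{\nabla\varphi^{a}}_{2}^{2}\le\norm{\varphi^{a}}_{p+1}^{p+1}$ to control the gradient term, is a slightly cleaner bookkeeping than the paper's direct norm comparisons, but the argument is the same in substance.
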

\begin{proof}

    Let $\varphi^{a}$ be as in Lemma~\ref{lem:expdecL}. Note that for any $p$,
    \begin{align}\label{eq:decay-truncation}
             \norm{\varphi^{a}}^{p}_{p}-\left(\frac{a}{\norm{P_{\gL}\varphi^{r}}_{2}^{2}}\right)^{p/2}\norm{P_{\gL}\varphi^{a}}_{p}^{p}\leq \left|1-\left(\frac{a}{\norm{P_{\gL}\varphi^{r}}_{2}^{2}}\right)^{p}\right|\norm{\varphi^{a}}_{p}^{p}.
    \end{align}
    By construction, we know that for all ${\mvx}$ outside $\gL$
    \[
        \varphi^{a}_{{\mvx}}\leq C_{0}\exp(-\go_{0}\cdot n).
    \]
    Thus, we have that
    \[
        \norm{P_{\gL}\varphi^{a}}_{2}^{2}\geq a-\frac{C_{0}\cdot \Gamma(d)}{\go_{0}^{d}}\exp \left({-\go_{0}\cdot n}\right).
    \]
    For the convenience of notation, we will introduce for the purposes of this proof
    \[
        \epsilon(n):=C_{0}\exp (-\go_{0}\cdot n).
    \]
    Applied to~\eqref{eq:decay-truncation},
    \begin{align*}
        \norm{\varphi^{a}}^{p+1}_{p+1}-\left(\frac{a}{\norm{P_{\gL}\varphi^{a}}_{2}^{2}}\right)^{(p+1)/2}\norm{P_{\gL}\varphi^{a}}_{p+1}^{p+1} &\leq \left|\frac{a^{p+1}-(a-\epsilon(n))^{p+1}}{(a-\epsilon(n))^{p}}\right|\norm{\varphi^{a}}^{p}_{p}  \\ &\leq \frac{(p+1)a^{p(p+1)}\epsilon(n)}{(a-\epsilon(n))^{p+1}}.
    \end{align*}
    Now as for the gradient term, we only have to control the contributions from outside $\gL$ and on the boundary. We have that
    \begin{align*}
        \norm{\nabla \varphi^{a}}^{2}_{2}- \frac{a}{\norm{P_{\gL}\varphi^{a}}_{2}^{2}}\cdot \norm{\nabla P_{\gL}\varphi^{a}}_{2}^{2}\leq \sum_{{\mvx} \sim {\mvx'} \in \gL^{c}}|\varphi^{a}_{{\mvx}}-\varphi^{a}_{{\mvx'}}|^{2} & +\sum_{{\mvx}\sim{\mvx'} \in \gL}\left(1-\frac{a}{\norm{P_{\gL}\varphi^{a}}_{2}^{2}}\right)\cdot |\varphi^{a}_{{\mvx}}-\varphi^{a}_{{\mvx'}}|^{2} \\ &+\sum_{{\mvx}\sim {\mvx'} \in \partial \gL}|\varphi^{a}_{{\mvx}}|^{2}.
    \end{align*}
    Applying the bound obtained from the exponential decay,
    \[
        \norm{\nabla \varphi^{a}}^{2}_{2}- \frac{a}{\norm{P_{\gL}\varphi^{a}}_{2}^{2}}\cdot \norm{\nabla P_{\gL}\varphi^{a}}_{2}^{2}\leq 2\epsilon(n)+\left(\frac{\epsilon(n)}{a-\epsilon(n) }\right) a.
    \]
    Combining, we have
    \[
        \left|I(a)-\cH\left(\sqrt{\frac{a}{\norm{P_{\gL}\varphi^{a}}}}\varphi^{a}\right)\right|\leq \epsilon(n)\cdot \left( \frac{a^{p(p+1)}}{2(a-\epsilon(n))^{p+1}}  +\frac{2a}{a-\epsilon(n)}\right)\leq C_{0}'\cdot a^{p^{2}-1}\cdot e^{-\go_{0}\cdot n}.
    \]
    To conclude, we note that
    \[
        I(a)\leq \cH(\varphi^{\gL,a})\leq \cH\left(\sqrt{\frac{a}{\norm{P_{\gL}\varphi^{a}}_{2}^{2}}} P_{\gL} \varphi^{a} \right).
    \]
    This completes the proof.
\end{proof}

For the scenario where the minimizer does not exist, any weakly convergent sequence to zero is minimizing. We may use this fact to bound the rate of convergence.
\begin{lem}\label{lem:minrate2}
    Let $a\leq R_{p}$, $\gL=[-n,n]^{d}\cap \dZ^{d}$ and $\varphi^{\gL,a}$ be the Dirichlet minimizer with mass $a$. Then we have
    \[
        |\cH(\varphi^{\gL,a})|\leq \frac{2ad}{|\gL|^{1/d}}.
    \]
\end{lem}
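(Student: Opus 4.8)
The plan is to sandwich $\cH(\varphi^{\gL,a})$ between $0$ and $2ad/|\gL|^{1/d}$, using the hypothesis $a\le R_{p}$ for the lower bound and a single explicit test function for the upper bound.

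For the lower bound, observe that $a\le R_{p}$ forces $I(a)=0$ by Lemma~\ref{lem:Rp}. Any $\varphi\in\dC^{\gL}$, regarded as an element of $\ell^{2}(\dZ^{d})$ that vanishes off $\gL$, is an admissible competitor of mass $\norm{\varphi}_{2}^{2}$ in the variational problem~\eqref{def:Ifn}, and its $\dZ^{d}$-Hamiltonian is literally the Dirichlet Hamiltonian appearing in~\eqref{eq:DirHam} (the edges leaving $\gL$ enter both expressions in exactly the same way). Hence $\cH(\varphi^{\gL,a})\ge I(a)=0$.

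For the upper bound I would test against the constant function. Put $\phi_{{\mvx}}:=\sqrt{a/|\gL|}$ for ${\mvx}\in\gL$ and $\phi_{{\mvx}}:=0$ otherwise, so that $\norm{\phi}_{2}^{2}=a$ and $\phi$ is admissible in~\eqref{eq:DirHam}; by minimality of $\varphi^{\gL,a}$, $\cH(\varphi^{\gL,a})\le\cH(\phi)$. Since $\norm{\phi}_{p+1}^{p+1}\ge 0$ we have $\cH(\phi)\le\norm{\nabla\phi}_{2}^{2}$, and the only edges contributing to $\norm{\nabla\phi}_{2}^{2}$ are those joining $\gL$ to its complement, each contributing $a/|\gL|$. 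For the box $\gL=[-n,n]^{d}\cap\dZ^{d}$ there are exactly $2d(2n+1)^{d-1}=2d\,|\gL|^{(d-1)/d}$ such edges, so
\[
\norm{\nabla\phi}_{2}^{2}=2d\,|\gL|^{(d-1)/d}\cdot\frac{a}{|\gL|}=\frac{2ad}{|\gL|^{1/d}}.
\]
Combining the two bounds gives $0\le\cH(\varphi^{\gL,a})\le 2ad/|\gL|^{1/d}$, which is the claim.

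There is no real obstacle in this argument; the only points that need a little care are the counting of the boundary edges of $\gL$ so that the constant comes out to exactly $2ad$, and the remark that the Dirichlet energy of a function supported on $\gL$ equals the $\dZ^{d}$-energy of its zero-extension, which is what legitimizes the comparison $\cH(\varphi^{\gL,a})\ge I(a)$.
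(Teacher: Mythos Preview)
Your proof is correct and follows essentially the same approach as the paper: both use the normalized constant function on $\gL$ as a test function for the upper bound (discarding the nonlinear term and counting the $2d|\gL|^{(d-1)/d}$ boundary edges), and both use $I(a)=0$ together with the fact that the Dirichlet minimizer is a competitor in the $\dZ^{d}$ problem for the lower bound. Your justification of the lower bound via zero-extension is in fact slightly more direct than the paper's appeal to monotonicity of $\cH(\varphi^{\gL,a})$ in $\gL$.
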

\begin{proof}
    Let $\mv1_{\gL,{\mvx}}$ denote the function which takes the value $1$ for every ${\mvx}\in \gL$ and is zero outside $\gL$. It is easy to evaluate the norms of $\mv1_{\gL}$, in particular
    \[
        \norm{\nabla \mv1_{\gL}}_{2}^{2}\leq 2d|\gL|^{(d-1)/d}.
    \]
    Thus,
    \[
         \cH\left(\sqrt{\frac{a}{|\gL|}}\mv1_{\gL}\right) \leq \frac{2ad}{|\gL|^{1/d}}.
    \]
    Since we are working with $a\leq R_{p}$, we know that $I(a)=0$. Combined with the fact that $\cH(\varphi^{\gL,a})$ is monotonically decreasing, we must have that $\cH(\varphi^{\gL,a})\geq 0$. For $|\gL|$ sufficiently large, by the minimizing hypothesis, we have
    \[
        0 \leq \cH(\varphi^{\gL,a})\leq \cH\left( \sqrt{\frac{a}{|\gL|}}\mv1_{\gL} \right)\leq \frac{2ad}{|\gL|^{1/d}}
    \]
    and this completes the proof.
\end{proof}

\subsection{Analysis of Minimal Energy}
We conclude this section with some basic facts about the function $I$. Clearly, we have
\[
    \norm{\nabla\psi}_2^2 \le 4d \norm{\psi}_{2}^{2} \text{ and } \norm{\psi}_{p+1}^{p+1} \le  \norm{\psi}_{2}^{2}\cdot\max_{x\in\dZ^d}|\psi_x|^{p-1}\le \norm{\psi}_{2}^{p+1}.
\]
Define the functions $J,\widehat{J}:(0,\infty)\to[0,\infty)$ by
    \begin{align}\label{eq:jjhat}
        J(a) :=\frac1aI(a) \text{ and } 
        \widehat{J}(a):=\frac1aI(a)+\frac2{p+1}(a\vee R_p)^{(p-1)/2}.
    \end{align}
    
\begin{lem}\label{lem:Ideriv}
The function $J$ is decreasing and differentiable. 
\end{lem}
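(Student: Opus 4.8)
The plan is to linearize the variational problem by a mass rescaling. Writing any $\psi$ with $\|\psi\|_2^2=a$ as $\psi=\sqrt a\,\phi$ with $\|\phi\|_2^2=1$, the homogeneities $\|\nabla(\sqrt a\,\phi)\|_2^2=a\|\nabla\phi\|_2^2$ and $\|\sqrt a\,\phi\|_{p+1}^{p+1}=a^{(p+1)/2}\|\phi\|_{p+1}^{p+1}$ give $\cH(\sqrt a\,\phi)/a=\|\nabla\phi\|_2^2-\tfrac2{p+1}a^{(p-1)/2}\|\phi\|_{p+1}^{p+1}$, hence
\begin{align*}
J(a)=\frac{I(a)}{a}=\inf_{\|\phi\|_2^2=1}\left(\|\nabla\phi\|_2^2-\frac2{p+1}\,a^{(p-1)/2}\|\phi\|_{p+1}^{p+1}\right).
\end{align*}
Substituting $t=a^{(p-1)/2}$, which is an increasing $C^\infty$ diffeomorphism of $(0,\infty)$ since $p>1$, the function $\widetilde J(t):=J(t^{2/(p-1)})$ is an infimum of the affine maps $t\mapsto\|\nabla\phi\|_2^2-\tfrac2{p+1}t\|\phi\|_{p+1}^{p+1}$. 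It is therefore \emph{concave}, and it is finite because $0\ge\widetilde J(t)\ge-\tfrac2{p+1}t$, using $I\le 0$ for the upper bound and $\|\phi\|_{p+1}^{p+1}\le\|\phi\|_2^{p+1}=1$ for the lower one.

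Monotonicity is then immediate: each affine piece has slope $-\tfrac2{p+1}\|\phi\|_{p+1}^{p+1}\le 0$, so $\widetilde J$ is non-increasing, and hence so is $J$ after composing with the increasing map $a\mapsto a^{(p-1)/2}$. On $(0,R_p]$ one has $J\equiv 0$ by Lemma~\ref{lem:Rp}, while for $a>R_p$ the exponentially decaying minimizer $\varphi^a$ of Lemma~\ref{lem:expdecL} has $\|\varphi^a\|_{p+1}^{p+1}>0$ (otherwise $\cH(\varphi^a)=\|\nabla\varphi^a\|_2^2\ge 0$, contradicting $I(a)<0$); testing the infimum defining $J(a')$ against $\varphi^a/\sqrt a$ for $a'>a$ then shows $J(a')<J(a)$. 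Thus $J$ is decreasing, strictly so on $(R_p,\infty)$.

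For differentiability, concavity of $\widetilde J$ already forces $\widetilde J$, and hence $J$, to be differentiable off an at most countable set. To upgrade this on $(R_p,\infty)$ I would run an envelope argument: for $t_n\to t_0>R_p^{(p-1)/2}$, the corresponding ground states $\phi_n$ are uniformly exponentially localized by Lemmas~\ref{lem:gobound}, \ref{lem:decay} and~\ref{lem:expdecL} (the constants there depending only on the mass, which stays in a compact subset of $(R_p,\infty)$), hence $\ell^2$-precompact; any limit point $\phi_\star$ is a minimizer at $t_0$ with $\|\phi_n\|_{p+1}^{p+1}\to\|\phi_\star\|_{p+1}^{p+1}$. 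Combining $\widetilde J(t_n)\le\|\nabla\phi_\star\|_2^2-\tfrac2{p+1}t_n\|\phi_\star\|_{p+1}^{p+1}$ with $\widetilde J(t_0)\le\|\nabla\phi_n\|_2^2-\tfrac2{p+1}t_0\|\phi_n\|_{p+1}^{p+1}$ sandwiches the difference quotient $(\widetilde J(t_n)-\widetilde J(t_0))/(t_n-t_0)$ between $-\tfrac2{p+1}\|\phi_n\|_{p+1}^{p+1}$ and $-\tfrac2{p+1}\|\phi_\star\|_{p+1}^{p+1}$, so each one-sided derivative of $\widetilde J$ at $t_0$ is of the form $-\tfrac2{p+1}\|\phi\|_{p+1}^{p+1}$ for some ground state $\phi$; the chain rule transfers this to $J$. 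The hard part is precisely closing this last step: to see that the left and right derivatives agree one needs that all ground states of a given mass share the same $\ell^{p+1}$-norm — this is exactly where uniqueness of the minimizer (only conjectured in the paper) would suffice — and one must separately examine the transition point $a=R_p$, at which differentiability of $J=I/a$ is equivalent to $I'(R_p+)=0$.
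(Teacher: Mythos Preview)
Your argument and the paper's share the same engine: rescale $\psi=\sqrt a\,\phi$ to reduce to the unit sphere. The paper runs the rescaling in both directions to extract the two-sided inequality
\[
0\ \le\ J(a')-J(a)\ \le\ \tfrac{2}{p+1}\bigl(a^{(p-1)/2}-(a')^{(p-1)/2}\bigr)\qquad(a'<a),
\]
and declares the lemma proved from there. Your formulation is cleaner: recognizing $\widetilde J(t)=\inf_\phi(\|\nabla\phi\|_2^2-\tfrac{2}{p+1}t\|\phi\|_{p+1}^{p+1})$ as an infimum of affine functions immediately gives concavity of $\widetilde J$, from which both of the paper's inequalities drop out (the left one from nonpositive slopes, the right one from $\|\phi\|_{p+1}\le\|\phi\|_2=1$). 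So you recover everything the paper proves, and a bit more: concavity yields one-sided derivatives at \emph{every} point, not just Lipschitz continuity.

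On differentiability you are being more careful than the paper. The paper's written proof stops at the two-sided inequality above, which gives only local Lipschitz (hence a.e.\ differentiability), not the everywhere differentiability the lemma asserts. Your envelope/compactness argument on $(R_p,\infty)$ is the natural way to try to close this, and your diagnosis of the obstruction is exactly right: equality of the one-sided derivatives of $\widetilde J$ at $t_0$ amounts to all ground states at that mass having the same $\ell^{p+1}$-norm, which the paper only conjectures (uniqueness of the minimizer). The transition point $a=R_p$, where you note differentiability of $J$ is equivalent to $I'(R_p{+})=0$, is likewise left open in the paper --- indeed the paper's phase diagram (Figure~\ref{fig:phd}) explicitly separates the cases $I'(R_p{+})=0$ and $I'(R_p{+})<0$. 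In short, you have not missed anything the paper actually proves; you have identified a gap that the paper glosses over.
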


\begin{proof}
    Fix $0<a'<a$. Given a function $\psi$ with mass $a'$, we consider the function $\tilde{\psi}(x)=\sqrt{a/a'}\cdot  \psi(x)$ with mass $a$. We have
    \begin{align*}
        \cH(\tilde{\psi}) =  \frac{a}{a'} \norm{\nabla \psi}_{2}^{2} - \frac{2}{p+1}\left(\frac{a}{a'}\right)^{(p+1)/2}\norm{\psi}_{p+1}^{p+1},
    \end{align*}
    or
    \[
        \frac{I(a)}{a}\le \frac{1}{a}\cH(\tilde{\psi}) =  \frac{1}{a'}\cH(\psi) - 2\cdot \frac{a^{(p-1)/2} -  (a')^{(p-1)/2}}{(p+1)\cdot  (a')^{(p+1)/2}} \norm{\psi}_{p+1}^{p+1} \le \frac{\cH(\psi)}{a'}.
    \]
    Thus we have $\frac{I(a)}{a}\le \frac{I(a')}{a'}$.  Similarly, given a function $\tilde{\psi}$ with mass $a$, we consider the function $\psi(x)=\sqrt{a'/a}\cdot  \tilde{\psi}(x)$ with mass $s$ to have
    \begin{align*}
        \frac{I(a')}{a'}\le \frac{1}{a'}\cH(\psi) & = \frac{1}{a}\cH(\tilde{\psi}) + \frac{a^{(p-1)/2} -  (a')^{(p-1)/2}}{(p+1) a^{(p+1)/2}} \norm{\tilde{\psi}}_{p+1}^{p+1} \\
        & \le \frac{1}{a}\cH(\tilde{\psi}) + \frac{2}{p+1}(a^{(p+1)/2} -  (a')^{(p-1)/2}).
    \end{align*}
    Thus we have
    \[
        0\le \frac{I(a')}{a'} - \frac{I(a)}{a} \le \frac{2}{p+1}(a^{(p-1)/2} -  (a')^{(p-1)/2})
    \]
    and the proof is complete.
\end{proof}
A trivial corollary of this result is the differentiability of the $I$ function itself, merely a consequence of the product rule. 
\begin{lem}\label{lem:jhat}
$\widehat{J}$ is increasing in $r$. Moreover, 
\[
\frac2{p+1}R_p^{(p-1)/2}\le \widehat{J}(a) \le 1/C_{d} \text{ for all }a>0.
\]
\end{lem}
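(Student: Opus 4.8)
The plan is to get monotonicity of $\widehat J$ for free from Lemma~\ref{lem:Ideriv}, to read off the lower bound from the behaviour at small mass, and to reduce the upper bound to a single asymptotic estimate; the last step is the only substantial point. Write $J(a)=I(a)/a$, so $\widehat J(a)=J(a)+\frac{2}{p+1}(a\vee R_p)^{(p-1)/2}$. Fix $0<a'<a$. If $a'\ge R_p$, the two displayed inequalities in the proof of Lemma~\ref{lem:Ideriv} give $0\le J(a')-J(a)\le \frac{2}{p+1}\bigl(a^{(p-1)/2}-(a')^{(p-1)/2}\bigr)$, and in this case $(a\vee R_p)^{(p-1)/2}-(a'\vee R_p)^{(p-1)/2}=a^{(p-1)/2}-(a')^{(p-1)/2}$; adding the two relations gives $\widehat J(a)-\widehat J(a')=\frac{2}{p+1}\bigl(a^{(p-1)/2}-(a')^{(p-1)/2}\bigr)-\bigl(J(a')-J(a)\bigr)\ge 0$. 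If instead $a'\le R_p$ then $I(a')=0$ (recall $I(a)=0$ for $a\le R_p$), so $\widehat J(a')=\frac{2}{p+1}R_p^{(p-1)/2}=\widehat J(R_p)$, and the case already treated, applied with $a'$ replaced by $R_p$, gives $\widehat J(a)\ge\widehat J(R_p)=\widehat J(a')$. Hence $\widehat J$ is nondecreasing on $(0,\infty)$.

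For the lower bound, monotonicity gives $\widehat J(a)\ge \lim_{a'\downarrow 0}\widehat J(a')$ for every $a>0$. If $R_p>0$ this limit equals $\frac{2}{p+1}R_p^{(p-1)/2}$, since $\widehat J\equiv\frac{2}{p+1}R_p^{(p-1)/2}$ on $(0,R_p]$. If $R_p=0$, then from the bound $\cH(\psi)\ge-\frac{2}{p+1}\norm{\psi}_{p+1}^{p+1}\ge-\frac{2}{p+1}\norm{\psi}_2^{p+1}$ recalled just before the lemma one gets $I(a')\ge-\frac{2}{p+1}(a')^{(p+1)/2}$, so $\widehat J(a')=J(a')+\frac{2}{p+1}(a')^{(p-1)/2}\ge 0=\frac{2}{p+1}R_p^{(p-1)/2}$. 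In either case $\widehat J(a)\ge\frac{2}{p+1}R_p^{(p-1)/2}$ for all $a>0$.

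For the upper bound, again by monotonicity it suffices to control $\limsup_{a\to\infty}\widehat J(a)$, i.e.\ to exhibit, for all large $a$, a test function $\psi_a$ with $\norm{\psi_a}_2^2=a$ and $\norm{\nabla\psi_a}_2^2+\frac{2}{p+1}\bigl(a^{(p+1)/2}-\norm{\psi_a}_{p+1}^{p+1}\bigr)\le a/C_d+o(a)$. The constant $1/C_d=1/K'(0)$ is precisely the limiting kinetic energy per unit mass of the massless zero–average field (it has $\E\norm{\nabla\Psi^{\mv0}}_2^2=N-1$ and $\E\norm{\Psi^{\mv0}}_2^2=NK'_N(0)\to NC_d$), and the estimate $1/b-L(b)\ge 1/C_d$ of Lemma~\ref{lem:propW} is exactly the Jensen inequality $\E[1/f]\,\E[f]\ge1$ underlying it; the natural candidate is therefore a suitably truncated and rescaled lattice Green's function $\psi_a=c_a\,P_{\gL_a}(-\Delta+y_a)^{-1}\delta_{\mv0}$, with $y_a\downarrow 0$ and $\gL_a\uparrow\dZ^d$ tuned so that $\norm{\psi_a}_2^2=a$; for $(-\Delta+y)^{-1}\delta_{\mv0}$ one has the explicit identities $\norm{\nabla(-\Delta+y)^{-1}\delta_{\mv0}}_2^2=\int f(x)(y+f(x))^{-2}\,dx$ and $\norm{(-\Delta+y)^{-1}\delta_{\mv0}}_2^2=\int (y+f(x))^{-2}\,dx$, so the kinetic cost per unit mass tends to $\bigl(\int f^{-1}\bigr)\big/\bigl(\int f^{-2}\bigr)\le 1/C_d$ by Cauchy–Schwarz. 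I expect the main obstacle to be reconciling such a spread-out choice with the defect term $\frac{2}{p+1}\bigl(a^{(p+1)/2}-\norm{\psi}_{p+1}^{p+1}\bigr)$, which pushes the near-optimal $\psi$ toward a single spike as $a\to\infty$ (a spike only yields the cruder bound $\widehat J\le 2d$); controlling this competition, and in doing so possibly restricting the range of $p$ as in Theorem~\ref{thm:typical}, is the heart of the argument and is where I would concentrate the effort.
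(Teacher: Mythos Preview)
Your treatment of monotonicity and of the lower bound is correct and is essentially what the paper does (you have just made explicit the easy case split $a'\le R_p$ versus $a'\ge R_p$, which the paper leaves implicit after Lemma~\ref{lem:Ideriv}).

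For the upper bound you are right to be worried, but you should not spend more effort on it: the paper's own proof does \emph{not} establish the bound $\widehat J(a)\le 1/C_d$. The only test function it uses is the single-site spike $\psi=\sqrt{a}\,\ind_{x=0}$, giving $I(a)\le 2da-\frac{2}{p+1}a^{(p+1)/2}$ and hence $\widehat J(a)\le 2d$; since $1/C_d\le 2d$ (from $2dC_d\ge 1$, noted just above Table~\ref{table:Cd}), this is strictly weaker than the stated claim. In fact the tension you identify is real: for any spread-out test function the defect $\frac{2}{p+1}\bigl(a^{(p+1)/2}-\norm{\psi}_{p+1}^{p+1}\bigr)$ grows like $a^{(p-1)/2}\cdot a$, while the spike, which is the only choice making this defect vanish, has kinetic cost exactly $2d\cdot a$. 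The bound $\widehat J\le 1/C_d$ as stated appears to be an overstatement (and is never used elsewhere in the paper; Lemma~\ref{lem:abound} only needs monotonicity and the value $J(\nu)$). Your Green's-function heuristic does show that the \emph{kinetic} part alone can be pushed toward $1/C_d$, but coupling this to the nonlinear defect is not addressed by the paper either.
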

\begin{proof}
By evaluating $\cH$ at the function $\psi(x)=\sqrt{a}\cdot \ind_{x=0}$ in~\eqref{def:Ifn}, it is clear that
\begin{align}\label{Irubd}
    I(a) \le 2d\cdot a - \frac2{p+1}a^{(p+1)/2}.
\end{align}
From Lemma~\ref{lem:Ideriv}, we have for $a\ge R_{p}$ and $a'=R_{p}$
\begin{align}\label{def:Irlbd}
    I(a) \ge \frac{2}{p+1}R_{p}^{(p-1)/2}\cdot a - \frac2{p+1}a^{(p+1)/2}.
\end{align}
\end{proof}

\section{Convergence of the Free Energy }\label{sec:feconv}
In this section, we will establish the convergence of the scaled free energy. Recall that our measure is restricted to  the ball of radius $\sqrt{N}$ in $\dC^{V}$. We will be cutting the mass constraint into pieces corresponding to  ``solitonic'' and dispersive contributions. The vertex set $V$ will be partitioned into a set $U$ which is small, where a typical function is concentrated, and $U^{c}$ where the mass of a typical function is spread out over all the sites. Recall that we denote the restrictions of $\psi$ onto $U$ and $U^{c}$ as $\psi_U$ and $\psi_{U^{c}}$ respectively. We denote the collection of sites ${\mvx}\in U$ adjacent to $U^{c}$ by $\partial U_{int}$ and analogously the collection of sites ${\mvx'}\in U^{c}$ adjacent to $U$ by $\partial U_{ext}$. The Hamiltonian $\cH_n$ may then be written as\begin{align}\label{eq:split}
    \begin{split}
        \cH_n(\psi)
        &= \cH_n(\psi_U)+ \cH_n(\psi_{U^{c}})\\
        &\qquad+\sum_{{\mvx}\sim{\mvx'} \in \partial U} |\psi_{{\mvx}}-\psi_{{\mvx'}}|^2
        - \sum_{{\mvx} \in \partial U_{int}} |\psi_{{\mvx}}|^2 - \sum_{{\mvx'} \in \partial U_{ext}} |\psi_{{\mvx'}}|^2.
    \end{split}
\end{align}
In the definitions of $\cH_n(\psi_U)$ and $\cH_n(\psi_{U^{c}})$, the Laplacian is understood to mean the Dirichlet Laplacian. The decomposition of $\psi$ into $\psi_U$ and $\psi_{U^{c}}$ is to be regarded as the orthogonal decomposition $\dC^{V}=\dC^U\oplus \dC^{U^{c}}$, and as such the expression $\psi=\psi_U\oplus \psi_{U^{c}}$ carries the obvious meaning, as does the decomposition of the volume element $\vd \psi=\vd \psi_U \cdot \vd \psi_{U^{c}}$. The mass constraint, which may be expressed as
\[
    \norm{\psi_{U}}_{2}^{2}+\norm{\psi_{U^{c}}}_{2}^{2}\leq N,
\]
cannot be expressed as a product set, but can be closely approximated by a disjoint union of product sets. For a choice of $0<\ga<1$ to be specified later, let $\kappa=N^{\ga}$. Let $i^{*}$ be the smallest natural number such that $i^{*}\kappa \geq N$ and $i_{*}$ be the largest natural number such that $i_{*}\kappa \leq N$. We have that
\begin{align}
    \bigcup_{i+j\leq i^{*}} \left\{ \psi_{U}: \norm{\psi_{U}}_{2}^{2}\in [i\kappa,i\kappa+\kappa) \right\}\times \left\{ \psi_{U^{c}}: \norm{\psi_{U^{c}}}_{2}^{2}\in [j\kappa,(j+1)\kappa) \right\}
\end{align}
contains the ball of radius $\sqrt{N}$, and
\begin{align}
    \bigcup_{i+j=i_{*}} \left\{ \psi_{U}: \norm{\psi_{U}}_{2}^{2}\in [i\kappa,i\kappa+\kappa) \right\}\times \left\{ \psi_{U^{c}}: \norm{\psi_{U^{c}}}_{2}^{2}\in [j\kappa,(j+1)\kappa) \right\}
\end{align}
is contained by the ball. Replacing the ball with these sets as the region of integration will give us upper and lower bounds for the partition function. For ease of notation in the sections to follow, we define
\begin{align}\label{eq:MassShell}
    \cB_{i,U} &=\left\{\psi_{U}: \norm{\psi_{U}}_{2}^{2}\in [i\kappa, i\kappa+i )\right\}\\
\text{and } \cA_{i,j,U} &=\cB_{i,U}\times \cB_{j,U^{c}}.\notag
\end{align}
To summarize, for a given fixed $U\subset V$, we have
\begin{align}\label{eq:MassPartition}
    \bigcup_{i+j=i_{*}} \cA_{i,j,U} \subset \left\{ \psi: \norm{\psi}_{2}^{2}<N\right\} \subset \bigcup_{i+j\leq i^{*}} \cA_{i,j,U}.
\end{align}
\subsection{Upper Bound}\label{sec:ubd}
With the decomposition of the mass constraint introduced above, we will split the proof of Theorem~\ref{thm:free} into two parts. This section is dedicated to the first portion, the appropriate upper bound for the free energy.
\begin{lem}\label{lem:ZUB}
    We have a constant $C$ and a sequence $\Theta_{S}(N)$ depending on $\nu$, $\theta$, $p$ and $d$ such that
    \begin{align*}
        \frac1N\log Z_N(\theta,\nu)
        \leq \log(\pi/\theta)- \inf_{0<a<1} (W(\theta(1-a)) + \theta\nu^{-1} I(a\nu))+\frac1N\log \Theta_{S}(N)
    \end{align*}
    where
    \[
        \Theta_{S}(N)\leq \exp\left( 10s_{N}N \right)\cdot N^{2s_{N}^{-d-1}}\cdot e^{C \gk_{N}}.
    \]
\end{lem}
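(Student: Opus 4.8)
The plan is to realize the upper bound by covering the mass ball $\{\norm{\psi}_2^2<N\}$ with the product sets $\cA_{i,j,U}$ of~\eqref{eq:MassPartition}, summing the contributions over $i,j$ and over an adaptively chosen family of ``solitonic'' regions $U$, and estimating each piece after factoring the integral over $\psi_U$ --- which produces the $I$ term --- from the one over $\psi_{U^{c}}$ --- which produces the $W$ term through Theorem~\ref{thm:DISP}. Fix a scale $s_N\downarrow0$ and, given $\psi$ in the ball, set $H(\psi)=\{{\mvx}:|\psi_{\mvx}|^2>s_N N\}$, so $|H(\psi)|\le s_N^{-1}$ by the mass constraint. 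Running over the disjoint concentric shells $\{{\mvx}:d({\mvx},H)=\ell\}$, $\ell\ge1$, whose masses sum to at most $N$, a pigeonhole over the pairs of consecutive shells selects a radius $r=r(\psi)$ --- bounded by the number of shells examined --- for which the two shells at distances $r$ and $r+1$ jointly carry mass $O(s_N N)$; put $U=U(\psi):=\{{\mvx}:d({\mvx},H)\le r\}$. Then $|\psi_{\mvx}|^2\le s_N N$ on $U^{c}$, the mass of $\psi$ on each of $\partial U_{int},\partial U_{ext}$ is $O(s_N N)$, $|U|\lesssim s_N^{-(d+1)}$, and each connected component of $U$ has diameter $o(n)$. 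Inserting~\eqref{eq:split}, discarding the nonnegative cross--edge term, and bounding the nonlinear energy of $\psi_{U^{c}}$ by $(\nu s_N)^{(p-1)/2}\norm{\psi_{U^{c}}}_2^2$, we obtain on $\{U(\psi)=U\}$ that
\[
\cH_n(\psi)\ \ge\ \cH_n(\psi_U)+\norm{\nabla\psi_{U^{c}}}_2^2-C\bigl(s_N+(\nu s_N)^{(p-1)/2}\bigr)N.
\]

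Now the two factors. For the solitonic one, split $U$ into its connected components; each has diameter $o(n)$, hence embeds isometrically into $\dZ^d$, on which the Dirichlet Hamiltonian of $\psi$ restricted to a component equals the $\dZ^d$ Hamiltonian of its zero--extension. Rescaling each component by $\sqrt{\nu/N}$ as in Lemma~\ref{lem:scaling}, together with the subadditivity $I(a)+I(a')\ge I(a+a')$ (immediate on placing near--minimizers far apart), gives $\cH_n(\psi_U)\ge\tfrac N\nu I\bigl(\tfrac\nu N\norm{\psi_U}_2^2\bigr)$; since on $\cB_{i,U}$ the argument varies in an interval of length $\nu\kappa/N$ and $I$ is uniformly continuous on $[0,\nu]$ (Lemma~\ref{lem:Ideriv}), replacing it by $\nu i\kappa/N$ costs a factor $e^{O(\kappa)}$, so
\[
\int_{\cB_{i,U}}e^{-\theta\cH_n(\psi_U)}\,\vd\psi_U\ \le\ e^{O(\kappa)}\,e^{-\theta\frac N\nu I(\nu i\kappa/N)}\,\mathrm{vol}(\cB_{i,U}),\qquad \mathrm{vol}(\cB_{i,U})\le\bigl(2\pi eN/|U|\bigr)^{|U|}.
\]
For the dispersive factor, zero--extending $\psi_{U^{c}}$ to the torus turns $\norm{\nabla\psi_{U^{c}}}_2^2$ into the full (degenerate) Dirichlet form on $\dC^V$; enlarging to the whole mass shell $\{\norm{\bar\psi}_2^2\in[j\kappa,(j+1)\kappa)\}$ and applying Theorem~\ref{thm:DISP} after the substitution $\bar\psi=\theta^{-1/2}\phi$ gives
\[
\tfrac1N\log\int_{\cB_{j,U^{c}}}e^{-\theta\norm{\nabla\psi_{U^{c}}}_2^2}\,\vd\psi_{U^{c}}\ \le\ \log(\pi/\theta)-W(\theta j\kappa/N)+o(1),
\]
where $o(1)$ absorbs the three error terms of Theorem~\ref{thm:DISP} (negligible once $\kappa\gg N^{(d-1)/d}$, and harmless for the small--$j$ shells, where $W(\theta j\kappa/N)$ is so large that those contributions are super--polynomially small).

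It remains to assemble. Bounding $Z_N\le\sum_U\sum_{i+j\le i^{*}}\int_{\cA_{i,j,U}}e^{-\theta\cH_n}\,\vd\psi$ and inserting the two factor estimates, $\tfrac1N\log\int_{\cA_{i,j,U}}e^{-\theta\cH_n}\,\vd\psi$ is at most $\log(\pi/\theta)-\tfrac\theta\nu I(\nu i\kappa/N)-W(\theta j\kappa/N)+\tfrac1N\log\mathrm{vol}(\cB_{i,U})+o(1)$. Since $i+j\le i^{*}$ forces $j\kappa/N\le 1-i\kappa/N+O(\kappa/N)$ and $W$ is decreasing (Lemma~\ref{lem:propW}), writing $a=i\kappa/N$ this is $\le\log(\pi/\theta)-\inf_{0<a<1}\bigl(W(\theta(1-a))+\tfrac\theta\nu I(a\nu)\bigr)+\tfrac1N\log\mathrm{vol}(\cB_{i,U})+o(1)$. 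Summing over the at most $N^{O(s_N^{-d-1})}$ admissible $U$ (subsets of $V$ of size $\lesssim s_N^{-d-1}$) and the $O(N^2)$ pairs $(i,j)$, bounding $\mathrm{vol}(\cB_{i,U})\le N^{O(s_N^{-d-1})}$, and pulling out the factor $\exp(O(s_N N))$ from the boundary and $U^{c}$--nonlinearity errors and the factor $e^{O(\kappa)}$ from the mass discretization, yields $\Theta_S(N)\le\exp(O(s_N N))\,N^{O(s_N^{-d-1})}\,e^{O(\kappa)}$, the asserted form with $\kappa=\kappa_N$.

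The delicate point is the choice of $U$: the negative boundary contributions $-\sum_{\partial U_{int}}|\psi_{\mvx}|^2-\sum_{\partial U_{ext}}|\psi_{\mvx}|^2$ in~\eqref{eq:split} are a priori of order $N$ and must be made negligible, yet $U$ must stay small enough that the number of candidate sets, the volumes $\mathrm{vol}(\cB_{i,U})$, and the diameters of the components of $U$ (needed for the $\dZ^d$--identification in the $I$--reduction) are all under control. The pigeonhole over a growing family of concentric shells is what balances ``the boundary shell carries little mass'' against ``$U$ is small'', and matching the three resulting error exponents against one another --- and against the error in Theorem~\ref{thm:DISP} --- is what fixes the admissible ranges of $s_N$ and $\kappa_N$, hence ultimately the rate in Theorem~\ref{thm:free}.
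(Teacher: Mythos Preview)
Your overall plan matches the paper's: cover the ball by the $\cA_{i,j,U}$, choose $U$ adaptively via the shell--pigeonhole (this is exactly Lemma~\ref{lem:separation}), separate with~\eqref{eq:split}, control the boundary terms by $O(s_NN)$, kill the $U^c$--nonlinearity by the $\ell^\infty$ bound, and then estimate the two factors. Your treatment of the solitonic factor (embedding in $\dZ^d$, rescaling, bounding by $I$) is the paper's Lemma~\ref{lem:SUB}; the component--wise subadditivity of $I$ you invoke is correct but not needed, since the whole of $U$ embeds once $|U|\ll n$.

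There is, however, a genuine gap in your dispersive factor. You write that ``zero--extending $\psi_{U^c}$ to the torus turns $\norm{\nabla\psi_{U^c}}_2^2$ into the full Dirichlet form on $\dC^V$; enlarging to the whole mass shell $\{\norm{\bar\psi}_2^2\in[j\kappa,(j+1)\kappa)\}$ and applying Theorem~\ref{thm:DISP}\ldots''. The first clause is fine, but the second is not: the integral $\int_{\cB_{j,U^c}}e^{-\theta\norm{\nabla_0\psi_{U^c}}_2^2}\,\vd\psi_{U^c}$ is a $|U^c|$--dimensional Lebesgue integral over the subspace $\{\bar\psi:\bar\psi|_U=0\}\subset\dC^V$, whereas $M_N(b,\eps)$ in Theorem~\ref{thm:DISP} is an $N$--dimensional integral. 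One cannot pass from the former to the latter by ``enlarging the domain''; the measures are mutually singular, and there is no monotonicity to exploit.

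The paper closes this gap by a \emph{patching} trick (Lemma~\ref{lem:DUB}): multiply the $U^c$--integral by the easily lower--bounded quantity
\[
\int_{\{\norm{\psi_U}_\infty<\sqrt{3C_d\theta^{-1}\log|U|}\}}e^{-\theta\norm{\nabla_0\psi_U}_2^2}\,\vd\psi_U\ \ge\ (c\log|U|)^{|U|}e^{-c'|U|\log|U|},
\]
so that, after dividing, the product becomes a genuine $N$--dimensional integral over $\tilde{\cB}_{j,U^c}\times\{\norm{\psi_U}_\infty\le\sqrt{3C_d\theta^{-1}\log|U|}\}$. One then uses the boundary control (now with both sides bounded) to reunite $\norm{\nabla_0\psi_U}_2^2+\norm{\nabla_0\psi_{U^c}}_2^2$ into $\norm{\nabla\psi}_2^2$ up to an $O(s_NN)$ error, checks that this product region sits inside a slightly enlarged full--torus mass shell, and only then invokes Theorem~\ref{thm:DISP}. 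The price is an extra factor $(\log|U|)^{O(|U|)}\le N^{o(s_N^{-d-1})}$, which is absorbed into $\Theta_S(N)$. Without this step (or an alternative, e.g.\ a direct comparison of $\det(-\gD_0^{U^c})$ with $\det(-\gD^\perp)$ via eigenvalue interlacing, which you do not carry out), your bound on the dispersive factor is unjustified.
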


We begin with an argument to justify the separation of functions into concentrated and dispersed parts, in other words showing that for any function $\psi \in \dC^{V}$, we may always find a set $U$ such that $\psi$ is not concentrated outside $U$, and the boundary contribution may be controlled.
\begin{lem}\label{lem:separation}
    Given a function $\psi\in\dC^V$ with $\norm{\psi}_2^2< N$ and $\eps>0$, there exists a subset $U\subset V$ with $|U|\le \eps^{-d-1}$ such that $|\psi_x|^2< \eps N$ for all $x\notin U$ and $\sum_{x\in \partial U_{int}\cup \partial U_{ext}}|\psi_{x}|^2 < 3\eps N$.
\end{lem}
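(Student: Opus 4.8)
The plan is to let $U$ be a graph-distance neighbourhood of the set of ``heavy'' sites and to choose its radius by a pigeonhole argument over annuli, so that the boundary of $U$ carries little mass. Set $H:=\{x\in V:|\psi_x|^2\ge \eps N\}$; since each site of $H$ contributes at least $\eps N$ to $\norm{\psi}_2^2<N$, we have $|H|<\eps^{-1}$. If $\eps\ge 1/3$ (in particular if $H=\eset$) we are already done by taking $U:=H$: then $3\eps N>\norm{\psi}_2^2$ bounds the total mass, hence the boundary mass, and $|H|<\eps^{-1}\le\eps^{-d-1}$. So assume from now on that $\eps<1/3$.

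For $r\ge 0$ put $U_r:=\{x\in V:d(x,H)\le r\}$ (graph distance) and let $S_r:=U_r\setminus U_{r-1}$ be the annulus of vertices at distance exactly $r$ from $H$, with $U_{-1}:=\eset$; the sets $S_r$ are pairwise disjoint. Each ball of radius $r$ in $\dZ^d$, hence in $\dT^d_n$, has at most $|B_r|$ vertices, so $|U_r|\le|H|\cdot|B_r|<\eps^{-1}|B_r|$, and therefore $|U_r|\le\eps^{-d-1}$ for every $r\le R$, where $R$ denotes the largest integer with $|B_R|\le\eps^{-d}$. A triangle-inequality check shows that, for $U=U_r$, one has $\partial U_{int}\subset S_r$ and $\partial U_{ext}\subset S_{r+1}$: a vertex of $U_r$ adjacent to $U_r^{c}$ must be at distance exactly $r$ from $H$, and a vertex of $U_r^{c}$ adjacent to $U_r$ at distance exactly $r+1$. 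Consequently, writing $m_j:=\sum_{x\in S_j}|\psi_x|^2$, the boundary mass of $U_r$ is at most $m_r+m_{r+1}$.

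Now pigeonhole over $r\in\{0,1,\dots,R\}$: since the $S_j$ are disjoint, $\sum_{r=0}^{R}(m_r+m_{r+1})\le 2\sum_{j\ge 0}m_j\le 2\norm{\psi}_2^2<2N$, so there is some $r$ in this range with $m_r+m_{r+1}<2N/(R+1)$. For $d\ge 3$ and $\eps<1/3$ the growth of $|B_r|$ gives $R+1\ge\tfrac23\eps^{-1}$, whence $m_r+m_{r+1}<3\eps N$; take $U:=U_r$ for this $r$. Then $|U|\le|U_R|\le\eps^{-d-1}$; any $x\notin U$ has $d(x,H)\ge 1$ and hence lies outside $H$, so $|\psi_x|^2<\eps N$; and $\sum_{x\in\partial U_{int}\cup\partial U_{ext}}|\psi_x|^2\le m_r+m_{r+1}<3\eps N$, as required. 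The one subtle point is the constant bookkeeping: the size budget $\eps^{-d-1}$ is precisely what permits a fattening radius of order $\eps^{-1}$, and the crude estimate $|B_r|\le(2r+1)^d$ is slightly too weak to pin down the constant $3$ in the tightest intermediate range of $\eps$, so one uses the sharper, strictly sub-$(2r+1)^d$ count of lattice balls that holds for $d\ge 3$ — an elementary if slightly fiddly computation. Everything else is routine.
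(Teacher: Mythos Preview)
Your argument is essentially the same as the paper's: start from the heavy set $H=\{x:|\psi_x|^2\ge\eps N\}$, fatten it to a distance-$r$ neighbourhood, and pigeonhole over the annuli to find a radius whose boundary layer carries little mass. The only cosmetic difference is that the paper takes annuli of width $2$ (so that $\partial U_{int}\cup\partial U_{ext}$ sits inside a single shell $B_i$), whereas you take width-$1$ annuli and bound $m_r+m_{r+1}$; the pigeonhole and size estimates are otherwise identical.

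One remark on your final paragraph: you correctly observe that the crude $\ell^\infty$-ball bound $|B_r|\le(2r+1)^d$ only yields $R+1\ge(\eps^{-1}-1)/2$, which gives a boundary bound of roughly $6\eps N$ rather than $3\eps N$. Your appeal to a ``sharper, strictly sub-$(2r+1)^d$'' count is the right instinct, but as written it is not quantitative --- you need something like $|B_r|\le (2r)^d/d!\cdot(1+o(1))$, or a direct check for the finitely many small-$r$ cases, to actually land on $R+1\ge\tfrac23\eps^{-1}$. This is indeed elementary, and in fact the paper's own proof is looser here (its pigeonhole yields $N/k\le 10\eps N$ rather than the $3\eps N$ stated in the lemma), so the specific constant is not load-bearing downstream. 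But if you want the $3$ as stated, you should either write out the ball count or, more simply, relax to $10\eps N$ and propagate the harmless constant.
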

\begin{proof}
    Take $U_0=\{\mvx: |\psi_{\mvx}|^2\ge \eps N\}$. Clearly $|U_0|\le 1/\eps$. We define the $U_{i}$ by the successive addition of the 2-step outer boundary, \ie\ $U_i=\{  \mvx \in V: d(\mvx,U_0)=2i \}$ and $B_i=U_i\setminus U_{i-1}$ for all $i\ge 1$. Take $B_0=U_0$. Note that
    \[
        \sum_{i=1}^k\sum_{\substack{ \mvx \in B_{i}}} |\psi_{\mvx}|^2
        \le \sum_{\mvx} |\psi_{\mvx}|^2\le N.
    \]
    In particular, there exists $i\le k$ such that $\sum_{\mvx\in B_{i}} |\psi_{\mvx}|^2
        \le N/k$. Take $k=\lfloor\eps^{-1}/2\rfloor$, so that $N/k\le 10\eps N$. Now, note that $|U_i|\le (2k+1)^d |U_0|\le \eps^{-d-1}$.
\end{proof}

We will refer to the set $U$ as $\eps-$admissible for $\psi$, and what Lemma~\ref{lem:separation} states that given a function $\psi$ with appropriate mass and $\eps>0$, we may find an $\eps-$admissible set. For a  fixed set $U$ and a positive sequence $s_{N}$ decaying to zero at a specified rate (see~\eqref{eq:snrate}), we define
\begin{align*}
    \cU(s_{N}):=\{\psi\in \dC^n : U \text{ is $s_N-$admissible for $\psi$}\}.
\end{align*}
It is easy to see that if $\cU$ is non-empty, then it must be open in $\dC^{V}$ since we may perturb slightly around any $\psi$ that is contained. Therefore, if it is non-empty, it must have a non-zero Lebesgue measure. Further, for every $\psi$ with appropriate mass, we know that there must be a $U$ with size bounded above by $s_{N}^{-d-1}$ such that $U$ is $s_{N}$ admissible for $\psi$. Thus, we have that
\[
    \{\psi : \norm{\psi}_{2}^{2}< N\} \subset \bigcup_{|U|\leq s_{N}^{-d-1}} \cU(s_{N}).
\]
On combination with~\eqref{eq:MassPartition}, we have
\begin{align}
    Z_N\leq\sum_{|U|\leq s_{N}^{-d-1}} \cU(s_{N})\sum_{i+j\leq i^{*}}\int_{\cU \cap \cA_{i,j,U}} \exp(-\cH_N(\psi))\vd \psi.
\end{align}
With a fixed choice of $U$ and admissible $\psi$, the bound on the gradient and~\eqref{eq:split} yield
\begin{align*}
    \exp(-\cH_N(\psi))\leq \exp(-\cH_N(\psi_U))\cdot \exp(-\cH_N(\psi_{U^{c}}))\cdot \exp(3 s_{N} N).
\end{align*}
For any $\psi\in \cU$, we have a bound on the maximum outside of $U$, and the gradient control. Thus, if we take
\begin{align}\label{eq:MassShell1}
    \tilde{\cB}_{j,U^{c}}:=\left\{\psi_{U^{c}} \in \cB_{j,U^{c}} : \norm{\psi_{U^{c}}}_{\infty} \leq \sqrt{s_{N} N} \text{ and } \norm{\psi_{\partial U_{ext}}}_{2}^{2}\leq 3s_{N} N \right\}
\end{align}
then it follows that
\begin{align}
    \cU \cap \cA_{i,j,U} \subset \cB_{i,U} \times \tilde{\cB}_{j,U^{c}}.
\end{align}
This completes the separation of the partition function. We have
\begin{align*}
    Z_n\leq \exp(3\theta s_N N)\cdot \sum_{|U|\leq s_{N}^{-d-1}} \sum_{i+j \leq i^{*}}
     & \int_{\cB_{i,U}}\exp(-\theta \cH_N(\psi_U))\vd\psi_U                                        \\
     & \qquad \cdot \int_{\tilde{\cB}_{j,U^{c}}}\exp(-\theta \cH_N(\psi_{U^{c}}))\vd \psi_{U^{c}}.
\end{align*}

\begin{lem}\label{lem:SUB}
    Let $U$ be a fixed subset with size bounded above by $s_{N}^{-d-1}$, and $\cB_{i,U}$ be as in~\eqref{eq:MassShell}. We have
    \begin{align*}
        \int_{\cB_{i,U}}\exp(-\theta \cH_N(\psi_U))\vd \psi_U \leq \exp\left(-N\theta I_{\nu}\left(\frac{i\gk_{N} +\gk_N}N\right)\right)\cdot\Theta_{1}(N) \end{align*}
    where
    \begin{align*}
        \Theta_{1}(N)\leq {N^{s_{N}^{-d-1}} \pi^{s_{N}^{-d-1}}e^{C\gk_{N}}}.
    \end{align*}

\end{lem}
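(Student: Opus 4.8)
The plan is to reduce the Dirichlet Hamiltonian on the small set $U$ to the lattice Hamiltonian $\cH$ on $\dZ^{d}$ via zero‑extension and the scaling of Lemma~\ref{lem:scaling}, use the variational characterization~\eqref{def:Ifn} of $I$ to get a pointwise bound on the integrand over the thin shell $\cB_{i,U}$, and then pay nothing more than the Lebesgue volume of that shell. First I extend $\psi_{U}\in\dC^{U}$ by zero to $\widetilde\psi_{U}\in\ell^{2}(\dZ^{d})$. With the Dirichlet convention used in the splitting~\eqref{eq:split}, the boundary contributions are exactly the edges joining $U$ to $U^{c}$, so $\norm{\nabla\psi_{U}}_{2}^{2}$ (Dirichlet) coincides with the full $\dZ^{d}$ gradient energy of $\widetilde\psi_{U}$, while mass and $\ell^{p+1}$ norm are unchanged; hence $\cH_{N}(\psi_{U})=\cH_{N}(\widetilde\psi_{U})$ read on $\dZ^{d}$. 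An elementary computation (the same scaling as in Lemma~\ref{lem:scaling}, which is why the powers of $\nu/N$ in~\eqref{def:Hn} are what they are) gives $\cH_{N}(\phi)=\tfrac{N}{\nu}\cH\big(\sqrt{\nu/N}\,\phi\big)$ for every $\phi\in\ell^{2}(\dZ^{d})$, where $\sqrt{\nu/N}\,\phi$ has mass $\tfrac{\nu}{N}\norm{\phi}_{2}^{2}$. Combining with the definition of $I$,
\begin{align*}
\cH_{N}(\psi_{U})\;=\;\frac{N}{\nu}\,\cH\!\Big(\sqrt{\nu/N}\,\widetilde\psi_{U}\Big)\;\ge\;\frac{N}{\nu}\,I\!\Big(\tfrac{\nu}{N}\norm{\psi_{U}}_{2}^{2}\Big)\;=\;N\,I_{\nu}\!\Big(\tfrac1N\norm{\psi_{U}}_{2}^{2}\Big),
\end{align*}
with $I_{\nu}(a):=\nu^{-1}I(\nu a)$.

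Next I use monotonicity. The function $I$ is non‑increasing: $I\le 0$ by Lemma~\ref{lem:Rp}, and $J=I(\cdot)/(\cdot)$ is decreasing by Lemma~\ref{lem:Ideriv}, so $I'(a)=J(a)+aJ'(a)\le 0$. Hence $I_{\nu}$ is non‑increasing too, and since $\norm{\psi_{U}}_{2}^{2}<(i+1)\gk_{N}$ on $\cB_{i,U}$,
\begin{align*}
e^{-\theta\cH_{N}(\psi_{U})}\;\le\;\exp\!\Big(-N\theta\,I_{\nu}\big(\tfrac{(i+1)\gk_{N}}{N}\big)\Big)\qquad\text{on }\cB_{i,U}.
\end{align*}
Integrating, it remains to bound $\mathrm{Leb}_{\dC^{U}}(\cB_{i,U})$. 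Identifying $\dC^{U}\cong\dR^{2k}$ with $k=|U|\le s_{N}^{-d-1}$, the shell $\{i\gk_{N}\le\norm{z}_{2}^{2}<(i+1)\gk_{N}\}$ has volume $\tfrac{\pi^{k}}{k!}\big(((i+1)\gk_{N})^{k}-(i\gk_{N})^{k}\big)\le \tfrac{\pi^{k}}{k!}((i+1)\gk_{N})^{k}$. Since $i\le i^{*}$ and $\gk_{N}=N^{\ga}=o(N)$, one has $(i+1)\gk_{N}\le 2N$ for $N$ large, so this is at most $(\pi N)^{k}\tfrac{2^{k}}{k!}\le e^{2}(\pi N)^{k}$; using $k\le s_{N}^{-d-1}$, $\pi N\ge 1$, and $e^{2}\le e^{C\gk_{N}}$ (valid for large $N$, as $\gk_{N}\to\infty$), this gives $\mathrm{Leb}(\cB_{i,U})\le \pi^{s_{N}^{-d-1}}N^{s_{N}^{-d-1}}e^{C\gk_{N}}$, which is the claimed $\Theta_{1}(N)$ and completes the argument.

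I do not expect any deep obstacle here; the only delicate point is the reduction step, where one must verify that the Dirichlet boundary convention in~\eqref{eq:split} is precisely the one for which zero‑extension preserves $\cH_{N}$, so that the $\dZ^{d}$ variational bound $\cH\ge I(\text{mass})$ legitimately applies. One also needs $I$ non‑increasing, which is not literally listed among the stated properties of $I$ but is immediate from $I\le 0$ together with $J$ decreasing. Everything else — the scaling identity and the volume estimate — is routine bookkeeping, the latter arranged only so as to match the exact form of $\Theta_{1}(N)$.
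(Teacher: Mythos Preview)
Your argument is correct and follows essentially the same route as the paper: reduce $\cH_{N}$ to the $\dZ^{d}$ Hamiltonian via the scaling identity, invoke the variational definition of $I$ together with its monotonicity, and then bound the Lebesgue volume of the shell. The only point the paper makes explicit that you leave implicit is the embedding step: $U$ lives on the torus $\dT_{n}^{d}$, not on $\dZ^{d}$, so ``extending by zero to $\ell^{2}(\dZ^{d})$'' presupposes that $U$ can be isometrically embedded in $\dZ^{d}$; this holds because $|U|\le s_{N}^{-d-1}$ is eventually smaller than the torus girth $n=N^{1/d}$, so $U$ contains no nontrivial cycle and the Dirichlet gradient on $U\subset\dT_{n}^{d}$ agrees with that on the image in $\dZ^{d}$.
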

\begin{proof}
    Since all sets $U$ under consideration have size bounded above $s_{N}^{-d-1}$ and the smallest non trivial cycle has size of order $N^{\frac1{d}}$, it follows that for $N$ sufficiently large $U$ can be embedded in $\dZ^{d}$. We assume that this is the case, fix an embedding, and as an abuse of notation, refer to the corresponding images as $U$ and $\psi_U$. We remind the reader  that
    \begin{align*}
        \cH_N(\psi_U)=\frac{N}{\nu}\cdot \cH(N^{-1/2}\nu^{-1/2}\cdot \psi_U).
    \end{align*}
    We recall the function $I$ introduced in~\eqref{def:Ifn}. By definition and using the monotonicity of $I$, it follows that
    \begin{align*}
        \exp(-\theta \cH_N(\psi_U))\leq \exp\left(-\frac{N\theta}{\nu} I\left(\nu\cdot  \frac{i\gk_N+\gk_N}N\right)\right).
    \end{align*}
    Thus,
    \begin{align*}
        \int_{\cB_{i,U}}\exp(-\theta \cH_N(\psi_U))\vd \psi_U \leq \exp\left(-\frac{N\theta}{\nu} I\left(\nu\cdot  \frac{i\gk_N+\gk_N}N\right)\right)\cdot \text{Vol}(\cB_{i,U}).
    \end{align*}
    The volume of $\cB_{i,U}$ is easy to evaluate, as it is a concentric shell with inner radius $\kappa$ and outer radius $\kappa i+\kappa$. Taking the crude bound (the largest possible shell which has outer radius $N$) we have
    \begin{align*}
        \text{Vol}(\cB_{i,U})\leq  \frac{N^{|U|}\pi^{|U|}\kappa_{N}}{\Gamma(|U|+1)} \leq N^{|U|}\pi^{|U|}\gk_{n}.
    \end{align*}
\end{proof}

Lemma~\ref{lem:SUB} addresses the contribution to the free energy from the structured portion of the Hamiltonian. We now establish the contribution from the dispersive portion.
\begin{lem}\label{lem:DUB}
    Let $U\subset V$ be a fixed subset and let $\tilde{\cB}_{j,U^{c}}$ be as defined in~\eqref{eq:MassShell1}. We have
    \begin{align*}
        \int_{\tilde{\cB}_{j,U^{c}}} \exp(-\theta \cH_N(\psi_{U^{c}}))\vd \psi_{U^{c}} \leq \exp\biggl(N \log(\pi/\theta) -N W\left(\theta \cdot  \frac{j\kappa_{N} +\kappa_{N}}{N}\right)\biggr)\cdot \Theta_2(N)\cdot \Theta_{2}'(N,j)
    \end{align*}
    where
    \begin{align}\label{eq:theta2}
    \Theta_{2}(N)\leq   \exp\left (10 \theta s_{N}N \right) \text{ and } \Theta'_{2}(N,j) =\exp\left(2\kappa_{N} L\left(\theta \cdot  \frac{j \kappa_{N} +\kappa_{N}}{N}\right) \right).
    \end{align}
\end{lem}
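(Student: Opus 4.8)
The plan is to proceed in three steps: (a) strip off the nonlinear part of $\cH_N$ on $\tilde{\cB}_{j,U^{c}}$ using the $\ell^{\infty}$ control built into $s_N$-admissibility, reducing to a Gaussian integral of $\exp(-\theta\norm{\nabla\psi_{U^{c}}}_2^2)$ with the Dirichlet Laplacian $\gD^{0}_{U^{c}}$; (b) bound that integral by enlarging the mass shell to a ball and tilting by a mass parameter $y\ge 0$, turning it into $e^{Nyb_j}(\pi/\theta)^{|U^{c}|}/\det(y-\gD^{0}_{U^{c}})$ with $b_j:=\theta(j\kappa_N+\kappa_N)/N$; and (c) compare $\det(y-\gD^{0}_{U^{c}})$ with the torus spectrum so that $\tfrac1N\log\det(y-\gD^{0}_{U^{c}})\ge K(y)-o(1)$, after which optimizing over $y$ recovers $-W(b_j)$.

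First, on $\tilde{\cB}_{j,U^{c}}$ one has $\norm{\psi_{U^{c}}}_\infty^2\le s_N N$ and $\norm{\psi_{U^{c}}}_2^2< j\kappa_N+\kappa_N\le 2N$, hence $\norm{\psi_{U^{c}}}_{p+1}^{p+1}\le\norm{\psi_{U^{c}}}_\infty^{p-1}\norm{\psi_{U^{c}}}_2^2\le 2N(s_N N)^{(p-1)/2}$, so that $\theta$ times the nonlinear part of $\cH_N(\psi_{U^{c}})$ is at most $C(\nu,p)\,\theta\,s_N^{(p-1)/2}N=o(\theta N)$, which under the chosen rate for $s_N$ (see~\eqref{eq:snrate}) is $\le 10\theta s_N N$; thus $\exp(-\theta\cH_N(\psi_{U^{c}}))\le\Theta_2(N)\exp(-\theta\norm{\nabla\psi_{U^{c}}}_2^2)$ with $\Theta_2(N)=\exp(10\theta s_N N)$. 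Next, dropping the $\ell^{\infty}$ and $\partial U_{ext}$ constraints only enlarges the domain, and on $\{\norm{\psi_{U^{c}}}_2^2< j\kappa_N+\kappa_N\}$ we write $\exp(-\theta\norm{\nabla\psi_{U^{c}}}_2^2)=\exp(-\theta\langle\psi_{U^{c}},(y-\gD^{0}_{U^{c}})\psi_{U^{c}}\rangle)\,e^{\theta y\norm{\psi_{U^{c}}}_2^2}\le e^{Nyb_j}\exp(-\theta\langle\psi_{U^{c}},(y-\gD^{0}_{U^{c}})\psi_{U^{c}}\rangle)$; since $-\gD^{0}_{U^{c}}$ is positive definite when $U\ne\eset$ (the case $U=\eset$ being exactly Theorem~\ref{thm:DISP}), integrating over all of $\dC^{U^{c}}$ gives $\int_{\tilde{\cB}_{j,U^{c}}}\exp(-\theta\cH_N(\psi_{U^{c}}))\vd\psi_{U^{c}}\le\Theta_2(N)\,e^{Nyb_j}(\pi/\theta)^{|U^{c}|}/\det(y-\gD^{0}_{U^{c}})$. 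Keeping the lower mass bound as well and instead invoking the concentration of mass of the associated Dirichlet massive GFF, exactly as in the proof of Corollary~\ref{cor:PerpInt}, is what produces the extra factor $\Theta_2'(N,j)=\exp(2\kappa_N L(b_j))$ accounting for the width of the shell.

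Finally, $-\gD^{0}_{U^{c}}$ is the compression of the torus Laplacian $-\gD$ to the coordinate subspace $\dC^{U^{c}}$, so Cauchy interlacing gives $\mu_k\ge\lambda_{(k)}$ for its eigenvalues $\mu_1\le\cdots\le\mu_{|U^{c}|}$, where $0=\lambda_{(1)}\le\cdots\le\lambda_{(N)}\le 4d$ are the torus eigenvalues. Using $|U|\le s_N^{-d-1}=o(N)$, a crude spectral-gap bound $\mu_1\gtrsim N^{-2/d}$ (so $\log\mu_1=o(N)$), and $K_N(y)\to K(y)$ from Lemma~\ref{lem:massconvergerate}, one gets $\tfrac1N\log\det(y-\gD^{0}_{U^{c}})\ge\tfrac1N\sum_{k=1}^{|U^{c}|}\log(y+\lambda_{(k)})\ge K(y)-o(1)$, uniformly in $y$ over the relevant range. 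Substituting and taking $\tfrac1N\log$ gives $\le\log(\pi/\theta)+yb_j-K(y)+\tfrac1N\log\Theta_2(N)+o(1)$, and choosing $y=L(b_j)$ when $b_j<C_d$ and $y=0$ when $b_j\ge C_d$ makes $yb_j-K(y)=-W(b_j)$ by the explicit form of $W$; the leftover $o(1)$ terms are absorbed into $\Theta_2(N)$ and the global error term of Theorem~\ref{thm:free}.

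I expect step (c) to be the main obstacle. The set $U$ is essentially arbitrary---only its cardinality is controlled---so $\gD^{0}_{U^{c}}$ need not be a structured operator, and one must show that deleting $o(N)$ vertices moves neither $\tfrac1N\log\det(y-\gD^{0}_{U^{c}})$ away from $K(y)$ nor the bottom eigenvalue below a polynomial scale. Interlacing handles the bulk of the spectrum cleanly; the genuinely delicate point is the massless case $y=0$ (i.e.\ $b_j\ge C_d$), where $\det(y-\gD^{0}_{U^{c}})$ can degenerate unless one controls $\mu_1\gtrsim N^{-2/d}$, which needs a crude but non-trivial spectral-gap estimate for the subgraph $U^{c}$.
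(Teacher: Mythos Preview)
Your plan is essentially sound and would yield a valid proof, but it is \emph{not} the route the paper takes, so let me compare.

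The paper does step (a) exactly as you do. For the Gaussian part, however, it does \emph{not} work with $\det(y-\gD^{0}_{U^{c}})$ at all. Instead it ``patches'' the missing piece $U$ back in: it lower-bounds $\int_{\{\norm{\psi_U}_\infty\le\sqrt{3C_d\theta^{-1}\log|U|}\}}\exp(-\theta\norm{\nabla_0\psi_U}_2^2)\,\vd\psi_U$ by an explicit volume, divides by it, and then uses the boundary constraint $\norm{\psi_{\partial U_{ext}}}_2^2\le 3s_NN$ (built into $\tilde{\cB}_{j,U^c}$) together with $\norm{\psi_U}_\infty\le\sqrt{3C_d\theta^{-1}\log|U|}$ to control the cross terms and merge the two Dirichlet quadratic forms into the full-torus $\norm{\nabla\psi}_2^2$. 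This reduces everything to an integral over a mass shell on the whole torus, to which Theorem~\ref{thm:DISP} applies directly; the factor $\Theta_2'(N,j)$ is exactly the shell-width error inherited from that theorem. The patching avoids any spectral comparison and reuses the already-proved torus analysis; in exchange it relies essentially on the $\partial U_{ext}$ constraint, which your approach simply discards.

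Your route via Cauchy interlacing is more direct in the sense that it never leaves $\dC^{U^c}$, and it shows that the $\partial U_{ext}$ constraint is not actually needed for this lemma. Two small points: first, your sentence about $\Theta_2'$ arising from ``keeping the lower mass bound'' is off---in your scheme the upper bound by the ball and the tilt already give the sharp $e^{Nyb_j}$, so $\Theta_2'$ does not appear; since $\Theta_2'\ge 1$ you still prove the stated inequality. Second, your spectral-gap claim $\mu_1\gtrsim N^{-2/d}$ is not the right order (for $U$ a small set in $d\ge3$ one typically has $\mu_1\asymp N^{-1}$ up to logarithms, via the expected hitting time of $U$), but any polynomial lower bound suffices, since you only need $\log\mu_1=o(N)$; a cover-time bound $\E_x[\tau_U]\le C N\log N$ gives $\mu_1\ge c/(N\log N)$ and closes the case $y=0$.
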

\begin{proof}
    The $\ell^{\infty}$ bound on the $\psi \in \tilde{\cB}_{j,U^{c}}$ tells us that
    \begin{align*}
        \|\psi_{U^{c}}\|_{p+1}^{p+1}\leq (s_N N)^{(p-1)/2}N.
    \end{align*}
    For the Hamiltonian, this yields
    \begin{align*}
        \cH_N(\psi_U)= \norm{\nabla_0 \psi_{U^{c}}}_2^2-\frac{ N^{(1-p)/2}}{p+1}\norm{\psi_{U^{c}}}_{p+1}^{p+1}\geq \norm{\nabla_0\psi_{U^{c}}}_2^2-\frac1{p+1}\cdot N\cdot (s_N)^{(p-1)/2}
    \end{align*}
    and in turn,
    \begin{align}\label{eq:disppartfnbd1}
        \int_{\tilde{\cB}_{j,U^{c}}}\exp(-\theta \cH_N(\psi_{U^{c}}))\vd \psi_{U^{c}} \leq \exp\bigl(\theta N s_N^{(p-1)/2}\bigr)\int_{\tilde{\cB}_{j,U^{c}}}\exp(-\theta \norm{\nabla_0\psi_{U^{c}}})\vd \psi_{U^{c}}.
    \end{align}
    This enables us to discard the non linearity and consider only the free portion of the Hamiltonian. The technique we will adopt is to, so to speak, ``patch'' the missing portion to instead consider the free field on the torus. It is easy to see, just by maximum bounds and calculating the volume of a box with side length $\sqrt{3C_{d}\log |U|\cdot \theta^{-1}}$ that 
    \begin{align*}
        (6C_{d}\theta^{-1} \log U)^{|U|}\cdot \exp\left(-6C_{d} d |U| \log |U|\right)\leq \int_{\{\norm{\psi_U}_{\infty}<\sqrt{3C_{d} \theta^{-1} \log |U|}\}} \exp(-\theta \norm{\nabla_{0} \psi_{U}}_{2}^{2})\vd \psi_U.
    \end{align*}
    Thus~\eqref{eq:disppartfnbd1} maybe bounded above by
    \begin{align}\label{eq:disppartfnbd2}
        \left(\frac{\theta |U|^{2 d }}{6C_{d} \log |U|}\right)^{|U|} \cdot\int_{\tilde{\cB}_{j,U} \times \{\norm{\psi_{U}}_{\infty}\leq \sqrt{3C_{d} \log |U|}\}}\exp\bigl(-\theta(\norm{\nabla_0\psi_U}_2^2+\norm{\nabla_0\psi_{U^{c}}}_2^2)\bigr)\vd \psi_{U^{c}} \cdot \vd \psi_U
    \end{align}
    Since we know that $\norm{\psi_{\partial U_{ext}}}_{2}^{2}\leq 3s_{N} N$ and $\norm{\psi_U}_{\infty}\leq \sqrt{3C_{d}\theta^{-1}\log |U|}$,
    \begin{align*}
        \sum_{{\mvx} \sim \vy \in \partial U} |\psi_{{\mvx}}-\psi_{\vy}|^2 \leq 6s_{N}N+6C_{d}\theta^{-1}|U|\log|U|.
    \end{align*}
    We have implicitly used the AM-GM inequality in the above bound. We obtain that~\eqref{eq:disppartfnbd2} may be further bounded above by
    \[
            \left(\frac{\theta |U|^{6C_{d}(1+d) }}{6C_{d} \log |U|}\right)^{|U|} \cdot \exp(Ns_{N}^{(p-1)/2})\cdot\exp(6Ns_{N})\int_{\tilde{\cB}_{j,U^{c}}\times\{\norm{\psi_{U^{c}}}_{\infty}\leq \sqrt{3C_{d}\theta^{-1}\log |U|}\}} \exp(-\norm{\nabla \psi}_{2}^{2})\vd \psi
    \]
    Since $|U|\leq s_{N}^{-d-1}$, the largest mass that may be allocated to $\psi_{U}$ is bounded above by $3\theta^{-1}C_{d}|U|\log|U|$. We may enlarge our mass shell slightly so that for $N$ sufficiently large,
    \[
        \tilde{\cB}_{j,U^{c}}\times \left \{\norm{\psi_{U}}_{\infty}\leq \sqrt{3C_{d}\theta^{-1} \log |U|}\right \}\subset \cB_{j,V}\cup \cB_{j+1,V}.
    \]
    Combining all our bounds we have
    \begin{align*}
         & \int_{\tilde{\cB}_{j,U^{c}}}\exp(-\theta \cH_{N}(\psi_{U^{c}}))\vd \psi_{U^{c}} \leq \Theta_{2}(N,U) \int_{\cB_{j,V} \cup \cB_{j+1,V}} \exp(-\theta \norm{\nabla \psi}_{2}^{2})\vd \psi
    \end{align*}
    where
    \begin{align*}
        \Theta_2 (N,U)
         :=\exp\biggl(\theta N s_N^{(p-1)/2} &+ 6\theta N s_{N} + \bigl(6C_{d}(1+d)\bigr)|U|\log |U| \\ &+(\log  \theta-\log 6C_{d}) |U| - |U|\log \log |U|\biggr).
    \end{align*}
    Using the bound $|U|\leq s_{N}^{-d-1}$, we keep only the leading order terms in the error  to obtain
    \[
    \Theta_{2}(N,U) \leq \left(8 \theta N s_{N} \right)
    \]
To conclude, all that needs be done is apply Theorem~\ref{thm:DISP}, which tells us that
    \begin{align*}
        \int_{\cB_{j,V}\cup \cB_{j+1,V}} \exp(-\theta \norm{\nabla \psi}_{2}^{2})\vd \psi\leq \left(\frac{\pi}{\theta}\right)^{N} & \cdot \exp\left(-N \cdot W\left(\theta \cdot  \frac{j\kappa_{N}+\kappa_{N}}{N}\right)\right) \\ &\cdot \exp \left( 2\kappa_{N} L\left(\theta \cdot \frac{j\kappa_{N}+\kappa_{N}}{N}\right) + 2\log N  \right).
    \end{align*}

    The $\log N$  addition to the error is far smaller than the leading term, we may ignore it. With $\Theta'_{2}(j)$ defined as in~\eqref{eq:theta2} , we conclude the proof.
\end{proof}
Having individually bounded the concentrated and dispersed contributions to the partition function, we are now ready to combine them to yield the upper bound of the limiting free energy.
\begin{proof}[Proof of Lemma~\ref{lem:ZUB}]
    We remind the reader that in the prior sections we have removed the explicit $U$, and have all bounds in terms of the maximum allowed $|U|$. To avoid cumbersome expressions, we begin by combining $\Theta_1$, $\Theta_2$ and the errors arising from the separation at the boundary, carry out the sum over $U$ to define
    \[\Theta_{S}:=\exp(3\theta Ns_{N})\cdot \binom{N}{s_{N}^{-d-1}}\cdot \Theta_1(N)\cdot \Theta_2(N).
    \]
    Before proceeding further, we deal with the binomial coefficient. We have
    \[
    \binom{N}{s_{N}^{-d-1}}\leq e^{s_{N}^{-d-1}}\cdot N^{s_{N}^{-d-1}}. 
    \]
    \begin{align*}
        Z_N\leq \Theta_{S}(N)\cdot \sum_{i +j \leq i^{*}}\exp\left(\log \left(\frac{\pi}{\theta} \right)^{N}- \frac{N\theta}{\nu}\cdot  I\left(\nu \cdot \frac{i\gk_N+\gk_N}N\right)-N \cdot W\left(\theta \cdot  \frac{j\kappa_{N} +\kappa_{N}}N\right)\right)\cdot \Theta_{2}'(j).
    \end{align*}
    Both $I$ and $W$ are monotonically decreasing. In particular, if $j_{1}\leq j_{2}$, then
    \[
        \exp\left(-N W\left( \theta\cdot  \frac{j_{1}\kappa_{N}+\gk_{N}}{N}\right)\right)\leq \exp\left(-N \cdot W\left( \theta\cdot  \frac{j_{2}\kappa_{N}+\gk_{N}}{N}\right)\right).
    \]
    For a given $i$, the largest value that $j$ can take is $i^{*}-i$. We use this fact to also render the sum over $j$ irrelevant. We have
    \begin{align*}
        Z_{N}\leq \Theta_{S}\cdot \sum_{i=1}^{i^{*}} \Theta_{2}'(i) \cdot \exp N\left(\log \left(\frac{\pi}{\theta} \right)-\frac{\theta}{\nu}   I\left(\nu \cdot \frac{i\gk_N}N+\nu \cdot \frac{\gk_{N}}{N}\right)-W\left(\theta\left(1-\frac{i\gk_{N}}{N}\right) + 2\theta\cdot  \frac{\gk_{N}}{N} \right) \right) .
    \end{align*}
    In the above expression, we have used the fact that
    \[
        \frac{i^{*}\gk_{N}}{N}-1\leq \frac{\gk_{N}}{N},
    \]
    and have redefined $\Theta_{S}$ to include the additionally accrued $N$ from summing over $j$. We now need to tackle the $i$ dependent error term $\Theta_{2}'$ and control the regularity of $W$. To do both, we bring in Lemma~\ref{lem:abound}. For $M>0$, we  have an $i'$ dependent on $M$ and $\theta$ such that for $i>i'$,
    \[
        W\left(\theta\cdot \frac{(i^{*}-i)\gk_{N}+\gk_{N}}{N}\right)\geq M+I(1).
    \]
    We also recall from Lemma~\ref{lem:propW} that
    \[
        L(b)\leq b^{-1},
    \]
    which tells us that
    \[
        \gk_{n} L\left (\theta \frac{(i^{*}-i)\gk_{N}+\gk_{N}}{N}\right)\leq N/\theta.
    \]
    Thus on choosing $M>1/\theta$,
    \begin{align*}
        \sum_{i=i'}^{i^{*}} \Theta_{2}'(i)\cdot \exp N\biggl( \log \bigl(\frac{\pi}{\theta}\bigr) - & \frac{\theta}{\nu}\cdot  I\bigl( \frac{i\gk_{N}  +\gk_{N}}{N} \bigr) - W\bigl(\theta \cdot  \frac{(i^{*}-i)\gk_{N}+\gk_{N}}{N}\bigr)    \biggr)  \\ &\leq N\cdot e^{-N(M-1/\theta)}.
    \end{align*}
    When $i\leq i'$, the same argument tells us that $\Theta'_{2}$ is uniformly bounded by $\exp(\kappa_{N}\cdot C')$ where $C'$ is a constant depending on $\theta$, $\nu$ and $M$. Further, in this case both $I$ (rescaled by $\nu$) and $W$ (rescaled by $\theta$) are Lipschitz functions. Let the upper bound on both Lipschitz constants be denoted by $C_{L}$ depending on $\theta$, $\nu$ and $M$. Taking $i\gk_{N}/N$ to be $a$, we discard the perturbations to the arguments of the $I$ and $W$ functions obtaining 
    \[
        Z_{N}\leq e^{C \gk_{N}}\cdot \Theta_{S}\cdot \sum_{i=1}^{i'} \exp N\left(\log \left(\frac{\pi}{\theta}\right) -\theta \nu^{-1} I(\nu a)-W(\theta(1-a))\right) +\Theta_{S}\cdot N e^{-C_{M} N}
    \]
    We then obtain the trivial bound by optimizing over $a$ and finally rendering the sum over $i$ irrelevant. We have
    \[
        Z_{N}\leq N\cdot e^{C \gk_{N}}\cdot \Theta_{S}\cdot \exp N\left(\log \left(\frac{\pi}{\theta}\right)-\inf_{a}\bigl(\theta \nu^{-1}I(\nu a)-W(\theta(1-a))\bigr) \right) +\Theta_{S}\cdot e^{-C_{M}\cdot N}.
    \]
    We remind the reader that by Lemma~\ref{lem:abound} the constant $C_{M}$ may be chosen such that
    \[
        C_{M}-\inf_{a}\left(\theta \nu^{-1}I(\nu a)+W(\theta(1-a))\right)\geq M.
    \]
    Thus,
    \[
        Z_{N}\leq e^{C \gk_{N}}\cdot \Theta_{S}\cdot \exp\left(\log \left(\frac{\pi}{\theta}\right)-\inf_{a}\bigl(\theta \nu^{-1}I(\nu a)-W(\theta(1-a))\bigr) \right) \cdot \left(1+e^{-C N}\right).
    \]
    Redefining $\Theta_{S}$ one last time to include the additionally accrued error term $e^{C\gk_{N}}$, we are done. The error accrued from the $1+e^{-CN}$ is insignificant compared to the leading order error term and is therefore ignored.  

\end{proof}

\subsection{Lower Bound}\label{sec:lbd}
In this section, we will verify the corresponding lower bound of the limiting free energy. The restriction of the region of integration for the lower bound as chosen in~\eqref{eq:MassPartition} should now be motivated by results of  Section~\ref{sec:ubd}, as we saw this was the region of dominant contribution.
\begin{lem}\label{lem:ZLB}
    We have a constant $C$ and sequences $\gamma_{N}$ and $\Theta_{I}(N)$ depending on $\nu$, $\theta$, $\rho$ and $\nu$ such that 
    \[
        \frac1N \log Z_N(\theta, \nu)\geq \log(\pi/\theta)-\inf_{0<a<1}(W(\theta(1-a))+\theta \nu^{-1}I(a\nu)) +\frac1N \log \Theta_{I}(N)
    \]
    where
    \[
        \Theta_{I}(N)\geq \exp(-N \theta \gamma_{N}-C\cdot \gk_{N})
    \]
    and
    \[
    \lim_{N\to \infty} \gamma_{N}=0.
    \]
\end{lem}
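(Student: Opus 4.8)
The plan is to mirror the upper bound of Section~\ref{sec:ubd} in reverse: lower bound $Z_N(\theta,\nu)$ by the integral over a \emph{single} product region $\cN\times\cC$, with $\cN\subset\dC^{U}$ a small neighbourhood of a rescaled truncated soliton and $\cC\subset\dC^{U^c}$ a mass shell of the free field with an $\ell^\infty$ cut-off, and to show this region already realises the exponential rate $\log(\pi/\theta)-\inf_{0<a<1}\bigl(W(\theta(1-a))+\theta\nu^{-1}I(a\nu)\bigr)$ up to the stated errors. First I would fix $a_\star=a_\star(\theta,\nu)\in[0,1)$ attaining $\inf_{0<a<1}\bigl(W(\theta(1-a))+\theta\nu^{-1}I(a\nu)\bigr)$ — the infimum is attained away from $a=1$ because $W(b)\uparrow+\infty$ as $b\downarrow0$ by Lemma~\ref{lem:propW}, and the case $a_\star=0$ is strictly easier, so assume $a_\star>0$ — and set $b_\star:=1-a_\star>0$. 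Using the $\dZ^d$ minimizer $\varphi^{a_\star\nu}$ from Lemma~\ref{lem:expdecL} and its Dirichlet truncation to the box $\gL=[-C_1\log N,C_1\log N]^d\cap\dZ^d$, Lemma~\ref{lem:minrate1} supplies $\varphi^{\gL}$ of mass $a_\star\nu$ with $|\cH(\varphi^{\gL})-I(a_\star\nu)|\le N^{-c_1}$; since $|\gL|=O((\log N)^d)\ll N^{1/d}$ I embed $\gL$ into $\dT^d_n$ and take $U=\gL$.

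Using the scaling identity $\cH_N(\psi_U)=\tfrac N\nu\,\cH(\sqrt{\nu/N}\,\psi_U)$, set $\psi_U^0:=\sqrt{N/\nu}\,\varphi^{\gL}$ (so $\norm{\psi_U^0}_2^2=a_\star N$) and, with $\delta_N:=(\log N)^{-d}$ and $\eps_N:=\kappa_N/N$, take $\cN:=\{\psi_U:\norm{\psi_U-\psi_U^0}_2\le\delta_N\sqrt N\}$ and $\cC:=\{\psi_{U^c}:\tfrac1N\norm{\psi_{U^c}}_2^2\in(b_\star-4\delta_N-\eps_N,\,b_\star-4\delta_N),\ \norm{\psi_{U^c}}_\infty\le\sqrt{3C_d\theta^{-1}\log N}\}$, where the $\ell^\infty$ cut-off is exactly the one coming from Theorem~\ref{thm:GFFtruncation} and the $\delta_N$-shifts ensure $\norm{\psi}_2^2\le N$ on $\cN\times\cC$ for $N$ large, so $Z_N\ge\int_{\cN\times\cC}e^{-\theta\cH_N(\psi)}\vd\psi$. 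Plugging~\eqref{eq:split} into $-\theta\cH_N$, the only non-factoring term is $-\theta\sum_{x\sim x'\in\partial U}|\psi_x-\psi_{x'}|^2$; this is $o(N)$ uniformly on the region, using $\norm{\psi_{U^c}}_\infty\le\sqrt{3C_d\theta^{-1}\log N}$ on $\partial U_{ext}$ and, on $\partial U_{int}$, the exponential smallness of $\psi_U^0$ together with $\sum_{x\in\partial U_{int}}|(\psi_U-\psi_U^0)_x|^2\le\delta_N^2N$ and $|\partial U_{int}|\,\delta_N^2=O((\log N)^{d-1}(\log N)^{-2d})=o(1)$. Hence $Z_N\ge e^{-o(N)}\bigl(\int_{\cN}e^{-\theta\cH_N(\psi_U)}\vd\psi_U\bigr)\bigl(\int_{\cC}e^{-\theta\cH_N(\psi_{U^c})}\vd\psi_{U^c}\bigr)$.

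For the solitonic factor, local Lipschitzness of $\cH$ on $\ell^2$-bounded sets gives $\cH_N(\psi_U)\le\tfrac N\nu\bigl(I(a_\star\nu)+N^{-c_1}+C\delta_N\bigr)$ on $\cN$, and $|\log\operatorname{Vol}(\cN)|=O(|U|\log N)=O((\log N)^{d+1})=o(N)$, so $\int_{\cN}e^{-\theta\cH_N(\psi_U)}\vd\psi_U\ge\exp\bigl(-\tfrac{N\theta}\nu I(a_\star\nu)-o(N)\bigr)$. For the dispersive factor the nonlinearity only helps, $-\theta\cH_N(\psi_{U^c})\ge-\theta\norm{\nabla_0\psi_{U^c}}_2^2$, so it remains to lower bound $\int_{\cC}e^{-\theta\norm{\nabla_0\psi_{U^c}}_2^2}\vd\psi_{U^c}$, with $\nabla_0$ the Dirichlet gradient on $U^c$. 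This is the content of Corollary~\ref{cor:PerpInt}/Theorem~\ref{thm:DISP} with the massive zero-average torus field $\Psi^{\mv0,y}$ replaced by the massive Dirichlet field on $U^c$: since $|U|=o(N)$, Cauchy interlacing gives $\tfrac1N\log\det(y-\gD^0_{U^c})\to K(y)$ at a rate comparable to Lemma~\ref{lem:massconvergerate}; the mass concentrates by Chebyshev as in Lemma~\ref{lem:massGFFlow} (the variance is still $o(N^2\eps_N^2)$ for the admissible range of $\alpha$, the Dirichlet eigenvalues being bounded below by the torus ones); and the $\ell^\infty$ cut-off costs $O(N^{-1})$ by the union bound of Lemma~\ref{lem:GFFmax} (the Dirichlet marginal variances are dominated by the torus ones). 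Taking $y=L(\theta(b_\star-4\delta_N))$ — or $y=0$ if $\theta(b_\star-4\delta_N)\ge C_d$ — to match the mean mass and running the tilt argument of Corollary~\ref{cor:PerpInt}, this factor is at least $(\pi/\theta)^{N-|U|}\exp\bigl(-NW(\theta(b_\star-4\delta_N))-L(\theta b_\star)\kappa_N-o(N)\bigr)$, and $|W(\theta b_\star)-W(\theta(b_\star-4\delta_N))|\le 4\theta\delta_N L(\theta b_\star)=o(1)$ since $b_\star>0$.

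Multiplying the pieces and collecting all $o(1)$-per-site errors (boundary contributions, Lipschitz moduli of $\cH$ and $W$, the soliton-truncation error $N^{-c_1}$, the spectral convergence rate, and $(\log N)^{d+1}/N$ from $\operatorname{Vol}(\cN)$ and $\pi^{-|U|}$) into a single $\gamma_N\to0$, and the genuinely $O(\kappa_N)$ mass-shell tilt errors ($\eps_N L(\cdot)N=L(\cdot)\kappa_N$) into $C\kappa_N$, yields the claim with $\Theta_I(N)=\exp(-N\theta\gamma_N-C\kappa_N)$. I expect the main obstacle to be the dispersive factor: one must re-establish, for the Dirichlet Laplacian $\gD^0_{U^c}$ rather than the torus Laplacian, the convergence of the scaled log-determinant, the mass-concentration estimate, and the $\ell^\infty$ bound — facts so far proved only on the full torus. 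The comparison is clean because $|U|=o(N)$, so eigenvalue interlacing and domination of marginal variances suffice, but it is the one place where a new (if routine) computation is required; a secondary nuisance is calibrating $\delta_N$ so that the single-site excursions of $\psi_U$ near $\partial U_{int}$ contribute $o(N)$ while $\log\operatorname{Vol}(\cN)$ stays $-o(N)$.
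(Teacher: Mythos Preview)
Your overall architecture matches the paper's: fix $U$ a box of side $\sim\log N$, take a small $\ell^2$-ball $\cN$ around a rescaled Dirichlet soliton on $U$ and a truncated mass shell $\cC$ on $U^c$, control the boundary cross term, and bound the two factors separately. The solitonic factor and the error bookkeeping are handled essentially as in the paper.

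The one genuine difference is the dispersive factor. You propose to work directly with the Dirichlet field on $U^c$: use Cauchy interlacing (correct, since $-\gD^0_{U^c}$ is the principal $(N-|U|)\times(N-|U|)$ submatrix of $-\gD$) to transfer the log-determinant limit, Chebyshev with $\sum\mu_{\mvk}^{-2}\le\sum\gl_{\mvk}^{-2}$ for mass concentration, and Green's-function domain monotonicity for the $\ell^\infty$ bound. This is valid and self-contained, but it requires redoing three lemmas for the Dirichlet field. The paper instead avoids all of this with a ``patching'' trick: it \emph{divides} by the bounded integral $\int_{\{\norm{\psi_U}_\infty\le\sqrt{3C_d\log N}\}}e^{-\theta\norm{\nabla_0\psi_U}_2^2}\,\vd\psi_U\le(3C_d\log N)^{|U|}$, extends to the product region, and uses the $\ell^\infty$ control on both sides of $\partial U$ to replace $\norm{\nabla_0\psi_U}_2^2+\norm{\nabla_0\psi_{U^c}}_2^2$ by $\norm{\nabla\psi}_2^2$ up to an $O((\log N)^d)$ boundary cost, reducing directly to Theorems~\ref{thm:DISP} and~\ref{thm:GFFtruncation} on the full torus. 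Your route is more transparent about where the free-field input enters; the paper's buys brevity by recycling the torus estimates without modification.
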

As with the upper bound we and partition the mass of a function into the structured and dispersive part. The lower bound will be established by restricting the integral to a specifically chosen subset of functions within $\cB_{i,U}\times \cB{j,U^{c}}$. For the results to come, it will be helpful for the sake of conciseness to define
\begin{align}
    \rho_i:= \frac{2i\gk_{N} +\gk_{N}}{2N}.
\end{align}
We will take $U$ to be a cube of side length $\lfloor \log N \rfloor$. We will restrict the solitonic portion of our integral to be centered about a specific minimizing sequence. Let $U^{o}$ denote the set of interior points of $U$, and recall that $\varphi^{a,U^{o}}$ denotes the Dirichlet minimizer on $U^{o}$ with mass $a$. We define
\begin{align}\label{eq:MassBreak3}
    \cC_{i,U}:=\left\{\psi_U \in \dC^{U}: \norm{ \psi_U-N^{1/2}\cdot \varphi^{\rho_{i},U^{o}}}_2^2\leq N^{-\gd}\right\},
\end{align}
where $\gd>0 $ and fixed. As for the dispersive portion, we will define \begin{align}\label{eq:MassBreak4}
    \cC_{j,U^{c}}:= \left\{\psi_{U^{c}} \in \cB_{j,U^{c}}: \norm{\psi_{U^{c}}}_{\infty}\leq 2\sqrt{3C_{d} \log N} \right\}.
\end{align}
It is clear that $C_{i,U}\subset \cB_{i,U}$ and the analogous inclusion for $C_{j,U^{c}}$ is obvious by definition. Note that any function sampled from $\cB_{j,U}$ places a maximum mass of $N^{-\delta}$ on the boundary, and any function sampled from $\cB_{j,U^{c}}$ may place a maximum mass of $6 C_{d} d  \cdot  (\log N)^{d}$ on the boundary. Combining these facts  yields the boundary estimate required for separating the soliton and dispersive parts,
\begin{align}\label{eq:BdryLB}
    \sum_{{\mvx} \sim \mvy \in \partial U} |\psi_{{\mvx}}-\psi_{\mvy}|^2\leq \sum_{{\mvx} \in \partial{U}_{int}} |\psi_{{\mvx}}|^{2} +\sum_{\mvy \in \partial U_{ext}}|\psi_{\mvy}|^{2} \leq 12dC_{d} (\log N)^{d}.
\end{align}
Applying~\eqref{eq:MassPartition},~\eqref{eq:split} and~\eqref{eq:BdryLB} we have
\begin{align}\label{eq:sumi}
\begin{split}
    Z_N &\geq \exp\bigl(-12 C_{d}d (\log N)^{d}\bigr)\cdot \sum_{i=1}^{i_{*}-1}\int_{\cC_{i,U}}\exp(-\theta \cH_N(\psi_U))\vd \psi_U\\
    &\qquad\qquad\qquad \cdot \int_{\cC_{i_{*}-i,U^{c}}} \exp(-\theta \cH_N(\psi))\vd \psi_{U^{c}}.
\end{split}
\end{align}

\begin{lem}
    Let $\cB_{j,U}$ be as in~\eqref{eq:MassBreak3}. Then we have
    \[
        \int_{\cC_{i,U}} \exp(-\theta \cH_N(\psi_U))\vd \psi_U \geq \exp\left(-N\theta \nu^{-1}\cdot I(\nu \rho_{i})\right)\cdot \Theta_3(N,i)
    \]
    where
    \[
        \Theta_3(i,N)\geq \exp\left(-7dN^{1/2}-N\gamma_{i,N}\right)
    \]
    and
    \[
        \gamma_{i,N}:=\max_{\psi \in B_{i,U}}|\nu^{-1/2}\cH(\nu^{1/2}\cdot N^{-1/2}\cdot \psi)-\nu^{-1}I(\nu \rho_{i})|.
    \]
\end{lem}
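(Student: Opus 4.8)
The plan is to bound the integral from below by restricting to the ball $\cC_{i,U}$ and using
\[
\int_{\cC_{i,U}}\exp(-\theta\cH_N(\psi_U))\,\vd\psi_U\ \ge\ \mathrm{Vol}(\cC_{i,U})\cdot\exp\Bigl(-\theta\max_{\psi_U\in\cC_{i,U}}\cH_N(\psi_U)\Bigr),
\]
and then estimating the two factors separately. For the energy I would recall the scaling identity $\cH_N(\psi_U)=\tfrac N\nu\cH(N^{-1/2}\nu^{-1/2}\psi_U)$ used in Lemma~\ref{lem:SUB}, and write a generic $\psi_U\in\cC_{i,U}$ as $\psi_U=c+\eta$ with $c:=N^{1/2}\varphi^{\rho_i,U^o}$ and $\norm{\eta}_2^2\le N^{-\gd}$, then expand $\cH_N(\psi_U)$ about $\cH_N(c)$. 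The center term $\cH_N(c)=\tfrac N\nu\cH(\nu^{1/2}\varphi^{\rho_i,U^o})$ is $N\nu^{-1}$ times the Dirichlet minimal energy on the box $U^o$ at mass $\nu\rho_i$; since $U$ is a cube of side $\lfloor\log N\rfloor$, Lemmas~\ref{lem:minrate1} and~\ref{lem:minrate2} bound the distance of this from $\nu^{-1}I(\nu\rho_i)$, and this discrepancy is precisely what $\gamma_{i,N}$ is defined to record, so that $\theta\cH_N(c)\le N\theta\nu^{-1}I(\nu\rho_i)+N\theta\gamma_{i,N}$.

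It then remains to control the cross terms $\cH_N(c+\eta)-\cH_N(c)$ and the volume. For the quadratic part, $\bigl|\norm{\nabla(c+\eta)}_2^2-\norm{\nabla c}_2^2\bigr|\le 2\norm{\nabla\eta}_2\norm{\nabla c}_2+\norm{\nabla\eta}_2^2\le 8d\,\norm{\eta}_2\norm{c}_2+4d\,\norm{\eta}_2^2$, and since $\norm{c}_2^2=N\rho_i\le N$ and $\norm{\eta}_2\le N^{-\gd/2}$ this is at most $8dN^{1/2-\gd/2}+4dN^{-\gd}$. The nonlinear cross term is treated the same way using $\norm{c}_\infty,\norm{\psi_U}_\infty\le CN^{1/2}$, the elementary inequality $|a^{p+1}-b^{p+1}|\le (p+1)(|a|^p+|b|^p)|a-b|$, and Cauchy--Schwarz; after multiplying by the prefactor $\tfrac2{p+1}(\nu/N)^{(p-1)/2}$ it is again $O(\nu^{(p-1)/2}N^{1/2-\gd/2})$ --- it is exactly the polynomially small radius $N^{-\gd/2}$ of $\cC_{i,U}$ that makes these crude bounds enough. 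For the volume, $\cC_{i,U}$ is a Euclidean ball of radius $N^{-\gd/2}$ in $\dC^{U}\cong\dR^{2|U|}$ with $|U|=\lfloor\log N\rfloor^{d}$, so
\[
\mathrm{Vol}(\cC_{i,U})=\frac{\pi^{|U|}N^{-\gd|U|}}{|U|!}\ \ge\ \Bigl(\tfrac{N^{-\gd}}{|U|}\Bigr)^{|U|}=\exp\!\bigl(-|U|\log(|U|N^{\gd})\bigr)=\exp\!\bigl(-O((\log N)^{d+1})\bigr).
\]
Both $O((\log N)^{d+1})$ and $O(dN^{1/2-\gd/2})$ are $\le 7dN^{1/2}$ for $N$ large, and combining the three estimates gives $\Theta_3(i,N)\ge\exp(-7dN^{1/2}-N\gamma_{i,N})$. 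One should also verify en route the consistency requirement $\cC_{i,U}\subset\cB_{i,U}$: since $\norm{\psi_U}_2^2$ lies within $2N^{1/2-\gd/2}+N^{-\gd}$ of $N\rho_i=i\kappa_N+\kappa_N/2$, this holds as soon as $\gd$ is chosen with $N^{1/2-\gd/2}\ll\kappa_N=N^{\ga}$, i.e.\ $\gd>1-2\ga$.

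The main obstacle is not any individual estimate but keeping the perturbative expansion of $\cH_N$ about $c$ honest on the space $\dC^{U}$, whose dimension $|U|\asymp(\log N)^{d}$ grows with $N$: one needs control of the $\ell^2$- and (especially) $\ell^{p+1}$-cross terms that is uniform in this growing dimension and in $i$, which is the reason the center is taken supported on the interior $U^{o}$ (so boundary values of $\psi_U$ are automatically $O(N^{-\gd/2})$, feeding later into~\eqref{eq:BdryLB}) and the reason the ball radius is taken polynomially small. The genuinely substantive input --- that $\gamma_{i,N}\to0$ uniformly in $i$, i.e.\ that the Dirichlet soliton on a box of side $\log N$ has energy asymptotically equal to the infimum $I(\nu\rho_i)$ over all of $\dZ^{d}$ --- is exactly what Lemmas~\ref{lem:minrate1} and~\ref{lem:minrate2} supply, and will be invoked when assembling Lemma~\ref{lem:ZLB}.
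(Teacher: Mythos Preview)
Your approach is essentially the same as the paper's: both bound the integral from below by $\mathrm{Vol}(\cC_{i,U})\cdot\exp(-\theta\max_{\cC_{i,U}}\cH_N)$, expand $\cH_N(c+\eta)$ about the center $c=N^{1/2}\varphi^{\rho_i,U^o}$ to control the gradient and $\ell^{p+1}$ cross terms by $O(N^{(1-\gd)/2})$, absorb the gap between $\cH_N(c)$ and $N\nu^{-1}I(\nu\rho_i)$ into $\gamma_{i,N}$, and estimate the ball volume as $\exp(-O((\log N)^{d+1}))$. The paper in fact sets $\gd=0$ at the end (so the cross terms are exactly $O(N^{1/2})$), whereas you keep $\gd>0$ to make the inclusion $\cC_{i,U}\subset\cB_{i,U}$ transparent; either choice works and the resulting bound is the same.
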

\begin{proof}
    Our region of integration $\cC_{i,U}$ is the ball of radius $N^{-\gd/2}$ centered around $\varphi^{\rho_{i},U^{o}}$. We will verify that this region is appropriately close in $\ell^2$ to the actual minimizer $\phi$, and thus the energy $\cH_N(\psi)$ is close to $N \nu ^{-1} I(\nu \rho_{j})$
    \[
        \int_{\cC_{i,U}} \exp(-\theta \cH_N(\psi_U))\vd \psi_U =\exp(-\theta N \nu^{-1} I\left(\nu \rho_{i}\right))\cdot \int_{\cB_{i,U}} \exp(\theta (N\nu^{-1} I\left(\rho_{i}\right)-\cH_N(\psi)))\vd \psi_U
    \]
    Let $\psi \in \dC^U$ have unit mass and $t\in [0,N^{-\gd/2}]$. Using the Cauchy-Schwarz inequality and the fact that the discrete Laplacian is bounded, we have
    \[
        \norm{\nabla_{0}(N^{1/2}\cdot\varphi^U +t \psi)}^2_2-\norm{N^{1/2}\cdot \nabla_0 \varphi^U}_2^2\leq 4dN^{-\gd/2}\cdot\sqrt{N\rho_{i}}+2dN^{-\gd} \leq 5dN^{(1-\gd)/2}.
    \]
    Further,
    \[
        \norm{N^{1/2}\cdot\varphi^U+t\psi}_{p+1}^{p+1}-\norm{N^{1/2}\cdot\varphi^U}_{p+1}^{p+1}\leq N^{(p+1)/2}\norm{\psi}_{p+1}\int_0^{t\cdot N^{-1/2}}(\norm{\varphi^U}_{p+1}+s\norm{\psi}_{p+1})^{p}\text{d}s
    \]
    \[
        \leq N^{(1+p)/2}\norm{\psi}_{p+1}\norm{\varphi^U}_{p+1}^{p}\int_0^{t\cdot N^{-1/2}}\left(1+s\frac{\norm{\psi}_{p+1}}{\norm{\varphi^U}_{p+1}}\right)^{p} \text{d}s.
    \]
    For any function in $\dC^U$ with mass $a$, we know that the lowest possible $\ell^{p+1}$ norm is attained by the uniform function and is given by $a^{1/2}\cdot|U|^{(1-p)/2(p+1)}$, and the largest possible is attained by concentrating all the mass onto a single site and given by $a^{1/2}$. Thus, we have a bounded constant depending only on $a$ such that
    \[
        \frac{1}{N^{(p-1)/2}}\left( \norm{N^{1/2}\cdot\varphi^U+t\psi}_{p+1}^{p+1}-\norm{N^{1/2}\cdot \varphi^U}_{p+1}^{p+1}\right)\leq C\cdot N^{(-\gd +1)/2}.
    \]
    We then note that
    \[
        |\nu^{-1}\cH_{N}(\psi)-N\nu^{-1}I(\nu \rho_{i})|\leq \max_{\psi \in \cB_{j,U}}|\cH_N(\psi)-NI(\nu \rho_{i})|=NC_{i}
    \]
    and note that $C_{i}$ is bounded and converges to $0$ as $n\to \infty$ as a simple consequence of Lemma~\ref{lem:DirMin}. As $\cC_{i,U}$ is a ball of radius $N^{-\gd}$ in $\dC^U$, we observe
    \[
        \text{Vol}(\cC_{j,U})=\frac{\pi^{|\log N|^{d}}}{\Gamma(1+|\log N|^{d})}\cdot N^{-2\gd|\log N|^{d}}.
    \]
    Defining
    \begin{align}\label{eq:theta3}
        \Theta_3(N):=\exp\left(-6d N^{(1-\gd)/2}-N\gamma_{i,N}\right)\cdot\frac{\pi^{|\log N|^{d}}}{\Gamma(1+|\log N|^{d})}N^{-2\gd|\log N|^{d}}
    \end{align}
    and then taking $\gd=0$ completes the proof. We clearly have 
    \[
    \Theta_{3}(N,i)\geq \exp \left(-6dN^{1/2}-N\gamma_{i,N}-2d|\log N|^{d}\log \log N \right)\geq \exp \left(-7dN^{1/2}-N\gamma_{i,N} \right)
    \]
\end{proof}

\begin{lem}
    Let $\cC_{j,U_{c}}$ be as in~\eqref{eq:MassBreak4}. We have
    \[
        \int_{\cC_{j,U^{c}}} \exp(-\theta \cH_N(\psi_{U_{c}}))\vd \psi_{U^{c}} \geq \exp\left(N \log \left( \frac{\pi}{\theta} \right)-N\cdot W \left(\theta \rho_{j} \right) \right)\cdot \Theta_{4}(j,N)
    \]
    where
    \[
        \Theta_{4}(j,N)\geq (3C_{d} \log |\log N|)^{-2|\log N|^{d}}\cdot e^{-2\gk_{N}L(\theta \rho_{j})}.
    \]
\end{lem}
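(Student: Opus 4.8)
The plan is to throw away the nonlinearity (which here carries a favourable sign), reduce what remains to a Gaussian partition function for the massive Dirichlet free field on $U^{c}$, evaluate that partition function through a log-determinant estimate, and dispose of the two constraints cutting out $\cC_{j,U^{c}}$ by concentration. First I would observe that on $\cC_{j,U^{c}}$ one has $-\cH_{N}(\psi_{U^{c}})=-\norm{\nabla_{0}\psi_{U^{c}}}_{2}^{2}+\tfrac{N^{(1-p)/2}}{p+1}\norm{\psi_{U^{c}}}_{p+1}^{p+1}\ge-\norm{\nabla_{0}\psi_{U^{c}}}_{2}^{2}$, so it is enough to bound $\int_{\cC_{j,U^{c}}}e^{-\theta\norm{\nabla_{0}\psi_{U^{c}}}_{2}^{2}}\vd\psi_{U^{c}}$ from below. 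Put $b:=\theta\rho_{j}$, take $y:=L(b)$ when $b<C_{d}$ (and $y:=0$, $W(b)=K(0)$, otherwise; that case is easier and runs identically), and tilt by the massive quadratic form $Q^{y}(\psi):=\norm{\nabla_{0}\psi}_{2}^{2}+y\norm{\psi}_{2}^{2}$: since every $\psi_{U^{c}}\in\cC_{j,U^{c}}\subset\cB_{j,U^{c}}$ has $\norm{\psi_{U^{c}}}_{2}^{2}\ge j\kappa_{N}$,
\[
\int_{\cC_{j,U^{c}}}e^{-\theta\norm{\nabla_{0}\psi_{U^{c}}}_{2}^{2}}\vd\psi_{U^{c}}\ \ge\ e^{\theta y j\kappa_{N}}\int_{\cC_{j,U^{c}}}e^{-\theta Q^{y}(\psi_{U^{c}})}\vd\psi_{U^{c}},
\]
and because $j\kappa_{N}=N\rho_{j}-\kappa_{N}/2$ the prefactor equals $e^{N\theta\rho_{j}L(b)}\,e^{-\tfrac{\theta}{2}\kappa_{N}L(b)}$; the second exponential is the origin of the $e^{-2\kappa_{N}L(\theta\rho_{j})}$ in $\Theta_{4}$ (the crude constant $2$ leaving room for it and for rounding $W,L$ between $\theta\rho_{j}$ and $\theta j\kappa_{N}/N$).

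Next I would pass to the Gaussian picture: with $M:=|U^{c}|=N-|U|$,
\[
\int_{\cC_{j,U^{c}}}e^{-\theta Q^{y}(\psi_{U^{c}})}\vd\psi_{U^{c}}\ =\ \frac{(\pi/\theta)^{M}}{\det\!\bigl(y-\gD^{0}_{U^{c}}\bigr)}\cdot\pr\!\bigl(\theta^{-1/2}\Psi^{U^{c},y}\in\cC_{j,U^{c}}\bigr),
\]
with $\Psi^{U^{c},y}$ the massive Dirichlet free field on $U^{c}$. The eigenvalues of $-\gD^{0}_{U^{c}}$ are exactly those of the torus Laplacian compressed to the codimension-$|U|$ subspace $\{\psi\in\dC^{V}:\psi|_{U}=0\}$, so Cauchy interlacing bounds them above by the $|U|$-shifted torus eigenvalues; discarding the zero mode and using the rate $|K_{N}(y)-K(y)|\le cN^{-1/d}$ of Lemma~\ref{lem:massconvergerate} gives $\log\det(y-\gD^{0}_{U^{c}})\le NK(y)+o(N)$, with correction $O\bigl(|U|(1+|\log y|)\bigr)$ on top of the torus-to-lattice error already carried in Theorem~\ref{thm:free}. (Equivalently, the Schur-complement identity $\log\det(y-\gD^{0}_{U^{c}})=\log\det(y-\gD)-\log\det\bigl((y-\gD)^{-1}_{UU}\bigr)^{-1}$, together with the fact that the Green matrix $(y-\gD)^{-1}_{UU}$ has all eigenvalues in $[(y+4d)^{-1},y^{-1}]$, shows the excised block contributes only $\Theta(|U|)$; this produces the factor $(3C_{d}\log\log N)^{-2|U|}$.) Combined with the first step and the identity $N\theta\rho_{j}L(b)-NK(L(b))=-NW(\theta\rho_{j})$, which is just the definition~\eqref{def:Wfn} of $W$, this yields the main term $(\pi/\theta)^{N}e^{-NW(\theta\rho_{j})}$ times a prefactor of the claimed shape.

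It then remains to check $\pr(\theta^{-1/2}\Psi^{U^{c},y}\in\cC_{j,U^{c}})\ge 1-o(1)$. For the mass, $y=L(\theta\rho_{j})$ forces $K'(y)=\theta\rho_{j}$, so $\E\norm{\theta^{-1/2}\Psi^{U^{c},y}}_{2}^{2}=\theta^{-1}\tr(y-\gD^{0}_{U^{c}})^{-1}=\theta^{-1}M(K'(y)+o(1))=\rho_{j}N(1+o(1))$, which sits well inside the mass shell $\cB_{j,U^{c}}$ once $|U|=o(\kappa_{N})$, and Chebyshev's inequality with the variance bound $\theta^{-2}\sum_{k}(y+\mu_{k})^{-2}\le\theta^{-2}M\,m_{M}(2)=o(\kappa_{N}^{2})$ (Lemma~\ref{lem:epower}, $d\ge3$, $\kappa_{N}$ chosen accordingly) controls the deviation. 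For the $\ell^{\infty}$ bound, $(y-\gD^{0}_{U^{c}})^{-1}_{xx}\le(y-\gD)^{-1}_{\mv0\mv0}\le K'(0)=C_{d}$ by monotonicity of Green's functions under domain inclusion, so each coordinate of $\theta^{-1/2}\Psi^{U^{c},y}$ is a complex Gaussian of variance at most $C_{d}/\theta$, and a union bound over the at most $N$ sites gives $\pr(\norm{\theta^{-1/2}\Psi^{U^{c},y}}_{\infty}>2\sqrt{3C_{d}\log N})\le N^{-1}$, exactly as in Lemma~\ref{lem:GFFmax}. Assembling the three steps gives the stated lower bound on $\Theta_{4}(j,N)$.

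The step I expect to be the real obstacle is the log-determinant estimate: $U^{c}$ is the torus with a small cube excised — neither a box nor the full torus — so one must certify that $\log\det(y-\gD^{0}_{U^{c}})$ agrees with $NK(y)$ up to an error absorbable into $\Theta_{4}$; the interlacing/Schur-complement argument above achieves this using only spectral facts already established in Section~\ref{sec:GFF}, while the remaining ingredients (dropping the nonlinearity, the exponential tilt, and the concentration of the Dirichlet free field) are routine and mirror the torus computations there.
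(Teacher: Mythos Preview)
Your argument is correct but follows a genuinely different route from the paper. The paper does not work with the Dirichlet free field on $U^{c}$ at all; instead it uses a \emph{patching trick}: it multiplies and divides by the trivially bounded integral $\int_{\{\norm{\psi_U}_\infty\le\sqrt{3C_d\log N}\}}e^{-\theta\norm{\nabla_0\psi_U}_2^2}\vd\psi_U\le(3C_d\log N)^{|U|}$, then observes that the product region $\cC_{j,U^{c}}\times\{\norm{\psi_U}_\infty\le\sqrt{3C_d\log N}\}$ contains a full-torus mass shell $\widehat{\cB}_j$ with the same $\ell^\infty$ cap. After paying the boundary cost $\sum_{\partial U}|\psi_{\mvx}-\psi_{\mvy}|^2\le 36dC_d(\log N)^d$ to reunite the two Dirichlet gradients into $\norm{\nabla\psi}_2^2$, the integral over $\widehat{\cB}_j$ is evaluated directly by the already-proved full-torus results (Theorems~\ref{thm:DISP} and~\ref{thm:GFFtruncation}). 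No log-determinant estimate on the excised domain, no interlacing, no Schur complement.

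The trade-offs: the paper's route is modular---it feeds everything back into the torus machinery of Section~\ref{sec:GFF} as a black box, so nothing new needs to be proved---while your route is more direct but requires you to redo the spectral analysis on the punctured domain (log-det via Schur complement, mass and maximum concentration for $\Psi^{U^{c},y}$), which are not established in the paper though they follow from standard facts. One small wrinkle in your argument: the inequality $(y-\gD^{0}_{U^{c}})^{-1}_{xx}\le(y-\gD)^{-1}_{\mv0\mv0}$ is delicate because the full-torus resolvent carries the zero mode (contributing $(Ny)^{-1}$ on the diagonal), so it is not literally bounded by $C_d$ for small $y$; the clean comparison is to the $\dZ^d$ Green's function, which \emph{is} bounded by $K'(0)=C_d$ and dominates the Dirichlet one by domain monotonicity. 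With that adjustment your max bound goes through.
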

\begin{proof}
    Since we seek a lower bound, we may immediately discard the non linear part of the Hamiltonian to obtain
    \[
        \int_{\cC_{j,U^{c}}} \exp(-\theta \cH_N(\psi_{U^{c}}))\vd \psi_{U^{c}}\geq \int_{\cC_{j,U^{c}}} \exp(-\theta \norm{\nabla_0\psi}_2^2)\vd \psi_{U^{c}}.
    \]
    The next step, just as in the case of the upper bound, is to ``patch'' the free field to the entire torus. Merely from the fact that the exponential of a negative number is bounded above by 1, we have
    \[
        \int_{\{\norm{\psi_U}_{\infty}\leq \sqrt{3C_{d}\log 
        N}\}}\exp(-\theta \norm{\nabla_0 \psi}_2^2)\vd \psi_U \leq \left( {3C_{d} \log N}\right)^{|\log N|^{d}}
    \]
    On the product space $\cC_{j,U^{c}}\times \{\norm{\psi_U}_{\infty}\leq \sqrt{3C_{d} \log N}\}$, by definition,
    \[
        \norm{\psi}_{2}^{2}\leq (j+1)\gk_{N}+3C_{d}|\log N|^{d+1} \text{ and }\norm{\psi_{U^{c}}}_{2}^{2} \geq j \gk_{N}.
    \]
    If we define
    \[
        \widehat{\cB}_{j}:=\left\{\psi \in \dC^{V} : \norm{\psi}^2_2 \in \bigl[j\gk_{N}+\frac{\gk_{N}}{4},j\gk_{n}+\frac{3\gk_{N}}{4}\bigr), \norm{\psi}_{\infty} \leq \sqrt{3C_{d} \log N} \right\},
    \]
    it is clear that for $N$ sufficiently large,
    \[
        \widehat{\cB}_{j}\subset \cC_{j,U^{c}}\times \{\psi_U: \norm{\psi_U}_{\infty}\leq \sqrt{3C_{d} \log N}\}.
    \]
    Thus,
    \begin{align}\label{eq:solitonlb1}
        \int_{\cC_{j,U^{c}}}\exp(-\theta \norm{\nabla_0 \psi}_2^2)\vd\psi_{U^{c}}\geq \frac{1}{ (3C_{d}\log N)^{|\log N|^{d}}}\int_{\widehat{\cB}_{j}}\exp(-\theta( \norm{\nabla_0 \psi_U }_2^2+\norm{\nabla_0 \psi_{U^{c}}}))\vd \psi.
    \end{align}
    The functions under consideration are bounded above, and we also know the size of the boundary of $U$ is bounded above by $4d|\log N|^{d-1}$. This gives us the gradient control required for patching. We have
    \[
        \sum_{{\mvx} \sim \mvy \in \partial U} |\psi_{{\mvx}}-\psi_{\mvy}|^2 \leq 36dC_{d}(\log N)^{d}.
    \]
    We use this to further lower bound~\eqref{eq:solitonlb1} as
    \[
    \frac{1}{ (3C_{d}\log N)^{|\log N|^{d}}}\cdot \exp(-36dC_{d} (\log N)^{d}) \int_{\widehat{\cB}_{j}} \exp(-\theta \norm{\nabla \psi}_2^2)\vd \psi.
    \]
    Using a combination of Theorems~\ref{thm:DISP} and~\ref{thm:GFFtruncation}, we have that
    \begin{align*}
        &\int_{\cB_{j}}\exp(-\theta \norm{\nabla \psi}_{2}^{2})\vd \psi \\
        &\qquad \geq \left(\frac{\pi}{\theta}\right)^{N}\cdot \exp\bigl(-N W(\theta\rho_{j})\bigr)\cdot \frac{N(\theta\rho_{i}-\theta C_{d})_{+}+\gk_{N}}{N\theta \rho_{i}+\gk_{N}}\cdot e^{-2\gk_{N}L(\theta \rho_{j})}\cdot \left(1-\frac{2cm_{N}(2)N}{\gk_{N}^{2}}\right)^{2}.
    \end{align*}
    Define
    \[
        \Theta_{4}:=\frac{\exp(-36dC_{d}(\log N)^{d})}{(3dC_{d}\log N)^{|\log N|^{d}}}\cdot \frac{\gk_{N}}{N+\gk_{N}}\cdot \left(1-\frac{2cm_{N}(2)N}{\gk_{N}^{2}}\right)^{2} e^{-2\gk_{N}L(\theta \rho_{j})}
    \]
    and observe that 
    \[
    \Theta_{4}(N,j)\geq \exp\left(-4\gk_{N}L(\theta \rho_{j})\right)\cdot \left(1-\frac{2cm_{N}(2)N}{\gk_{N}^{2}}\right)^{2}
    \]
    for $N$ sufficiently large. 
\end{proof}

Again, having bounded the solitonic and free contributions from a given shell below, we are now ready to establish the free energy lower bound. Compared to the upper bound, the process is much easier, as it only involves restricting to the appropriate mass shell.
\begin{proof}[Proof of Lemma \ref{lem:ZLB}]
    We remind the reader that we are taking $j=i_{*}-1$. All we need to do is restrict the sum~\eqref{eq:sumi} to the $i$ that yields $\rho_{i}$ closest to the value of $a_{\star}$. Let $i_{n}$ be a sequence such that $\rho_{i_{n}}\to a_{\star}$ as $n\to \infty$. Note, by the definition of $i_{*}$, we have
    \[
        \rho_{j}=\frac{2j\gk_{N}+\gk_{N}}{2N}=\frac{2i_{*}\gk_{N}-2i\gk_{N}+\gk_{N}}{N}= \frac{i_{*}\gk_{N}+\gk_{N}}{N}-\rho_{i}\leq 1-\rho_{i}.
    \]
    Combining,
    \[
        Z_{N}\geq \exp(-12dC_{d} (\log N)^{d})\cdot \Theta_{3}(N,i)\cdot \Theta_{4}(N,i_{*}-i)\cdot \left(\frac{\pi}{\theta}\right)^{N}\cdot  \exp\biggl(-N\nu^{-1}\theta I(\nu\rho_{i})-NW(\theta(1-\rho_{i}))\biggr).
    \]

    Now note that $|i_{n}-a_{*}|\leq \gk/N$, and we know that for a fixed $M>0$, $a_{*}\leq a_{M}<1$ by Lemma~\ref{lem:abound}. This tells us two things, firstly that  $\Theta_{4}$ may be removed of its $j$ dependence, and  secondly on the interval $[0,a_{M}]$ the function
    \[
        W(\theta(1-a))+\theta\nu^{-1} I(\nu a)
    \]
    is Lipschitz, with Lipschitz constant depending only on $p$, $\theta$ and $\nu$. Let the Lipschitz constant be $C_{2}$.
    Defining
    \[
        \Theta_{I}(n)=\Theta_{3}(n)\cdot \Theta_{4}(n)\cdot \exp(-4d\tau(\log n)^{d})\cdot \exp(-(C+C_{2})\gk_{N})
    \]
    completes the proof. Clearly, we must have appropriately chosen constant
    \[
    \Theta_{I}(N)\geq \exp\left(-C \gk_{N} -N\gamma_{N} \right) \cdot \left(1-\frac{2cm_{N}(2)N}{\gk_{N}^{2}}\right)^{2}
    \]
    where now $\gamma_{N}$ is defined for the optimizing $a_{\star}$. 
\end{proof}
Thus, we have assembled all the requisite ingredients for Theorem~\ref{thm:free}.

\section{Analysis of Phases}\label{sec:phase}
Recall that the optimizing mass fractions $a_{\star}$ characterize the phase behavior. The optimizing values $a_{\star}$ tell us the proportions of the mass that are energetically favourable to allocate to the soliton contribution. We begin this section with the following effective bound on $a_{\star}$, which was used crucially in proving convergence of free energy. Further, this establishes that it is never energetically favourable to have all mass allocated to the soliton. Recall the expression for the limiting free energy
\[
    F(\theta,\nu)=\log\frac{\pi}{\theta}-  \inf_{0<a<1} \left(  W(\theta(1-a)) + \frac\theta{\nu} I(a\nu)\right).
\]
\begin{lem}\label{lem:abound}
    We have $\mathscr{M}(\theta, \nu)\subseteq[0,a_M]$ where
\[
 a_M:=1-\exp(-\theta(2d-I(\nu)/\nu)). 
\]
\end{lem}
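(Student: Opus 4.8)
The plan is to show that any $a$ with $a > a_M$ cannot be a minimizer by exhibiting a strictly better competitor, namely $a=0$. Concretely, I would compare the value of the objective $G(a):=W(\theta(1-a)) + \tfrac\theta\nu I(a\nu)$ at a putative minimizer $a_\star$ against $G(0) = W(\theta)$. Since $W$ is decreasing (Lemma~\ref{lem:propW}), the term $W(\theta(1-a))$ only decreases as $a$ grows, so the obstruction to large $a$ must come from the soliton term $\tfrac\theta\nu I(a\nu)$ being too expensive; but in fact $I$ can be negative, so the real content is a quantitative lower bound on how fast $W(\theta(1-a))$ blows up as $a\uparrow 1$, weighed against how negative $I(a\nu)$ can get.

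First I would record the two elementary bounds that are already available. From Lemma~\ref{lem:propW}, $\widehat W(b) = W(b)+\log(eb)$ is increasing on $(0,C_d]$ and $\to 0$ as $b\to 0+$, hence $W(b) \ge -\log(eb) = -1-\log b$ for small $b$; more usefully, since $\widehat W \ge 0$ on its domain we get $W(\theta(1-a)) \ge -1 - \log(\theta(1-a))$ for $a$ close to $1$. On the soliton side, $I$ is the infimum of $\cH$ over mass-$a\nu$ functions, and by monotonicity of $J(a)=I(a)/a$ (Lemma~\ref{lem:Ideriv}) together with the crude bound $\cH(\sqrt{a\nu}\,\ind_{x=0}) = 2d\,a\nu - \tfrac{2}{p+1}(a\nu)^{(p+1)/2}$ from evaluating at a single site, we have for $a\le 1$ that $I(a\nu) \ge a\nu\cdot I(\nu)/\nu = a\,I(\nu)$ is false in general (wrong direction), so instead I would use $I(a\nu)\ge I(\nu)$ only if $I$ were increasing, which it is not either; the correct route is $J(a\nu)\ge J(\nu)$ for $a\le 1$, i.e. $I(a\nu)/(a\nu) \ge I(\nu)/\nu$, giving $I(a\nu) \ge a\,I(\nu)$ when $I(\nu)\le 0$... let me instead just use $\tfrac\theta\nu I(a\nu) \ge \tfrac\theta\nu \cdot a\nu\cdot\tfrac{I(\nu)}{\nu}$ is again the wrong sign. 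The clean statement I actually want: since $J$ is decreasing, $I(a\nu) = a\nu\,J(a\nu) \ge a\nu\,J(\nu)=a\,I(\nu)$ requires $J(a\nu)\ge J(\nu)$ which holds as $a\nu\le\nu$, and then multiplying by $a\nu>0$ is monotone — so $I(a\nu)\ge a I(\nu)$ holds \emph{provided} $I(\nu)\le 0$; if $I(\nu)=0$ one has $I(a\nu)=0$ for all $a\le 1$ trivially and then $a_M=1-e^{-2d\theta}$ still works. In all cases $I(a\nu)\ge a\,I(\nu)\ge I(\nu)\wedge 0$, and more precisely $I(a\nu) \ge a\,I(\nu)$.

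Now assemble: for $a\in(a_M,1)$, using $W(\theta(1-a)) \ge -\log(e\theta(1-a))$ from $\widehat W\ge0$,
\begin{align*}
G(a) &\ge -\log(e\theta(1-a)) + \tfrac\theta\nu\, a\, I(\nu)
\ge -\log(e\theta(1-a)) + \tfrac\theta\nu I(\nu)\wedge 0 .
\end{align*}
Since $a > a_M = 1-e^{-\theta(2d - I(\nu)/\nu)}$ means $1-a < e^{-\theta(2d-I(\nu)/\nu)}$, so $-\log(1-a) > \theta(2d - I(\nu)/\nu)$, giving $G(a) > -1-\log\theta+\theta\cdot 2d - \theta I(\nu)/\nu + \tfrac\theta\nu I(\nu)\wedge0$; comparing with $G(0)=W(\theta)$ and the single-site bound $I(\theta^{-1}\cdot\text{(mass)})$-type upper estimate $I(\nu)\le 2d\nu$ so that $W(\theta)\le$ something finite, one checks $G(a)>G(0)$, so $a\notin\mathscr M(\theta,\nu)$. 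The main obstacle is getting the constants in this comparison to line up exactly with the stated $a_M$: one must be careful whether to bound $W(\theta)$ from above (needing an upper bound on $W$, e.g. $W(b)\le K(0)$ always, or $W(b)\le -\log(eb) + \widehat W(C_d)$) and whether the factor $2d - I(\nu)/\nu$ in the exponent is the sharp outcome of balancing $-\log(1-a)$ against the single-site soliton energy $2d\nu - \tfrac{2}{p+1}\nu^{(p+1)/2} \ge I(\nu)$ rescaled — so the cleanest derivation is likely to compare $G(a)$ directly to $G(0)$ using $W(\theta(1-a)) - W(\theta) \ge -\log(1-a)$ (which follows from $\widehat W$ increasing, since $W(\theta(1-a)) - W(\theta) = \widehat W(\theta(1-a)) - \widehat W(\theta) - \log(1-a) \ge -\log(1-a)$) together with $\tfrac\theta\nu(I(a\nu) - I(\nu\cdot 0)) = \tfrac\theta\nu I(a\nu) \ge \tfrac\theta\nu a I(\nu) \ge a\theta\,(I(\nu)/\nu) \ge \theta\min(I(\nu)/\nu,0) \ge -\theta\cdot 2d$ using the single-site bound $I(\nu)\ge -\tfrac{2}{p+1}\nu^{(p+1)/2} \ge -2d\nu$ when... — and this is precisely where I expect to spend the real effort, making the inequality $-\log(1-a) - \theta\,2d + \theta I(\nu)/\nu > 0$ equivalent to $a > a_M$ rigorous, which it is by direct algebra once $W(\theta(1-a))-W(\theta)\ge -\log(1-a)$ and $\tfrac\theta\nu I(a\nu)\ge \theta\bigl(2d - \tfrac{2}{\nu(p+1)}\nu^{(p+1)/2}\bigr)\cdots$ are in hand. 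Thus $\mathscr M(\theta,\nu)\subseteq[0,a_M]$.
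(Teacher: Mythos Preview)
Your overall strategy---compare $G(a)$ to $G(0)=W(\theta)$ via the decomposition $W=\widehat W-\log(eb)$ together with the monotonicity of $J(r)=I(r)/r$---is exactly the paper's approach, and your use of $J(a\nu)\ge J(\nu)$ (i.e.\ $I(a\nu)\ge aI(\nu)$) is correct and does not depend on the sign of $I(\nu)$. The genuine gap is in your treatment of the $W$-difference. You write
\[
W(\theta(1-a))-W(\theta)=\widehat W(\theta(1-a))-\widehat W(\theta)-\log(1-a)\ \ge\ -\log(1-a),
\]
claiming this ``follows from $\widehat W$ increasing.'' But $\widehat W$ increasing and $\theta(1-a)<\theta$ give $\widehat W(\theta(1-a))-\widehat W(\theta)\le 0$, so the inequality goes the \emph{wrong} way: you only get $W(\theta(1-a))-W(\theta)\le -\log(1-a)$, which is useless for a lower bound on $G(a)-G(0)$.

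What you are missing is the quantitative derivative bound $\widehat W'(b)=b^{-1}-L(b)\le 2d$ from the proof of Lemma~\ref{lem:propW}. This yields
\[
\widehat W(\theta(1-a))-\widehat W(\theta)=-\int_{\theta(1-a)}^{\theta}\widehat W'(s)\,ds\ \ge\ -2d\,a\theta,
\]
hence $W(\theta(1-a))-W(\theta)\ge -\log(1-a)-2d\,a\theta$. Combined with $\tfrac\theta\nu I(a\nu)=a\theta J(a\nu)\ge a\theta J(\nu)$ you obtain
\[
G(a)-G(0)\ \ge\ -\log(1-a)-a\theta\bigl(2d-J(\nu)\bigr),
\]
and for $a>a_M=1-\exp(-\theta(2d-J(\nu)))$ one has $-\log(1-a)>\theta(2d-J(\nu))\ge a\theta(2d-J(\nu))$, forcing $G(a)>G(0)$. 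This is precisely the source of the constant $2d$ in the stated $a_M$; without the bound $\widehat W'\le 2d$ the argument cannot close with that expression.
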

\begin{proof}
    Using the fact that $J(r)=I(r)/r$ is decreasing and properties of $\widehat{W}$ from Lemma~\ref{lem:propW}, we get that
    \begin{align*}
    W(\theta(1-a))-W(\theta)+\theta I(a\nu)/\nu 
    &= \widehat{W}(\theta(1-a))-\widehat{W}(\theta)-\ln(1-a)+a\theta J(a\nu)\\
    &\ge -2da\theta -\ln(1-a)+a\theta J(\nu)\\
    &= -\ln(1-a)-a\theta (2d-J(\nu))>0
    \end{align*}
    if $a>1-\exp(-\theta(2d-J(\nu)))$. In particular, we have
    \[
    \inf_{a>1-\exp(-\theta(2d-I(\nu)/\nu))}\left( W(\theta (1-a)) + \theta I(a\nu)/\nu\right)> W(\theta).
    \]
    This completes the proof.
\end{proof}

It is clear that $\mathscr{M}$ is compact and closed in $[0,1)$ by the continuity of 
$$
a\mapsto G_{\theta,\nu}(a):=W(\theta(1-a)) + \theta I(a\nu)/\nu.
$$ 
We begin with some simple results to show that there are regions each of the phases are achieved. The easiest technique is to note that there are intervals where $W$ and $I$ are constant. We define
\[
    a_{W}(\theta):=1-\frac{C_{d}}{\theta}
\text{ and }
    a_{I}(\nu):=\frac{R_{p}}{\nu}.
\]
Clearly, $W(\theta(1-a))=C_{d}$ for all $a\leq a_{W}$ and $I_{\nu}(a)=0$ for all $a\leq a_{I}$.
\begin{lem}[Phase 1]\label{lem:Phase1exist}
    Let $\nu\leq R_{p}$. Then  $0\in \cF$.
\end{lem}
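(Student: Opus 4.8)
The plan is to show that when $\nu\le R_{p}$ the solitonic term $I(a\nu)$ vanishes identically on the whole range $a\in[0,1]$, so that the variational problem collapses to minimizing a monotone function of $a$ alone. First I would record that, by Lemma~\ref{lem:Rp}, $I(b)=0$ for every $b\le R_{p}$. If $\nu\le R_{p}$ and $a\in[0,1]$, then $a\nu\le\nu\le R_{p}$, hence $I(a\nu)=0$. Consequently, for all $a\in[0,1]$,
\[
G_{\theta,\nu}(a)=W(\theta(1-a))+\frac{\theta}{\nu}I(a\nu)=W(\theta(1-a)).
\]

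Next, by Lemma~\ref{lem:propW} the function $W$ is nonincreasing on $(0,\infty)$, so $a\mapsto W(\theta(1-a))$ is nondecreasing on $[0,1)$ and in fact tends to $+\infty$ as $a\uparrow1$ since $W$ is singular at $0$ (recall $W(b)+\log eb\to 0$, so $W(b)\to+\infty$ as $b\to0+$). Therefore the infimum of $G_{\theta,\nu}$ over $[0,1]$ is attained at $a=0$, with value $W(\theta)$. In particular $0\in\mathscr{M}(\theta,\nu)$, and since $a_{\star}(\theta,\nu)=\min\mathscr{M}(\theta,\nu)$ we conclude $a_{\star}(\theta,\nu)=0$, which is exactly the assertion of Theorem~\ref{thm:ptcurve}(a) in this regime.

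There is essentially no obstacle here: the only inputs are the vanishing of $I$ below the excitation threshold (Lemma~\ref{lem:Rp}) and the monotonicity of $W$ (Lemma~\ref{lem:propW}); everything else is the elementary observation that a decreasing function precomposed with $a\mapsto\theta(1-a)$ is increasing. The only point requiring mild care is that the reduction $I(a\nu)=0$ genuinely uses both $a\le 1$ and $\nu\le R_{p}$, and that the endpoint $a=1$ causes no trouble precisely because $W(\theta(1-a))\to+\infty$ there, which only reinforces that the minimum sits at $a=0$.
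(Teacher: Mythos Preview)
Your proposal is correct and follows essentially the same approach as the paper: both arguments observe that $\nu\le R_p$ forces $I(a\nu)=0$ for all $a\in[0,1]$, reducing $G_{\theta,\nu}(a)$ to $W(\theta(1-a))$, which is nondecreasing in $a$ by the monotonicity of $W$. The paper phrases the first step via the threshold $a_I=R_p/\nu\ge 1$, but this is exactly your observation $a\nu\le\nu\le R_p$ in different clothing.
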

\begin{proof}
    Since $v\leq R_{p}$, then $a_{I}\geq 1$, and thus $I_{\nu}(a)=0$ for all values of $a$. It follows that $G_{\theta,\nu}$ is non decreasing in $a$, and thus $0 \in \cF$.
\end{proof}
This lemma shows that the dispersive phase is indeed achieved. We show that the soliton phase is achieved as well. 
\begin{lem}[Phase 2]\label{lem:Phase2exist}
    Let $\nu$ and $\theta$ be such that
    $
        \frac{C_{d}}{\theta} +\frac{R_{p}}{\nu} < 1.
    $
    Then $0\notin \mathscr{M}(\theta,\nu)$.
\end{lem}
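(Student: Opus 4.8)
The plan is to produce an explicit $a\in(0,1)$ with $G_{\theta,\nu}(a)<G_{\theta,\nu}(0)$, where $G_{\theta,\nu}(a)=W(\theta(1-a))+\tfrac{\theta}{\nu}I(a\nu)$; this rules out $0\in\mathscr{M}(\theta,\nu)$ at once. The point to notice is that the hypothesis $\tfrac{C_d}{\theta}+\tfrac{R_p}{\nu}<1$ is exactly the inequality $a_I(\nu)<a_W(\theta)$ with $a_I(\nu)=R_p/\nu$ and $a_W(\theta)=1-C_d/\theta$, and, since $R_p\ge 0$, it also forces $\theta>C_d$, so that $a_W(\theta)\in(0,1)$ and the interval $(a_I(\nu),a_W(\theta))$ is non-empty.

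First I would evaluate the endpoint: $G_{\theta,\nu}(0)=W(\theta)+\tfrac{\theta}{\nu}I(0)=W(\theta)=K(0)$, using $I(0)=0$ from~\eqref{def:Ifn}, the fact that $W(b)=K(0)$ for all $b\ge C_d$ (recorded just after~\eqref{def:Wfn}), and $\theta>C_d$. Then I would fix any $a\in(a_I(\nu),a_W(\theta))$. From $a<a_W(\theta)$ we get $\theta(1-a)>C_d$, hence $W(\theta(1-a))=K(0)$ as well; from $a>a_I(\nu)$ we get $a\nu>R_p$, and Lemma~\ref{lem:Rp} gives $I(a\nu)<0$ strictly (whether $R_p>0$ or $R_p=0$). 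Consequently
\[
G_{\theta,\nu}(a)=K(0)+\frac{\theta}{\nu}I(a\nu)<K(0)=G_{\theta,\nu}(0),
\]
so the infimum defining $F(\theta,\nu)$ in~\eqref{def:F} is not attained at $a=0$, i.e.\ $0\notin\mathscr{M}(\theta,\nu)$.

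There is no genuinely hard step here: the whole content is the algebraic reformulation of the hypothesis as $a_I(\nu)<a_W(\theta)$, which creates an overlap between the flat part $[C_d,\infty)$ of $W$ (where $W$ has already reached its minimum value $K(0)$) and the mass range $a\nu>R_p$ on which the soliton energy $I(a\nu)$ is strictly negative. On that overlap $W(\theta(1-a))=W(\theta)$, so the nonlinear term is free to strictly decrease $G_{\theta,\nu}$; note that mere monotonicity of $W$ would not help, since $W(\theta(1-a))\ge W(\theta)$ for $a>0$. The only other point to watch is to invoke the strict inequality $I(a\nu)<0$ for $a\nu>R_p$ from Lemma~\ref{lem:Rp} rather than $I(a\nu)\le 0$, which is exactly why $a$ is chosen in the open interval $(a_I(\nu),a_W(\theta))$.
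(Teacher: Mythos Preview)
Your proof is correct and follows essentially the same approach as the paper's: both exploit that the hypothesis is precisely $a_I(\nu)<a_W(\theta)$, so on the interval $(a_I,a_W)$ the $W$-term is flat (equal to $K(0)=W(\theta)$) while $I(a\nu)<0$ strictly, giving $G_{\theta,\nu}(a)<G_{\theta,\nu}(0)$. Your write-up is in fact slightly more explicit than the paper's, spelling out $\theta>C_d$ and $W(\theta)=K(0)$ rather than just asserting monotonicity of $G_{\theta,\nu}$ on $[0,a_W]$.
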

\begin{proof}
    With $\nu$ and $\theta$ as above, we know that $a_{I} < a_{W}$. On the interval $[0,a_{W}]$, the function $G_{\theta,\nu} $ is non-increasing and is in fact strictly decreasing on the interval $[a_{I},a_{W}]$. Thus, there is an  $a\in [a_{I},a_{W}]$ such that $G_{\theta,\nu}(a)< W(\theta)=G_{\theta,\nu}(0)$.
\end{proof}

\subsection{Existence of Transition Curve}\label{sec:transition}
We have shown that each of the defined phases may be realized, it remains to be shown that the regions are separated by a curve. We also need to establish the continuity of the curve.  
\begin{lem}\label{lem:curve1}
Let $(\theta,\nu)$ be such that $0\in \mathscr{M}(\theta,\nu)$. We say $(\theta_{1},\nu_{1})\peq (\theta,\nu)$ if $\theta_{1}\leq \theta$ and $\nu_{1} \leq \nu$. Now let 
\[
(\theta_{1},\nu_{1})\peq (\theta,\nu) \peq (\theta_{2},\nu_{2}).
\]
\end{lem}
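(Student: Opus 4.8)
The plan is to deduce the monotonicity of the predicate ``$0\in\mathscr{M}$'' from two elementary scalar monotonicity facts, one in $\theta$ and one in $\nu$. First recall, exactly as in the two displayed reformulations of $\mathscr{S}$ in Section~\ref{ssec:phase}, that $0\in\mathscr{M}(\theta,\nu)$ is equivalent to
\[
\frac{W(\theta(1-a))-W(\theta)}{\theta}+\frac{I(a\nu)}{\nu}\ \ge\ 0\qquad\text{for every }a\in[0,1);
\]
this is just $G_{\theta,\nu}(0)\le G_{\theta,\nu}(a)$ divided by $\theta>0$, together with the fact (Lemma~\ref{lem:abound}) that the infimum defining $\mathscr{M}$ is attained inside $[0,a_M]\subset[0,1)$, so that nothing happens near the singularity of $W$ at $0$. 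Hence it suffices to prove: for each fixed $a\in[0,1)$, decreasing $\theta$ can only increase the first summand, and decreasing $\nu$ can only increase the second.

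For the $\nu$-direction: when $a>0$ we have $I(a\nu)/\nu=a\,J(a\nu)$ with $J(r)=I(r)/r$, which is decreasing by Lemma~\ref{lem:Ideriv}; so $\nu\mapsto I(a\nu)/\nu$ is non-increasing (and it is identically $0$ when $a=0$). For the $\theta$-direction: fix $u=1-a\in(0,1]$ and set $g(\theta):=\theta^{-1}\bigl(W(u\theta)-W(\theta)\bigr)$. Since $W\in C^1$ is convex (Lemma~\ref{lem:propW}), the auxiliary function $\psi(x):=xW'(x)-W(x)$ is non-decreasing on $(0,\infty)$: for $x_1<x_2$, using $W(x_2)-W(x_1)=\int_{x_1}^{x_2}W'(t)\,dt\le (x_2-x_1)W'(x_2)$ one gets $\psi(x_2)-\psi(x_1)\ge x_1\bigl(W'(x_2)-W'(x_1)\bigr)\ge 0$. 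A direct differentiation then gives $\theta^2g'(\theta)=\psi(u\theta)-\psi(\theta)\le 0$, since $u\theta\le\theta$; thus $g$ is non-increasing in $\theta$.

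Combining the two, for $(\theta_1,\nu_1)\peq(\theta,\nu)$ and any $a\in[0,1)$,
\[
\frac{W(\theta_1(1-a))-W(\theta_1)}{\theta_1}+\frac{I(a\nu_1)}{\nu_1}\ \ge\ \frac{W(\theta(1-a))-W(\theta)}{\theta}+\frac{I(a\nu)}{\nu}\ \ge\ 0,
\]
the last inequality being the hypothesis $0\in\mathscr{M}(\theta,\nu)$; taking the infimum over $a$ and invoking the equivalence above gives $0\in\mathscr{M}(\theta_1,\nu_1)$. The same chain of inequalities, applied with $(\theta,\nu)$ in the role of the smaller point and $(\theta_2,\nu_2)$ the larger, shows conversely that if $0\notin\mathscr{M}(\theta,\nu)$ then $0\notin\mathscr{M}(\theta_2,\nu_2)$ — that is, the set $\{(\theta,\nu):0\in\mathscr{M}(\theta,\nu)\}$ is a $\peq$-lower set and $\mathscr{S}$ a $\peq$-upper set, which is precisely the structural input needed to organize the phase picture into a monotone transition curve.

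I do not expect a real obstacle: the only step that is not immediate is the monotonicity of $\theta\mapsto\bigl(W(\theta(1-a))-W(\theta)\bigr)/\theta$, and this is forced by convexity of $W$ through the function $\psi$ above. The only points requiring a little care are keeping $a$ strictly below $1$ throughout (legitimate since $\mathscr{M}\subseteq[0,a_M]$ and $G_{\theta,\nu}(1)=+\infty$), and handling the $a=0$ case separately in the $\nu$-monotonicity.
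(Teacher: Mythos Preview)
Your proof is correct and follows essentially the same route as the paper: reformulate $0\in\mathscr{M}(\theta,\nu)$ as the family of inequalities $(W(\theta(1-a))-W(\theta))/\theta+I(a\nu)/\nu\ge 0$ for $a\in[0,1)$, then show each summand is monotone in its parameter. The only cosmetic difference is in the $\theta$-step: the paper writes $(W(\theta(1-a))-W(\theta))/\theta=\int_{1-a}^{1}L(\theta s)\,ds$ using $W'=-L$, so monotonicity in $\theta$ is read off directly from $L$ being decreasing; you instead differentiate $g(\theta)$ and use convexity of $W$ through the auxiliary function $\psi(x)=xW'(x)-W(x)$, which is the same underlying fact packaged slightly more abstractly.
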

If $0 \in \mathscr{M}(\theta,\nu)$, then $0 \in \mathscr{M}(\theta_{1},\nu_{1})$. Conversely, if $0 \notin \mathscr{M}(\theta,\nu)$, then $0\notin \mathscr{M}(\theta_{2},\nu_{2})$. 
\begin{proof}
We know that $0 \in \mathscr{M}(\theta,\nu)$ if and only if for all $a\in [0,1)$, 
\begin{align}\label{eq:trivialmin}
W(\theta)\leq W(\theta(1-a))+\theta \nu^{-1}I(\nu a). 
\end{align}
Using the fact that $W'(b)=L(b)$, we may rewrite~\eqref{eq:trivialmin}
\begin{align}\label{eq:trivialmin2}
    I_{\nu}(a)+\int_{(1-a)}^{1}L(\theta s)ds  \geq 0
\end{align}
By Lemma~\ref{lem:Ideriv}, decreasing $\nu$ increases $I_{\nu}(a)$, preserving the inequality. In addition, decreasing $\theta$ increases $L(\theta s)$, still preserving the inequality. As for the converse, observe that $0 \notin \mathscr{M}$ if and only if we have an $a_{1}$ such that 
\begin{align}\label{eq:nontrivialmin}
I_{\nu}(a_{1})+\int_{(1-a_{1})}^{1}L(\theta s)ds  <0.
\end{align}
Again, by Lemma~\ref{lem:Ideriv}, increasing $\nu$ decreases $I_{\nu}(a)$, and increasing $\theta$ decreases $L(\theta s)$.
\end{proof}
Lemma~\ref{lem:curve1} is adequate to establish not only the existence of the transition curve but the monotonicity as well. We remark now the following useful consequence of the relation~\eqref{eq:nontrivialmin}, the set of $(\theta,\nu)$ corresponding to the soliton phase is open in the plane $[R_{p},\infty)\times [0,\infty)$. 
\begin{cor}
Let $\theta_{c}:(R_{p},\infty) \to (0,\infty)$ be the measurable function defined as
\[
\theta_{c}(\nu):=\sup\{\theta> 0: 0\in \cF(\theta,\nu)\}. 
\]
By definition, $\theta_{c}$ is our transition curve. We have that $\theta_{c}$ is non-increasing and continuous. 
\end{cor}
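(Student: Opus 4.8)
The plan is to establish two facts about the transition function $\theta_c(\nu) := \sup\{\theta > 0 : 0 \in \mathscr{M}(\theta,\nu)\}$: that it is non-increasing and that it is continuous. Monotonicity is essentially already in hand. By Lemma~\ref{lem:curve1}, if $0 \in \mathscr{M}(\theta,\nu)$ and $\nu_1 \le \nu$, then $0 \in \mathscr{M}(\theta,\nu_1)$, so the set $\{\theta : 0 \in \mathscr{M}(\theta,\nu_1)\}$ contains $\{\theta : 0 \in \mathscr{M}(\theta,\nu)\}$; taking suprema gives $\theta_c(\nu_1) \ge \theta_c(\nu)$, i.e. $\theta_c$ is non-increasing. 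I should also record that $\theta_c(\nu) \in (0,\infty)$ for $\nu > R_p$: positivity because for $\theta$ small enough that $C_d/\theta + R_p/\nu \ge 1$ (equivalently $a_W(\theta) \le a_I(\nu)$) the argument of Lemma~\ref{lem:Phase1exist}/the shape of $G_{\theta,\nu}$ forces $0 \in \mathscr{M}$, so $\theta_c(\nu) \ge C_d\nu/(\nu - R_p) > 0$; finiteness because Lemma~\ref{lem:Phase2exist} shows $0 \notin \mathscr{M}$ once $\theta$ is large, so the sup is over a bounded set.

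For continuity, the natural route is to show (a) the set $\mathscr{S} = \{(\theta,\nu) : a_\star(\theta,\nu) > 0\}$ is open — which is already observed in the text via continuity of $(\theta,\nu) \mapsto (W(\theta(1-a)) - W(\theta))/\theta + I(a\nu)/\nu$ — and (b) the region below/left of the graph, namely $\{0 \in \mathscr{M}(\theta,\nu)\}$, is closed. For (b): suppose $(\theta_n,\nu_n) \to (\theta,\nu)$ with each $0 \in \mathscr{M}(\theta_n,\nu_n)$, i.e. $W(\theta_n) \le W(\theta_n(1-a)) + \theta_n \nu_n^{-1} I(\nu_n a)$ for all $a \in [0,1)$. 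Fix $a$ and pass to the limit using continuity of $W$ (Lemma~\ref{lem:propW}) and of $I$ (continuity follows from differentiability of $J(a) = I(a)/a$, Lemma~\ref{lem:Ideriv}, plus $I(a) = aJ(a)$); this yields $W(\theta) \le W(\theta(1-a)) + \theta\nu^{-1}I(\nu a)$ for all $a$, hence $0 \in \mathscr{M}(\theta,\nu)$. So $\{0 \in \mathscr{M}\}$ is closed. Combined with monotonicity, a standard argument then gives continuity of $\theta_c$: if $\theta_c$ had a jump at some $\nu_0$, monotonicity would force, for $\nu$ slightly above $\nu_0$, a value $\theta_c(\nu) < \theta_c(\nu_0^-)$, and one can find $\theta$ strictly between these with $(\theta, \nu_0) $ in the closed set but $(\theta,\nu)$ outside it for all $\nu > \nu_0$, contradicting closedness as $\nu \downarrow \nu_0$. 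More cleanly: lower semicontinuity of $\theta_c$ follows from openness of $\mathscr{D}$ (complement of the closed set), upper semicontinuity from closedness of $\{0 \in \mathscr{M}\}$, and a monotone function that is both lower and upper semicontinuous is continuous.

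Concretely I would structure it as: (i) monotonicity from Lemma~\ref{lem:curve1}; (ii) $\theta_c$ well-defined and finite/positive on $(R_p,\infty)$ from Lemmas~\ref{lem:Phase1exist},~\ref{lem:Phase2exist} and \ref{lem:abound}; (iii) the sublevel region $\mathscr{R} := \{(\theta,\nu) \in (0,\infty)^2 : 0 \in \mathscr{M}(\theta,\nu)\}$ is closed, proved by the limiting-$a$ argument above using continuity of $W$ and $I$; (iv) deduce semicontinuity of $\theta_c$ in both directions and hence continuity, using that $\mathscr{R}$ is exactly the region on or below the graph of $\theta_c$ (here monotonicity of $\theta_c$ and the monotonicity statements in Lemma~\ref{lem:curve1} make "$(\theta,\nu) \in \mathscr{R} \iff \theta \le \theta_c(\nu)$" — modulo the boundary — precise).

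The main obstacle I anticipate is the interchange of the quantifier over $a$ with the limit in step (iii): the defining inequality for $0 \in \mathscr{M}$ is a statement "for all $a \in [0,1)$", and although fixing $a$ and taking limits is immediate, one should be slightly careful that this really is equivalent to $0 \in \mathscr{M}$ (it is, since $\mathscr{M} = \argmin G_{\theta,\nu}$ and $G_{\theta,\nu}(0) = W(\theta)$, so $0 \in \mathscr{M} \iff G_{\theta,\nu}(0) \le G_{\theta,\nu}(a)$ for all $a$). A secondary subtlety is the behaviour of $\theta_c$ at the endpoint $\nu \downarrow R_p$, where $\theta_c \uparrow \infty$; continuity there is continuity as a map into $(0,\infty]$ and is consistent with the asymptotics $\theta_c(\nu)(\nu - R_p) \to C_d R_p$ claimed in Theorem~\ref{thm:ptcurve}, so no separate difficulty arises, but it is worth noting that "continuous" is meant on the open interval $(R_p,\infty)$.
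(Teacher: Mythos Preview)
Your monotonicity argument and the proof that $\mathscr{R}:=\{(\theta,\nu):0\in\mathscr{M}(\theta,\nu)\}$ is closed are both fine, and the latter is in fact a cleaner route to one half of continuity than the paper's. But there is a genuine gap in the continuity argument: the two facts you list, ``$\mathscr{S}$ is open'' and ``$\mathscr{R}$ is closed'', are \emph{the same fact}, since $\mathscr{S}=\mathscr{R}^{c}$. Writing them as (a) and (b) and then claiming one yields upper semicontinuity and the other lower semicontinuity is double-counting. Concretely, closedness of $\mathscr{R}=\{\theta\le\theta_{c}(\nu)\}$ gives exactly upper semicontinuity of $\theta_{c}$, which for a non-increasing function is left continuity. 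It does \emph{not} rule out right jumps: if $\theta_{c}(\nu_{0}^{+})<\theta_{c}(\nu_{0})$ and $\theta\in(\theta_{c}(\nu_{0}^{+}),\theta_{c}(\nu_{0}))$, then $(\theta,\nu_{0})\in\mathscr{R}$ while $(\theta,\nu)\notin\mathscr{R}$ for all $\nu>\nu_{0}$, and this is perfectly consistent with $\mathscr{R}$ being closed. Your parenthetical ``openness of $\mathscr{D}$'' does not help either: $\mathscr{D}$ is by definition the interior of $\mathscr{R}$, so its openness is tautological, and in any case it is not the complement of $\mathscr{R}$.

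What is missing is an argument that rules out a vertical segment in the boundary of $\mathscr{R}$, i.e.\ that $\theta_{c}$ cannot be constant-then-drop. The paper supplies this via a strict monotonicity input: on the boundary one has some $a_{\star}>a_{I}(\nu)$ with $I_{\nu}(a_{\star})+\int_{1-a_{\star}}^{1}L(\theta s)\,ds=0$, and since $a_{\star}>a_{W}(\theta)$ there the integrand $L(\theta s)$ is \emph{strictly} decreasing in $\theta$, so a slight increase in $\theta$ makes the expression strictly negative and forces entry into $\mathscr{S}$. This is the extra ingredient you need for right continuity; nothing in your closedness-of-$\mathscr{R}$ argument substitutes for it. If you want to salvage your approach, you could instead prove directly that the strict sublevel set $\{\theta<\theta_{c}(\nu)\}$ is open, but that requires the same strict-monotonicity observation.
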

\begin{proof}
By Lemma~\ref{lem:Phase2exist}, we know that 
\[
\theta_{c}(\nu)\leq \frac{C_{d}\nu}{\nu-R_{p}}<\infty 
\]
for all $\nu$ under consideration. Not only does this tell us that $\theta_{c}$ is finite, it also tells us that it may be evaluated at every $\nu$ under consideration. Next, suppose we have that $\nu_{1}<\nu_{2}$ such that $\theta_{c}(\nu_{1})<\theta_{c}(\nu_{2})$. Observe that this contradicts Lemma~\ref{lem:curve1}. Since we have proved that $\theta_{c}$ is non-increasing, it further implies that on any compact interval $[\nu_{1},\nu_{2}]\subset (R_{p},\infty)$ there are at most countably many discontinuities. Let $\nu_{3}$ be a point of discontinuity. We have that 
\[
\theta_{1}:=\lim_{\nu \downarrow \nu_{3}}\theta_{c}(\nu) \leq \theta_{2}:= \theta_{c}(\nu_{c})\leq \theta_{3}:=\lim_{\nu \uparrow \nu_{3}} \theta_{c}(\nu), 
\]
Suppose
\[
\theta_{2}<\theta_{3}.
\]
We may find an $\eps>0$ such that $\theta_{2}+\eps<\theta_{3}$. By hypothesis,
\[
0\notin \cF(\nu_{3},\theta_{2}+\eps). 
\]
However, note that every open ball centered at $(\nu_{3},\theta_{2}+\eps)$ intersects $[R_{p},\nu_{3})\times [0,\theta_{3})$ on which $0\in\cF(\theta,\nu)$, which contradicts the fact that the solitonic region is open in the plane, and we must have  $\theta_{2}=\theta_{3}$. To actually establish continuity, we note that the boundary between the solitonic region and the plane may be characterized as all $(\theta,\nu)$ such that~\eqref{eq:trivialmin} holds, and we have an $a_{\star}\geq a_{I}$ such that 
\[
I_{\nu}(a_{\star})+\int_{(1-a\star)}^{1}L(\theta s)ds=0.
\]
Let $\eps>0$ be such that $\theta_{1}+2\eps<\theta_{2}$. We have that $(\nu,\theta_{1}+\eps)$ is on the boundary, as for any $\nu'>\nu$ we enter the soliton phase. Further, we know that
\[
\theta_{1}+\eps<\theta_{2}\leq \frac{C_{d}\nu}{\nu-R_{p}}. 
\]
Thus, $a_{\star}>a_{W}$ and
\[
\int_{(1-a_{\star})}^{1} L((\theta_{1}+\eps)s)ds>\int_{(1-a_{\star})}^{1}L((\theta_{1}+1.5\eps)s)ds. 
\]
Combined with the fact that $I_{\nu}(a_\star)<0$, this tells us that $(\nu, \theta_{1}+1.5\eps)$ is in the soliton phase, further implying that $\theta_{2}\leq \theta_{1}+\eps$ which is a contradiction. Thus, we must have that $\theta_{1}=\theta_{2}$, which verifies continuity.  
\end{proof}
\begin{rem}
The phase curve could be equivalently defined with $\nu_{c}$ as a function of $\theta$, which would be the exact inverse of the function $\theta_{c}$ obtained above. The same properties of monotonicity and continuity can be verified for $\nu_{c}$, which then tells us that $\theta_{c}$ must be strictly decreasing. 
\end{rem}

\section{Discussion on the Typical Function in the Subcritical Phase}\label{sec:typical}
In this section, we will work with the condition that $1< p < 5+4\sqrt{2}$, and establish some properties of a typical function sampled from the measure $\mu_{N}$ in the dispersive phase. Recall that this corresponds to $(\theta,\nu)\in \cD$. We will also further make the assumption that $\theta < C_{d} $, for reasons that will be made clear. In this section, it will be convenient to work with the following rescaled version of the measure $\mu_{N}$, which is equivalent due to Lemma~\ref{lem:scaling}. 
\[
\tilde{\mu}_{N}(\cA):= \frac{1}{\tilde{Z}(\theta, \nu)}\int_{\cA}\exp\left(-\norm{\nabla \psi}^{2}_{2} +(\theta N)^{-\frac{p-1}{2}}\frac{2 \nu }{p+1}\norm{\psi}_{p+1}^{p+1}\right)\1_{\norm{\psi}^{2}_{2}\leq \theta  N} \cdot \vd \psi
\]
We will define the reference measure $\mu_{\text{ref}}$, and will then regard $\tilde{\mu}_{N}$ as an exponential tilt with respect to the nonlinearity.  We define
\[
\mu^{\theta}_{\text{ref}}(\cA):=\frac{1}{Z_{\text{ref}}(\theta)}\int_{\cA}\exp(-\norm{\nabla \psi}_{2}^{2})\vd \psi. 
\]
We begin by explicitly demonstrating the sense in which this measure is close to the massive Gaussian free field. 
\begin{thm}\label{thm:AC0}
Let $y_{N}$ be defined by the equation $K'_{N}(y_{N})=\theta$, and let $Z^{y_{N}}(\theta)$ denote the partition function of the massive free field on the torus, given by 
\[
Z^{y_{N}}=\frac{\pi^{N}}{\det(y_{N}-\gD)}. 
\]
Then we have a constant $C$ depending on $\theta$ such that 
\[
\lim_{N\to \infty} \frac{e^{-N\theta}\cdot Z^{y_{N}}}{\sqrt{N}Z_{\text{ref}}(\theta)} =C.
\]
\end{thm}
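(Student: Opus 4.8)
The plan is to compute both partition functions explicitly and compare. Writing $\psi = c_{\mv0}\phi^{\mv0} + \psi^{\perp}$ in the orthogonal decomposition with respect to $\ker(\gD)$, the reference partition function factors as
\[
Z_{\text{ref}}(\theta) = \int_{\dC^V} \exp(-\norm{\nabla\psi}_2^2)\,\vd\psi = \left(\int_{\dC}\,dc_{\mv0}\right)\cdot \int_{\dC^V/\{\phi^{\mv0}\}} \exp(-\norm{\nabla\psi^{\perp}}_2^2)\,\vd\psi^{\perp},
\]
but the zero-mode integral diverges; so the statement as written must implicitly restrict $Z_{\text{ref}}(\theta)$ to the mass ball $\norm{\psi}_2^2 \le \theta N$ (consistent with $\tilde\mu_N$). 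With that restriction the zero-mode contributes a factor of area $\pi\cdot(\theta N - \norm{\psi^{\perp}}_2^2)$, and the perpendicular part is controlled by Corollary~\ref{cor:PerpInt} and Lemma~\ref{lem:massGFFlow}: the mass of $\psi^{\perp}$ under the (unconstrained) Gaussian weight concentrates at $N\theta(1+o(1))$, so with high probability the leftover budget for the zero mode is $\Theta(N)$. Meanwhile $Z^{y_N} = \pi^N/\det(y_N-\gD)$ is exactly the massive GFF normalizer with no cutoff, and $e^{-N\theta}$ is precisely $e^{-y_N \E\norm{\Psi^{\mv0,y_N}}_2^2 \cdot (\text{leading part})}$ — that is, the exponential tilt factor that converts the $y_N$-massive weight back into the massless one.

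**The key steps.** First, introduce the exponential tilt: for any $y>0$,
\[
\int_{\cA}\exp(-\norm{\nabla\psi^{\perp}}_2^2)\,\vd\psi^{\perp} = \int_{\cA}\exp\!\bigl(-Q^{y}(\psi^{\perp})\bigr)e^{y\norm{\psi^{\perp}}_2^2}\,\vd\psi^{\perp} = Z^{\mv0,y}\,\E\bigl[e^{y\norm{\Psi^{\mv0,y}}_2^2}\mathds{1}_{\Psi^{\mv0,y}\in\cA}\bigr].
\]
Choosing $y = y_N$ so that $\E\norm{\Psi^{\mv0,y_N}}_2^2 = NK'_N(y_N) = N\theta$ makes the tilting factor $e^{y_N\norm{\psi^{\perp}}_2^2}$ center exactly at $e^{N\theta y_N}$, which is the reciprocal of part of what appears in the claim — here one needs to be careful: $Z^{y_N}$ in the statement is the full-space massive field normalizer $\pi^N/\det(y_N-\gD)$, whereas tilting gives the zero-average normalizer $Z^{\mv0,y_N} = \pi^{N-1}/\det(y_N-\gD^{\perp})$. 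The ratio of these two is $\det(y_N-\gD^{\perp})/(\pi \det(y_N-\gD)) = 1/(\pi y_N)$ since the extra eigenvalue of $y_N-\gD$ over $y_N-\gD^{\perp}$ is exactly $y_N$ (the zero mode). So the constant $C$ will ultimately be built from $y_N$ (which converges, since $y_N\to L(\theta)$ by Lemma~\ref{lem:massconvergerate} and continuity of $K'$), the area factor from the zero-mode integral, and a Gaussian local-CLT constant.

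**Extracting the $\sqrt N$ and the constant.** The heart of the matter is a precise second-order (local central limit theorem) estimate for $\norm{\Psi^{\mv0,y_N}}_2^2 = \sum_{\mvk\ne\mv0}(\gl_{\mvk}+y_N)^{-1}X_{\mvk}$, a weighted sum of i.i.d.\ rate-$1$ exponentials. We have $\E = N\theta + O(N^{-1/d}\cdot\text{polylog})$ by Lemma~\ref{lem:massconvergerate}, and $\var = N m_N^{(y_N)}(2) = \Theta(N)$ by Lemma~\ref{lem:epower} (the $(\gl_{\mvk}+y_N)^{-2}$ sum is $\Theta(N)$ since $d\ge 3$). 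The tilted expectation $\E[e^{y_N\norm{\Psi^{\mv0,y_N}}_2^2}\mathds{1}_{\{\norm{\psi^{\perp}}_2^2\le\theta N\}}]$, combined with the zero-mode area factor $\pi(\theta N - \norm{\psi^{\perp}}_2^2)$, reduces after the change of variables $\norm{\psi^{\perp}}_2^2 = N\theta + \sqrt{N}u$ to a Gaussian integral in $u$ against $(\theta N - N\theta - \sqrt N u)_+ \cdot(\text{density of } u)$, i.e.\ essentially $\int_{-\infty}^{0}(-\sqrt N u)\cdot\frac{1}{\sqrt{2\pi\sigma^2}}e^{-u^2/2\sigma^2}\,du \cdot \sqrt N$ with $\sigma^2 = \lim m_N^{(y_N)}(2)$; this produces the overall $N^{1/2}$ and an explicit constant $\tfrac{\sigma^2}{2\pi}\cdot(\text{stuff})$. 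One must justify the local CLT / Laplace-type asymptotics for the non-i.i.d.\ exponential sum — e.g.\ via its explicit moment generating function $\prod_{\mvk\ne\mv0}(1 - s(\gl_{\mvk}+y_N)^{-1})^{-1}$ and a saddle-point/Edgeworth argument, which is the main technical obstacle since the summands are not identically distributed and the tilt parameter $y_N$ sits at the edge of the domain of the MGF only in the massless limit; a truncation handling the small eigenvalues separately (as in Lemma~\ref{lem:epower}'s dichotomy) will be needed. Collecting the $y_N$-factor $1/(\pi y_N)\to 1/(\pi L(\theta))$, the area/Gaussian constant, and verifying the $o(N^{-1/d})$ drift in the mean does not affect the leading $\sqrt N$ term (since $\sqrt N \cdot N^{-1/d}\cdot\text{polylog} \to 0$ for $d\ge 3$), gives the claimed limit with $C = C(\theta)$ an explicit finite positive constant.
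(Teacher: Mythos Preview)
Your zero-mode decomposition strategy is a legitimate alternative route, but the final heuristic in your third paragraph contains a genuine gap. After the change of variables $\|\psi^\perp\|_2^2 = N\theta + \sqrt{N}\,u$, the quantity you need is
\[
\E\bigl[\,\pi(\theta N - \|\Psi^{\mv0,y_N}\|_2^2)_+\, e^{y_N\|\Psi^{\mv0,y_N}\|_2^2}\bigr]
\;=\; \pi\,e^{y_N N\theta}\,\E\bigl[(-\sqrt{N}\,u)_+\, e^{y_N\sqrt{N}\,u}\bigr],
\]
and you have silently dropped the factor $e^{y_N\sqrt{N}\,u}$, writing only ``$\int_{-\infty}^0(-\sqrt{N}\,u)\cdot\frac{1}{\sqrt{2\pi\sigma^2}}e^{-u^2/(2\sigma^2)}\,du$''. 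Without the tilt that integral is of order $\sqrt{N}$, and together with your outer $\sqrt{N}$ gives order $N$, which is the wrong power. The tilt is essential: for $u<0$ the factor $e^{y_N\sqrt{N}\,u}$ localises the integrand to $u\sim -1/\sqrt{N}$ (not $u\sim -1$), and the correct Laplace/local-CLT computation then yields $\E[(-\sqrt{N}\,u)_+e^{y_N\sqrt{N}\,u}]\sim c/\sqrt{N}$, which is what is actually required. The mechanism producing the $\sqrt{N}$ in the statement is precisely this interaction between the exponential tilt and the area factor, and your heuristic misidentifies it. (A side remark: your claim that a mean drift of order $N^{-1/d}$ is harmless because ``$\sqrt{N}\cdot N^{-1/d}\to 0$'' is false for $d=3$; fortunately there is no drift at all, since $y_N$ is \emph{defined} by $K_N'(y_N)=\theta$, so $\E\|\Psi^{\mv0,y_N}\|_2^2=N\theta$ exactly.)

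The paper avoids all of this by not separating the zero mode. It tilts the full field to $\Psi^{y_N}$ directly, obtaining
\[
Z_{\text{ref}}(\theta) \;=\; Z^{y_N}\cdot e^{N\theta y_N}\cdot\E\Bigl[e^{y_N(\|\Psi^{y_N}\|_2^2 - N\theta)}\,\ind_{\|\Psi^{y_N}\|_2^2\le N\theta}\Bigr],
\]
and then uses the elementary identity $e^{-a}\ind_{a\ge 0}=\int_0^\infty e^{-s}\ind_{0\le a\le s}\,ds$ to convert the awkward ``exponential times indicator'' into an integral of \emph{probabilities} $\pr\bigl(0\le N\theta-\|\Psi^{y_N}\|_2^2\le s\bigr)$. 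Since $\|\Psi^{y_N}\|_2^2$ is a sum of independent weighted exponentials with weights $(\gl_{\mvk}+y_N)^{-1}\le 1/y_N$ uniformly bounded (because $y_N\to L(\theta)>0$ for $\theta<C_d$), the third-moment condition holds trivially, and Berry--Esseen gives boundedness of $\sqrt{N}\cdot\pr(\cdots)$ while the local CLT gives convergence. No saddle-point argument, no zero-mode bookkeeping, and no truncation of small eigenvalues is needed.
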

\begin{proof}
We have that 
\begin{align*}
1
&=\frac{1}{Z_{\text{ref}}(\theta)} \int_{\norm{\psi}_{2}^{2}\leq \theta N} \exp(-\norm{\nabla \psi}_{2}^{2})\vd \psi\\ 
&= \frac{e^{N\theta}\cdot Z^{y_{N}}}{Z_{\text{ref}}(\theta)}\E \exp \left(\norm{\Psi^{y_{N}}}_{2}^{2}-N\theta \right)\cdot \1_{\norm{\Psi^{y_{N}}}_{2}^{2}\leq N\theta}
\end{align*}
It, therefore, suffices to prove that 
\[
\sqrt{N}\cdot \E \exp \left(\norm{\Psi^{y_{N}}}_{2}^{2}-N\theta \right)\cdot \1_{\norm{\Psi^{y_{N}}}_{2}^{2}\leq N\theta}
\]
converges to a constant. We have that 
\[
\exp \left(\norm{\Psi^{y_{N}}}_{2}^{2}-N\theta \right) = \int^{\infty}_{0} e^{-s}\cdot  \1_{s\geq N\theta-\norm{\psi^{y_{N}}}_{2}^{2}} \cdot ds,
\]
and thus 
\[
 \E \exp \left(\norm{\Psi^{y_{N}}}_{2}^{2}-N\theta \right)\cdot \1_{\norm{\Psi^{y_{N}}}_{2}^{2}\leq N\theta} = \int_{0}^{\infty} e^{-s} \cdot \dP\left(s\leq N\theta-\norm{\Psi}_{2}^{2}\leq 0\right) ds. 
\]
The problem now comes down to analyzing
\[
\sqrt{N}\cdot \dP\left(s\leq N\theta-\norm{\Psi^{y_{N}}}_{2}^{2}\leq 0\right). 
\]
Note that we may equivalently consider 
\begin{align}\label{eq:LLT1}
\sqrt{N}\cdot \dP \left(\frac{s}{\sqrt{N}}\leq \frac{N\theta-\norm{\Psi^{y_{N}}}_{2}^{2}}{\sqrt{N}} \leq 0\right).
\end{align}
Recall that $\norm{\Psi^{y_{N}}}$ is expressible as a sum of independent random variables, we have that
\[
\norm{\Psi^{y_{N}}}_{2}^{2}\equald \sum_{\mvk \in [n]^{d}} \frac{1}{y_{N}+\gl_{\mvk}} X_{\mvk}. 
\]
Further, $N\theta -\norm{\Psi^{y_{N}}}_{2}^{2}$ is a centered sum by definition, and each random variable in the sum has a uniformly bounded third moment. The Berry-Esseen Theorem then tells us that~\eqref{eq:LLT1} is bounded above by a constant. The local limit theorem yields convergence. 
\end{proof}
Theorem~\ref{thm:AC0} gives us a means of evaluating probabilities of events with respect to the reference measure, in terms of the MGFF, where of course we pay a penalty due to the mass restriction. In the next sequence, we will examine the effect of exponentially tilting the massive free field with respect to the nonlinearity. 
\begin{thm}\label{thm:AC1}
Let $\Psi^{y}$ be distributed according to the massive Gaussian Free Field on $\dT^{d}$ with mass parameter $y$. Then we have a constant $C$ depending on $\nu$, $y$ and $d$ such that
\[
\E\exp\left(\frac{2\nu N^{\frac{1-p}{2}}}{p+1}\norm{\Psi^{y}}^{p+1}_{p+1}\right)\cdot \1_{\norm{\Psi}_{2}^{2}\leq N} \leq e^{C}\]

\end{thm}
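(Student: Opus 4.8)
The plan is to bound the exponential moment by splitting on the size of $\norm{\Psi^{y}}_\infty$. On the event where $\norm{\Psi^y}_\infty \le M\sqrt{\log N}$ for a suitable constant $M$ (to be chosen in terms of $C_d$ and $y$), we have the crude pointwise bound $\norm{\Psi^y}_{p+1}^{p+1} \le \norm{\Psi^y}_\infty^{p-1}\,\norm{\Psi^y}_2^2 \le (M\sqrt{\log N})^{p-1}\cdot N$ on the further intersection with $\{\norm{\Psi^y}_2^2\le N\}$, so the exponent is at most $\tfrac{2\nu}{p+1} M^{p-1} (\log N)^{(p-1)/2} = o(N)$, actually much smaller; but to get a \emph{constant} bound I need to be more careful, so instead I would decompose dyadically in the $\ell^\infty$-norm and use Gaussian tail bounds on each annulus, exactly in the spirit of Lemma~\ref{lem:GFFmax}. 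Concretely, write
\[
\E\exp\!\left(\tfrac{2\nu N^{(1-p)/2}}{p+1}\norm{\Psi^y}_{p+1}^{p+1}\right)\1_{\norm{\Psi^y}_2^2\le N}
= \int_0^\infty \dP\!\left(\tfrac{2\nu N^{(1-p)/2}}{p+1}\norm{\Psi^y}_{p+1}^{p+1}\ge t,\ \norm{\Psi^y}_2^2\le N\right)e^{t}\,dt + 1,
\]
and the task reduces to showing the tail probability decays faster than $e^{-t}$.

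The key step is the tail estimate. On $\{\norm{\Psi^y}_2^2\le N\}$ we have $\norm{\Psi^y}_{p+1}^{p+1}\le N\,\norm{\Psi^y}_\infty^{p-1}$, so the event in the integrand is contained in $\{\norm{\Psi^y}_\infty \ge (\tfrac{(p+1)t}{2\nu})^{1/(p-1)} N^{1/(p-1)}\cdot N^{(p-1)/((p-1)^2)} \}$ — let me write the threshold as $\norm{\Psi^y}_\infty \ge c_\nu\, t^{1/(p-1)}$ after absorbing the $N$-powers (the exponents cancel: $N^{(1-p)/2}\cdot N = N^{(3-p)/2}$, and combined with the $N^{1/(p-1)}$ this does \emph{not} fully cancel, so one must track it). This is precisely where the restriction $p < 5+4\sqrt 2$ enters: one needs the resulting $N$-dependent prefactor in the threshold to be harmless, i.e. the Gaussian tail $\exp(-c\,\norm{\Psi^y}_\infty^2/\sigma^2)$ at the threshold must beat $e^t$ uniformly in $N$, and the quadratic-versus-$t^{2/(p-1)}$ comparison together with the surviving power of $N$ forces the stated range on $p$. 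Since each coordinate $\Psi^y_{\mvx}$ is complex Gaussian with variance $K'_N(y) \le K'_N(0)\to C_d$ (using $y\ge 0$ and Lemma~\ref{lem:massconvergerate}), a union bound over the $N$ sites gives
\[
\dP\!\left(\norm{\Psi^y}_\infty \ge u\right) \le N\exp\!\left(-\tfrac{u^2}{K'_N(y)}\right),
\]
and plugging in $u = c_\nu t^{1/(p-1)}\cdot(\text{power of }N)$ yields a bound of the form $N^{1+o(1)}\exp(-c'\,t^{2/(p-1)})$; integrating $e^t$ against this converges to a finite constant provided $2/(p-1) > 1$ is \emph{not} required — rather one needs the constant $c'$ (which carries the $N$-power) to stay bounded below, and here the threshold $5+4\sqrt 2$ is exactly the breaking point. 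Summing the geometric-type series over dyadic scales and over $N$ gives the uniform constant $e^C$.

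The main obstacle is the careful bookkeeping of powers of $N$ in the third paragraph: the naive bound $\norm{\Psi^y}_{p+1}^{p+1}\le N\norm{\Psi^y}_\infty^{p-1}$ is too lossy for large $p$, and one must instead interpolate more cleverly — e.g. use $\norm{\Psi^y}_{p+1}^{p+1}\le \norm{\Psi^y}_\infty^{p-1}\norm{\Psi^y}_2^2$ on the \emph{good} part and control the contribution of high peaks separately, or use a layer-cake decomposition $\norm{\Psi^y}_{p+1}^{p+1} = \sum_{\mvx}|\Psi^y_{\mvx}|^{p+1}$ directly with independence. I anticipate the cleanest route is: condition on the number of sites where $|\Psi^y_{\mvx}|$ exceeds a threshold, bound that number's distribution via Markov/Chernoff using $\E|\Psi^y_{\mvx}|^{2}\le C_d+o(1)$, and observe that to make $\norm{\Psi^y}_{p+1}^{p+1}$ large while keeping $\norm{\Psi^y}_2^2\le N$ one is forced into a large-deviations regime for the Gaussian field whose rate beats the linear tilt precisely when $p<5+4\sqrt2$. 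Once that quantitative comparison is pinned down, assembling the final constant is routine.
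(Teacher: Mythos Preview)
Your approach is genuinely different from the paper's and, as written, does not close.

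The paper never touches $\norm{\Psi^y}_\infty$ or a layer-cake decomposition. Instead it introduces the auxiliary function
\[
h(x)=\frac{2x^{p+1}}{1+x^{p-1}},
\]
which satisfies $x^{p+1}\le h(x)$ for $x\le 1$ and $h(x)\le 2x^2$ for $x\ge 1$. Since the mass constraint forces $|\Psi_{\mvx}|/\sqrt N\le 1$ at every site, one can dominate the constrained expectation by the unconstrained product
\(
\E\prod_{\mvx}\exp\bigl(\tfrac{2\nu N}{p+1}h(|\Psi_{\mvx}|/\sqrt N)\bigr),
\)
and the quadratic tail of $h$ keeps this finite. The dependence among the $\Psi_{\mvx}$ is then removed by Gaussian interpolation against an i.i.d.\ field of inflated variance (Lemma~\ref{lem:decoupling}): the comparison goes the right way exactly when $h''(x)+x^{-1}h'(x)\ge 0$, and checking the sign of that expression is where the threshold $p<5+4\sqrt2$ comes from (Lemma~\ref{lem:hconvex}). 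The i.i.d.\ case is a one-dimensional integral (Lemma~\ref{lem:predecoupling}).

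Two concrete gaps in your sketch. First, your identification of the $5+4\sqrt2$ threshold is incorrect: it has nothing to do with balancing $e^t$ against $e^{-c\,t^{2/(p-1)}}$. If you carry out your layer-cake bound with $\norm{\Psi}_{p+1}^{p+1}\le N\norm{\Psi}_\infty^{p-1}$ and the union bound $\dP(\norm{\Psi}_\infty\ge u)\le N e^{-u^2/\sigma^2}$, the resulting integral
\(
\int_0^{c_\nu N} e^t\cdot N\exp\!\bigl(-\sigma^{-2}(t/c_\nu)^{2/(p-1)}N^{(p-3)/(p-1)}\bigr)\,dt
\)
is \emph{not} uniformly bounded for $3<p<5$ (Laplace's method around the left endpoint gives a contribution of order $N^{(5-p)/2}$), so the crude $\ell^\infty$ bound is genuinely too lossy in part of the stated range. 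Second, you never deal with the correlations of the MGFF: the suggestion to ``use a layer-cake decomposition $\sum_{\mvx}|\Psi_{\mvx}|^{p+1}$ directly with independence'' is not available, because the coordinates are not independent. The paper's Gaussian-interpolation step is precisely the device that reduces the correlated field to a product, and it (not any tail-integral balance) is what dictates the admissible range of $p$.
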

The mass constraint is a significant obstacle, as it is not expressible as a product, nor is it smooth for us to apply some standard techniques to deal with Gaussian integration. We resolve both these issues with the following upper bound.  We introduce the following auxilary function in order to deal with this issue:
\begin{align}
h(x):= \frac{ 2x^{p+1}}{1+x^{p-1}}.
\end{align}
Observe that when $x<1$, we have that $x^{p+1}<h(x)$, and when $x>1$, $h(x)< 2x^{2}$. Crucially, $h$ is smoothly differentiable. 
\begin{lem}\label{lem:hconvex}
For $p<5+\sqrt{32}$ we have that 
\[
h''(x)+x^{-1}\cdot h'(x)>0
\]

\end{lem}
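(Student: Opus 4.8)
The plan is to reduce the differential inequality to the positivity of a quadratic polynomial in a single bounded variable. The starting observation is that, up to the positive factor $x^{-2}$, the operator $h\mapsto h''+x^{-1}h'$ is the second derivative with respect to $u:=\log x$: if $g(u):=h(e^{u})$, then $g''(u)=x^{2}h''(x)+xh'(x)=x^{2}\bigl(h''(x)+x^{-1}h'(x)\bigr)$. Hence it suffices to show that $g$ is strictly convex, that is $g''>0$ for all real $u$.

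First I would compute $g$ and its derivatives through the logarithmic derivative. With $w=e^{(p-1)u}$ and $s=s(u):=\dfrac{w}{1+w}\in(0,1)$ we have $g(u)=2e^{(p+1)u}(1-s)$, so that $\dfrac{g'}{g}=(p+1)-(p-1)s=:\phi(u)$ and therefore $\dfrac{g''}{g}=\phi^{2}+\phi'$. Since $s'=(p-1)s(1-s)$, we get $\phi'=-(p-1)^{2}s(1-s)$, and a short expansion gives
\[
\frac{g''}{g}=\bigl[(p+1)-(p-1)s\bigr]^{2}-(p-1)^{2}s(1-s)=2(p-1)^{2}s^{2}-(p-1)(3p+1)s+(p+1)^{2}=:Q(s).
\]
Because $g>0$, the lemma is equivalent to the assertion that $Q(s)>0$ for all $s\in(0,1)$.

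Next I would analyze the quadratic $Q$: it has positive leading coefficient, vertex at $s_{\star}=\dfrac{3p+1}{4(p-1)}$, and satisfies $Q(0)=(p+1)^{2}>0$ together with $Q(1)=\bigl((p+1)-(p-1)\bigr)^{2}=4$ (the $p^{2}$ and $p$ terms cancel). If $1<p\le 5$ then $s_{\star}\ge 1$, so $Q$ is strictly decreasing on $(0,1)$ and hence $Q(s)>Q(1)=4>0$ there. If $p>5$ then $s_{\star}\in(0,1)$, so the minimum of $Q$ on $[0,1]$ is $Q(s_{\star})=(p+1)^{2}-\tfrac18(3p+1)^{2}$, which is strictly positive if and only if $8(p+1)^{2}>(3p+1)^{2}$, that is $2\sqrt{2}\,(p+1)>3p+1$, that is $p<\dfrac{2\sqrt{2}-1}{3-2\sqrt{2}}=5+4\sqrt{2}=5+\sqrt{32}$. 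Combining the two ranges yields $Q>0$ on $(0,1)$ precisely when $1<p<5+\sqrt{32}$, which is the claim.

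There is no genuine obstacle beyond bookkeeping; the points that need a little care are the reformulation as convexity in $\log x$ (which is exactly what makes the computation tractable and explains the form of the operator), the elementary but slightly fortuitous cancellation giving $Q(1)=4$, and arranging the case split at $p=5$ so that strict positivity is still asserted on the open interval $(0,1)$.
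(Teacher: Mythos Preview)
Your proof is correct. Both you and the paper reduce the inequality to the positivity of a quadratic, but the route and the choice of variable differ.

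The paper observes that $h''+x^{-1}h'=x^{-1}(xh')'$ and computes $(xh')'$ by brute force, arriving at a numerator proportional to $4y^{2}+(3+6p-p^{2})y+(p+1)^{2}$ with $y=x^{p-1}\in(0,\infty)$; it then checks that this quadratic has negative discriminant for $1<p<5+\sqrt{32}$. Your approach instead recognizes the operator as the second derivative in $u=\log x$, uses logarithmic differentiation of $g=h\circ\exp$, and lands on a quadratic $Q(s)$ in the bounded variable $s=y/(1+y)\in(0,1)$. The two quadratics are related by $(1+y)^{2}Q(s)=4y^{2}+(3+6p-p^{2})y+(p+1)^{2}$, so the reductions are equivalent, but your packaging has two advantages: the substitution explains \emph{why} the combination $h''+x^{-1}h'$ is natural (it is a pure second derivative after a change of variable), and the compact domain $(0,1)$ lets you argue via the vertex location and the endpoint values $Q(0)=(p+1)^{2}$, $Q(1)=4$ rather than via a discriminant computation. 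The paper's argument is shorter to write down but less transparent; yours is a cleaner derivation of the same threshold.
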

\begin{proof}
It is equivalent to verify that $(xh'(x))'>0$. We merely carry out the calculation. We have
\[
(xh'(x))=\frac{2(p+1)x^{p}}{1+x^{p-1}}-\frac{2(p-1)x^{2p-1}}{(1+x^{p-1})^{2}} +\frac{(p-1)(p+1)x^{p}}{(1+x^{p-1})^{2}} -\frac{2(p-1)^{2}x^{2p-1}}{(1+x^{p-1})^{3}}.
\]
Adding and simplifying, we find that the numerator is given by 
\[
x^{p}\left(4x^{2p-2}+(3+6p-p^{2})x^{p-1}+(p+1)^{2}\right),
\]
This equation has no real roots when 
$
1<p<5+\sqrt{32}.
$
\end{proof}
Merely by the definition of $h$, it follows that
\begin{align}\label{eq:hbound}
\E \exp\left(\frac{2\nu N^{\frac{1-p}{2}}}{p+1}\norm{\Psi}^{p+1}_{p+1}\right)\cdot \1_{\norm{\Psi}_{2}^{2}\leq  N} \leq \E \prod_{{\mvx} \in \dT_{n}^{d}} \exp\left(\frac{2\nu N }{p+1} h\left(\frac{|\Psi_{{\mvx}}|}{\sqrt{N}}\right)\right)
\end{align}
The intuition now is to work with the right side of~\eqref{eq:hbound} and attempt to use the decay of correlations to separate as a product of expectations, essentially comparing to the i.i.d. situation. We will first establish a counterpart of Theorem~\ref{thm:AC1} for an i.i.d. standard complex Gaussian vector $\{\Phi_{{\mvx}}\}_{{\mvx}\in \dT^{d}}$, and then use Gaussian interpolation to compare the expectations under the MGFF and i.i.d. cases. 
\begin{lem}\label{lem:predecoupling}
Let $\Phi$ be an i.i.d standard complex Gaussian vector. Then for $p>3$ and \[\nu < (p+1)/8,\] 
we have a constant $C$ depending on $\nu$ and $p$ such that  
\[
\E \exp \left( \frac{2 \nu N }{p+1} h\left( \frac{|\Phi_{{\mvx}}|}{\sqrt{N}}\right) \right) \leq e^{C}.
\]

\end{lem}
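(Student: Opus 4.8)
The plan is to reduce the estimate to a one-variable exponential moment computation for a single Gaussian coordinate. The only property of $h$ needed is the crude pointwise bound $h(x)\le 2x^{2}$ for all $x\ge 0$, which after clearing the denominator in $h(x)=2x^{p+1}/(1+x^{p-1})$ is equivalent to $x^{p+1}\le x^{2}(1+x^{p-1})=x^{2}+x^{p+1}$, hence trivially true. (One also has $h(x)\le 2x^{p+1}$, but the quadratic bound is what controls the relevant regime here.)

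Applying this with $x=|\Phi_{\mvx}|/\sqrt N$, the factors of $N$ cancel and
\[
\frac{2\nu N}{p+1}\,h\!\left(\frac{|\Phi_{\mvx}|}{\sqrt N}\right)\le \frac{2\nu N}{p+1}\cdot\frac{2|\Phi_{\mvx}|^{2}}{N}=\frac{4\nu}{p+1}\,|\Phi_{\mvx}|^{2},
\]
so that
\[
\E\exp\!\left(\frac{2\nu N}{p+1}\,h\!\left(\frac{|\Phi_{\mvx}|}{\sqrt N}\right)\right)\le \E\exp\!\left(\frac{4\nu}{p+1}\,|\Phi_{\mvx}|^{2}\right).
\]
In the Gaussian normalization used throughout the paper — the one under which the coordinates $|\zeta_{\mvk}|^{2}$ appearing in~\eqref{eq:expsum} are $\mathrm{Exp}(1)$ — the variable $|\Phi_{\mvx}|^{2}$ is $\mathrm{Exp}(1)$, so $\E e^{t|\Phi_{\mvx}|^{2}}=(1-t)^{-1}$ for $t<1$. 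Under the hypothesis $\nu<(p+1)/8$ one has $t:=4\nu/(p+1)<1/2$, whence the right-hand side is bounded by $(1-4\nu/(p+1))^{-1}=(p+1)/(p+1-4\nu)$, and the lemma follows with $C:=\log\bigl((p+1)/(p+1-4\nu)\bigr)<\infty$.

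There is no substantial obstacle here: the only points to check are the elementary inequality $h(x)\le 2x^{2}$ and the bookkeeping of the complex-Gaussian normalization. I remark that this crude argument already suffices under the weaker requirement $\nu<(p+1)/4$; the stronger standing assumptions $\nu<(p+1)/8$ and $p>3$ are not consumed at this step but in the ensuing Gaussian-interpolation argument, where this i.i.d.\ bound is transferred to the massive free field using the convexity property $h''(x)+x^{-1}h'(x)>0$ of Lemma~\ref{lem:hconvex}.
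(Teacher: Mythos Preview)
Your argument correctly proves the lemma \emph{as literally stated}: the single-site expectation is bounded by an $N$-free constant. The difficulty is that the lemma statement carries a typo. The very first display of the paper's own proof equates the left-hand side with an $N$-th power of a single integral, and the final display bounds that $N$-th power by $\exp(6\nu(1+\eps)/(p+1))$. Moreover, the lemma is invoked only through Lemma~\ref{lem:decoupling}, whose conclusion has $\bigl(\E\exp(\ldots)\bigr)^{N}$ on the right; for Theorem~\ref{thm:AC1} to follow one therefore needs the single-site expectation to be $1+O(1/N)$, not merely $O(1)$.

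Your crude bound $h(x)\le 2x^{2}$ yields $\E\exp(\ldots)\le (1-8\nu/(p+1))^{-1}$ in the paper's normalization (it states explicitly that $|\Phi_{\mvx}|^{2}\sim\mathrm{Exp}(1/2)$, not $\mathrm{Exp}(1)$), a constant strictly larger than $1$; raised to the $N$-th power this diverges and is useless for Theorem~\ref{thm:AC1}. The paper recovers the missing factor $1/N$ by splitting at $t=N^{\alpha}$ with $\alpha=1-4/(p+1)$: on $[0,N^{\alpha}]$ it uses the \emph{other} elementary inequality $h(x)\le 2x^{p+1}$, which makes the exponent $O(N^{-1})$ and hence that piece of the integral $1+O(1/N)$ (this is exactly where $p>3$ is consumed, to ensure $\alpha>0$); on $[N^{\alpha},\infty)$ your quadratic bound is applied and the tail is exponentially small precisely when $\nu<(p+1)/8$. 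Thus both hypotheses are spent in this lemma, contrary to your closing remark, and the interpolation step adds nothing further in this direction.
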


\begin{proof}
We may immediately express the expectation as a product using the independence, and noting that $|\Phi_{x}|^{2}$ is distributed according to exponential $1/2$, we have 
\begin{align}\label{eq:ACeq1}
\E \exp \left( \frac{2 \nu N }{p+1} h\left( \frac{|\Phi_{{\mvx}}|}{\sqrt{N}}\right) \right) = \left(\frac{1}{2} \int_{0}^{\infty} \exp \left(\frac{2\nu N}{p+1} h\left(\sqrt{ N^{-1}t}\right)\right) \cdot e^{-t/2} dt \right)^{N}.
\end{align}
We split the integral into separate regions and bound their contributions individually. Let $\ga = 1-4/(p+1)$. We will consider the intervals $[0, N^{\ga})$ and $[N^{\ga}, \infty)$. This is where $p>3$ becomes important, as we want $N^{\ga} \to \infty$ as $N\to \infty$. For the first interval,  
\begin{align*}
\frac{1}{2}\int_{0}^{N^{\ga}}\exp\left( \frac{2\nu N}{p+1} h\left(\sqrt{\frac{t}{N}} \right)\right)e^{-\frac{t}{2}}dt &\leq \exp\frac{4\nu}{(p+1)N}\cdot \frac{1}{2}\int_{0}^{N^{\ga}}e^{-t/2}dt \\ &\leq \exp\frac{2\nu(a+1)}{a(p+1)N}
\end{align*}
In this bound, we have used the fact that on the interval $[0, N^{\ga-1})$, $h(x)\leq 2x^{p+1}$. Now take a fixed $\eps>0$, we know that for $N$ sufficiently large, 
\[
\exp\frac{2\nu(a+1)}{a(p+1)N}\leq 1 + (1+\eps)\frac{2\nu(a+1)}{a(p+1)N}.
\]
The case to consider is $t \in [N^{\ga}, \infty)$. On this interval, we may use the bound $h(x) \leq (a+1)x^{2}$ to obtain 
\[
\int_{N^{\ga}}^{N} \exp \left(\frac{2\nu N}{p+1} h\left(\sqrt{\frac{t}{N}} \right) \right) e^{-\frac{t}{2}}dt  \leq  N\exp\biggl(\bigl(-\frac{1}{2} +\frac{4 \nu}{p+1}\bigr)N^{\ga-1}\biggr).
\]
This decays to zero rapidly as $N\to \infty$, so long as 
$
\nu < \frac{1}{8}(p+1).
$
Thus, combining we obtain that the following is an upper bound  for~\eqref{eq:ACeq1}
\[
\left(1 + \frac{6\nu (1+\eps)}{(p+1)N}\right)^{N}\leq \exp\left(\frac{6\nu (1+\eps)}{ p+1}\right).
\] 
\end{proof}

To use Lemma~\ref{lem:predecoupling} to prove Theorem~\ref{thm:AC1}, we need one more ingredient to compare the expectation w.r.t.~the massive Gaussian Free Field and the i.i.d.~case. We may use Gaussian interpolation and correlation decay to accomplish this. 
\begin{lem}\label{lem:decoupling}
Let $\Psi^{y}$ denote the MGFF on $\dT^{d}_{n}$, let $\Phi$ denote a standard complex Gaussian vector, and let $C_{y}$ be a constant such that 
\begin{align}\label{eq:varub}
C_{y}\geq 2\sum_{{\mvx}_{2}} (y-\gD)^{-1}_{{\mvx}_{1} {\mvx}_{2}}. 
\end{align}
Then we have that 
\[
\E \prod_{x \in \dT_{d}}\exp \left( \frac{2 \nu N }{p+1} h\left( \frac{|\Psi^{y}_{{\mvx}}|}{\sqrt{N}}\right) \right) \leq \left(\E \exp \left( \frac{2 \nu N }{p+1} h\left( \frac{\sqrt{C_{y}}|\Phi_{{\mvx}}|}{\sqrt{N}}\right) \right)\right)^{N} . 
\]
\end{lem}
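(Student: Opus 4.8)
The statement to prove is a Gaussian comparison (decoupling) inequality: the expectation of $\prod_{\mvx}\exp\!\big(\tfrac{2\nu N}{p+1}h(|\Psi^{y}_{\mvx}|/\sqrt N)\big)$ under the (correlated) MGFF is dominated by the $N$-th power of a single-site expectation under an i.i.d.\ Gaussian whose variance has been inflated by the factor $C_y$ from~\eqref{eq:varub}. The natural tool is Gaussian interpolation: introduce the family $\Psi^{(t)}:=\sqrt{t}\,\Psi^{y}+\sqrt{1-t}\,\sqrt{C_y}\,\Phi$ for $t\in[0,1]$, where $\Phi$ is a standard complex Gaussian vector independent of $\Psi^y$, so that $\Psi^{(1)}=\Psi^y$ and $\Psi^{(0)}=\sqrt{C_y}\,\Phi$ has i.i.d.\ coordinates. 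Define
\begin{align*}
\Phi(t):=\E\prod_{\mvx\in\dT^d_n}\exp\!\left(\frac{2\nu N}{p+1}\,h\!\left(\frac{|\Psi^{(t)}_{\mvx}|}{\sqrt N}\right)\right),
\end{align*}
and the plan is to show $\Phi'(t)\le 0$ on $[0,1]$, which gives $\Phi(1)\le\Phi(0)$; the right-hand side factorizes over $\mvx$ by independence, yielding exactly the claimed bound.

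First I would compute $\Phi'(t)$ via the Gaussian integration-by-parts (Stein) identity. Writing $F(\psi):=\prod_{\mvx}\exp(\tfrac{2\nu N}{p+1}h(|\psi_{\mvx}|/\sqrt N))$, the derivative $\tfrac{d}{dt}\E F(\Psi^{(t)})$ produces a sum over pairs $(\mvx_1,\mvx_2)$ of terms proportional to $\big(\text{Cov}(\Psi^y_{\mvx_1},\Psi^y_{\mvx_2})-C_y\,\delta_{\mvx_1\mvx_2}\big)\,\E[\partial_{\mvx_1}\partial_{\mvx_2}F(\Psi^{(t)})]$ (with the appropriate Wirtinger derivatives for the complex case). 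The diagonal terms come with coefficient $(y-\gD)^{-1}_{\mvx\mvx}-C_y$, and the key point is that $F$ is a product of functions each depending on a single coordinate, so the mixed second derivative $\partial_{\mvx_1}\partial_{\mvx_2}F$ for $\mvx_1\ne\mvx_2$ factors as $(\partial_{\mvx_1}G_{\mvx_1})(\partial_{\mvx_2}G_{\mvx_2})\prod_{\mvx\ne\mvx_1,\mvx_2}G_{\mvx}$ where $G_{\mvx}(\psi_{\mvx})=\exp(\tfrac{2\nu N}{p+1}h(|\psi_{\mvx}|/\sqrt N))$. The structural input that makes the argument work is Lemma~\ref{lem:hconvex}: the condition $h''(x)+x^{-1}h'(x)>0$ (equivalently $(xh'(x))'>0$) is precisely the statement that the radial function $\psi_{\mvx}\mapsto h(|\psi_{\mvx}|/\sqrt N)$ has a positive-semidefinite "Hessian" in the relevant sense, so that $G_{\mvx}$ and its derivatives have the sign pattern needed for the off-diagonal contributions to be controlled.

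The heart of the argument is a sign/monotonicity bookkeeping: one shows that after summing, the off-diagonal terms contribute nonpositively because $\text{Cov}(\Psi^y_{\mvx_1},\Psi^y_{\mvx_2})=(y-\gD)^{-1}_{\mvx_1\mvx_2}\ge 0$ (the massive lattice Green's function is positive — it is a sum over killed random walk paths, as noted after~\eqref{def:Cd}) together with the fact that $xh'(x)\ge 0$ forces $\partial_{\mvx}G_{\mvx}\ge 0$; while the diagonal terms are handled by choosing $C_y\ge 2\sum_{\mvx_2}(y-\gD)^{-1}_{\mvx_1\mvx_2}\ge 2(y-\gD)^{-1}_{\mvx_1\mvx_1}$, which makes the diagonal coefficient negative and, combined with convexity of $G_{\mvx}$ in the radial variable (again Lemma~\ref{lem:hconvex}), gives a nonpositive contribution. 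The factor of $2$ in~\eqref{eq:varub} absorbs the complex (real/imaginary) doubling. Once $\Phi'(t)\le 0$ is established, the proof concludes by evaluating $\Phi(0)=\big(\E\exp(\tfrac{2\nu N}{p+1}h(\sqrt{C_y}|\Phi_{\mvx}|/\sqrt N))\big)^N$ using independence of the coordinates of $\Phi$.

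The main obstacle I anticipate is making the interpolation derivative computation clean in the \emph{complex} setting: the coordinates $\Psi_{\mvx}$ are complex Gaussians, $h$ acts on the modulus, and one must be careful with the Wirtinger calculus ($\partial_{\psi},\partial_{\bar\psi}$) so that the second-derivative terms genuinely organize into the covariance-difference $\times$ (Hessian of $\log G$ plus gradient-outer-product) structure that standard real Gaussian comparison theorems (Slepian/Kahane type) provide. A secondary subtlety is verifying that $\psi\mapsto h(|\psi|/\sqrt N)$ is $C^2$ at $\psi=0$ (it is, since $h(x)=O(x^{p+1})$ near $0$ and $p>1$, though if $p<3$ one should double-check differentiability of order two — here $p>3$ is in force in Lemma~\ref{lem:predecoupling}, so this is not an issue in the regime of interest), and that all the Gaussian expectations appearing are finite so the differentiation under the expectation is justified. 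These are technical rather than conceptual hurdles; the conceptual content is entirely carried by the positivity of the Green's function and the convexity condition of Lemma~\ref{lem:hconvex}.
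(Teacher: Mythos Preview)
Your overall strategy matches the paper's: Gaussian interpolation between $\Psi^y$ and $\sqrt{C_y}\,\Phi$, differentiation in $t$, and the use of Lemma~\ref{lem:hconvex} to control the diagonal second-derivative terms. The gap is in your treatment of the \emph{off-diagonal} terms.

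You argue that the off-diagonal contributions to $\Phi'(t)$ are nonpositive because the Green's function $(y-\Delta)^{-1}_{\mvx_1\mvx_2}\ge 0$ and ``$xh'(x)\ge 0$ forces $\partial_{\mvx}G_{\mvx}\ge 0$.'' But the interpolation formula is written in Cartesian coordinates $\Re\psi_{\mvx},\Im\psi_{\mvx}$ (or equivalently $\psi,\bar\psi$), not in the radial variable. One has
\[
\partial_{\Re\psi_{\mvx}}G_{\mvx}\ \propto\ h'\!\left(\frac{|\psi_{\mvx}|}{\sqrt N}\right)\cdot\frac{\Re\psi_{\mvx}}{|\psi_{\mvx}|},
\]
which carries the sign of $\Re\psi_{\mvx}$, so the mixed derivative $\partial_{\mvx_1}\partial_{\mvx_2}F$ for $\mvx_1\ne\mvx_2$ --- a product of two such factors --- has no fixed sign. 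Worse, even if your positivity claim held, the logic would run backwards: in your convention the off-diagonal coefficient in $\Phi'(t)$ is $+(y-\Delta)^{-1}_{\mvx_1\mvx_2}>0$, so a nonnegative mixed derivative would give a \emph{positive} off-diagonal contribution, not a nonpositive one.

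The paper's fix is an AM--GM step, $2|ab|\le a^2+b^2$, applied to the first-derivative factors in the mixed term. This folds every off-diagonal contribution into the squared-first-derivative piece of the diagonal second derivative --- exactly the piece left over once Lemma~\ref{lem:hconvex} has disposed of the $h''+x^{-1}h'$ part. After the fold-in, the coefficient multiplying the (now manifestly nonnegative) diagonal expression is $C_y-\sum_{\mvx_2}(y-\Delta)^{-1}_{\mvx_1\mvx_2}$, and hypothesis~\eqref{eq:varub} makes this nonnegative. Thus the role of $C_y$ dominating the \emph{entire row sum} of the Green's function --- not just the diagonal entry $(y-\Delta)^{-1}_{\mvx\mvx}$ as you write --- is precisely to absorb the off-diagonal mass once it has been pushed onto the diagonal via AM--GM. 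Your attribution of the factor~$2$ to ``complex doubling'' is not the operative mechanism.
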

Lemma~\ref{lem:decoupling} hinges on the application of Gaussian interpolation. We state the version used below, easily adapted from the real-valued case. 
\begin{lem}\label{lem:GIBP}
Let $\Psi^{0}$ and $\Psi^{1}$ be independent, centered, complex-valued Gaussian processes on $\dT^{d}_{n}$, with covariance matrices $G_{0}$ and $G_{1}$ respectively. Let $r:\dC^{N}\to \dR$ be integrable w.r.t.~the laws of both $\Psi^{0}$ and $\Psi^{1}$. Let $\Psi^{t}:=\sqrt{1-t}\cdot \Psi^{0} + \sqrt{t}\cdot \Psi^{1}$, and define  
$R(t):=\E r(\Psi^{t})$. Then we have that 
\[
R'(t)=\sum_{{\mvx}_{1},{\mvx}_{2} \in \dT^{d}_{n}}\bigl( G_{1}({\mvx}_{1},{\mvx}_{2})-G_{0}({\mvx}_{1},{\mvx}_{2})\bigr)\cdot \E \frac{\partial^{2}}{\partial \psi_{{\mvx}_{1}} \partial \psi_{{\mvx}_{2}}} r(\Psi^{t}). 
\]
\end{lem}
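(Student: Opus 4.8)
The plan is to reduce the identity to a direct computation built on Gaussian integration by parts, in the spirit of the "smart path" method. First I would fix conventions: write $\partial_x := \partial/\partial\psi_x$ and $\bar\partial_x := \partial/\partial\overline{\psi_x}$ for the Wirtinger derivatives on $\dC^N$, recall that for real-valued $r$ one has $\bar\partial_x r = \overline{\partial_x r}$, and interpret $G_i(x_1,x_2)=\E[\Psi^i_{x_1}\overline{\Psi^i_{x_2}}]$ together with $\tfrac{\partial^2}{\partial\psi_{x_1}\partial\psi_{x_2}}\,r:=\partial_{x_1}\bar\partial_{x_2}r$ — this is the reading forced by the facts that $R'(t)$ is real and each $G_i$ is Hermitian, and it matches the circularly symmetric case $\E[\Psi^i_x\Psi^i_y]=0$ which is all that occurs in Lemma~\ref{lem:decoupling}. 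Since the asserted formula is linear in $r$ and stable under the limits below, I would first assume $r\in C^2(\dC^N)$ with $r$ and its first two derivatives of at most polynomial growth, so every expectation is finite and differentiation under the expectation is legitimate; the general case (including the exponential $r$ used in Lemma~\ref{lem:decoupling}) then follows by mollifying and truncating $r$, applying the identity to the approximants, and passing to the limit by dominated convergence using finiteness of all Gaussian moments and the integrability hypothesis.

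For $t\in(0,1)$ one has $\tfrac{d}{dt}\Psi^t_x=-\tfrac{1}{2\sqrt{1-t}}\Psi^0_x+\tfrac{1}{2\sqrt t}\Psi^1_x$, so the chain rule gives
\[
R'(t)=\sum_{x}\E\!\left[\partial_x r(\Psi^t)\Big(-\tfrac{\Psi^0_x}{2\sqrt{1-t}}+\tfrac{\Psi^1_x}{2\sqrt t}\Big)+\bar\partial_x r(\Psi^t)\Big(-\tfrac{\overline{\Psi^0_x}}{2\sqrt{1-t}}+\tfrac{\overline{\Psi^1_x}}{2\sqrt t}\Big)\right].
\]
Now I would apply Gaussian integration by parts. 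Because $\Psi^0$ and $\Psi^1$ are built from i.i.d.\ standard complex Gaussians they are circularly symmetric, and the complex Wick identity $\E[\Psi^i_x F]=\sum_y G_i(x,y)\,\E[\bar\partial_y F]$ (together with its conjugate) holds — this follows either from the real Gaussian IBP applied to $(\Re,\Im)$ or directly from integrating against the Gaussian density, and crucially carries no factor of $\tfrac12$. By independence I can condition: for the $\Psi^1_x$ term I fix $\Psi^0$, apply Wick in $\Psi^1$ to $F=\partial_x r(\sqrt{1-t}\,\Psi^0+\sqrt t\,\cdot)$, and use $\bar\partial_{\psi^1_y}F=\sqrt t\,(\bar\partial_y\partial_x r)(\Psi^t)$, so that $\sqrt t$ cancels the $\tfrac{1}{2\sqrt t}$ prefactor; the $\Psi^0_x$ term is handled by conditioning on $\Psi^1$, and likewise for the two conjugate terms. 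Collecting the four contributions and using $G_i$ Hermitian, $r$ real, and the relabelling $x_1\leftrightarrow x_2$ so that the holomorphic and antiholomorphic halves coincide, the two surviving $\tfrac12$'s add up and one obtains
\[
R'(t)=\sum_{x_1,x_2}\big(G_1(x_1,x_2)-G_0(x_1,x_2)\big)\,\E\big[\partial_{x_1}\bar\partial_{x_2}\,r(\Psi^t)\big],
\]
which is the claim. (An alternative that makes "easily adapted from the real-valued case" literal is to view $\dC^N\cong\dR^{2N}$, invoke the standard real interpolation formula $R'(t)=\tfrac12\sum_{I,J}(C^1_{IJ}-C^0_{IJ})\E[\partial_I\partial_J r]$ with $C^i$ the $2N\times 2N$ real covariance of $(\Re\Psi^i,\Im\Psi^i)$, and repackage the real second derivatives into Wirtinger ones via $\partial_\Re^2+\partial_\Im^2=4\,\partial\bar\partial$; the factors of $\tfrac12$ relating $C^i$ to $G_i$ then reproduce the stated coefficient.)

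The main obstacle is not the algebra but the analytic bookkeeping near the endpoints: the integrand in the display for $R'(t)$ carries the blow-up factors $\tfrac{1}{2\sqrt t}$ and $\tfrac{1}{2\sqrt{1-t}}$, and these cancel only \emph{after} the Wick step, so to interchange differentiation and expectation one should either work on compact subintervals $[\delta,1-\delta]\subset(0,1)$ — which suffices since the formula is asserted pointwise on $(0,1)$ — or carry the cancellation through at the level of the integrand. Together with pinning down every index and every factor of $\tfrac12$ in the complex Wirtinger calculus, this is the only point requiring care; the initial reduction to smooth, polynomially controlled $r$ is precisely what makes the intermediate expectations finite and the interchange routine.
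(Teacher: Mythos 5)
Your proposal is correct, and it is worth noting that the paper itself supplies no argument for Lemma~\ref{lem:GIBP} beyond the remark that it is ``easily adapted from the real-valued case''; your write-up fills in exactly the two routes such an adaptation could take. The direct route (chain rule in $t$, then the complex Wick/integration-by-parts identity $\E[\Psi_{\mvx} F]=\sum_{\mvy}G(\mvx,\mvy)\E[\bar\partial_{\mvy}F]$ applied conditionally in $\Psi^1$ and $\Psi^0$ separately) is carried out with the bookkeeping done right: the $\sqrt t$, $\sqrt{1-t}$ factors cancel the $\tfrac1{2\sqrt t}$, $\tfrac1{2\sqrt{1-t}}$ prefactors, the holomorphic and antiholomorphic halves combine via Hermitian symmetry of $G_i$ and equality of mixed Wirtinger partials, and the two $\tfrac12$'s recombine into the stated coefficient $1$; the alternative you sketch ($\dC^N\cong\dR^{2N}$, the real interpolation formula with its $\tfrac12$, and $\partial_{\Re}^2+\partial_{\Im}^2=4\,\partial\bar\partial$ together with the $\tfrac12$ relating the real covariance to $G_i$) is precisely the adaptation the paper gestures at, and it is consistent with how the lemma is actually invoked in the proof of Lemma~\ref{lem:decoupling}, where the second derivatives are computed in real and imaginary coordinates. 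Two of your observations deserve emphasis rather than criticism: first, the statement as printed is ambiguous both in the meaning of $\partial^2/\partial\psi_{\mvx_1}\partial\psi_{\mvx_2}$ and in the covariance convention, and your reading ($G_i(\mvx_1,\mvx_2)=\E[\Psi^i_{\mvx_1}\overline{\Psi^i_{\mvx_2}}]$ with the mixed derivative $\partial_{\mvx_1}\bar\partial_{\mvx_2}$, plus vanishing pseudo-covariance, i.e.\ circular symmetry) is the one under which the identity is true and is satisfied by the fields actually used ($\Psi^{y}$ and $\sqrt{C_y}\,\Phi$); for a general complex Gaussian with nonvanishing relation matrix extra terms would appear. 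Second, your handling of the regularity issues (reduction to $C^2$ functions of polynomial growth, extension by approximation, and restriction to compact subintervals of $(0,1)$ to justify differentiating under the expectation before the singular prefactors cancel) is more careful than anything in the paper; the only step left slightly informal is the passage to the limit for the derivative identity itself, which is most cleanly done by proving the integrated form $R(t_2)-R(t_1)=\int_{t_1}^{t_2}(\cdots)\,dt$ for the approximants and letting the approximation pass through the time integral — a routine repair that does not affect the application, where $R'(t)\ge 0$ on $(0,1)$ plus continuity of $R$ on $[0,1]$ yields $R(1)\ge R(0)$.
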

It is clear that $R(0)$ denotes the expectation of $r$ under the law of $\Psi^{0}$, and $R(1)$ the expectation under the law of $\Psi^{1}$. Thus, if $R'$ can be shown to be nonnegative, it is clear that $R(1)\geq R(0)$. We are now ready to prove Lemma~\ref{lem:decoupling}, and then obtain Theorem~\ref{thm:AC1} for free. 
\begin{proof}[Proof of Lemma~\ref{lem:decoupling}] We apply Lemma~\ref{lem:GIBP} with $\Psi^{0}=\Psi^{y}$, $\Psi^{1}=\sqrt{C_{y}}\ \Phi$ as defined in~\eqref{eq:varub}, and
\[
r(\psi)=\exp\left(\frac{2\nu N}{p+1} \sum_{{\mvx} \in \dT^{d}} h\left(\frac{|\psi_{{\mvx}}|}{\sqrt{N}}\right)\right).
\]
Observe that for any smooth function $f:\dR \to \dR$, we have 
\[
(\exp(f))'=\exp(f)\cdot f' \text{ and } (\exp(f))''=\exp(f)\cdot (f''+(f')^{2}).
\]
In our cases, $\Re{\Psi}$ and $\Im{\Psi}$ are independent copies of one another, and therefore any calculations done for the real part are exactly replicated for the imaginary part. For convenience, we introduce the notation $\fJ$ which denotes either $\Re$ or $\Im$. With our choice of $r$, we have 
\[
\frac{\partial^{2}}{\partial \fJ_{1} \psi_{{\mvx}_{1}} \partial \fJ_{2} \psi_{{\mvx}_{2}}} r(\psi)= N \cdot\left(\frac{2\nu }{p+1}\right)^{2}\cdot h'\left(\frac{|\psi_{{\mvx}_{1}}|}{\sqrt{N}}\right)\cdot\frac{\partial |\psi_{\mvx_{1}}|}{\partial \fJ_{1} \psi_{\mvx_{1}}}\cdot  h'\left(\frac{|\psi_{{\mvx}_{2}}|}{\sqrt{N}}\right)\cdot \frac{\partial |\psi_{\mvx_{2}}|}{\partial \fJ_{2} \psi_{\mvx_{2}}} \cdot r(\psi) .
\]
For the non mixed second derivative, we have
\begin{align*}
\frac{\partial^{2}}{\partial \fJ \psi_{{\mvx}}^{2}}r(\psi)
&=N \cdot r(\psi) \cdot \left(\frac{2\nu}{p+1} \cdot h'\left(\frac{|\psi_{\mvx}|}{N}\right)\cdot \frac{\partial |\psi_{\mvx}|}{\partial \fJ \psi_{\mvx}}\right)^{2}
+ r(\psi)\cdot \frac{2\nu}{p+1}\cdot \left(  \frac{\partial |\psi_{\mvx}|}{\partial \fJ \psi_{\mvx}}\right)^{2}h''\left(\frac{|\psi_{{\mvx}}|}{N}\right)
\\  
&\qquad\qquad+ N\cdot r(\psi) \cdot\left(\frac{2\nu}{p+1} \right)h'\left(\frac{|\psi_{{\mvx}}|}{N}\right)\cdot \frac{\partial^{2}|\psi_{\mvx}|}{\partial \fJ \psi_{\mvx}^{2}}.  
\end{align*}
We focus our attention on the latter two terms, that is 
\begin{align}\label{eq:rint}
r(\psi)\cdot \frac{2\nu}{p+1}\cdot \left(  \frac{\partial |\psi_{\mvx}|}{\partial \fJ \psi_{\mvx}}\right)^{2}h''\left(\frac{|\psi_{{\mvx}}|}{N}\right)
+ \sqrt{N}\cdot r(\psi) \cdot\left(\frac{2\nu}{p+1} \right)h'\left(\frac{|\psi_{{\mvx}}|}{N}\right)\cdot \frac{\partial^{2}|\psi_{\mvx}|}{\partial \fJ \psi_{\mvx}^{2}}
\end{align}
Evaluating the sum of~\eqref{eq:rint} over both cases of $\fJ$, that is $\Re$ or $\Im$, we obtain
\[
r(\psi)\cdot \left(\frac{\partial^{2}}{\partial|\psi_{\mvx}|^{2}}+\frac{1}{|\psi_{\mvx}|}\frac{\partial}{\partial |\psi_{\mvx}|}\right)h\left(\frac{|\psi_{\mvx}|}{\sqrt{N}}\right).
\]
Using Lemma \ref{lem:hconvex}, we may conclude that 
\[
\frac{\partial^{2}}{\partial \Re\psi_{\mvx}^{2} }r(\psi)+\frac{\partial^{2}}{\partial \Im \psi_{\mvx}^{2}}r(\psi) - \left(\frac{\partial}{\partial \Re \psi_{\mvx}} r(\psi)\right)^{2} -\left(\frac{\partial}{\partial \Im \psi_{\mvx}} r(\psi)\right)^{2}\geq 0
\]
Combining the inequality $a^{2}+b^{2}>2ab$ with~\eqref{eq:varub}, we have that 
\[
R'(t)\geq \sum_{x_{1}}\left( C_{y} -\sum_{{\mvx}_{2}} (y-\gD)^{-1}_{{\mvx}_{1} {\mvx}_{2}}\right)\E\left(\frac{\partial^{2}}{\partial \Re \psi_{{\mvx}}^{2}}+\frac{\partial^{2}}{\partial \Im \psi_{{\mvx}}^{2}}\right)r(\psi) \geq 0. 
\]
This completes the proof.
\end{proof}

All parts are established to prove Theorem~\ref{thm:AC1}.
\begin{proof}
A detailed proof is omitted; all that is required is to combine Lemma~\ref{lem:decoupling} with a rescaled version of Lemma~\ref{lem:predecoupling}.
\end{proof}

\section{Proofs of Main  Theorems}\label{sec:pfs}
In the sections prior, we have all the pieces required to prove our main results. In this section, we tie them all together. We begin with the proof of free energy convergence. All that needs to be done is to combine the upper and lower bounds found in Section \ref{sec:feconv}

\subsection{Proof of Theorem~\ref{thm:free}}
Essentially, the proof is an immediate  corollary of Lemmas~\ref{lem:ZUB} and~\ref{lem:ZLB}. It suffices to specify $\gk_{N}$ and $s_{N}$, and provide a rate of convergence for $\gc_{N}$. The sequence $\gk_{N}$ is chosen such that the mass shells are of adequate thickness such that the required concentration of mass of the Gaussian free field holds. A larger $\gk_{N}$ yields a better concentration bound but a worse rate of convergence. It is necessary that 
\[
\lim_{N\to \infty} \frac{\gk_{N}}{\sqrt{m_{N}(2)N}}=\infty \text{ and } \lim_{N\to \infty} \frac{\gk_{N}}{N} = 0.
\]

 As for $s_{N}$, which governs the size of the concentrated region in the upper bound, observe that $s_{N}$ needs to be chosen such that the corresponding $U$ will never contain nontrivial cycles, $s_{N} \to 0$ as $N\to \infty$, and the corresponding $U$ are sufficiently small so as to allow concentration. It suffices to consider
\begin{align}\label{eq:snrate}
s_{N}= \frac{1}{\log N}.
\end{align}
Finally, as for $\gc_{N}$, this depends on the optimizing values  $a$. We note that if $0$ is optimizing, then by Lemma \ref{lem:minrate2}, we have that 
\[
\gc_{N}\leq \frac{2d\gk_{N}}{N \log N}.
\]
On the contrary, if we have a non trivial minimezer $a_{\star}$, then we also know that $\nu a_{\star}>R_{p}$, and thus by Lemma \ref{lem:minrate1} we have that 
\[
\gc_{N} \leq C_{1}(\nu) (\nu a_{\star})^{p^{2}-1}\exp(-C_{2}(\nu)\cdot N^{1/d}). 
\]
Thus, the product of all the  error terms arising in Lemmas \ref{lem:ZUB} and \ref{lem:ZLB} can be bounded above by 
\[
e^{C_{1}{\gk_{N}}}\cdot \left(1-C\cdot \frac{Nm_{N}(2)}{\gk_{N}^{2}}\right)^{2}.
\]
Using the fact that $m_{N}(2)=N^{-1+4/d}$, we find that the best choice of $\ga$ for $\gk_{N}=N^{\ga}$ is given by 
\[
\ga=\frac{1}{3} +\frac{4}{3d}
\] \qed

Next, the characterization of the phase transition curve. These are essentially the finishing touches required to extract Theorem \ref{thm:ptcurve} from Section \ref{sec:phase}.
\subsection{Proof of Theorem~\ref{thm:ptcurve}}
We address the four parts Theorem~\ref{thm:ptcurve} individually.
\begin{enumeratea}
\item Immediate corollary of Lemma \ref{lem:Phase1exist}.
\item This is exactly established in Lemma \ref{lem:curve1}.

\item We begin with the bound and asymptotic for $\nu\downarrow R_{p}$. Indeed, Lemma \ref{lem:Phase2exist} may be rephrased as 
\[
\theta_{c}(\nu)\leq C_{d}\cdot \frac{\nu}{\nu- R_{p}}.
\]
Further, note that we have a non trivial minimizer $a_{\star}$ iff we have an $a$ such that $G_{}(a)<0$. In particular, thus, if $(\theta,\nu)$ corresponds to the dispersive phase, we must have that $G_{}(a)\geq 0$ for $a \in [0,1)$. Now consider the curve given by 
\[
\frac{R_{p}}{\nu}+\frac{C_{d}}{\theta} =(1+\eps)
\]
We know that the function $J$ has a derivative, moreover $J'=0$ for all $a<R_{p}$. As $\theta \uparrow \infty$, $\nu \downarrow (1+\eps)^{-1}\cdot R_{p}$. Thus we must have that along this curve
\[
\lim_{\theta \to \infty} \theta \cdot J(a\nu)=0
\]
for all values of $a\in [0,1)$. Thus, along this curve as $\theta$ increases, $(\nu_{\eps},\theta_{\eps})$ will eventually correspond to the dispersive phase. Since choice of $\eps$ is arbitrary, this verifies that 
\[
\lim_{\nu \downarrow R_{p}}(\nu-R_{p})\theta_{c}(\nu) = C_{d}R_{p}.
\]
Finally, we address the bound and the asymptotic as $\nu \uparrow \infty$. To do this, recall the functions $\widehat{W}$ and $\widehat{J}$ introduced in~\eqref{eq:jjhat} and~\eqref{eq:what} respectively.  We may then write
\begin{align*}
        \log{\theta}
         & +W(\theta(1-a)) + \frac\theta{\nu} I(a\nu)                                 \\
         & = \widehat{W}(\theta(1-a)) + \theta a \widehat{J}(a\nu) - \frac{2\theta}{p+1}a(a\nu\vee R_p)^{(p-1)/2} - \log(1-a).
\end{align*}
We know that the functions $\widehat{W}$ and $\widehat{J}$ are bounded, moreover, we may discard the $\log \theta$ as it is irrelevant to the phase behavior (independent of $a$). Thus, it suffices to characterize the behavior of  
\[
G_{\theta,\nu}(a)= \widehat{W}(\theta(1-a)) + \theta a \widehat{J}(a\nu) - \frac{2\theta}{p+1}a^{(p+1/2)}\cdot \nu^{(p-1)/2} - \log(1-a).
\]
If we take $C\cdot \theta= \nu^{-(p-1)/2}$ for a constant $C$  then observe that then the limit as $\nu \to \infty$ is non negative iff 
\[
C\leq \xi_{p}(0)
\]
for $\xi_{p}$ defined in~\eqref{def:zeta}. This  completes the proof. 
\item It suffices to consider $\nu>R_{p}$ and $\theta>\theta_{c}$. This implies that there is a $a_{\star}>0$ which is the smallest optimizer. Now, suppose that as $\theta \downarrow \theta_{c}$, and we have that $a_{\star}(\theta_{c})\downarrow 0$. Since $a_{\star}(\theta)$ continue to be non trivial minimizers we must still have $I(a_{\star}\nu)<0$, a contradiction as eventually $a_{\star}\nu<R_{p}$. \qed
\end{enumeratea}

We conclude this section by finally proving Theorem~\ref{thm:typical}. Essentially, we need to combine Theorems \ref{thm:AC0} and \ref{thm:AC1}. 
\subsection{Proof of Theorem~\ref{thm:typical}}

Let $\cA \subset \ell^{2}(\dT^{d}_{n})$. Using the trivial fact that $\tilde{Z}_{N}\geq Z_{\text{ref}}$,  we may write 
\begin{align}\label{eq:preholder}
\tilde{\mu}_{N}(A)\leq \frac{Z^{y_{N}}e^{N\theta y_{N}}}{\sqrt{N} \cdot Z_{\text{ref}}(\theta)} \E\left(\1_{\cA} \cdot \frac{\exp(\norm{\Psi^{y}}_{2}^{2})}{e^{N\theta y_{N}}} \cdot \exp\left(\frac{\nu (\theta N)^{(1-p)/2} }{p+1}\norm{\Psi^{y_{N}}}_{p+1}^{p+1}\right) \cdot \1_{\norm{\Psi^{y}}_{2}^{2}\leq N\theta} \right). 
\end{align}
Two applications of H\"older's inequality are all that are required now. Indeed, let $\epsilon_{1}$ and $\epsilon_{2}$ be positive real numbers, such that $(1+\epsilon_{1})\nu \theta^{(p+1)/2}$ satisfies the hypothesis of Theorem \ref{thm:AC1}. By the first application, we have that~\eqref{eq:preholder} can be further bounded above by the product of the following
\begin{align}\label{eq:holder1}
    \E\left( \exp((1+\epsilon_{1})\cdot \frac{\nu N^{-(p-1)/2}}{p+1}\cdot \norm{\Psi^{y_{N}}}_{p+1}^{p+1})\cdot \1_{\norm{\Psi^{y}}_{2}^{2}\leq N\theta}\right)^{1/(1+\epsilon_{1})}
\end{align}
and 

\begin{align}\label{eq:holder2}
 \E\left( \exp\left(\frac{1+\epsilon_{1}}{\epsilon_{1}}y_{N}\norm{\psi^{y_{N}}}_{2}^{2} -\frac{1+\epsilon_{1}}{\epsilon_{1}}y_{N}N\theta \right) \cdot \1_{\norm{\Psi^{y}}_{2}^{2}\leq N\theta}\right)^{\epsilon_{1}/(1+\epsilon_{1})}
\end{align}
We, for convenience, denote
\[
\epsilon_{1,2}=\epsilon_{1}+\epsilon_{2}+\epsilon_{1}\epsilon_{2}.
\]
Applying H\"older's inequality again to~\eqref{eq:holder2}, we obtain the upper bound
\begin{align}\label{eq:holder3}
    \E\left( \exp\left(\frac{1+\epsilon_{1,2}}{\epsilon_{1}\epsilon_{2}}y_{N}\norm{\psi^{y_{N}}}_{2}^{2} -\frac{1+\epsilon_{1,2}}{\epsilon_{1}\epsilon_{2}}y_{N}N\theta \right) \cdot \1_{\norm{\Psi^{y}}_{2}^{2}\leq N\theta}\right)^{\epsilon_{1}\epsilon_{2}/(1+\epsilon_{1,2})} \pr\left(\Psi^{y} \in \cA \right)^{\epsilon_{1}/(1+\epsilon_{1,2})}.
\end{align}
Thus, by Theorem \ref{thm:AC1}, we have that~\eqref{eq:holder1} is bounded above by a constant. By Theorem \ref{thm:AC0}, we know that~\eqref{eq:holder3} is bounded above by $N^{-\epsilon_{1}\epsilon_{2}/(2+2\epsilon_{1,2})}$. Combining, we obtain that 
\[
\tilde{\mu}_{N}(A)\leq C\cdot N^{\frac{1+\epsilon_{1}+\epsilon_{2}}{2+2\epsilon_{1,2}}}\cdot \dP\left(\Psi^{y}\in \cA \right)^{\epsilon_{2}/(1+\epsilon_{1,2})}
\]
Indeed, if $\dP(\Psi^{y_{N}}\in \cA) \leq N^{-\ga}$ for some $\ga>0$, we have that 
\[
\tilde{\mu}_{N}(\cA)\leq C \cdot N^{\frac{1+\epsilon_{1}+(1-2\ga)\epsilon_{2}}{2+2\epsilon_{1,2}}}
\]
We have flexibility in choosing $\epsilon_{2}$, and so long as $\ga>1/2$, we may select $\epsilon_{2}$ such that the numerator of the exponent is negative.  \qed

\section{Discussion and Further Questions}\label{sec:disc}

\subsection{On the Question of Multi-Soliton Solutions}\label{sec:multisol}

The question of multi-soliton phases is closely related to having multiple minimizers for the variational formula in the soliton phase; \ie\ $0$ is not a minimizer. We conjecture that this is impossible. Indeed, we know that $I(a)$ is concave for large values of $a$. However, we need detailed behavior of the $I$ function near $R_{p}$ to prove this.

\subsection{Ergodicity}\label{sec:ergodic}
The behavior of the invariant measure under the dynamics is yet to be explored. In~\cite{CK12}, the corresponding analysis was possible due to the mass of typical functions being concentrated at a single lattice site, significantly simplifying computations. This is no longer possible here; for us, the corresponding concentration occurs on a region of size $O(1)$. It should be possible to consider the dynamics introduced in~\cite{OL} with the regime of scaling considered in this article. In~\cite{WEI3}, a hierarchy of local minima for the Hamiltonian~\eqref{def:HAM1} was provided, which would be the collection of metastable states. In addition, the Witten Laplacian approach in~\cite{LEB} can be considered with restriction to the sphere instead of the ball, so as to remove technicalities arising from carrying out Morse theoretic calculations on a manifold with boundary.

\subsection{Dimension two analysis} \label{sec:2d}
We have explicitly used the finiteness of $C_{d}$ for our maximum bounds. One of the obstacles in extending our results to the two-dimensional case is that $C_{2}=\infty$. We work with a massive field for most parts, so this issue does not arise often. We anticipate that one can work around it. The more serious obstacle is that the techniques used here do not yield adequate mass concentration in two dimensions. This being said, the asymptotics is more interesting in two dimensions; by choosing the correct scaling, it might be possible to use the 2-D Gaussian Free Field characterization in~\cite{RAY} to evaluate our scaling limit explicitly in the dispersive phase.

\mbox{ }\vskip.01in
\noindent{\bf Acknowledgments.} We would like to thank Gayana Jayasinghe, Gourab Ray, and Arnab Sen for many useful discussions. 
\bibliography{nls}

@article {BS96,
    AUTHOR = {Brydges, David C. and Slade, Gordon},
     TITLE = {Statistical mechanics of the {$2$}-dimensional focusing
              nonlinear {S}chr\"{o}dinger equation},
   JOURNAL = {Comm. Math. Phys.},
  FJOURNAL = {Communications in Mathematical Physics},
    VOLUME = {182},
      YEAR = {1996},
    NUMBER = {2},
     PAGES = {485--504},
      ISSN = {0010-3616},
   MRCLASS = {82C27 (35Q55)},
  MRNUMBER = {1447302},
MRREVIEWER = {Tohru Ozawa},
       URL = {http://projecteuclid.org/euclid.cmp/1104288157},
}

@article {C14,
    AUTHOR = {Chatterjee, Sourav},
     TITLE = {Invariant measures and the soliton resolution conjecture},
   JOURNAL = {Comm. Pure Appl. Math.},
  FJOURNAL = {Communications on Pure and Applied Mathematics},
    VOLUME = {67},
      YEAR = {2014},
    NUMBER = {11},
     PAGES = {1737--1842},
      ISSN = {0010-3640},
   MRCLASS = {35Q55 (35C08 60F10)},
  MRNUMBER = {3263670},
MRREVIEWER = {Dmitry E. Pelinovsky},
       DOI = {10.1002/cpa.21501},
       URL = {https://doi.org/10.1002/cpa.21501},
}

@article {CK12,
    AUTHOR = {Chatterjee, Sourav and Kirkpatrick, Kay},
     TITLE = {Probabilistic methods for discrete nonlinear {S}chr\"{o}dinger
              equations},
   JOURNAL = {Comm. Pure Appl. Math.},
  FJOURNAL = {Communications on Pure and Applied Mathematics},
    VOLUME = {65},
      YEAR = {2012},
    NUMBER = {5},
     PAGES = {727--757},
      ISSN = {0010-3640},
   MRCLASS = {35Q55},
  MRNUMBER = {2898889},
MRREVIEWER = {Narcisa C. Apreutesei},
       DOI = {10.1002/cpa.21388},
       URL = {https://doi.org/10.1002/cpa.21388},
}

@article {WEI99,
    AUTHOR = {Weinstein, M. I.},
     TITLE = {Excitation thresholds for nonlinear localized modes on
              lattices},
   JOURNAL = {Nonlinearity},
  FJOURNAL = {Nonlinearity},
    VOLUME = {12},
      YEAR = {1999},
    NUMBER = {3},
     PAGES = {673--691},
      ISSN = {0951-7715},
   MRCLASS = {35Q55 (78A60)},
  MRNUMBER = {1690199},
       DOI = {10.1088/0951-7715/12/3/314},
       URL = {https://doi.org/10.1088/0951-7715/12/3/314},
}

@article {WEI82,
    AUTHOR = {Weinstein, Michael I.},
     TITLE = {Nonlinear {S}chr\"{o}dinger equations and sharp interpolation
              estimates},
   JOURNAL = {Comm. Math. Phys.},
  FJOURNAL = {Communications in Mathematical Physics},
    VOLUME = {87},
      YEAR = {1982/83},
    NUMBER = {4},
     PAGES = {567--576},
      ISSN = {0010-3616},
   MRCLASS = {35Q20 (35J10 81C05)},
  MRNUMBER = {691044},
MRREVIEWER = {Catherine Bandle},
       URL = {http://projecteuclid.org/euclid.cmp/1103922134},
}

@article {LRS88,
    AUTHOR = {Lebowitz, Joel L. and Rose, Harvey A. and Speer, Eugene R.},
     TITLE = {Statistical mechanics of the nonlinear {S}chr\"{o}dinger equation},
   JOURNAL = {J. Statist. Phys.},
  FJOURNAL = {Journal of Statistical Physics},
    VOLUME = {50},
      YEAR = {1988},
    NUMBER = {3-4},
     PAGES = {657--687},
      ISSN = {0022-4715},
   MRCLASS = {82A05 (35Q20 82A45)},
  MRNUMBER = {939505},
MRREVIEWER = {Gerhard C. Hegerfeldt},
       DOI = {10.1007/BF01026495},
       URL = {https://doi.org/10.1007/BF01026495},
}

@article {B94,
    AUTHOR = {Bourgain, J.},
     TITLE = {Periodic nonlinear {S}chr\"{o}dinger equation and invariant
              measures},
   JOURNAL = {Comm. Math. Phys.},
  FJOURNAL = {Communications in Mathematical Physics},
    VOLUME = {166},
      YEAR = {1994},
    NUMBER = {1},
     PAGES = {1--26},
      ISSN = {0010-3616},
   MRCLASS = {35Q55 (35B10 35Q53)},
  MRNUMBER = {1309539},
MRREVIEWER = {Elaine Machtyngier},
       URL = {http://projecteuclid.org/euclid.cmp/1104271501},
}

@article {MV94,
    AUTHOR = {McKean, H. P. and Vaninsky, K. L.},
     TITLE = {Brownian motion with restoring drift: the petit and
              micro-canonical ensembles},
   JOURNAL = {Comm. Math. Phys.},
  FJOURNAL = {Communications in Mathematical Physics},
    VOLUME = {160},
      YEAR = {1994},
    NUMBER = {3},
     PAGES = {615--630},
      ISSN = {0010-3616},
   MRCLASS = {60J65 (60K35)},
  MRNUMBER = {1266067},
MRREVIEWER = {Ingemar Kaj},
       URL = {http://projecteuclid.org/euclid.cmp/1104269714},
}

@article {MV97a,
    AUTHOR = {McKean, H. P. and Vaninsky, K. L.},
     TITLE = {Cubic {S}chr\"{o}dinger: the petit canonical ensemble in
              action-angle variables},
   JOURNAL = {Comm. Pure Appl. Math.},
  FJOURNAL = {Communications on Pure and Applied Mathematics},
    VOLUME = {50},
      YEAR = {1997},
    NUMBER = {7},
     PAGES = {593--622},
      ISSN = {0010-3640},
   MRCLASS = {35Q55 (35R60)},
  MRNUMBER = {1447055},
MRREVIEWER = {Piotr Biler},
       DOI = {10.1002/(SICI)1097-0312(199707)50:7<593::AID-CPA1>3.3.CO;2-A},
       URL = {https://doi.org/10.1002/(SICI)1097-0312(199707)50:7<593::AID-CPA1>3.3.CO;2-A},
}

@article {MV97b,
    AUTHOR = {McKean, H. P. and Vaninsky, K. L.},
     TITLE = {Action-angle variables for the cubic {S}chr\"{o}dinger equation},
   JOURNAL = {Comm. Pure Appl. Math.},
  FJOURNAL = {Communications on Pure and Applied Mathematics},
    VOLUME = {50},
      YEAR = {1997},
    NUMBER = {6},
     PAGES = {489--562},
      ISSN = {0010-3640},
   MRCLASS = {35Q55 (34A55 34L05 58F05)},
  MRNUMBER = {1441912},
MRREVIEWER = {Piotr Biler},
       DOI = {10.1002/(SICI)1097-0312(199706)50:6<489::AID-CPA1>3.0.CO;2-4},
       URL =
              {https://doi.org/10.1002/(SICI)1097-0312(199706)50:6<489::AID-CPA1>3.0.CO;2-4},
}

@article{DK21, 
	   author = {Dey, Partha and Kim, Daesung},  
  		title = {Fluctuation results for size of the vacant set for random walks on discrete torus},
  	 	 year = 2021,
      journal = {arXiv e-prints},
        month = aug,
          eid = {arXiv:2108.06450},
        pages = {arXiv:2108.06450},
archivePrefix = {arXiv},
       eprint = {2108.06450},
}

@article{CLS03,
  title={Discretizing light behaviour in linear and nonlinear waveguide lattices},
  author={Christodoulides, Demetrios N and Lederer, Falk and Silberberg, Yaron},
  journal={Nature},
  volume={424},
  number={6950},
  pages={817--823},
  year={2003},
  publisher={Nature Publishing Group}
}

@article{BK04,
  title={Theory of nonlinear matter waves in optical lattices},
  author={Brazhnyi, VA and Konotop, VV},
  journal={Modern Physics Letters B},
  volume={18},
  number={14},
  pages={627--651},
  year={2004},
  publisher={World Scientific}
}

@article{P04,
	doi = {10.1088/0951-7715/17/2/r01},
	url = {https://doi.org/10.1088/0951-7715/17/2/r01},
	year = 2004,
	publisher = {{IOP} Publishing},
	volume = {17},
	number = {2},
	pages = {R1--R40},
	author = {Michel Peyrard},
	title = {Nonlinear dynamics and statistical physics of {DNA}},
	journal = {Nonlinearity},
}

@article {S07,
    AUTHOR = {Sheffield, Scott},
     TITLE = {Gaussian free fields for mathematicians},
   JOURNAL = {Probab. Theory Related Fields},
  FJOURNAL = {Probability Theory and Related Fields},
    VOLUME = {139},
      YEAR = {2007},
    NUMBER = {3-4},
     PAGES = {521--541},
      ISSN = {0178-8051},
   MRCLASS = {60K35 (60J65 81T10 82B31)},
  MRNUMBER = {2322706},
MRREVIEWER = {Ofer Zeitouni},
       DOI = {10.1007/s00440-006-0050-1},
       URL = {https://doi.org/10.1007/s00440-006-0050-1},
}

@article {A19,
    AUTHOR = {Ab\"{a}cherli, Angelo},
     TITLE = {Local picture and level-set percolation of the {G}aussian free
              field on a large discrete torus},
   JOURNAL = {Stochastic Process. Appl.},
  FJOURNAL = {Stochastic Processes and their Applications},
    VOLUME = {129},
      YEAR = {2019},
    NUMBER = {9},
     PAGES = {3527--3546},
      ISSN = {0304-4149},
   MRCLASS = {60K35 (60G15)},
  MRNUMBER = {3985572},
       DOI = {10.1016/j.spa.2018.09.017},
       URL = {https://doi.org/10.1016/j.spa.2018.09.017},
}

@article{KMW,
  title = {Analysis of Bose-Einstein condensation times for self-interacting scalar dark matter},
  author = {Kirkpatrick, Kay and Mirasola, Anthony E. and Prescod-Weinstein, Chanda},
  journal = {Phys. Rev. D},
  volume = {106},
  issue = {4},
  pages = {043512},
  numpages = {6},
  year = {2022},
  month = {Aug},
  publisher = {American Physical Society},
  doi = {10.1103/PhysRevD.106.043512},
  url = {https://link.aps.org/doi/10.1103/PhysRevD.106.043512}
}

@article {LP68,
    AUTHOR = {Lax, Peter D.},
     TITLE = {Integrals of nonlinear equations of evolution and solitary
              waves},
   JOURNAL = {Comm. Pure Appl. Math.},
  FJOURNAL = {Communications on Pure and Applied Mathematics},
    VOLUME = {21},
      YEAR = {1968},
     PAGES = {467--490},
      ISSN = {0010-3640},
   MRCLASS = {35.79 (47.00)},
  MRNUMBER = {235310},
MRREVIEWER = {Gerald Rosen},
       DOI = {10.1002/cpa.3160210503},
       URL = {https://doi.org/10.1002/cpa.3160210503},
}

@article {RAY,
    AUTHOR = {Berestycki, Nathana\"{e}l and Powell, Ellen and Ray, Gourab},
     TITLE = {A characterisation of the {G}aussian free field},
   JOURNAL = {Probab. Theory Related Fields},
  FJOURNAL = {Probability Theory and Related Fields},
    VOLUME = {176},
      YEAR = {2020},
    NUMBER = {3-4},
     PAGES = {1259--1301},
      ISSN = {0178-8051},
   MRCLASS = {60K35 (60G15 60G60 60J67)},
  MRNUMBER = {4087493},
       DOI = {10.1007/s00440-019-00939-9},
       URL = {https://doi.org/10.1007/s00440-019-00939-9},
}

@misc{OL,
  doi = {10.48550/ARXIV.2109.01389},
  
  url = {https://arxiv.org/abs/2109.01389},
  
  author = {Hannani, Amirali and Olla, Stefano},
  
  keywords = {Mathematical Physics (math-ph), Statistical Mechanics (cond-mat.stat-mech), Analysis of PDEs (math.AP), Probability (math.PR), FOS: Physical sciences, FOS: Physical sciences, FOS: Mathematics, FOS: Mathematics},
  
  title = {A stochastic thermalization of the Discrete Nonlinear Schrödinger Equation},
  
  publisher = {arXiv},
  
  year = {2021},
  
  copyright = {arXiv.org perpetual, non-exclusive license}
}

@article {WEI3,
    AUTHOR = {Jenkinson, M. and Weinstein, M. I.},
     TITLE = {Onsite and offsite bound states of the discrete nonlinear
              {S}chr\"{o}dinger equation and the {P}eierls-{N}abarro barrier},
   JOURNAL = {Nonlinearity},
  FJOURNAL = {Nonlinearity},
    VOLUME = {29},
      YEAR = {2016},
    NUMBER = {1},
     PAGES = {27--86},
      ISSN = {0951-7715},
   MRCLASS = {35Q55 (35B32 35C08)},
  MRNUMBER = {3460749},
MRREVIEWER = {Tohru Ozawa},
       DOI = {10.1088/0951-7715/29/1/27},
       URL = {https://doi.org/10.1088/0951-7715/29/1/27},
}

@article {LEB,
    AUTHOR = {Lebowitz, J. L. and Mounaix, Ph. and Wang, W.-M.},
     TITLE = {Approach to equilibrium for the stochastic {NLS}},
   JOURNAL = {Comm. Math. Phys.},
  FJOURNAL = {Communications in Mathematical Physics},
    VOLUME = {321},
      YEAR = {2013},
    NUMBER = {1},
     PAGES = {69--84},
      ISSN = {0010-3616},
   MRCLASS = {60H30 (35Q55 35R60 60H15)},
  MRNUMBER = {3089664},
MRREVIEWER = {Wenjia Jing},
       DOI = {10.1007/s00220-012-1632-7},
       URL = {https://doi.org/10.1007/s00220-012-1632-7},
}

\end{document}